\newcommand{\nc}{\newcommand}
\nc{\CC}{{\mathbb{C}}}
\nc{\LL}{{\mathbb{L}}}
\nc{\RR}{{\mathbb{R}}}
\renewcommand{\P}{{\mathbb{P}}}
\nc{\OO}{{\mathbb{O}}}
\renewcommand{\SS}{{\mathbb{S}}}
\nc{\QQ}{{\mathbb{Q}}}
\nc{\ZZ}{{\mathbb{Z}}}
\nc{\rA}{{\mathrm{A}}}
\nc{\rB}{{\mathrm{B}}}
\nc{\rC}{{\mathrm{C}}}
\nc{\rD}{{\mathrm{D}}}
\nc{\rE}{{\mathrm{E}}}
\nc{\rF}{{\mathrm{F}}}
\nc{\rG}{{\mathrm{G}}}
\nc{\rH}{{\mathrm{H}}}
\nc{\rI}{{\mathrm{I}}}
\nc{\rJ}{{\mathrm{J}}}
\nc{\rK}{{\mathrm{K}}}
\nc{\rL}{{\mathrm{L}}}
\nc{\rM}{{\mathrm{M}}}
\nc{\rN}{{\mathrm{N}}}
\nc{\rO}{{\mathrm{O}}}
\nc{\rP}{{\mathrm{P}}}
\nc{\rQ}{{\mathrm{Q}}}
\nc{\rR}{{\mathrm{R}}}
\nc{\rS}{{\mathrm{S}}}
\nc{\rT}{{\mathrm{T}}}
\nc{\rU}{{\mathrm{U}}}
\nc{\rV}{{\mathrm{V}}}
\nc{\rW}{{\mathrm{W}}}
\nc{\rX}{{\mathrm{X}}}
\nc{\rY}{{\mathrm{Y}}}
\nc{\rZ}{{\mathrm{Z}}}
\nc{\cA}{{\mathscr{A}}}
\nc{\cB}{{\mathscr{B}}}
\nc{\cC}{{\mathscr{C}}}
\nc{\cD}{{\mathscr{D}}}
\nc{\cE}{{\mathscr{E}}}
\nc{\cF}{{\mathscr{F}}}
\nc{\cG}{{\mathscr{G}}}
\nc{\cH}{{\mathscr{H}}}
\nc{\cI}{{\mathscr{I}}}
\nc{\cJ}{{\mathscr{J}}}
\nc{\cK}{{\mathcal{K}}}
\nc{\cL}{{\mathscr{L}}}
\nc{\cM}{{\mathscr{M}}}
\nc{\cN}{{\mathscr{N}}}
\nc{\cO}{{\mathscr{O}}}
\nc{\cP}{{\mathscr{P}}}
\nc{\cQ}{{\mathcal{Q}}}
\nc{\cR}{{\mathscr{R}}}
\nc{\cS}{{\mathcal{S}}}
\nc{\cT}{{\mathscr{T}}}
\nc{\cU}{{\mathscr{U}}}
\nc{\cV}{{\mathscr{V}}}
\nc{\cW}{{\mathscr{W}}}
\nc{\cX}{{\mathscr{X}}}
\nc{\cY}{{\mathscr{Y}}}
\nc{\cZ}{{\mathscr{Z}}}
\nc{\bA}{{\mathbf{A}}}
\nc{\bB}{{\mathbf{B}}}
\nc{\bC}{{\mathbf{C}}}
\nc{\bD}{{\mathbf{D}}}
\nc{\bE}{{\mathbf{E}}}
\nc{\bF}{{\mathbf{F}}}
\nc{\bG}{{\mathbf{G}}}
\nc{\bH}{{\mathbf{H}}}
\nc{\bI}{{\mathbf{I}}}
\nc{\bJ}{{\mathbf{J}}}
\nc{\bK}{{\mathbf{K}}}
\nc{\bL}{{\mathbf{L}}}
\nc{\bM}{{\mathbf{M}}}
\nc{\bN}{{\mathbf{N}}}
\nc{\bO}{{\mathbf{O}}}
\nc{\bP}{{\mathbf{P}}}
\nc{\bQ}{{\mathbf{Q}}}
\nc{\bR}{{\mathbf{R}}}
\nc{\bS}{{\mathbf{S}}}
\nc{\bT}{{\mathbf{T}}}
\nc{\bU}{{\mathbf{U}}}
\nc{\bV}{{\mathbf{V}}}
\nc{\bW}{{\mathbf{W}}}
\nc{\bX}{{\mathbf{X}}}
\nc{\bY}{{\mathbf{Y}}}
\nc{\bZ}{{\mathbf{Z}}}
\nc{\ba}{{\mathbf{a}}}
\nc{\bb}{{\mathbf{b}}}
\nc{\bc}{{\mathbf{c}}}
\nc{\bd}{{\mathbf{d}}}
\nc{\be}{{\mathbf{e}}}
\nc{\bg}{{\mathbf{g}}}
\nc{\bh}{{\mathbf{h}}}
\nc{\bi}{{\mathbf{i}}}
\nc{\bj}{{\mathbf{j}}}
\nc{\bk}{{\mathbf{k}}}
\nc{\bl}{{\mathbf{l}}}
\nc{\bm}{{\mathbf{m}}}
\nc{\bn}{{\mathbf{n}}}
\nc{\bo}{{\mathbf{o}}}
\nc{\bp}{{\mathbf{p}}}
\nc{\bq}{{\mathbf{q}}}
\nc{\br}{{\mathbf{r}}}
\nc{\bs}{{\mathbf{s}}}
\nc{\bt}{{\mathbf{t}}}
\nc{\bu}{{\mathbf{u}}}
\nc{\bv}{{\mathbf{v}}}
\nc{\bw}{{\mathbf{w}}}
\nc{\bx}{{\mathbf{x}}}
\nc{\by}{{\mathbf{y}}}
\nc{\bz}{{\mathbf{z}}}
\nc{\fA}{{\mathfrak{A}}}
\nc{\fB}{{\mathfrak{B}}}
\nc{\fC}{{\mathfrak{C}}}
\nc{\fD}{{\mathfrak{D}}}
\nc{\fE}{{\mathfrak{E}}}
\nc{\fF}{{\mathfrak{F}}}
\nc{\fG}{{\mathfrak{G}}}
\nc{\fH}{{\mathfrak{H}}}
\nc{\fI}{{\mathfrak{I}}}
\nc{\fJ}{{\mathfrak{J}}}
\nc{\fK}{{\mathfrak{K}}}
\nc{\fL}{{\mathfrak{L}}}
\nc{\fM}{{\mathfrak{M}}}
\nc{\fN}{{\mathfrak{N}}}
\nc{\fO}{{\mathfrak{O}}}
\nc{\fP}{{\mathfrak{P}}}
\nc{\fQ}{{\mathfrak{Q}}}
\nc{\fR}{{\mathfrak{R}}}
\nc{\fS}{{\mathfrak{S}}}
\nc{\fT}{{\mathfrak{T}}}
\nc{\fU}{{\mathfrak{U}}}
\nc{\fV}{{\mathfrak{V}}}
\nc{\fW}{{\mathfrak{W}}}
\nc{\fX}{{\mathfrak{X}}}
\nc{\fY}{{\mathfrak{Y}}}
\nc{\fZ}{{\mathfrak{Z}}}
\nc{\fa}{{\mathfrak{a}}}
\nc{\fb}{{\mathfrak{b}}}
\nc{\fc}{{\mathfrak{c}}}
\nc{\fd}{{\mathfrak{d}}}
\nc{\fe}{{\mathfrak{e}}}
\nc{\ff}{{\mathfrak{f}}}
\nc{\fg}{{\mathfrak{g}}}
\nc{\fh}{{\mathfrak{h}}}
\nc{\fj}{{\mathfrak{j}}}
\nc{\fk}{{\mathfrak{k}}}
\nc{\fl}{{\mathfrak{l}}}
\nc{\fm}{{\mathfrak{m}}}
\nc{\fn}{{\mathfrak{n}}}
\nc{\fo}{{\mathfrak{o}}}
\nc{\fp}{{\mathfrak{p}}}
\nc{\fq}{{\mathfrak{q}}}
\nc{\fr}{{\mathfrak{r}}}
\nc{\fs}{{\mathfrak{s}}}
\nc{\ft}{{\mathfrak{t}}}
\nc{\fu}{{\mathfrak{u}}}
\nc{\fv}{{\mathfrak{v}}}
\nc{\fw}{{\mathfrak{w}}}
\nc{\fx}{{\mathfrak{x}}}
\nc{\fy}{{\mathfrak{y}}}
\nc{\fz}{{\mathfrak{z}}}
\nc{\sA}{{\mathsf{A}}}
\nc{\sB}{{\mathsf{B}}}
\nc{\sC}{{\mathsf{C}}}
\nc{\sD}{{\mathsf{D}}}
\nc{\sE}{{\mathsf{E}}}
\nc{\sF}{{\mathsf{F}}}
\nc{\sG}{{\mathsf{G}}}
\nc{\sH}{{\mathsf{H}}}
\nc{\sI}{{\mathsf{I}}}
\nc{\sJ}{{\mathsf{J}}}
\nc{\sK}{{\mathsf{K}}}
\nc{\sL}{{\mathsf{L}}}
\nc{\sM}{{\mathsf{M}}}
\nc{\sN}{{\mathsf{N}}}
\nc{\sO}{{\mathsf{O}}}
\nc{\sP}{{\mathsf{P}}}
\nc{\sQ}{{\mathsf{Q}}}
\nc{\sR}{{\mathsf{R}}}
\nc{\sS}{{\mathsf{S}}}
\nc{\sT}{{\mathsf{T}}}
\nc{\sU}{{\mathsf{U}}}
\nc{\sV}{{\mathsf{V}}}
\nc{\sW}{{\mathsf{W}}}
\nc{\sX}{{\mathsf{X}}}
\nc{\sY}{{\mathsf{Y}}}
\nc{\sZ}{{\mathsf{Z}}}
\nc{\sa}{{\mathsf{a}}}
\nc{\sd}{{\mathsf{d}}}
\nc{\se}{{\mathsf{e}}}
\nc{\sg}{{\mathsf{g}}}
\nc{\sh}{{\mathsf{h}}}
\nc{\si}{{\mathsf{i}}}
\nc{\sj}{{\mathsf{j}}}
\nc{\sk}{{\mathsf{k}}}
\nc{\sm}{{\mathsf{m}}}
\nc{\sn}{{\mathsf{n}}}
\nc{\so}{{\mathsf{o}}}
\nc{\sq}{{\mathsf{q}}}
\nc{\sr}{{\mathsf{r}}}
\nc{\st}{{\mathsf{t}}}
\nc{\su}{{\mathsf{u}}}
\nc{\sv}{{\mathsf{v}}}
\nc{\sw}{{\mathsf{w}}}
\nc{\sx}{{\mathsf{x}}}
\nc{\sy}{{\mathsf{y}}}
\nc{\sz}{{\mathsf{z}}}
\nc{\oA}{{\overline{A}}}
\nc{\oB}{{\overline{B}}}
\nc{\oC}{{\overline{C}}}
\nc{\oD}{{\overline{D}}}
\nc{\oE}{{\overline{E}}}
\nc{\oF}{{\overline{F}}}
\nc{\oG}{{\overline{G}}}
\nc{\oH}{{\overline{H}}}
\nc{\oI}{{\overline{I}}}
\nc{\oJ}{{\overline{J}}}
\nc{\oK}{{\overline{K}}}
\nc{\oL}{{\overline{L}}}
\nc{\oM}{{\overline{M}}}
\nc{\oN}{{\overline{N}}}
\nc{\oO}{{\overline{O}}}
\nc{\oP}{{\overline{P}}}
\nc{\oQ}{{\overline{Q}}}
\nc{\oR}{{\overline{R}}}
\nc{\oS}{{\overline{S}}}
\nc{\oT}{{\overline{T}}}
\nc{\oU}{{\overline{U}}}
\nc{\oV}{{\overline{V}}}
\nc{\oW}{{\overline{W}}}
\nc{\oX}{{\overline{X}}}
\nc{\oY}{{\overline{Y}}}
\nc{\oZ}{{\overline{Z}}}
\nc{\oa}{{\overline{a}}}
\nc{\ob}{{\overline{b}}}
\nc{\oc}{{\overline{c}}}
\nc{\od}{{\overline{d}}}
\nc{\of}{{\overline{f}}}
\nc{\og}{{\overline{g}}}
\nc{\oh}{{\overline{h}}}
\nc{\oi}{{\overline{i}}}
\nc{\oj}{{\overline{j}}}
\nc{\ok}{{\overline{k}}}
\nc{\ol}{{\overline{l}}}
\nc{\om}{{\overline{m}}}
\nc{\on}{{\overline{n}}}
\nc{\oo}{{\overline{o}}}
\nc{\op}{{\overline{p}}}
\nc{\oq}{{\overline{q}}}
\nc{\os}{{\overline{s}}}
\nc{\ot}{{\overline{t}}}
\nc{\ou}{{\overline{u}}}
\nc{\ov}{{\overline{v}}}
\nc{\ow}{{\overline{w}}}
\nc{\ox}{{\overline{x}}}
\nc{\oy}{{\overline{y}}}
\nc{\oz}{{\overline{z}}}
\nc{\tA}{{\tilde{A}}}
\nc{\tB}{{\tilde{B}}}
\nc{\tC}{{\tilde{C}}}
\nc{\tD}{{\tilde{D}}}
\nc{\tE}{{\tilde{E}}}
\nc{\tF}{{\tilde{F}}}
\nc{\tG}{{\tilde{G}}}
\nc{\tH}{{\tilde{H}}}
\nc{\tI}{{\tilde{I}}}
\nc{\tJ}{{\tilde{J}}}
\nc{\tK}{{\tilde{K}}}
\nc{\tL}{{\tilde{L}}}
\nc{\tM}{{\tilde{M}}}
\nc{\tN}{{\tilde{N}}}
\nc{\tO}{{\tilde{O}}}
\nc{\tP}{{\tilde{P}}}
\nc{\tQ}{{\tilde{Q}}}
\nc{\tR}{{\tilde{R}}}
\nc{\tS}{{\tilde{S}}}
\nc{\tT}{{\tilde{T}}}
\nc{\tU}{{\tilde{U}}}
\nc{\tV}{{\tilde{V}}}
\nc{\tW}{{\tilde{W}}}
\nc{\tX}{{\tilde{X}}}
\nc{\tY}{{\tilde{Y}}}
\nc{\tZ}{{\tilde{Z}}}
\nc{\tcF}{{\widetilde{\cF}}}
\nc{\tcG}{{\widetilde{\cG}}}
\nc{\tcQ}{{\widetilde{\cQ}}}
\nc{\bcF}{{\overline{\cF}}}
\nc{\bcS}{{\overline{\cS}}}
\nc{\ta}{{\tilde{a}}}
\nc{\tb}{{\tilde{b}}}
\nc{\tc}{{\tilde{c}}}
\nc{\td}{{\tilde{d}}}
\nc{\te}{{\tilde{e}}}
\nc{\tf}{{\tilde{f}}}
\nc{\tg}{{\tilde{g}}}
\nc{\ti}{{\tilde{i}}}
\nc{\tj}{{\tilde{j}}}
\nc{\tk}{{\tilde{k}}}
\nc{\tl}{{\tilde{l}}}
\nc{\tm}{{\tilde{m}}}
\nc{\tn}{{\tilde{n}}}
\nc{\tp}{{\tilde{p}}}
\nc{\tq}{{\tilde{q}}}
\nc{\tr}{{\tilde{r}}}
\nc{\ts}{{\tilde{s}}}
\nc{\tu}{{\tilde{u}}}
\nc{\tv}{{\tilde{v}}}
\nc{\tw}{{\tilde{w}}}
\nc{\tx}{{\tilde{x}}}
\nc{\ty}{{\tilde{y}}}
\nc{\tz}{{\tilde{z}}}
\nc{\hA}{{\hat{A}}}
\nc{\hB}{{\hat{B}}}
\nc{\hC}{{\hat{C}}}
\nc{\hD}{{\hat{D}}}
\nc{\hE}{{\hat{E}}}
\nc{\hF}{{\hat{F}}}
\nc{\hG}{{\hat{G}}}
\nc{\hH}{{\hat{H}}}
\nc{\hI}{{\hat{I}}}
\nc{\hJ}{{\hat{J}}}
\nc{\hK}{{\hat{K}}}
\nc{\hL}{{\hat{L}}}
\nc{\hM}{{\hat{M}}}
\nc{\hN}{{\hat{N}}}
\nc{\hO}{{\hat{O}}}
\nc{\hP}{{\hat{P}}}
\nc{\hQ}{{\hat{Q}}}
\nc{\hR}{{\hat{R}}}
\nc{\hS}{{\hat{S}}}
\nc{\hT}{{\hat{T}}}
\nc{\hU}{{\hat{U}}}
\nc{\hV}{{\hat{V}}}
\nc{\hW}{{\hat{W}}}
\nc{\hX}{{\hat{X}}}
\nc{\hY}{{\hat{Y}}}
\nc{\hZ}{{\hat{Z}}}
\nc{\ha}{{\hat{a}}}
\nc{\hb}{{\hat{b}}}
\nc{\hc}{{\hat{c}}}
\nc{\hd}{{\hat{d}}}
\nc{\he}{{\hat{e}}}
\nc{\hf}{{\hat{f}}}
\nc{\hg}{{\hat{g}}}
\nc{\hh}{{\hat{h}}}
\nc{\hi}{{\hat{i}}}
\nc{\hj}{{\hat{j}}}
\nc{\hk}{{\hat{k}}}
\nc{\hl}{{\hat{l}}}
\nc{\hm}{{\hat{m}}}
\nc{\hn}{{\hat{n}}}
\nc{\ho}{{\hat{o}}}
\nc{\hp}{{\hat{p}}}
\nc{\hq}{{\hat{q}}}
\nc{\hr}{{\hat{r}}}
\nc{\hs}{{\hat{s}}}
\nc{\hu}{{\hat{u}}}
\nc{\hv}{{\hat{v}}}
\nc{\hw}{{\hat{w}}}
\nc{\hx}{{\hat{x}}}
\nc{\hy}{{\hat{y}}}
\nc{\hz}{{\hat{z}}}
\nc{\eps}{\varepsilon}
\nc{\lan}{\big\langle}
\nc{\ran}{\big\rangle}
\nc{\kk}{{\mathsf{k}}}
\def\bw#1#2{\textstyle{\bigwedge\hskip-0.9mm^{#1}}\hskip0.2mm{#2}}
\DeclareMathOperator{\Sym}{{\mathrm{Sym}}}
\DeclareMathOperator{\Aut}{\mathrm{Aut}}
\DeclareMathOperator{\Hom}{\mathrm{Hom}}
\DeclareMathOperator{\Ext}{\mathrm{Ext}}
\DeclareMathOperator{\Hilb}{\mathrm{Hilb}}
\DeclareMathOperator{\Spec}{\mathrm{Spec}}
\DeclareMathOperator{\Proj}{\mathrm{Proj}}
\DeclareMathOperator{\Bl}{\mathrm{Bl}}
\DeclareMathOperator{\Sing}{\mathrm{Sing}}
\DeclareMathOperator{\Pic}{\mathrm{Pic}}
\DeclareMathOperator{\Ker}{\mathrm{Ker}}
\DeclareMathOperator{\Ima}{\mathrm{Im}}
\DeclareMathOperator{\Cone}{\mathrm{Cone}}
\DeclareMathOperator{\Gr}{\mathrm{Gr}}
\DeclareMathOperator{\OGr}{\mathrm{OGr}}
\DeclareMathOperator{\LGr}{\mathrm{LGr}}
\DeclareMathOperator{\Fl}{\mathrm{Fl}}
\DeclareMathOperator{\OFl}{\mathrm{OFl}}
\DeclareMathOperator{\GL}{\mathrm{GL}}
\DeclareMathOperator{\SO}{\mathrm{SO}}
\DeclareMathOperator{\PSO}{\mathrm{PSO}}
\DeclareMathOperator{\GO}{\mathrm{O}}
\DeclareMathOperator{\Spin}{\mathrm{Spin}}
\DeclareMathOperator{\codim}{\mathrm{codim}}
\DeclareMathOperator{\rc}{\mathrm{c}}
\DeclareMathOperator{\CH}{\mathrm{CH}}
\DeclareMathOperator{\pr}{\mathrm{pr}}
\DeclareMathOperator{\rk}{\mathrm{rk}}
\makeatletter\@addtoreset{equation}{section} \makeatother
\theoremstyle{plain}
\newtheorem{theorem}[equation]{Theorem}
\newtheorem{lemma}[equation]{Lemma}
\newtheorem{proposition}[equation]{Proposition}
\newtheorem{corollary}[equation]{Corollary}
\theoremstyle{definition}
\newtheorem{definition}[equation]{Definition}
\theoremstyle{remark}
\newtheorem{remark}[equation]{Remark}
\title{On linear sections of the spinor tenfold, I}
\author{Alexander Kuznetsov}
\address{{\sloppy
\parbox{0.9\textwidth}{
Algebraic Geometry Section, Steklov Mathematical Institute of Russian Academy of Sciences,\\
8 Gubkin str., Moscow 119991 Russia
\\[5pt]
The Poncelet Laboratory, Independent University of Moscow
\hfill\\[5pt]
Laboratory of Algebraic Geometry, National Research University Higher School of Economics, Russian Federation
}\bigskip}}
\email{akuznet@mi.ras.ru}
\date{}
\thanks{This work is supported by the Russian Science Foundation under grant 14-50-00005.}
\begin{document}

\begin{abstract}
We discuss the geometry of transverse linear sections of the spinor tenfold $X$, the connected component 
of the orthogonal Grassmannian of 5-dimensional isotropic subspaces in a 10-dimensional vector space 
equipped with a non-degenerate quadratic form.
In particular, we show that as soon as the dimension of a linear section of $X$ is at least 5, 
its integral Chow motive is of Lefschetz type.
We discuss classification of smooth linear sections of $X$ of small codimension;
in particular we check that there is a unique isomorphism class of smooth hyperplane sections
and exactly two isomorphism classes of smooth linear sections of codimension~2.
Using this, we define a natural quadratic line complex associated with a linear section of~$X$.
We also discuss the Hilbert schemes of linear spaces and quadrics on $X$ and its linear sections.
\end{abstract}

\maketitle


\section{Introduction}

\subsection{Overview}

The {\sf spinor tenfold}
\begin{equation*}
X = \Spin(10)/\bP_5 \subset \P^{15}
\end{equation*}
is one of the most interesting rational homogeneous spaces. 
Here $\Spin(10)$ is the simply connected covering of the special orthogonal group $\SO(10)$
and $\bP_5$ is its parabolic subgroup associated with the last vertex of the Dynkin diagram $\rD_5$
(the corresponding vertex on the picture below is black):
\begin{figure}[h]
\begin{tikzpicture}
\draw (0,.5) node [above] {1} -- (1,.5) node [above] {2} -- (2,.5) node [above] {3} -- (3,1) node [right] {4} -- (2,.5) -- (3,0) node [right] {5};
\draw (0,.5) circle 		[radius=.07];
\draw (1,.5) circle 		[radius=.07];
\draw (2,.5) circle 		[radius=.07];
\draw (3,1) circle 		[radius=.07];
\draw[fill] (3,0) circle 	[radius=.07];
\end{tikzpicture}
\end{figure}

The spinor tenfold is classically represented as a connected component
\begin{equation*}
X \cong \OGr_+(5,\rV)
\end{equation*}
of the isotropic Grassmannian $\OGr(5,\rV)$ for a non-degenerate quadratic form on a 10-dimensional vector space~$\rV$.
Note however, that the Pl\"ucker embedding of $\OGr_+(5,\rV) \subset \Gr(5,\rV) \subset \P(\bw5\rV)$ 
corresponds to the square of the generator of the Picard group $\Pic(X)$.

One of the most interesting features of the spinor tenfold $X$ is its \emph{projective self-duality} ---
the projective dual variety $X^\vee \subset \check\P^{15}$ of $X$ is projectively isomorphic to $X$
(here $\check\P^{15}$ is the dual projective space of~$\P^{15}$, and these two are the projectivizations
of the two half-spinor representations $\SS$ and $\SS^\vee$ of $\Spin(\rV)$).
More canonically, 
\begin{equation*}
X^\vee \cong \Spin(\rV)/\bP_4 \cong \OGr_-(5,\rV)
\end{equation*}
(so it is obtained from $X$ by an outer automorphism of $\Spin(\rV)$ corresponding to the involution of the Dynkin diagram $\rD_5$, 
and can be also described as the other connected component of the isotropic Grassmannian).
The self-duality property, actually, is very special --- among smooth projective varieties besides the spinor tenfold only 
quadrics $Q^n$, Segre varieties $\P^1 \times \P^n$, and the Grassmannian $\Gr(2,5)$ are self-dual.

The projective self-duality of the spinor tenfold lifts to the higher homological level.
In fact, it is also \emph{homologically projectively self-dual} (see~\cite[Section~6.2]{k2006hyperplane}, \cite[Theorem~5.5]{icm2014}).
This means that there is a nice relation (see Theorem~\ref{theorem:db-xk}) 
between derived categories of coherent sheaves of linear sections of $X$ and $X^\vee$.

The goal of this paper, first in a series, is to start a systematic study of the geometry of these linear sections 
\begin{equation*}
X_K = X \cap \P(K^\perp) \subset \P(\SS) = \P^{15}.
\end{equation*}
Here $K \subset \SS^\vee$ is a linear subspace and $K^\perp \subset \SS$ is the orthogonal complement of $K$.
We are mostly interested in linear sections which are smooth and dimensionally transverse of codimension at most~5.
We describe the Chow motive of all these varieties, provide a classification in codimension~1 and codimension~2 cases,
discuss the most special sections of codimension~3, and introduce an important ``quadratic invariant'' $R_K$ of $X_K$
which will play an important role in subsequent papers.

Actually, significant part of the results of this paper are known to experts, 
but the references are scattered (and some of these results are folklore) and use different approaches.
For instance see~\cite{zak1993book,Ranestad2000,pasquier2009,manivel2017double,fu2015special}.
So, we provided proofs for these results trying to keep the paper self-contained.

\subsection{Smooth, complete and non-isotrivial families}

Before explaining in more detail the content of the paper, let us mention one interesting property of varieties $X_K$.
By classical projective duality, in the case~$k = \dim K \le 5$ the linear section $X_K$ is smooth and dimensionally transverse
if and only if the corresponding linear subspace $\P(K) \subset \P(\SS^\vee) = \check\P^{15}$ satisfies the property
\begin{equation*}
X^\vee \cap \P(K) = \varnothing.
\end{equation*}
It follows that all intermediate linear sections $X_K \subset X_{K'} \subset X$ 
(sometimes we will call them {\sf over-sections} of $X_K$) are also smooth and dimensionally transverse.
Moreover, the simple smoothness criterion above has the following striking consequence.

Assume that $B$ is a smooth projective variety and $\phi \colon B \to \Gr(k,\SS^\vee)$ is a map such that 
\begin{equation*}
X^\vee \cap \P(K_b) = \varnothing
\end{equation*}
for each point $b \in B$, where $K_b = \phi(b)$ is the $k$-dimensional subspace of $\SS^\vee$ 
associated with the point~$b \in B$ by the map~$\phi$.
Since $\codim_{\P(\SS^\vee)} X^\vee = 5$, in the case $\dim B + k - 1 < 5$ the above assumption is easy to satisfy
(in this case this is a generality assumption).
Then each linear section $X_{K_b} \subset X$ is smooth of codimension $k$ in $X$.
Consider the total family of these sections
\begin{equation*}
\cX_B := X \times_{\P(\SS)} \P_B(\phi^*(\cK^\perp)),
\end{equation*}
where $\cK$ is the tautological subbundle of $\Gr(k,\SS^\vee)$ and $\cK^\perp$ is the subbundle of its orthogonal complements.
Then it follows that the morphism $\cX_B \to B$ is smooth, hence $\cX_B \to B$ is a \emph{complete family} of smooth projective varieties.
It is also not hard to choose $k$, $B$, and $\phi$ in such a way, that this family is not \emph{isotrivial}
(for this one should assume $k \ge 2$).
This gives one of not so many known examples of a complete non-isotrivial family of smooth varieties.

\subsection{Results}

The homological projective duality implies that every smooth linear section $X_K$ of $X$ of codimension~$k \le 5$ 
comes with a full exceptional collection of vector bundles of length $2\dim(X) - 4$.
The existence of a full exceptional collection implies easily that the rational Chow motive of $X_K$ is of Lefschetz type.
It is, however, not known whether the existence of a full exceptional collection implies that 
the Chow motive with \emph{integral} coefficients is of Lefschetz type 
(see, however, \cite{gorchinskiy2017} for some results in the 3-dimensional case).

The first main result of the paper (Theorem~\ref{theorem:motives-z-xk-06}) is a proof of this fact by a geometrical construction. 
Actually, we show (Proposition~\ref{proposition:blowup-xk}) that the blowup of the projective space $\P(K^\perp)$ with center in $X_K$ is 
a Zariski piecewise trivial fibration in projective spaces over the 8-dimensional quadric~$\rQ = \Spin(\rV)/\bP_1$.
Modifying this fibration (Proposition~\ref{proposition:double-blowup-xk}) to a projective bundle over a blowup of $\rQ$, 
we deduce that the motive of~$X_K$ is a direct summand of a sum of Lefschetz motives, 
which immediately implies that it is a direct sum of Lefschetz motives as well.
Another geometric argument (Corollary~\ref{corollary:xk-rational}) proves that every $X_K$ is rational
(it is expected, but not proven yet, that any smooth projective variety with a full exceptional collection is rational).

It may seem from the above that all smooth linear sections of $X$ are uniform and boring.
In the rest of the paper we show that this is far from being true, by exhibiting rich and interesting geometry 
associated with them.
Even more of this will come in subsequent papers.

First, we discuss hyperplane sections of $X$.
As it is well known, there are only two projective isomorphism classes of hyperplane sections --- smooth and singular.
We reprove this and provide a convenient geometric description of the hyperplane section in both cases.

For the singular hyperplane section $X'_1$ the description is the following.
We check that the singular locus of $X'_1$ is a 4-space $\P^4 \subset X$
and prove that the blowup of $X'_1$ along this 4-space is an explicit $\P^3$-bundle over~$\Gr(2,5)$, 
see Corollary~\ref{corollary:singular-hyperplane-sections} for details.
In fact, we deduce this isomorphism from a more general result (Proposition~\ref{proposition:blowup-x-pi4}) --- 
an identification of the blowup of $X$ with center in a 4-space with the blowup of $\P^{10}$ 
with center in the Grassmannian~$\Gr(2,5)$ (contained in a hyperplane $\P^9 \subset \P^{10}$).

Similarly, for the smooth hyperplane section $X''_1$ of $X$ we show that 
there is a unique 6-dimensional quadric $Q^6$ contained in $X''_1$ (Lemma~\ref{lemma:g6-xkappa-smooth}), 
and that the blowup of $X''_1$ along this quadric is isomorphic to a $\P^4$-bundle 
over a 5-dimensional quadric (Corollary~\ref{corollary:blowup-xkappa-smooth}, cf.~\cite[Lemma~1.17]{pasquier2009}).
Again, we deduce this from a more general result (Proposition~\ref{proposition:blowup-x-ogr48}) --- 
an identification of the blowup of $X$ with center in~$Q^6$ with a~$\P^4$-bundle over another 6-dimensional quadric.

Next we consider smooth linear sections $X_K \subset X$ of codimension~2. 
We show that there are exactly two isomorphism classes of those, that can be distinguished 
by looking at the Hilbert schemes $F_p(X_K)$ of linear subspaces of dimension $p$ on them.
First of all, for $X_K$ of the first type one has $F_4(X_K) = \varnothing$, 
while for the second type $F_4(X_K) \cong \P^1$ (Proposition~\ref{proposition:x2-with-p4}).
Second, for $X_K$ of the first type $F_1(X_K)$ is smooth, while for the second type $F_1(X_K)$ has a unique singular point (Corollary~\ref{corollary:f1-x1}).

We say that~$X_K$ of the second type is {\sf special} 
and the line on $X_K$ corresponding to the singular point of $F_1(X_K)$ is the {\sf special line} of $X_K$.
Geometrically, a special $X_K$ can be obtained by blowing up a quintic del Pezzo fourfold inside $\P^8$ 
(it is contained in a hyperplane $\P^7 \subset \P^8$) and then contracting the strict transform of the hyperplane.

We study the subvariety $R_0$ of the Grassmannian $\Gr(2,\SS^\vee)$ of lines in $\P(\SS^\vee)$ 
parameterizing special linear sections of $X$ and its closure $R = \overline{R_0} \subset \Gr(2,\SS^\vee)$.
We show (Lemma~\ref{lemma:secondary-quadric}) that $R$ is a \emph{quadratic line complex}, i.e., 
it is a hypersurface cut out on $\Gr(2,\SS^\vee)$ by a quadric in the corresponding Pl\"ucker space,
and we call it {\sf the spinor quadratic line complex}.

We show (Corollary~\ref{corollary:sing-r}) that the singular locus of $R$ is the variety of secant lines to $X^\vee$ 
(in particular, it follows that its codimension in $R$ is equal to~7)
and construct (Lemma~\ref{lemma:resolution-r}) a nice resolution of singularities $\tR \to R$ 
with $\tR$ isomorphic to a $\Gr(2,8)$-bundle over~$\OGr(3,\rV)$.

We use the spinor quadratic line complex $R$ to define an interesting invariant for all linear sections of~$X$ of codimension at least~2.
Given such $X_K \subset X$ we define a {\sf quadratic invariant} of $X_K$ as
\begin{equation*}
R_K := \Gr(2,K) \cap R \subset \Gr(2,\SS^\vee).
\end{equation*}
It is easy to show (Lemma~\ref{lemma:rk-invariant}) that if $X_{K_1} \cong X_{K_2}$, 
the associated quadratic invariants are isomorphic as well: $R_{K_1} \cong R_{K_2}$
In terms of the quadratic invariant $R_K$ it is easy to characterize special codimension~2 linear sections --- 
a section~$X_K$ is special if and only if $R_K$ is nonempty (this is, of course, a tautological characterization).
Associating with a linear section $X_K$ its quadratic line complex $R_K$ defines a (rational) map from the moduli stack
of linear sections $X_K \subset X$ of codimension $k$ to the moduli stack of quadratic line complexes in $\Gr(2,k)$.
It is an interesting question to understand the relation between these moduli stacks.

We finish the paper by discussing some properties of $R_K$.
We show that $R_K \subset \Gr(2,K)$ is almost always a divisor (Lemma~\ref{lemma:rk-codim4}).
The only exception (besides the special linear sections of codimension 2) is the case of a linear section $X_K \subset X$ of codimension~3
such that~$F_4(X_K) \ne \varnothing$.
We show (Proposition~\ref{proposition:x3-with-p4}) that there is a unique isomorphism class of such $X_K$ and call them {\sf very special}.
Geometrically, a very special $X_K$ can be obtained by blowing up a quintic del Pezzo threefold inside $\P^7$ 
(it is contained in a hyperplane $\P^6 \subset \P^7$) and then contracting the strict transform of the hyperplane.
This transformation, as well as the birational isomorphisms of $X$ with $\P^{10}$ 
and of a special codimension~2 linear section of $X$ with $\P^8$ discussed above,
are particular cases of \emph{special birational transformations of type~$(2,1)$} 
studied by Fu and Hwang in~\cite[Proposition~2.12]{fu2015special}.

\subsection{Minifolds}

Finally, let us say some words about some further results.
Probably, one of the most interesting cases that will be considered in subsequent papers
is the case of linear sections $X_K \subset X$ of codimension~5.
These varieties are particularly interesting, because they are \emph{minifolds} (see~\cite{galkin}) --- 
they have the same Hodge diamond as $\P^5$ and their derived category of coherent sheaves is generated 
by the minimal possible number~$\dim X_K + 1 = 6$ of exceptional bundles.
Besides $X_K$ only four other minifolds of dimension 5 are known --- 
these are $\P^5$, $Q^5$, the adjoint $\rG_2$-Grassmannian, 
and a hyperplane section of the Lagrangian Grassmannian $\LGr(3,6)$.
Among those~$X_K$ is the only minifold with moduli.
Moreover, besides two other examples in dimension 3 (the quintic del Pezzo threefold and prime Fano threefolds of genus~12), 
the only minifolds known by now are projective spaces and odd-dimensional quadrics.

One of the motivations to start this project was the following funny observation.
Consider the three 5-dimensional minifolds of index 3: 
the adjoint $\rG_2$-Grassmannian, the hyperplane section of $\LGr(3,6)$, and a fivefold $X_K$.
For each of them the Hilbert scheme of lines is again a Fano variety of dimension~5.
It is immediate to see that for the first of them, the Hilbert scheme of lines is isomorphic to $Q^5$.
It is much less evident (see, however,~\cite[Corollary~6.7]{k2016kuchle-c5}) that 
for the second of them the Hilbert scheme of lines is isomorphic to the adjoint $\rG_2$-Grassmannian.
In particular, in both cases the Hilbert scheme is itself a 5-dimensional minifold!
So, one could hope that the Hilbert schemes $F_1(X_K)$ of lines on fivefolds~$X_K$ would give a new example of a minifold.

It turns out, however, that this is not true, but still the geometry of $F_1(X_K)$ is quite interesting.
In fact, in a forthcoming paper we will show that there is a natural \emph{Sarkisov link} relating the Hilbert scheme $F_1(X_K)$
to the quadratic line complex $R_K \subset \Gr(2,K)$, which in this case is a \emph{Gushel--Mukai fivefold} (see~\cite{debarre2015gushel}).
Explicitly, there are certain natural $\P^1$-bundles over $R_K$ and $F_1(X_K)$ that are related by a flop
\begin{equation*}
\xymatrix@1{\P^1_{R_K} \ar@{<-->}[rr] && \P^1_{F_1(X_K)}}.
\end{equation*}
The flopping locus on both sides is a $\P^2$-bundle over the curve $F_2(X_K)$ (the Hilbert scheme of planes on~$X_K$),
which can be equivalently described as the locus in $\Gr(3,K)$ of very special codimension-3 over-sections of $X_K$.
In particular, it follows that $F_1(X_K)$ is smooth if and only if $R_K$ is smooth, and that
the Hodge numbers of $F_1(X_K)$ and $R_K$ are the same
(see~\cite[Proposition~3.1]{debarre2016gushel} for the Hodge numbers of~$R_K$), and so $F_1(X_K)$ is not a minifold.

In fact, the Hilbert scheme $F_1(X_K)$ was already considered in~\cite{Ranestad2000}.
In particular, it was proved in~\cite[Theorem~8.6]{Ranestad2000} that $F_1(X_K)$ can be realized 
as a variety of sums of powers for a general cubic threefold (and some invariants of $F_1(X_K)$ were computed).
It would be very interesting to understand the relation between the cubic threefold and the Gushel--Mukai fivefold
associated with the minifold $X_K$.

Note also that the derived category of a smooth Gushel--Mukai fivefold 
has an interesting semiorthogonal decomposition~\cite[Proposition~2.3]{kuznetsov2016derived}
consisting of six exceptional vector bundles and an Enriques-type category~\cite[Proposition~2.6]{kuznetsov2016derived}.
The relation of $F_1(X_K)$ with $R_K$ suggests that the derived category of $F_1(X_K)$ has a semiorthogonal 
decomposition of the same type (thus, the Enriques category serves as the obstruction to the minifold property).
It will be interesting to find it.

\subsection{Structure of the paper}

The paper is organized as follows.
In Section~\ref{section:preliminaries} we give a short reminder of isotropic Grassmannians, spinor representations and bundles,
and prove a useful blowup lemma (Lemma~\ref{lemma:blowup}).
In Section~\ref{section:x-and-xk} we introduce the spinor tenfold $X$ and describe its Hilbert schemes of lines, planes, and other linear spaces.
We also discuss a smoothness criterion for linear sections of $X$, their semiorthogonal decompositions, 
and consequences of those for Chow motives with rational coefficients.
In Section~\ref{section:blowup-x} we prove that the blowup of $\P^{15}$ along $X$ is isomorphic to a $\P^7$-bundle over~$Q^8$
and deduce many consequences of this result.
Among these there is a description of Hilbert schemes of quadrics on~$X$ and of the integral Chow motives of linear sections of~$X$.
In Section~\ref{section:hyperplanes} we prove that the blowup of $X$ along a 4-space is isomorphic to the blowup of $\P^{10}$ along~$\Gr(2,5)$
and extract from this a description of a singular hyperplane section of~$X$.
We also prove that the blowup of $X$ along a 6-dimensional quadric is isomorphic to a $\P^4$-bundle over~$Q^6$,
and deduce from this a description of a smooth hyperplane section of $X$ as a $\P^4$-bundle over $Q^5$.
In Section~\ref{section:codimension-2} we classify all smooth linear sections of $X$ of codimension~2 and introduce the spinor quadratic line complex~$R$.
In Section~\ref{section:other} we define the quadratic invariant $R_K$ of a linear section $X_K \subset X$
and use it to answer some questions about geometry of linear sections of $X$ of codimension greater than~2.

\subsection{Conventions}

We work over a field $\Bbbk$, which we assume to be an algebraically closed field of zero characteristic.
By $\Gr(s,V)$ we denote the Grassmannian of $s$-dimensional vector subspaces in $V$.
In particular, $\P(V) = \Gr(1,V)$ is the projectivization of a vector space $V$.
Similarly, for a vector bundle $\cV$ on a scheme~$S$ we denote by
\begin{equation*}
\P_S(\cV) = \Proj \left( \bigoplus_{p=0}^\infty \Sym^p \cV^\vee \right)
\end{equation*}
the projectivization of $\cV$.
We denote by $\cO_{\P_S(\cV)}(1)$ the Grothendieck line bundle on $\cV$, normalized by the property $\pi_*\cO_{\P_S(\cV)}(1) \cong \cV^\vee$.
Its first Chern class is called the {\sf relative hyperplane class} of $\P_S(\cV)$.

\subsection{Acknowledgement}

I am grateful to Dima Orlov and Yura Prokhorov for useful discussions and to Laurent Manivel for the reference~\cite{Landsberg2003}.
When the first version of this paper was published I was informed by Baohua Fu about similar results obtained 
in~\cite{pasquier2009} and~\cite{fu2015special}; I am very grateful to him for these and other references.

\section{Preliminaries}
\label{section:preliminaries}

\subsection{Isotropic orthogonal Grassmannians}

Let $V$ be a vector space over an algebraically closed field~$\Bbbk$ of characteristic zero  with a nondegenerate quadratic form $\bq_V$.
We denote by $\OGr(s,V) \subset \Gr(s,V)$ the subvariety of the Grassmannian that parameterizes $\bq_V$-isotropic $s$-dimensional subspaces in $V$.
In particular,
\begin{equation*}
\OGr(1,V) = Q_V \subset  \P(V)
\end{equation*}
is a smooth quadric defined by $\bq_V$.

Of course, when $2s > \dim V$ the isotropic Grassmannian is empty, so we will always assume~$2s \le \dim V$.
Each of these isotropic Grassmannians is a homogeneous space for the group $\Spin(V)$.
Restricting to the case when $\dim V = 2m$ is even, so that $\Spin(V)$ is a group of Dynkin type $\rD_m$, we can write
\begin{equation*}
\OGr(s,V) = 
\begin{cases}
\Spin(V)/\bP_s, & \text{if $s \le m - 2$},\\
\Spin(V)/\bP_{m-1,m}, & \text{if $s = m - 1$},\\
(\Spin(V)/\bP_{m}) \sqcup (\Spin(V)/\bP_{m-1}) , & \text{if $s = m$},
\end{cases}
\end{equation*}
where $\bP_I$ denote a parabolic subgroup in $\Spin(V)$ corresponding to the set of vertices $I$ of the Dynkin diagram.
In particular, the isotropic Grassmannian $\OGr(m,V)$ has two connected components which we denote by $\OGr_+(k,V)$ and $\OGr_-(k,V)$ respectively.
We will use the convention 
\begin{equation*}
\OGr_+(m,V) = \Spin(V)/\bP_m
\qquad\text{and}\qquad
\OGr_-(m,V) = \Spin(V)/\bP_{m-1}.
\end{equation*}
Note that these varieties are abstractly isomorphic (and an isomorphism is carried out by an outer automorphism of $\Spin(V)$).
To figure out the component to which a given isotropic subspace belongs the following property is useful:
if $U'$ and $U''$ are maximal isotropic subspaces then $U'$ and $U''$ belong to the same component if and only if 
$\dim(U' \cap U'') \equiv \dim U' \pmod 2$.
Note also that for small $m$ one has the following identifications of the isotropic Grassmannians:
\begin{equation}
\label{eq:small-ogr}
\OGr_+(1,2) \cong \Spec(\Bbbk),
\qquad 
\OGr_+(2,4) \cong \P^1,
\qquad 
\OGr_+(3,6) \cong \P^3,
\qquad 
\OGr_+(4,8) \cong Q^6,
\end{equation}
and similarly for $\OGr_-(m,2m)$.
The last isomorphism is a manifestation of \emph{triality}:
\begin{equation*}
\Spin(8)/\bP_1 \cong \Spin(8)/\bP_3 \cong \Spin(8)/\bP_4.
\end{equation*}
Besides the Grassmannians, we also need isotropic flag varieties.
Assuming (for simplicity) that~$V$ is a vector space of even dimension $2m$ with a non-degenerate quadratic form $\bq_V$,
and $0 < s_1 < \dots < s_r \le m$ is a sequence of integers, we denote by $\OFl(s_1,\dots,s_r;V) \subset \Fl(s_1,\dots,s_r;V)$
the subvariety of the flag variety that parameterizes $\bq_V$-isotropic flags.
If $s_r = m$, it has two connected components which we denote 
\begin{equation*}
\OFl_\pm(s_1,\dots,s_{r-1},m;V) = \OFl(s_1,\dots,s_{r-1},m;V) \times_{\OGr(m,V)} \OGr_\pm(m,V).
\end{equation*}

\subsection{Spinor spaces and bundles}
\label{subsection:spinors}

For the material in this section we refer to~\cite{ottaviani} for the case of quadrics and to~\cite[Section~6]{k2008exceptional} in general.
Note that the conventions for the definition of spinor bundles in these two references are opposite.
Here we stick to the convention used in~\cite{ottaviani}.

Assume again that $\dim V = 2m$ and $\bq_V$ is a nondegenerate quadratic form. 
Denote by $\omega_i$ the fundamental weight of $\Spin(V)$ corresponding to the vertex $i$ of the Dynkin diagram $\rD_m$,
and by $\rV_{\Spin(V)}^{\lambda}$ the irreducible $\Spin(V)$-representation with the highest weight $\lambda$.
Then
\begin{equation*}
V \cong \rV_{\Spin(V)}^{\omega_1},
\end{equation*}
and the irreducible representations corresponding to the last two vertices 
\begin{equation*}
\SS = \SS_+ := \left(\rV_{\Spin(V)}^{\omega_m}\right)^\vee
\qquad\text{and}\qquad
\SS_- := \left(\rV_{\Spin(V)}^{\omega_{m-1}}\right)^\vee
\end{equation*}
are called the {\sf half-spinor representations}.
Their dimensions are equal to
\begin{equation*}
\dim(\SS) = \dim(\SS_-) = 2^{m-1}
\end{equation*}
and they are swapped by outer automorphisms of $\Spin(V)$.
The half-spinor representations are self-dual or mutually dual depending on the parity of $m$; explicitly
\begin{equation}
\label{eq:dual-s}
\SS^\vee \cong \SS_{(-1)^m}
\qquad\text{and}\qquad
\SS_-^\vee \cong \SS_{(-1)^{m-1}}.
\end{equation}

A similar construction allows to define spinor vector bundles on isotropic Grassmannians of $V$.
Namely, for the maximal isotropic Grassmannians $\OGr_\pm(m,V)$ 
the spinor bundles are just the anti-ample generators of the Picard groups:
\begin{equation*}
\cS_1 = \cS_{1,+} := \cL_{-\omega_m} \in \Pic(\OGr_+(m,V))
\qquad\text{and}\qquad
\cS_{1,-} := \cL_{-\omega_{m-1}} \in \Pic(\OGr_-(m,V))
\end{equation*}
(we use the subscript to specify the rank of the bundles).
In what follows we denote $\cS_{1,\pm}^\vee$ simply by~$\cO_{\OGr_\pm(m,V)}(1)$ and consider these line bundles 
as polarizations of~$\OGr_\pm(m,V)$.
Note that, if $\cU_\pm$ denote the tautological rank-$m$ vector bundles on $\OGr_\pm(m,V)$, then
\begin{equation}
\label{eq:detu-s}
\det \cU_m^\vee \cong \cO_{\OGr_\pm(m,V)}(2).
\end{equation}

Similarly, for $s \le m - 2$ we consider the isotropic flag varieties $\OFl_\pm(s,m;V)$ 
with the projections $\pr_s$ and $\pr_{m,\pm}$ to $\OGr(s,V)$ and $\OGr_\pm(m,V)$ respectively, and define
\begin{equation*}
\cS_{2^{m-s-1}} = \cS_{2^{m-s-1},+} := \left(\pr_{s*}(\pr_m^*(\cS_1^\vee))\right)^\vee
\qquad\text{and}\qquad
\cS_{2^{m-s-1},-} := \left(\pr_{s*}(\pr_{m,-}^*(\cS_{1,-}^\vee))\right)^\vee,
\end{equation*}
where again the subscript specifies the rank. 
The analogue of the duality isomorphisms~\eqref{eq:dual-s} for spinor bundles is similar, but also involves a twist.

\begin{lemma}[cf.~{\cite[Theorem~2.8]{ottaviani}}, {\cite[Corollary~6.5, Proposition~6.6]{k2008exceptional}}]
\label{lemma:cs-dual}
Let $\dim V = 2m$. If~$s \le m - 2$, the spinor bundle $\cS$ on $\OGr(s,V)$ has the following property
\begin{equation*}
\cS^\vee \cong \cS_{(-1)^{m-s}}(1).
\qquad\text{and}\qquad 
\cS_-^\vee \cong \cS_{(-1)^{m-s-1}}(1).
\end{equation*}
In particular, $\det(\cS_\pm) \cong \cO(-2^{m-s-2})$.
\end{lemma}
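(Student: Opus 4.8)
The plan is to recognize $\cS$ as a \emph{relative half-spinor bundle} and to deduce the duality fiberwise from the representation-theoretic statement~\eqref{eq:dual-s} applied to the smaller orthogonal group of type $\rD_{m-s}$. Write $\cU$ for the tautological rank-$s$ subbundle on $\OGr(s,V)$; since $\cU$ is isotropic we have $\cU \subset \cU^\perp$, and $\cW := \cU^\perp/\cU$ is a bundle of rank $2(m-s)$ carrying a fiberwise nondegenerate quadratic form, so its structure group reduces to $\Spin(2(m-s))$, of type $\rD_{m-s}$. The fiber of $\pr_s \colon \OFl_+(s,m;V) \to \OGr(s,V)$ over a point $[U]$ is the component of the maximal orthogonal Grassmannian of $\cW$ consisting of those maximal isotropic $\tilde U \supset U$ lying in $\OGr_+(m,V)$; moreover $\pr_m^*(\cS_1^\vee)$ restricts on this fiber to the polarization whose space of sections is a half-spinor representation of $\Spin(\cW)$. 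Hence, by the very definition of the spinor bundle, $\cS^\vee = \pr_{s*}\pr_m^*(\cS_1^\vee)$ is the relative half-spinor bundle of $\cW$, and likewise $\cS_-$ is the relative half-spinor bundle attached to the other component $\OGr_-(m,V)$.

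First I would establish the isomorphism up to a twist. Since $\Pic(\OGr(s,V)) \cong \ZZ$ is generated by $\cO(1)$ for $s \le m-2$, and since the fiberwise duality of half-spinor representations of type $\rD_{m-s}$ reads $\SS^\vee \cong \SS_{(-1)^{m-s}}$ (this is~\eqref{eq:dual-s} with $m$ replaced by $m-s$), the relative half-spinor bundles must satisfy $\cS^\vee \cong \cS_{(-1)^{m-s}} \otimes \cO(t)$ for a single integer $t$; the parity $(-1)^{m-s}$ records which of the two half-spinor representations is dual to which, exactly as for the honest half-spinor representations in~\eqref{eq:dual-s}. The second asserted isomorphism will then follow from the first, whatever the value of $t$, by applying the outer automorphism of $\Spin(V)$ that interchanges the two components $\OGr_\pm(m,V)$: it swaps $\cS_+ \leftrightarrow \cS_-$, turns $(-1)^{m-s}$ into $(-1)^{m-s-1}$, and fixes both $\OGr(s,V)$ and its polarization $\cO(1)$ (the node $s$ being fixed by the diagram involution).

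The delicate point, and the main obstacle, is to show that the twist is exactly $t = 1$. I would pin it down by exhibiting the natural Clifford pairing: relative Clifford multiplication on $\cW$ induces a bilinear pairing $\cS \otimes \cS_{(-1)^{m-s}} \to \cO(1)$, and it suffices to check that this pairing is fiberwise nondegenerate, which reduces to the nondegeneracy of the spinor metric for $\Spin(2(m-s))$ and identifies $\cS^\vee$ with $\cS_{(-1)^{m-s}}(1)$. Alternatively, one may restrict the whole situation to the smooth quadric $\OGr(1,V) = Q_V$ along a chain of isotropic flag varieties and invoke the known value of the twist there, namely the case recorded in~\cite[Theorem~2.8]{ottaviani}, the compatibility of polarizations guaranteeing that the value $\cO(1)$ is propagated back to $\OGr(s,V)$.

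Finally, the determinant formula is a formal consequence of the main isomorphism. Taking top exterior powers in $\cS^\vee \cong \cS_{(-1)^{m-s}}(1)$ and using that $\cS$ has rank $2^{m-s-1}$ gives $\det(\cS)^{-1} \cong \det(\cS_{(-1)^{m-s}}) \otimes \cO(2^{m-s-1})$. Since the outer automorphism above fixes $\cO(1)$ and hence acts trivially on $\Pic(\OGr(s,V)) \cong \ZZ$, the two determinants coincide, $\det(\cS_+) \cong \det(\cS_-)$; writing this common line bundle as $\cO(a)$ we obtain $-a = a + 2^{m-s-1}$, whence $a = -2^{m-s-2}$ and $\det(\cS_\pm) \cong \cO(-2^{m-s-2})$.
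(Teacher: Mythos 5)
The paper does not actually prove this lemma: it is stated as a citation to Ottaviani's Theorem~2.8 and to \cite[Corollary~6.5, Proposition~6.6]{k2008exceptional}, so there is no in-paper argument to compare against. Judged on its own merits, your reconstruction is essentially correct and follows the spirit of the cited sources: identifying $\cS$ as the relative half-spinor bundle of $\cW=\cU^\perp/\cU$ via base change along $\pr_s$, using irreducibility of the homogeneous bundles and $\Pic(\OGr(s,V))\cong\ZZ$ to reduce to a single twist $t$, transporting the first isomorphism to the second by the diagram involution (which fixes the node $s\le m-2$ and hence $\cO(1)$), and deducing the determinant formula formally — including the needed equality $\det\cS_+\cong\det\cS_-$ — are all sound. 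The only soft spot is your first route for pinning down $t=1$: asserting that relative Clifford multiplication takes values in $\cO(1)$ is not free, since the absolute spinor pairing is valued in the trivial representation and the line bundle appearing in the relative version is exactly the character of the centre of the Levi $\GL(U_s)\times\Spin(\cW)$ acting on $\cS\otimes\cS_{(-1)^{m-s}}$ — which is essentially the quantity you are trying to compute, so this route is circular as stated. Your alternative route does close the gap: restricting to a fibre of $\OFl(s-1,s;V)\to\OGr(s-1,V)$, which is a $2(m-s)$-dimensional quadric on which $\cS$ restricts to a spinor bundle and $\cO_{\OGr(s,V)}(1)$ restricts to the generator $\cO_Q(1)$, the value $t=1$ is forced by Ottaviani's theorem on the quadric, and injectivity of the restriction on Picard groups propagates it back. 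I would make that second route the actual proof and drop, or properly justify, the Clifford-pairing claim.
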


Furthermore, we have an identification $\SS_\pm^\vee = H^0(\OGr(s,V),\cS_\pm^\vee)$.
It gives canonical evaluation morphisms $\SS_\pm^\vee \otimes \cO_{\OGr(s,V)} \to \cS_\pm^\vee$, 
which are surjective by homogeneity of $\OGr(s,V)$, 
and by duality it gives fiberwise monomorphisms $\cS_\pm \hookrightarrow \SS_\pm \otimes \cO_{\OGr(s,V)}$.
For $s = m$ this defines embeddings 
\begin{equation}
\label{eq:spinor-embedding}
\OGr_\pm(m,V) \to \P(\SS_\pm).
\end{equation}
In cases $m \in \{1,2,3\}$ it induces the first three of isomorphisms~\eqref{eq:small-ogr},
and for $m = 4$ it induces the embedding $\OGr_\pm(4,8) \hookrightarrow \P^7$ as a quadric, 
thus giving the last of isomorphisms~\eqref{eq:small-ogr}.
In general, \eqref{eq:spinor-embedding} is called the {\sf spinor embedding}.
By~\eqref{eq:detu-s} the Pl\"ucker embedding $\OGr_\pm(m,V) \hookrightarrow \Gr(m,V) \hookrightarrow \P(\bw{m}V)$
is a composition of the spinor embedding with the double Veronese embedding.

In the case $s = 1$ (so that $\OGr(1,V) = Q$) the embeddings $\cS_\pm \to \SS_\pm \otimes \cO_Q$ extend to exact sequences.

\begin{lemma}[{\cite[Theorem~2.8]{ottaviani}}]
\label{lemma:spinor-sequence}
If\/ $Q \subset \P(V)$ is an even-dimensional quadric then there are canonical exact sequences
\begin{equation*}
0 \to \cS \to \SS \otimes \cO_Q \to \cS_-(1) \to 0
\qquad\text{and}\qquad 
0 \to \cS_- \to \SS_- \otimes \cO_Q \to \cS(1) \to 0
\end{equation*}
\end{lemma}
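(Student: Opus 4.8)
The plan is to build both exact sequences out of Clifford multiplication on the spinor module and then to recognize the resulting sub- and quotient bundles as spinor bundles by equivariance. Recall that $\SS = \SS_+$ and $\SS_-$ are the two irreducible modules over the even Clifford algebra $\Cl_0(V)$, that the odd part of $\Cl(V)$ interchanges them, and that the inclusion $V \hookrightarrow \Cl(V)$ therefore gives canonical $\Spin(V)$-equivariant Clifford multiplications $c\colon V \otimes \SS \to \SS_-$ and $c'\colon V \otimes \SS_- \to \SS$. Since $v \cdot v = \bq_V(v)$ in $\Cl(V)$, for an isotropic vector $v$ --- that is, a point $[v] \in Q = \OGr(1,V)$ --- the operator $v\cdot$ squares to zero on the full module $\SS \oplus \SS_-$, so that $c'(v)\circ c(v) = 0$ and $c(v)\circ c'(v) = 0$.

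First I would carry out the pointwise linear algebra. Fix a nonzero isotropic $v$ and choose an isotropic $w$ normalized so that $vw + wv = 2$ in $\Cl(V)$. Using $v^2 = w^2 = 0$ one checks that $p := \tfrac12 vw$ and $q := \tfrac12 wv$ are complementary idempotents ($p+q=1$, $p^2=p$, $q^2=q$), that $vp = 0$ and $pv = vq = v$, and that $v\cdot$ is injective on $q(\SS\oplus\SS_-)$. It follows that on the full module $\Ker(v\cdot) = \Ima(v\cdot) = p(\SS\oplus\SS_-)$; intersecting with the two half-spinor summands, which $p$ preserves since it is even, gives
\[
\Ker\big(c(v)\colon \SS \to \SS_-\big) = \Ima\big(c'(v)\colon \SS_- \to \SS\big) = p\SS,
\]
\[
\Ima\big(c(v)\colon \SS \to \SS_-\big) = \Ker\big(c'(v)\colon \SS_- \to \SS\big) = p\SS_-,
\]
and a symmetry count shows each of these spaces has dimension $2^{m-2}$, so the two-periodic complex is exact at every point of $Q$.

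Next I would globalize. Composing the tautological inclusion $\cO_Q(-1) \hookrightarrow V \otimes \cO_Q$ with $c$ and twisting by $\cO_Q(1)$ produces a morphism $\alpha\colon \SS \otimes \cO_Q \to \SS_- \otimes \cO_Q(1)$ whose fiber at $[v]$ is $c(v)$ up to scale. By the pointwise computation $\alpha$ has constant rank $2^{m-2}$, so $\Ker\alpha$ and $\Ima\alpha$ are subbundles and we obtain a short exact sequence
\[
0 \to \Ker\alpha \to \SS \otimes \cO_Q \to \Ima\alpha \to 0, \qquad \Ima\alpha \subset \SS_- \otimes \cO_Q(1).
\]
All that then remains is to identify $\Ker\alpha \cong \cS$ and $\Ima\alpha \cong \cS_-(1)$.

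This identification is the main obstacle, and I would settle it by equivariance. On $Q = \Spin(V)/\bP_1$ every $\Spin(V)$-equivariant subbundle of $\SS \otimes \cO_Q$ is determined by its fiber at the base point $\ell_0 = \Bbbk v_0$ as a representation of the parabolic $\bP_1$. The fiber of $\Ker\alpha$ is $p\SS$, which under the Levi factor $\GL(1) \times \Spin(v_0^\perp/v_0)$ is, up to a character, a half-spinor representation of the $(2m-2)$-dimensional quadratic space $V' = v_0^\perp/v_0$; by the branching rule $\SS|_{\Spin(V')} \cong \SS_+ \oplus \SS_-$ of $V'$ exactly one half-spinor occurs, and this is precisely the $\bP_1$-representation defining $\cS = (\pr_{1*}\pr_m^*\cS_1^\vee)^\vee$. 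Hence $\Ker\alpha \cong \cS$, and the same analysis applied to the equivariant subbundle $\Ima\alpha \subset \SS_- \otimes \cO_Q(1)$, whose fiber is the half-spinor $p\SS_-$, gives $\Ima\alpha \cong \cS_-(1)$. As a consistency check the sequence then forces $\det\Ker\alpha \cong \cO_Q(-2^{m-3})$, in agreement with $\det\cS_\pm \cong \cO(-2^{m-3})$ from Lemma~\ref{lemma:cs-dual}; the delicate bookkeeping is exactly the matching of the $\cO_Q(1)$-twist and the choice of half-spinor, which this determinant comparison pins down. This yields the first sequence, and the second follows verbatim after exchanging the roles of $\SS$ and $\SS_-$ (equivalently, applying the outer automorphism of the Dynkin diagram $\rD_m$).
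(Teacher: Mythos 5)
The paper does not actually prove this lemma: it is quoted verbatim from Ottaviani's Theorem~2.8, so there is no internal argument to compare against. Your Clifford-multiplication proof is the standard one and is sound in structure: the idempotents $p=\tfrac12 vw$, $q=\tfrac12 wv$ do satisfy $vp=0$, $pv=vq=v$, giving $\Ker(v\cdot)=\Ima(v\cdot)=p(\SS\oplus\SS_-)$, the rank is constant, and the kernel and image of $\alpha$ are equivariant subbundles whose fibers are irreducible over the Levi of $\bP_1$ (the unipotent radical then acts trivially for free, since its fixed subspace is a nonzero $\bP_1$-submodule). Your ``symmetry count'' for $\dim p\SS=2^{m-2}$ is terse but repairable: the relations coming from $v$ and $w$ alone only give $\dim p\SS=\dim q\SS_-$ and $\dim q\SS=\dim p\SS_-$; you need an \emph{even} Clifford element interchanging $v$ and $w$ (e.g.\ $(v-w)u$ with $u$ anisotropic in $\{v,w\}^\perp$), which conjugates $p$ into $q$ while preserving each half-spinor summand, or simply the Fock model $\SS\oplus\SS_-\cong\bw{\udot}{W}$.

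The one step that does not do what you claim of it is the final determinant comparison. By Lemma~\ref{lemma:cs-dual} one has $\det(\cS)\cong\det(\cS_-)\cong\cO(-2^{m-3})$, so comparing determinants pins down the $\cO_Q(1)$-twist of $\Ker\alpha$ but is incapable of deciding whether the kernel is $\cS$ or $\cS_-$ --- exactly the piece of bookkeeping you defer to it. That choice has to come out of the equivariant identification itself: in the model $\SS\oplus\SS_-=\bw{\udot}{W}$ with $v=e_1\in W$ and $[W]\in\OGr_+(m,V)$, the pure spinor line $\bw{m}W$ lies in $\Ker(c(v))=e_1\wedge\bw{\udot}{(W/v)}$, and since $W/v$ belongs to $\OGr_+(m-1,v^\perp/v)$ by the paper's convention~\eqref{eq:qv-qvminus}, the fiber of $\Ker\alpha$ is the ``$+$'' half-spinor representation of $v^\perp/v$, i.e.\ precisely the fiber of $\cS=(\pr_{1*}\pr_m^*\cS_1^\vee)^\vee$. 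With that replacement for the determinant step, the proof is complete; the second sequence follows, as you say, by the outer automorphism.
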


In the case $s > 1$ the situation is more complicated.
Instead of a short exact sequence, one extends the spinor subbundle to a filtration, 
whose factors involve both the original spinor bundle and the tautological vector bundle $\cU_s$ on the isotropic Grassmannian.

\begin{lemma}[cf.~{\cite[Proposition~6.3]{k2008exceptional}}]
\label{lemma:filtration-ss}
Let $V$ be even-dimensional, $\dim V = 2m$. 
If $s \le m - 2$, the trivial vector bundle $\SS \otimes \cO_{\OGr(s,V)}$ has a natural filtration whose factors are isomorphic to 
\begin{align*}
&\cS_{(-1)^i} \otimes \bw{i}\cU_s^\vee, &\qquad& 0 \le i \le s.\\
\intertext{Similarly, the trivial vector bundles $\SS \otimes \cO_{\OGr_+(m,V)}$ and $\SS_- \otimes \cO_{\OGr_+(m,V)}$ 
have natural filtrations whose factors are isomorphic to}
&\cS \otimes \bw{2i}\cU_m^\vee, 	&\qquad& 0 \le 2i \le m,	&& \text{for $\SS \otimes \cO_{\OGr_+(m,V)}$}, \\
&\cS \otimes \bw{2i + 1}\cU_m^\vee, 	&\qquad& 0 \le 2i + 1 \le m,	&& \text{for $\SS_- \otimes \cO_{\OGr_+(m,V)}$}.
\end{align*}
\end{lemma}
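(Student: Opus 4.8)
The plan is to reduce the statement to a branching computation for the half-spinor representation and then to globalize by homogeneity. Since $\OGr(s,V) = \Spin(V)/\bP_s$ and the trivial bundle $\SS \otimes \cO_{\OGr(s,V)}$ is the $\Spin(V)$-equivariant bundle associated with the $\Spin(V)$-representation $\SS$ restricted to the parabolic $\bP_s$, any filtration of $\SS$ by $\bP_s$-submodules induces a filtration of the bundle by subbundles, with factors the bundles associated with the successive quotients. Thus it suffices to produce the required filtration at the level of $\bP_s$ and to identify its graded pieces, as representations of the Levi quotient $\GL(\cU_s) \times \Spin(\cU_s^\perp/\cU_s)$, with the modules underlying $\bw{i}\cU_s^\vee \otimes \cS_{(-1)^i}$.

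First I would invoke the Clifford module structure. The half-spinor space $\SS$ is a module over the Clifford algebra $\Cl(V)$, and the tautological isotropic subbundle $\cU_s \subset V \otimes \cO_{\OGr(s,V)}$ acts fiberwise on $\SS \otimes \cO_{\OGr(s,V)}$ by Clifford multiplication. Because $\cU_s$ is $\bq_V$-isotropic, these operators anticommute and square to zero, so the action factors through the exterior algebra $\bw{\bullet}\cU_s$. The filtration I would use is then the one by the images of iterated Clifford multiplication by $\cU_s$: over a point $[U]$, after choosing a maximal isotropic $W \supset U$ with $V = W \oplus W^\vee$ and a splitting $W = U \oplus W'$, this is exactly the filtration of $\bw{\bullet}W$ by degree in $U$ under the identification $\SS \cong \bw{\bullet}W$. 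This presentation makes the filtration manifestly canonical and $\Spin(V)$-equivariant, so it globalizes.

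The key computation is the associated graded. Under $\bw{\bullet}W \cong \bw{\bullet}U \otimes \bw{\bullet}W'$ the $i$-th graded piece is $\bw{i}U \otimes \bw{\bullet}W'$, and the fixed parity that cuts out the half-spinor $\SS$ forces the $\bw{\bullet}W'$-factor to be the half-spinor of $U^\perp/U \cong W' \oplus W'^\vee$ of parity congruent to $i$ modulo $2$; this is the origin of the alternating subscript $(-1)^i$. Globally the $\bw{\bullet}W'$-factor becomes the relative spinor bundle $\cS_{(-1)^i}$ of $\cU_s^\perp/\cU_s$ and the $\bw{i}U$-factor becomes an exterior power of $\cU_s$. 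To match the paper's normalization --- where $\SS = (\rV_{\Spin(V)}^{\omega_m})^\vee$ is a dual representation and the spinor bundles are the anti-ample generators --- one must carry this naive answer through dualization together with the standard half-determinant twist relating the $\GL(\cU_s)$-restriction of a half-spinor to $\bw{\bullet}\cU_s$; feeding the result through the duality isomorphism of Lemma~\ref{lemma:cs-dual} and the determinant identity $\det\cU_m^\vee \cong \cO(2)$ of~\eqref{eq:detu-s} converts $\bw{i}\cU_s$ into $\bw{i}\cU_s^\vee$ and pins down the twist. I expect this duality-and-twist bookkeeping, rather than the existence of the filtration, to be the crux of the argument, since the parity, the passage $\cU_s \rightsquigarrow \cU_s^\vee$, and the overall normalizing line bundle must all be tracked simultaneously.

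Finally, the maximal case $s = m$ is the degenerate specialization $U = W$, $\cU_s = \cU_m$, for which $U^\perp/U = 0$ and the relative half-spinor is the trivial line $\Bbbk$. The degree filtration of $\SS \otimes \cO_{\OGr_+(m,V)}$ then has graded pieces the even exterior powers $\bw{2i}\cU_m$ (since $\SS$ is the even half-spinor) and that of $\SS_- \otimes \cO_{\OGr_+(m,V)}$ the odd ones $\bw{2i+1}\cU_m$; the same half-determinant twist together with~\eqref{eq:detu-s} and $\cS = \cO(-1)$ rewrites these as $\cS \otimes \bw{2i}\cU_m^\vee$ and $\cS \otimes \bw{2i+1}\cU_m^\vee$. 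As a consistency check one can verify the rank counts $\sum_i \binom{s}{i} 2^{m-s-1} = 2^{m-1} = \dim\SS$ in the first case and $\sum_i \binom{m}{2i} = \sum_i \binom{m}{2i+1} = 2^{m-1}$ in the second.
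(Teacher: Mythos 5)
The paper does not actually prove Lemma~\ref{lemma:filtration-ss}: it imports it wholesale from \cite[Proposition~6.3]{k2008exceptional}, so there is no internal proof to compare against. Your argument is the standard one and, as far as I can tell, essentially the proof given in that reference: pass from equivariant bundles on $\OGr(s,V)=\Spin(V)/\bP_s$ to $\bP_s$-module filtrations, filter the spinor module by iterated Clifford multiplication with the isotropic subspace $U_s$ (which factors through $\bw{\bullet}{U_s}$ since the operators anticommute and square to zero), and identify the graded pieces by splitting a maximal isotropic $W = U_s \oplus W'$ so that $\bw{\bullet}{W} \cong \bw{\bullet}{U_s} \otimes \bw{\bullet}{W'}$, with the fixed parity of the half-spinor producing the alternation $\cS_{(-1)^i}$. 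You also correctly locate the only delicate point, namely that the paper's conventions ($\SS = (\rV_{\Spin(V)}^{\omega_m})^\vee$, spinor bundles as anti-ample generators) force a dualization and a half-determinant twist that turn the naive factors $\bw{i}{U_s}$ into $\bw{i}{\cU_s^\vee}$; one can sanity-check this against Remark~\ref{remark:filtrations} and the case $s=m$, where the factors $\cS \otimes \bw{2i}{\cU_m^\vee}$ come out of the weight computation exactly as you describe. The one place to be slightly more careful is that Clifford multiplication by a single vector is an odd operator interchanging $\SS$ and $\SS_-$, so the subspaces $\bw{i}{U_s}\cdot\SS$ do not lie in $\SS$ for odd $i$; the filtration should be taken as the degree filtration of the full spinor module intersected with the half-spinor summand (equivalently, the image of $\bw{i}{U_s}\otimes\SS_{(-1)^i}$). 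This is a cosmetic fix, not a gap, and your rank count confirms the answer.
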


\begin{remark}
\label{remark:filtrations}
Choosing a point $[U_m] \in \OGr_+(m,V)$ and trivializing the spinor line bundle $\cS$ at this point, we obtain from the lemma 
filtrations on the vector spaces $\SS$ and $\SS_-$ with the factors being $\bw{2i}U_m^\vee$ for the first and $\bw{2i+1}U_m^\vee$ for the second.
Similarly, for a point $[U_{m,-}] \in \OGr_-(m,V)$ we have filtrations on $\SS$ and $\SS_-$ with the factors being~$\bw{2i+1}U_{m,-}^\vee$ 
for the first and $\bw{2i}U_{m,-}^\vee$ for the second.

We note that these filtrations are compatible with the duality between $\SS$ and $\SS^\vee$.
In particular, assuming that $m$ is odd, the first step of the filtration $\Bbbk = \bw{0}U_{m,-}^\vee$ gives a point of $\SS^\vee = \SS_-$ 
(this is the point $[U_{m,-}] \in \OGr_-(m,V)$ in the half-spinor embedding $\OGr_-(m,V) \hookrightarrow \P(\SS_-) = \P(\SS^\vee)$),
and the corresponding hyperplane in $\P(\SS)$ corresponds to the projection $\SS \to \bw{m}U_{m,-}^\vee$ to the last factor
of the second filtration.
\end{remark}

\subsection{Blowup lemma}

The following result of Ein and Shepherd-Barron \cite{ein1989} will be used several times in the paper
to prove that a particular birational morphism is a smooth blowup.
For a projective morphism $X \to Y$ we denote by $\rho(X/Y)$ the relative Picard rank.

\begin{lemma}
\label{lemma:blowup}
Assume that there is a commutative diagram,
\begin{equation*}
\xymatrix@C=5em{
E
\ar@{^{(}->}[r]^i \ar[d]_p & 
X \ar[d]^f 
\\
Z \ar@{^{(}->}[r]^j & 
Y
}
\end{equation*}
where $X$, $Y$, $Z$ are smooth varieties, $\codim_Y(Z) \ge 2$, $E$ is an irreducible divisor,
$f$ is a projective birational morphism, $p$ is surjective, and $i$ and $j$ are closed embeddings.
If $\rho(X/Y) = 1$ 
then $f$ is the blowup of\/~$Y$ with center in~$Z$, $X \cong \Bl_Z(Y)$, and $E$ is the exceptional divisor of $f$.
\end{lemma}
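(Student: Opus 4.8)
The plan is to verify the universal property of the blowup, after first pinning down the basic geometry of $f$. Since $p$ is surjective and $f\circ i=j\circ p$, we have $f(E)=Z$; as $\codim_Y(Z)\ge 2$ while $E$ is a divisor, $f$ contracts $E$, so $E$ lies in the exceptional locus $\mathrm{Exc}(f)$. Because $X$ and $Y$ are smooth, $\mathrm{Exc}(f)$ is pure of codimension one (purity of the exceptional locus for a birational morphism to a locally factorial variety), and its prime components are numerically independent in $N^1(X/Y)$; the hypothesis $\rho(X/Y)=1$ therefore forces $E$ to be the unique exceptional prime divisor. Combined with connectedness of the fibres of $f$ (Zariski's main theorem, $Y$ being normal) this yields $f^{-1}(Z)_{\mathrm{red}}=E$ and an isomorphism $f\colon X\setminus E \xrightarrow{\ \sim\ } Y\setminus Z$.

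Next I would use $\rho(X/Y)=1$ to put $f$ in relative-$\Proj$ form. For a curve $C\subset E$ contracted by $f$ one has $E\cdot C<0$ by negativity of contraction, so in $N^1(X/Y)\cong\RR\cdot[E]$ the class of $-E$ is $f$-ample; hence $\cO_X(-E)$ is $f$-ample and $X\cong \Proj_Y\bigl(\bigoplus_{d\ge 0} f_*\cO_X(-dE)\bigr)$. It remains to match this with $\Bl_Z(Y)=\Proj_Y\bigl(\bigoplus_{d\ge0}\cI_Z^d\bigr)$. The inclusions $\cI_Z^d\cdot\cO_X\subseteq\cO_X(-dE)$ push forward to a graded $\cO_Y$-algebra map $\bigoplus_d \cI_Z^d \to \bigoplus_d f_*\cO_X(-dE)$ which is an isomorphism over $Y\setminus Z$. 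In degree one it is an isomorphism: writing $v_E$ for the valuation of $E$, any $g\in \cJ_1:=f_*\cO_X(-E)$ satisfies $v_E(f^*g)\ge 1$, so $f^*g$ vanishes on $E=f^{-1}(Z)$, whence $g$ vanishes on $Z$ and lies in $\cI_Z=\sqrt{\cI_Z}$; thus $\cJ_1=\cI_Z$. Since $Z$ and $Y$ are smooth, $\Bl_Z(Y)$ is smooth, hence normal, so $\bigoplus_d\cI_Z^d$ is integrally closed; identifying $\bigoplus_d f_*\cO_X(-dE)$ with the integral closure of the Rees algebra then makes the induced $Y$-morphism a normalization map between normal varieties, i.e. an isomorphism $X\cong\Bl_Z(Y)$ carrying $E$ to $\tE=\P_Z(N_{Z/Y})$.

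The technical heart, and what I expect to be the main obstacle, is exactly the identification of the higher-degree pieces, equivalently the claim that the inverse image ideal $\cI_Z\cdot\cO_X$ is invertible and equals $\cO_X(-E)$ (so that $v_E=\mathrm{ord}_Z$ and $\cJ_d=\cI_Z^{(d)}=\cI_Z^d$, with no non-reduced or higher-order behaviour along $E$). I would isolate this by localizing at the generic point $\eta$ of $Z$: there $\cO_{Y,\eta}$ is a regular local ring of dimension $\codim_Y(Z)$, and $f$ restricts to a proper birational morphism from a regular scheme with a single irreducible exceptional divisor and relative Picard rank one, which one checks must be the blowup of the maximal ideal $\fm_\eta$, giving $v_E=\mathrm{ord}_Z$. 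Finally, once a $Y$-morphism $h\colon X\to\Bl_Z(Y)$ is produced (from invertibility via the universal property, or from the $\Proj$ identification), it is an isomorphism: $h$ is projective and birational between smooth varieties, any $h$-exceptional divisor is $f$-exceptional and hence equals $E$, while $h^{-1}(\tE)=f^{-1}(Z)=E$ together with surjectivity of $h$ shows $h$ cannot contract $E$; thus $h$ has no exceptional divisor and is an isomorphism. In this argument the smoothness of $Z$ guarantees $\Bl_Z(Y)$ is smooth, the codimension hypothesis makes $E$ genuinely exceptional, and $\rho(X/Y)=1$ is used both to single out $E$ and to force the relative ampleness of $-E$.
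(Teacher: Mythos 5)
Your first paragraph is correct and is essentially the paper's own argument: purity of the exceptional locus plus $\rho(X/Y)=1$ identifies $E$ with the whole exceptional divisor, and Zariski connectedness gives $f^{-1}(Z)_{\mathrm{red}}=E$ and the isomorphism off $E$. At that point the paper simply invokes Ein--Shepherd-Barron \cite[Theorem~1.1]{ein1989}, which is stated for exactly this situation ($Z$ smooth, $E=f^{-1}(Z)$ set-theoretically an irreducible divisor) and whose conclusion is the lemma. You instead try to prove that theorem from scratch, and this is where the gap is.

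The unproved step is the one you yourself flag as the ``technical heart'': that $v_E=\mathrm{ord}_Z$, equivalently that $\cI_Z\cdot\cO_X=\cO_X(-E)$ (rather than $\cO_X(-kE)\cdot\fa$ with $k\ge 2$ or with an embedded component), equivalently that the localized morphism $X\times_Y\Spec\cO_{Y,\eta}\to\Spec\cO_{Y,\eta}$ is the blowup of the maximal ideal. Your reduction to the generic point of $Z$ is legitimate (once $v_E=\mathrm{ord}_Z$ is known, $f_*\cO_X(-dE)=\cI_Z^{(d)}=\cI_Z^d$ follows for all $d$ since $Z$ is smooth in smooth $Y$, and your $\Proj$ comparison and the final isomorphism argument then go through), and your degree-one computation $f_*\cO_X(-E)=\cI_Z$ is fine. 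But the local claim is dispatched with ``one checks,'' and it cannot be: it is precisely the content of the cited theorem. It is classical only when $\codim_Y(Z)=2$, where factorization of birational morphisms of regular $2$-dimensional schemes applies; for $\codim_Y(Z)\ge 3$ (the case actually used in the paper, e.g.\ codimension $5$ in Proposition~\ref{proposition:blowup-x}) there is no elementary reason why a projective birational morphism from a regular scheme to a regular local ring, with a single irreducible exceptional divisor of relative Picard rank one over the closed point, must be the blowup of the maximal ideal --- knowing $J_1=\fm$ does not rule out $v_E(\fm)\ge 2$ or $J_d\supsetneq\fm^d$ for $d\ge 2$, and Izumi-type comparisons between $v_E$ and $\mathrm{ord}_Z$ only give a linear bound, not equality. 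So as written the proposal restates the lemma (in localized form) at its crucial point rather than proving it; to close the gap you should either cite \cite[Theorem~1.1]{ein1989} as the paper does, or supply a genuine proof of the identification $\cI_Z\cdot\cO_X=\cO_X(-E)$.
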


\begin{proof}
%
Since $X$ and $Y$ are smooth, the exceptional locus of $f$ is a divisor, 
and since $\rho(X/Y) = 1$, the exceptional divisor is irreducible.
Since it contains $E$, we conclude that $E$ is the exceptional locus of~$f$.
By Zariski connectedness Theorem $f^{-1}(Z)$ is in the exceptional locus of $f$, 
hence $f^{-1}(Z) = E$ set-theoretically, and $f \colon X \setminus E \to Y \setminus Z$ an isomorphism.
%
Thus $Z$ is the base locus of $f^{-1}$.
Since $E$ is the set-theoretic preimage of $Z$, \cite[Theorem~1.1]{ein1989} proves 
that $f$ is the blowup of $Z$ and $E$ is its exceptional divisor.
\end{proof}

The following statement is also well known, but is quite useful.

\begin{lemma}
\label{lemma:smoothness-criterion}
If $f \colon X \to Y$ is a projectivization of a vector bundle then $X$ is smooth if and only if $Y$ is smooth.
If $f \colon X \to Y$ is a blowup with center in $Z \subset Y$, where $Z$ is a locally complete intersection in~$Y$,
then $X$ is smooth if and only if both $Y$ and $Z$ is smooth.
\end{lemma}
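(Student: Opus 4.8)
The plan is to reduce both statements to the \emph{flat local criterion for regularity}: for a flat local homomorphism $(A,\mathfrak{m}) \to (B,\mathfrak{n})$ of Noetherian local rings, $B$ is regular if and only if $A$ and the fibre $B/\mathfrak{m}B$ are both regular. (Over our algebraically closed field a variety is smooth exactly when it is regular, so I use the two words interchangeably.) For the projectivization, write $X = \P_Y(\cV)$. Then $f$ is a smooth surjective morphism all of whose fibres are projective spaces, so for every $x \in X$ with $y = f(x)$ the induced map $\cO_{Y,y} \to \cO_{X,x}$ is flat and local, and its fibre is a localization of a projective space over the residue field, hence regular. The criterion then gives that $\cO_{X,x}$ is regular if and only if $\cO_{Y,y}$ is; since $f$ is surjective every $y$ occurs, and the first claim follows.

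For the blowup I would argue locally. Since $Z$ is a local complete intersection of codimension $c$, near a point $y \in Z$ the ideal $I_Z$ is generated by a regular sequence $f_1, \dots, f_c$, and $X = \Bl_Z(Y)$ is the subscheme of $Y \times \P^{c-1}$ cut out by the relations $f_i u_j = f_j u_i$, with $E = f^{-1}(Z)$ and $X \setminus E \cong Y \setminus Z$. In the chart $u_1 \ne 0$, with affine coordinates $v_j = u_j/u_1$, the scheme $X$ is the complete intersection $\{\, f_j - f_1 v_j = 0 : 2 \le j \le c \,\}$ inside $Y \times \mathbb{A}^{c-1}$. If $Y$ and $Z$ are smooth, then off $E$ smoothness is clear, while along $E$ the differentials $df_1|_y, \dots, df_c|_y$ are linearly independent in $T_y^*Y$, and a short Jacobian computation shows that $X$ is smooth there as well --- this is the classical fact that the blowup of a smooth variety along a smooth centre is smooth.

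Conversely, suppose $X$ is smooth. Off $E$ this gives smoothness of $Y \setminus Z$ directly. Along $E$, at a point $x$ over $y$ we have $\cO_{X,x} = B/(g_2, \dots, g_c)$ with $B = \cO_{Y \times \mathbb{A}^{c-1},\, x}$ and $g_j = f_j - f_1 v_j$, and $\dim \cO_{X,x} = \dim B - (c-1)$; since the $c-1$ elements $g_j$ cut the dimension of $B$ down by exactly $c-1$, regularity of the quotient $\cO_{X,x}$ forces $B$ itself to be regular (an embedding-dimension count), whence $\cO_{Y,y}$ is regular and $Y$ is smooth. Using now that $Y$ is smooth, I compute the Jacobian of $(g_2,\dots,g_c)$ at a point $[u] \in E_y \cong \P^{c-1}$: because $f_1(y)=0$ the $v$-derivatives vanish, and the $Y$-derivatives are the covectors $df_j|_y - v_j\, df_1|_y$. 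Encoding $df_i|_y$ by the linear map $\Phi \colon \kappa(y)^{\oplus c} \to T_y^*Y$, $e_i \mapsto df_i|_y$, smoothness of $X$ at $[u]$ becomes equivalent to the injectivity of $\Phi$ on the hyperplane $u^{\perp}$; demanding this for every $[u]$ forces $\Phi$ to be injective, i.e. $Z$ to be smooth at $y$.

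The hard part is precisely this last step, and it is where the codimension enters: producing, from a nonzero vector of $\ker\Phi$, a point $[u]$ with $\ker\Phi \cap u^{\perp} \ne 0$ requires $c \ge 2$. Indeed, for a codimension-one (Cartier) centre the blowup is an isomorphism and $X = Y$ may be smooth while $Z$ is singular, so the ``only if'' direction is to be understood for genuine centres with $\codim_Y(Z) \ge 2$, as is the case for every blowup considered in this paper (cf. Lemma~\ref{lemma:blowup}).
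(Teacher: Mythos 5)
Your proof is correct and follows essentially the same route as the paper: realize the blowup locally as a complete intersection in $Y \times \P^{c-1}$, deduce regularity of the ambient (hence of $Y$) from regularity of the complete-intersection quotient, and then compare Jacobians to get smoothness of $Z$ — you merely spell out the Jacobian step that the paper leaves as ``easily deduce,'' and replace ``evident'' by the flat local criterion for the projective-bundle case. Your observation that the converse for the blowup genuinely requires $\codim_Y(Z) \ge 2$ (a Cartier centre gives $X = Y$ and says nothing about $Z$) is a correct caveat that the paper's statement leaves implicit; it is harmless here since every blowup in the paper has centre of codimension at least $2$.
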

\begin{proof}
The first part and one direction of the second part are evident. 
So, assume that $X = \Bl_Z(Y)$ is smooth. 
Clearly $Y$ is smooth away of $Z$.
Set $c = \codim_Y(Z)$ and let $f_1,\dots,f_c$ be local equations of~$Z$ in~$Y$.
Then $X \subset Y \times \P^{c-1}$ is given by the equations $u_if_j - u_jf_i = 0$, 
where $(u_1:\dots:u_c)$ are the homogeneous coordinates on~$\P^{c-1}$.
In the chart $u_c \ne 0$ (in this chart we can assume $u_c = 1$ and take $u_1,\dots,u_{c-1}$ to be the coordinates) 
the equations can be rewritten as $f_i - u_if_c = 0$, $i = 1, \dots, c - 1$.
It follows that~$X$ is a locally complete intersection in~$Y \times \P^{c-1}$.
Since $X$ is smooth, we conclude that~$Y \times \P^{c-1}$ is smooth along $X$.
In particular, it is smooth along the exceptional divisor of the blowup, i.e., along~$Z \times \P^{c-1}$.
Thus $Y$ is smooth along $Z$, hence is smooth everywhere.
Finally, by comparing the Jacobian matrices corresponding to the equations of $X$ in $Y \times \P^{c-1}$ and $Z$ in $Y$,
we easily deduce smoothness of $Z$ from smoothness of $X$.
\end{proof}

\section{The spinor tenfold and its linear sections}
\label{section:x-and-xk}

The spinor tenfold $X$ and its projective dual variety $X^\vee$ (which is abstractly isomorphic to $X$) were described in the Introduction.
We start by recalling some notation introduced earlier.

\subsection{Notation}
\label{subsection:notation}

We fix a vector space $\rV$ of dimension~10 (in the notation of Section~\ref{section:preliminaries} 
this means that~\mbox{$m = 5$}) and a nondegenerate quadratic form $\bq_\rV$ on it.
We will always identify the spaces $\rV$ and~$\rV^\vee$ with the help of the quadratic form $\bq_\rV$.
We denote by $\SS$ and $\SS^\vee \cong \SS_-$ (see~\eqref{eq:dual-s}) the corresponding half-spinor representations.
Recall that
\begin{alignat*}{3}
X\hphantom{{}^\vee} 	&:= \OGr_+(5;\rV) &&\cong \Spin(\rV)/\bP_5 &&\subset \P(\SS),\\
X^\vee 			&:= \OGr_-(5;\rV) &&\cong \Spin(\rV)/\bP_4 &&\subset \P(\SS^\vee).
\intertext{We usually denote a point of $X$ by $[U_5]$ and a point of $X^\vee$ by $[U_{5,-}]$,
meaning that $U_5,U_{5,-} \subset \rV$ are the corresponding 5-dimensional isotropic subspaces.
Accordingly, we denote by $\cU_5$ and $\cU_{5,-}$ the tautological vector bundles on $X$ and $X^\vee$,
and in many cases we abbreviate these to just $\cU$ and $\cU_-$.
Furthermore, we denote}
\rQ\hphantom{{}_-} 	&:= \OGr(1,\rV)  &&\cong \Spin(\rV)/\bP_1 &&\subset \P(\rV),
\intertext{this is a smooth quadric of dimension 8, and}
\cQ\hphantom{{}_-} 	&:= \OFl_+(1,5;\rV) && \cong \Spin(\rV)/\bP_{1,5},\\
\cQ_- 			&:= \OFl_-(1,5;\rV) && \cong \Spin(\rV)/\bP_{1,4}.
\end{alignat*}
Then we have a diagram
\begin{equation}
\label{diagram:x-q-x}
\vcenter{\xymatrix{
&
\cQ \ar[dl] \ar[dr] &&
\cQ_- \ar[dl] \ar[dr] \\
X && \rQ && X^\vee
}}
\end{equation}
in which the outer arrows are $\P^4$-fibrations, while the inner arrows are fibrations in smooth 6-dimensional quadrics.
To be more precise, on one hand, we have isomorphisms
\begin{equation}
\label{eq:cq-pu}
\cQ \cong \P_X(\cU),
\qquad 
\cQ_- \cong \P_{X^\vee}(\cU_-),
\end{equation}
and on the other hand, we have canonical embeddings into the projectivizations of the spinor bundles
\begin{equation}
\label{eq:cq-in-ps}
\cQ \hookrightarrow \P_\rQ(\cS_8),
\qquad 
\cQ_- \hookrightarrow \P_\rQ(\cS_{8,-}),
\end{equation}
which are the relative versions of the last embedding of~\eqref{eq:small-ogr}.
In most cases, we consider $\cQ$ and $\cQ_-$ as families of 6-dimensional quadrics $\cQ_v \subset X$ and $\cQ_{v,-} \subset X^\vee$
parameterized by $v \in \rQ$ (see also~\eqref{eq:qv-qvminus}).

We remind that the two components of $\OGr(m,\rV)$ can be distinguished by the parity of the dimension of intersection of subspaces:
\begin{equation}
\label{eq:parity-intersection}
\dim (U'_5 \cap U''_5) \equiv 
\begin{cases}
{0 \pmod 2}, & \text{if $U'_5$ and $U''_5$ are in different components of $\OGr(5,\rV)$},\\
{1 \pmod 2}, & \text{if $U'_5$ and $U''_5$ are in the same component of $\OGr(5,\rV)$}.
\end{cases}
\end{equation} 

We denote by $\cO_X(-1) = \cS_1$, $\cO_{X^\vee}(-1) = \cS_{1,-}$ the spinor line bundles on $X$ and $X^\vee$.
Then
\begin{equation}
\det \cU \cong \cO_X(-2),
\qquad 
\det \cU_- \cong \cO_{X^\vee}(-2).
\end{equation} 
Moreover, the canonical bundle of $X$ can be written as
\begin{equation}
\omega_X \cong (\det\cU)^{\otimes 4} \cong \cO_X(-8).
\end{equation} 

\begin{remark}
\label{remark:qv-ogr-zero}
For further use we note that the families of quadrics $\cQ$ and $\cQ_-$ in~\eqref{diagram:x-q-x} have the following interpretation:
for $v \in \rQ$ the outer arrows in the diagram induce identifications
\begin{equation}
\label{eq:qv-qvminus}
\cQ_v = \OGr_+(4,v^\perp/v) \subset X
\qquad\text{and}\qquad 
\cQ_{-,v} = \OGr_-(4,v^\perp/v) \subset X^\vee.
\end{equation}
Note also that these quadrics are the zero loci of the global section $v$ of the vector bundles $\cU^\vee$ and $\cU^\vee_-$ on $X$ and $X^\vee$ respectively.
Moreover, it follows from~\eqref{diagram:x-q-x} that if $v \in \P(\rV) \setminus \rQ$ the corresponding global sections 
of $\cU^\vee$ and $\cU^\vee_-$ are everywhere non-zero.
\end{remark}

\subsection{Linear spaces on the spinor tenfold}

In this section we describe all linear spaces on the spinor tenfold.
We denote by 
\begin{equation*}
F_d(X) = \Hilb^{(t+1)\cdots(t+d)/d!}(X)
\end{equation*}
the Hilbert scheme of linearly embedded $\P^d \subset X \subset \P(\SS)$.

Let $U_s \subset \rV$ be a $\bq_\rV$-isotropic subspace of dimension $s$.
Denote by $U_s^\perp \subset \rV$ its orthogonal with respect to the quadratic form $\bq_\rV$; 
then $U_s \subset U_s^\perp$, the quotient space $U_s^\perp/U_s$ is $(10 -2s)$-dimensional and 
has a canonical quadratic form induced by $\bq_\rV$.
Moreover, for any isotropic subspace of dimension~$d$ in $U_s^\perp/U_s$ its preimage in $U_s^\perp \subset \rV$ is an isotropic subspace of dimension $d + s$.
In particular, we have a natural embedding
\begin{equation}
\OGr_+(5-s,U_s^\perp/U_s) \subset \OGr_+(5,\rV) = X
\end{equation}
and under this embedding the line bundle $\cO_X(1)$ restricts to the ample generator of the Picard group.
For instance, by~\eqref{eq:small-ogr} for any isotropic 2-dimensional subspace $U_2 \subset \rV$ the subvariety
\begin{equation}
\label{def:pi3-u2}
\Pi^3_{U_2} := \OGr_+(3,U_2^\perp/U_2) \cong \P^3 \hookrightarrow X
\end{equation}
is a linearly embedded 3-space,
and for any isotropic 3-dimensional subspace $U_2 \subset \rV$ the subvariety
\begin{equation}
\label{def:l-u3}
L_{U_3} := \OGr_+(2,U_3^\perp/U_3) \hookrightarrow X.
\end{equation}
is a line in $X$.
In the same vein we define a line on $X^\vee$ by
\begin{equation}
\label{def:l-u3-minus}
L^-_{U_3} := \OGr_-(2,U_3^\perp/U_3) \hookrightarrow X^\vee.
\end{equation}
We note that the family of 3-spaces~\eqref{def:pi3-u2} on $X$ is given by the diagram
\begin{equation}
\label{eq:universal-3space}
\vcenter{\xymatrix{
& \OFl_+(2,5;\rV) \ar@{=}[r]^-{\sim} \ar[dl] & 
\P_{\OGr(2,\rV)}(\cS_4) \ar[dr] 
\\
X &&& 
\OGr(2,\rV)
}}
\end{equation}
and the families of lines on $X$ and $X^\vee$ are given by the diagrams
\begin{equation}
\label{eq:universal-lines}
\vcenter{\xymatrix@C=1em{
& \P_{\OGr(3,\rV)}(\cS_2) \ar[dl] \ar@{=}[r]^-{\sim} &
\OFl_+(3,5;\rV) \ar[dr] && 
\OFl_-(3,5;\rV) \ar@{=}[r]^-{\sim} \ar[dl] &
\P_{\OGr(3,\rV)}(\cS_{2,-}) \ar[dr] \\
X &&& 
\OGr(3,\rV) &&& 
X^\vee
}}
\end{equation}
where $\cS_4$, $\cS_2$, and $\cS_{2,-}$ are the corresponding spinor bundles
and the isomorphism are given by relative versions of~\eqref{eq:small-ogr}.

On the other hand, consider an isotropic 5-dimensional subspace $U_{5,-} \subset \rV$ 
(corresponding to a point of~$X^\vee$.
Then we have a natural embedding $\Gr(4,U_{5,-}) \subset \OGr(4,\rV)$.
Furthermore, by~\eqref{eq:small-ogr} every isotropic subspace in $\rV$ of dimension 4 extends in a unique way to a five-dimensional subspace
corresponding to a point of~$X$.
This defines a regular map $\OGr(4,\rV) \to X$.
Combining these two we obtain an embedding 
\begin{equation}
\label{def:pi4-u5}
\Pi^4_{U_{5,-}} := \Gr(4,U_{5,-}) \cong \P^4 \hookrightarrow X.
\end{equation}
This is a linearly embedded 4-space. The corresponding family is given by the diagram
\begin{equation*}
\xymatrix@C=3em{
& 
\P_X(\cU^\vee(-1)) \ar[dl] \ar@{=}[r]^-\sim &
\OGr(4,V) \ar[dl] \ar[dr] &
\P_{X^\vee}(\cU_-^\vee(-1)) \ar[dr] \ar@{=}[l]_-\sim 
\\
X \ar@{=}[r]^-\sim &
\OGr_+(5,\rV) &&
\OGr_-(5,\rV) &
X^\vee \ar@{=}[l]_-\sim &
}
\end{equation*}
Note also that for any subspace $U_s \subset U_{5,-}$ we have a natural embedding
\begin{equation}
\label{def:pi-us-u5}
\Pi^{4 -s}_{U_s,U_{5,-}} := \Gr(4-s,U_{5,-}/U_s) \hookrightarrow \Gr(4,U_{5,-}) \hookrightarrow X,
\end{equation}
and this is a linear subspace of dimension $4-s$ on $X$.

\begin{theorem}
\label{theorem:linear-spaces}
Any linear space on $X$ is one of the following:

\noindent$(1)$ 
If $L \subset X$ is a line then there is a unique isotropic $3$-dimensional subspace $U_3 \subset \rV$ such that $L = L_{U_3}$.
Moreover, for any isotropic $U_{5,-} \subset \rV$ containing $U_3$ the line $L_{U_3}$ can also be written as $L_{U_3} = \Pi^1_{U_3,U_{5,-}}$.

\noindent$(2)$ 
If $\Pi \subset X$ is a plane, there is a unique isotropic flag $U_2 \subset U_{5,-} \subset \rV$ such that $\Pi = \Pi^2_{U_2,U_{5,-}}$.

\noindent$(3)$ 
If $\Pi \subset X$ is a $3$-space, then exactly one of the following two possibilities holds:
\begin{enumerate}
\item[(a)] 
either there is a unique isotropic $2$-dimensional subspace $U_2 \subset \rV$ such that $\Pi = \Pi^3_{U_2}$;
\item[(b)]
or there is a unique isotropic flag $U_1 \subset U_{5,-} \subset \rV$ such that $\Pi := \Pi^3_{U_1,U_{5,-}}$.
\end{enumerate}

\noindent$(4)$ 
If $\Pi \subset X$ is a $4$-space, then there is a unique isotropic subspace $U_{5,-} \subset \rV$ such that $\Pi := \Pi^4_{U_{5,-}}$.

\noindent
In particular, there are no linear subspaces on $X$ of dimension $d \ge 5$.

Furthermore, the Hilbert schemes of linear spaces on $X$ are the homogeneous $\Spin(V)$-varieties:
\begin{equation}
\label{eq:fk-x}
\begin{aligned}
F_1(X) &\cong \Spin(\rV)/\bP_3 					&&\cong \OGr(3,V),\\
F_2(X) &\cong \Spin(\rV)/\bP_{2,4} 					&&\cong \OFl_-(2,5;\rV),\\
F_3(X) &\cong \Spin(\rV)/\bP_2 \sqcup \Spin(\rV)/\bP_{1,4} 	&&\cong \OGr(2,\rV) \sqcup \OFl_-(1,5;\rV),\\
F_4(X) &\cong \Spin(\rV)/\bP_4 					&&\cong \OGr_-(5,\rV) = X^\vee.
\end{aligned}
\end{equation} 
\end{theorem}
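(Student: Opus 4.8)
The plan is to reduce the whole classification to the local geometry of $X$ at one point, and thence to the classical geometry of $\Gr(2,5)$. Since $\Spin(\rV)$ acts transitively on $X$, I fix a point $p=[U_5]$ and classify the linear spaces through it; homogeneity then yields the statements for arbitrary linear spaces. By Remark~\ref{remark:filtrations} the spinor space $\SS$ carries a $\bP_5$-stable filtration with successive factors $\bw0{U_5^\vee}$, $\bw2{U_5^\vee}$, $\bw4{U_5^\vee}$, the line $\hat p$ being the bottom factor $\bw0{U_5^\vee}=\Bbbk$. A first-order computation (a tangent vector is a map $U_5\to \rV/U_5\cong U_5^\vee$ which isotropy forces to be skew) identifies the affine tangent space $T_pX$ with the adjacent factor $\bw2{U_5^\vee}$, so that $\P(T_pX)=\P(\bw2{U_5^\vee})$.

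Next I determine the variety of lines through $p$. By~\eqref{def:l-u3} a line of $X$ through $[U_5]$ is exactly an $L_{U_3}$ with $U_3\subset U_5$ an isotropic $3$-subspace; differentiating the pencil of $5$-spaces $U_5'\supset U_3$ shows that its tangent line at $p$ is the decomposable bivector $\bw2{(U_5/U_3)^\vee}\subset\bw2{U_5^\vee}$. Hence the variety of lines through $p$ is precisely the Pl\"ucker-embedded Grassmannian $\Gr(2,U_5^\vee)\subset\P(\bw2{U_5^\vee})$, a point $W_2\subset U_5^\vee$ of it corresponding to the line $L_{U_3}$ with $U_3\subset U_5$ the annihilator of $W_2$.

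The heart of the argument is that $\Pi\mapsto\P(T_p\Pi)$ is a bijection from the linear spaces $\P^d\ni p$ on $X$ to the linear subspaces $\P^{d-1}$ of $\Gr(2,U_5^\vee)$. It is well defined, since every line of $\P^d$ through $p$ is a line of $X$ and so a point of the variety of lines; it is injective, since a linear subspace of $\P(\SS)$ through $p$ is recovered from its tangent space there. For surjectivity I avoid any abstract integration: I invoke the classical description of the linear subspaces of $\Gr(2,U_5^\vee)$ --- the maximal ones are the $\alpha$-spaces $\{W_2\supset A_1\}\cong\P^3$ and the $\beta$-spaces $\{W_2\subset B_3\}\cong\P^2$ --- and match each with an explicit family through $p$ by comparing tangent directions. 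Concretely, the $\alpha$-space attached to $A_1\subset U_5^\vee$ matches $\Pi^4_{U_{5,-}}$ from~\eqref{def:pi4-u5} with $U_5\cap U_{5,-}$ the annihilator of $A_1$; the $\beta$-space attached to $B_3$ matches $\Pi^3_{U_2}$ from~\eqref{def:pi3-u2} with $U_2$ the annihilator of $B_3$; a line $\{A_1\subset W_2\subset B_3\}$ matches the plane $\Pi^2_{U_2,U_{5,-}}$ from~\eqref{def:pi-us-u5}; and the two families of planes in $\Gr(2,U_5^\vee)$ --- those cut out in an $\alpha$-space (flags $A_1\subset B_4$) and the $\beta$-spaces $\Gr(2,B_3)$ --- match the two families of $3$-spaces $\Pi^3_{U_1,U_{5,-}}$ and $\Pi^3_{U_2}$. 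As these families exhaust the linear subspaces of $\Gr(2,U_5^\vee)$, injectivity forces them to exhaust the linear spaces through $p$; reading off projective dimensions (one more than the VMRT dimension) produces the four cases of the theorem, and the absence of a $\P^4$ in $\Gr(2,5)$ gives the bound $d\le 4$.

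It remains to prove uniqueness of the parametrizing data and to identify the Hilbert schemes. For a line $L$ I recover $U_3=\bigcap_{[U_5']\in L}U_5'$ intrinsically as the fixed part of $\cU|_L$, and one recovers the defining flags of a plane or a $3$-space in the same spirit; this gives the asserted uniqueness and shows cases (3a) and (3b) are mutually exclusive, as their VMRT planes are of $\beta$- and $\alpha$-type respectively. Feeding this into the incidence diagrams~\eqref{eq:universal-3space}, \eqref{eq:universal-lines} and the analogous diagram for the $4$-spaces exhibits the parameter spaces as $\OGr(3,\rV)$, $\OFl_-(2,5;\rV)$, $\OGr(2,\rV)\sqcup\OFl_-(1,5;\rV)$ and $\OGr_-(5,\rV)=X^\vee$, which is~\eqref{eq:fk-x}. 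The main obstacle is the second and third steps: pinning down the variety of lines as $\Gr(2,U_5^\vee)$ with its Pl\"ucker embedding and computing the tangent directions of the explicit families accurately enough to make every matching exact --- once this local picture is secured, the remaining work is the transitive group action and the classical geometry of $\Gr(2,5)$.
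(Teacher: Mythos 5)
Your route is genuinely different from the paper's. The paper disposes of this theorem in a few lines by citing the general classification of linear spaces on rational homogeneous varieties \cite[Theorem~4.9]{Landsberg2003} and unwinding the Tits construction from the Dynkin diagram; you instead reduce to the linear spaces through a fixed point $p=[U_5]$ and to the classical classification of linear subspaces of $\Gr(2,5)$ ($\alpha$-spaces $\P^3$ and $\beta$-planes). This buys a self-contained and more geometric proof, and your observation that injectivity of $\Pi\mapsto\P(T_p\Pi)$ together with an explicit family realizing every linear subspace of the variety of lines forces surjectivity is a clean way to avoid any integrability argument. The matchings themselves (points $\leftrightarrow$ $U_3\subset U_5$, pencils $\leftrightarrow$ flags $U_2\subset U_4\subset U_5$, $\beta$-planes $\leftrightarrow$ $\Pi^3_{U_2}$, $\alpha$-planes $\leftrightarrow$ $\Pi^3_{U_1,U_{5,-}}$, $\alpha$-spaces $\leftrightarrow$ $\Pi^4_{U_{5,-}}$), the exclusivity of (3a)/(3b), and the resulting identifications~\eqref{eq:fk-x} all check out.

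There is, however, one load-bearing step that you assert rather than prove: that \emph{every} line of $X$ through $[U_5]$ is of the form $L_{U_3}$ with $U_3\subset U_5$, equivalently that the variety of lines through $p$ is exactly $\Gr(2,U_5^\vee)$ and nothing larger. Equation~\eqref{def:l-u3} only \emph{defines} the lines $L_{U_3}$; it does not say these exhaust the lines of $X$, and this exhaustion is precisely part~(1) of the theorem, on which the whole bijection argument rests (a linear space $\Pi\ni p$ has $\P(T_p\Pi)\subset\Gr(2,U_5^\vee)$ only once you know every line through $p$ has decomposable tangent bivector). The gap is real but easily closed: the tangent directions at $p$ of lines on $X$ through $p$ form a closed subvariety of $\P(\bw2{U_5^\vee})$ invariant under the Levi factor $\GL(U_5)$ of the stabilizer of $p$, and $\GL(U_5)$ has exactly two orbits there, the decomposable bivectors $\Gr(2,U_5^\vee)$ and their complement; so this subvariety is either $\Gr(2,U_5^\vee)$ or all of $\P^9$, and in the latter case the union of lines through $p$ would be a $10$-dimensional cone with vertex $p$, hence all of $X$, contradicting smoothness of $X$ at $p$ (or homogeneity, which would then force $X$ to be a linear space). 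Alternatively one can use that $X$ is cut out by quadrics. With this inserted, your proof is complete.
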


\begin{proof}
This follows from a general result~\cite[Theorem~4.9]{Landsberg2003} of Landsberg and Manivel.
Indeed, to describe $F_d(X)$ we should consider all minimal sets of vertices of the Dynkin diagram $\rD_5$ such that 
the connected component of its complement containing vertex $5$ is the Dynkin diagram of type $\rA_d$ with vertex~5 being its end-point.
The next picture shows all the possibilities:
\begin{equation*}
\begin{array}{|c|c|c|c|c|}
\hline
k = 1 & k = 2 & \multicolumn{2}{c|}{k = 3} & k = 4 
\\
\hline
\begin{tikzpicture}[xscale=.7,yscale=.7]
\draw[dashed] (0,.5) node [above] {1} -- (1,.5) node [above] {2} -- (2,.5) node [above] {3} -- (3,1) node [right] {4} -- (2,.5) -- (3,0) node [right,red] {5};
\draw (0,.5) circle 		[radius=.07];
\draw (1,.5) circle 		[radius=.07];
\draw[fill] (2,.5) circle 		[radius=.11];
\draw (3,1) circle 		[radius=.07];
\draw[red] (3,0) circle 	[radius=.07];
\end{tikzpicture}
&
\begin{tikzpicture}[xscale=.7,yscale=.7]
\draw[dashed] (0,.5) node [above] {1} -- (1,.5) node [above] {2} -- (2,.5) -- (3,1) node [right] {4};
\draw[red, very thick] (2,.5) node [above] {3} -- (3,0) node [right] {5};
\draw (0,.5) circle 		[radius=.07];
\draw[fill] (1,.5) circle 		[radius=.11];
\draw[red] (2,.5) circle 		[radius=.07];
\draw[fill] (3,1) circle 		[radius=.11];
\draw[red] (3,0) circle 	[radius=.07];
\end{tikzpicture}
&
\begin{tikzpicture}[xscale=.7,yscale=.7]
\draw[dashed] (0,.5) node [above] {1} -- (1,.5) node [above] {2} -- (2,.5);
\draw[red, very thick] (3,1) node [right] {4} -- (2,.5) node [above] {3} -- (3,0) node [right] {5};
\draw (0,.5) circle 		[radius=.07];
\draw[fill] (1,.5) circle 		[radius=.11];
\draw[red] (2,.5) circle 		[radius=.07];
\draw[red] (3,1) circle 		[radius=.07];
\draw[red] (3,0) circle 	[radius=.07];
\end{tikzpicture}
&
\begin{tikzpicture}[xscale=.7,yscale=.7]
\draw[dashed] (0,.5) node [above] {1} -- (1,.5);
\draw[dashed] (2,.5) -- (3,1) node [right] {4};
\draw[red, very thick] (1,.5) node [above] {2} -- (2,.5) node [above] {3} -- (3,0) node [right] {5};
\draw[fill] (0,.5) circle 		[radius=.11];
\draw[red] (1,.5) circle 		[radius=.07];
\draw[red] (2,.5) circle 		[radius=.07];
\draw[fill] (3,1) circle 		[radius=.11];
\draw[red] (3,0) circle 	[radius=.07];
\end{tikzpicture}
&
\begin{tikzpicture}[xscale=.7,yscale=.7]
\draw[dashed] (2,.5) -- (3,1) node [right] {4};
\draw[red, very thick] (0,.5) node [above] {1} -- (1,.5) node [above] {2} -- (2,.5) node [above] {3} -- (3,0) node [right] {5};
\draw[red] (0,.5) circle 		[radius=.07];
\draw[red] (1,.5) circle 		[radius=.07];
\draw[red] (2,.5) circle 		[radius=.07];
\draw[fill] (3,1) circle 		[radius=.11];
\draw[red] (3,0) circle 	[radius=.07];
\end{tikzpicture}
\\
\hline
\end{array}
\end{equation*}
Here solid (red) segments form the subdiagram of type $\rA_d$ containing vertex~$5$ as an end-point, 
and solid (black) vertices form the minimal set such that $\rA_d$ is a connected component of its complement.
This gives~\eqref{eq:fk-x}.
Furthermore, unwinding the Tits construction explained in~\cite[Section~4]{Landsberg2003} 
we obtain the descriptions of linear spaces on $X$ in the theorem.
\end{proof}

\subsection{Linear sections and their derived categories}

The main characters of this paper are linear sections of~$X$.
Let 
\begin{equation*}
K \subset \SS^\vee
\end{equation*}
be a vector subspace of dimension $k$ and let
\begin{equation*}
K^\perp := \Ker(\SS \to K^\vee) \subset \SS
\end{equation*}
be its orthogonal complement (of codimension $k$ and dimension $16 - k$).
We define 
\begin{equation}
X_K := X \times_{\P(\SS)} \P(K^\perp)
\qquad\text{and}\qquad
X^\vee_K := X^\vee \times_{\P(\SS^\vee)} \P(K)
\end{equation} 
to be the corresponding linear sections of the spinor tenfold and its projective dual.
If the intersections are dimensionally transverse, we have
\begin{equation*}
\dim X_K = 10 - k,
\qquad
\dim X^\vee_K = 10 - (16 - k) = k - 6
\end{equation*}
with the convention that the dimension of an empty set is an arbitrary negative number.
The following simple observation is extremely useful.

\begin{lemma}
\label{lemma:smoothness}
A linear section $X_K$ is smooth and dimensionally transverse if and only if  $X^\vee_K$ is smooth and dimensionally transverse.
In particular, for $k = \dim K \le 5$ a linear section $X_K$ is smooth and dimensionally transverse 
if and only if $X^\vee_K = \varnothing$.
\end{lemma}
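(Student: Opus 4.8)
The plan is to replace ``smooth and dimensionally transverse'' by a pointwise transversality condition and to package the latter into a criterion on the conormal variety of $X$ that is manifestly symmetric under the duality $X \leftrightarrow X^\vee$, $\P(K^\perp) \leftrightarrow \P(K)$. Throughout I use that $X^\vee = \OGr_-(5,\rV)$ is the projective dual variety of $X \subset \P(\SS)$, as recalled in the Introduction, and for a subspace $A$ of $\SS$ (resp.\ of $\SS^\vee$) I write $A^\perp$ for its annihilator in $\SS^\vee$ (resp.\ in $\SS$); this is consistent with the paper's $K^\perp$ and satisfies $(A^\perp)^\perp = A$. I denote by $\hat T_x(-) \subseteq \SS$ the affine tangent spaces, i.e.\ the cones over the embedded projective tangent spaces.

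First I would establish the pointwise reduction. Since $X$ is smooth, at any $x \in X_K = X \cap \P(K^\perp)$ the embedded tangent spaces satisfy $\hat T_x X_K = \hat T_x X \cap K^\perp$ in $\SS$, so
\[
\dim \hat T_x X_K = \dim \hat T_x X + \dim K^\perp - \dim(\hat T_x X + K^\perp) \ge 11 + (16 - k) - 16 = 11 - k,
\]
with equality exactly when $\hat T_x X + K^\perp = \SS$. As the projective dimension theorem forces every component of $X_K$ to have dimension $\ge 10 - k$, it follows that $X_K$ is smooth of pure dimension $10 - k$ (or empty) if and only if $\hat T_x X + K^\perp = \SS$ at every $x \in X_K$. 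Dualizing, the failure of this equality at $x$ is the condition $(\hat T_x X)^\perp \cap K \ne 0$, i.e.\ the existence of a nonzero $f \in K$ with $\hat T_x X \subseteq \Ker f$. Thus $X_K$ is \emph{not} smooth and dimensionally transverse precisely when there exist $x \in X$ and $0 \ne f \in K$ with $x \in \P(K^\perp)$ and $\hat T_x X \subseteq \Ker f$ --- that is, with the hyperplane $\P(\Ker f)$ tangent to $X$ at $x$.

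The key step is to observe that this criterion is symmetric. Let $\cI \subset \P(\SS) \times \P(\SS^\vee)$ be the conormal variety of $X$, the (already closed, since $X$ is smooth) locus of pairs $(x,[f])$ with $\hat T_x X \subseteq \Ker f$; it projects onto $X$ and $X^\vee$. The previous paragraph reads: $X_K$ is not smooth-and-transverse if and only if there is a pair $(x,[f]) \in \cI$ with $x \in \P(K^\perp)$ and $[f] \in \P(K)$. By the biduality theorem $(X^\vee)^\vee = X$, and the conormal variety of $X^\vee$ is exactly $\cI$ with its two factors interchanged; concretely, for such a pair the tangency $\hat T_x X \subseteq \Ker f$ is equivalent to $\hat T_{[f]} X^\vee \subseteq \Ker x$ (reading $x \in \SS = (\SS^\vee)^\vee$). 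Running the pointwise reduction of the previous paragraph for the section $X^\vee_K = X^\vee \cap \P(K)$ of $X^\vee \subset \P(\SS^\vee)$, whose relevant annihilator is $K^\perp$, produces the identical condition. Hence $X_K$ is smooth and dimensionally transverse if and only if $X^\vee_K$ is, which is the first assertion. The ``in particular'' then follows from a dimension count: since $\codim_{\P(\SS^\vee)} X^\vee = 5$, a dimensionally transverse $X^\vee_K$ has dimension $10 + (k-1) - 15 = k - 6 < 0$ for $k \le 5$, so in this range ``$X^\vee_K$ smooth and dimensionally transverse'' means ``$X^\vee_K = \varnothing$''.

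I expect the only real obstacle to be the honest use of biduality in the key step: one must check that the tangency relation cutting out $\cI$ is genuinely symmetric in its two factors, rather than settling for the tempting shortcut ``$X_K$ smooth and transverse $\iff \P(K) \cap X^\vee = \varnothing$.'' That shortcut is in fact \emph{false} in general --- it holds only while $\dim \P(K) < \codim X^\vee = 5$, i.e.\ precisely in the range of the ``in particular'' --- and it already fails for a smooth quadric surface cut by a line, where a generic line meets the dual quadric yet the section stays transverse.
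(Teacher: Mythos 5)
Your argument is correct. The paper gives no proof of its own here --- it delegates to \cite[Proposition~2.24]{debarre2015gushel} --- and the argument there is precisely your reduction to pointwise transversality of affine tangent spaces followed by the symmetry of the conormal variety under biduality (valid here since the field has characteristic zero and both $X$ and $X^\vee$ are smooth), so your proposal matches the outsourced proof in substance and detail, and your closing caveat about the na\"ive criterion $\P(K)\cap X^\vee=\varnothing$ being valid only for $\dim\P(K)<\codim X^\vee$ is consistent with the paper stating that criterion only for $k\le 5$.
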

\begin{proof}
The proof is the same as in~\cite[Proposition~2.24]{debarre2015gushel}.
\end{proof}

In what follows we frequently abbreviate ``smooth and dimensionally transverse'' to just ``smooth''.

The tautological bundle $\cU$ and the structure sheaf $\cO_X$ give a very nice exceptional collection on $X$.
We write $\bD(X)$ for the bounded derived category of coherent sheaves on $X$.

\begin{proposition}[{\cite[Sections~6.2]{k2006hyperplane}}]
\label{proposition:db-x}
There is a full exceptional collection in $\bD(X)$ of the form
\begin{equation*}
\bD(X) = \langle \cO_X, \cU^\vee, \cO_X(1), \cU^\vee(1), \dots, \cO_X(7), \cU^\vee(7) \rangle.
\end{equation*}
\end{proposition}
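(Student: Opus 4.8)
The plan is to verify the three conditions defining a full exceptional collection---exceptionality of each of the sixteen objects, semiorthogonality in the prescribed order, and generation of $\bD(X)$---and the first two are purely cohomological. Every $\RHom$ between two members reduces to the cohomology of a homogeneous bundle on $X=\Spin(\rV)/\bP_5$: writing $t=j-i$ for the difference of twists, one has $\RHom(\cO_X(i),\cO_X(j))=\RGamma(X,\cO_X(t))$, $\RHom(\cU^\vee(i),\cO_X(j))=\RGamma(X,\cU(t))$, $\RHom(\cO_X(i),\cU^\vee(j))=\RGamma(X,\cU^\vee(t))$, and $\RHom(\cU^\vee(i),\cU^\vee(j))=\RGamma(X,\CEnd(\cU)(t))$. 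Since $\cO_X(1)=\cS_1^\vee$, $\cU$, $\cU^\vee$ and $\CEnd(\cU)=\cU^\vee\otimes\cU$ are all completely reducible homogeneous bundles, each group is computed by Borel--Weil--Bott, and I would organize the verification around the finitely many values of $t$ that occur.

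The relevant twist-differences are $t=0$ (inside a block and for exceptionality) and $t=-1,\dots,-7$ (between blocks, where $j>i$). Exceptionality asks for $\RGamma(X,\cO_X)=\Bbbk$, which is the usual acyclicity of the structure sheaf of a rational homogeneous space, and $\RGamma(X,\CEnd(\cU))=\Bbbk$, which says $\cU$ is simple with no higher self-extensions and again follows from Bott once $\cU$ is seen to be irreducible homogeneous. Intra-block semiorthogonality is the single vanishing $\RGamma(X,\cU)=0$; here $H^0=0$ because no nonzero vector of $\rV$ lies in every isotropic $5$-space of the $+$-family, and the higher groups vanish by Bott. For the inter-block conditions I need $\RGamma(X,\cO_X(-t))=\RGamma(X,\cU(-t))=\RGamma(X,\cU^\vee(-t))=\RGamma(X,\CEnd(\cU)(-t))=0$ for $1\le t\le7$. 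The line-bundle case is the index vanishing: since $\omega_X\cong\cO_X(-8)$, Kodaira vanishing and Serre duality kill $\cO_X(-t)$ for $0<t<8$; the remaining three families are an explicit, finite Borel--Weil--Bott check (each shifted weight is singular in the stated range), which I would relegate to a short weight table.

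Fullness is the main obstacle, and counting alone does not settle it: although the collection has length $16=\rk K_0(X)$, the number of Schubert cells, and its classes span $K_0(X)$, this does not \emph{a priori} exclude a phantom in the semiorthogonal complement, so a genuine generation argument is required. Here I would exploit the double fibration of diagram~\eqref{diagram:x-q-x}. By~\eqref{eq:cq-pu} the flag variety $\cQ$ is the projective bundle $\P_X(\cU)$, with projection $\pi\colon\cQ\to X$ a $\P^4$-bundle; by~\eqref{eq:cq-in-ps} the other projection $p\colon\cQ\to\rQ$ to the $8$-dimensional quadric is a fibration in $6$-dimensional quadrics, embedded fibrewise in $\P_\rQ(\cS_8)$. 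Since $\pi^*$ is fully faithful, Orlov's projective-bundle decomposition reduces fullness on $X$ to generation of $\bD(\cQ)$ by the pullbacks $\pi^*\cO_X(i)\otimes\cO_\pi(s)$ and $\pi^*\cU^\vee(i)\otimes\cO_\pi(s)$ for $0\le i\le7$, $0\le s\le4$. Generation of $\bD(\cQ)$ I would then establish through $p$: Kuznetsov's quadric-fibration decomposition generates $\bD(\cQ)$ by twisted pullbacks $p^*\bD(\rQ)$ together with a single even-Clifford-module term, while $\bD(\rQ)$ is generated by Kapranov's collection consisting of $\cO_\rQ,\dots,\cO_\rQ(7)$ and the two spinor bundles $\cS_8,\cS_{8,-}$. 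Rewriting these generators through $\pi$---using $\pi_*\cO_\pi(1)\cong\cU^\vee$ and the fibrewise embedding~\eqref{eq:cq-in-ps}---should return exactly the sought generators.

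The delicate point is precisely this last translation: matching the two decompositions requires tracking the two relative hyperplane classes against one another and, above all, identifying the Clifford term of the quadric fibration, which is where the self-duality of $X$ with $X^\vee$ enters. Should that bookkeeping prove unwieldy, an alternative is to construct a Koszul-type resolution of the diagonal $\cO_\Delta$ on $X\times X$ from the tautological sequence $0\to\cU\to\rV\otimes\cO_X\to\cU^\vee\to0$ and the spinor sequences of Lemma~\ref{lemma:spinor-sequence}, whose terms are external products of the objects $\cO_X(i)$ and $\cU^\vee(i)$; fullness would then follow directly.
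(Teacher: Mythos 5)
The paper does not actually prove this proposition: it is imported verbatim from \cite[Section~6.2]{k2006hyperplane}, so there is no internal argument to measure your proof against, and I will judge it on its own terms. The cohomological half is sound: reducing exceptionality and semiorthogonality to the vanishing of $\RGamma(X,\cO_X(-t))$, $\RGamma(X,\cU(-t))$, $\RGamma(X,\cU^\vee(-t))$ and $\RGamma(X,\cU\otimes\cU^\vee(-t))$ in the stated ranges, killing the line bundles via $\omega_X\cong\cO_X(-8)$ and the rest by Borel--Bott--Weil, is exactly the standard verification, and deferring the finite list of singular shifted weights to a table is acceptable.

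The gap is in fullness, and it sits precisely where you flagged it --- but it is larger than ``bookkeeping''. The reduction from $X$ to $\cQ=\P_X(\cU)$ is valid: an object right-orthogonal to the sixteen bundles pulls back to an object right-orthogonal to all eighty twisted pullbacks, since $\pi_*\cO_\pi(-s)=0$ for $1\le s\le 4$. The problem is the return trip. With the paper's normalizations one finds $\cO_\pi(1)\cong p^*\cO_\rQ(1)$ and $\cO_p(1)\cong\pi^*\cO_X(1)$, so the generators coming from Kapranov's collection on $\rQ$ are $\pi^*\cO_X(j)\otimes\cO_\pi(a)$ with $0\le j\le 5$ but $0\le a\le 7$. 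Bringing $a\in\{5,6,7\}$ into the range $[0,4]$ forces you through the Koszul complex of $\cO_\pi(-1)\hookrightarrow\pi^*\cU$, which introduces $\pi^*\bw{2}{\cU^\vee}$ and $\pi^*\bw{3}{\cU^\vee}$; showing that these lie in the subcategory generated by the $\cO_X(i)$ and $\cU^\vee(i)$ is not a formality --- it is essentially the content of fullness. The same issue recurs for the pullbacks of the spinor bundles $\cS_8$, $\cS_{8,-}$ of $\rQ$ and for the Clifford component of the quadric fibration $p$ (whose simple modules are the relative spinor bundles $\cU_4$ and $\cU_4^\vee(-1)$ of the $Q^6$-fibration): absorbing them requires the filtrations of Lemma~\ref{lemma:filtration-ss}, which again produce wedge powers of $\cU$. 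So the assertion that the translation ``should return exactly the sought generators'' is false as stated; what it returns is a strictly larger list of bundles whose generation by the collection is the actual theorem. The alternative you mention --- a Koszul-type resolution of the diagonal --- is only named, not constructed. As written, the proposal establishes that the collection is exceptional but not that it is full.
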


This exceptional collection is Lefschetz and rectangular in the terminology of~\cite{k2007hpd,icm2014}, 
which just means that it consists of several twists of the starting block $\langle \cO_X, \cU^\vee \rangle \subset \bD(X)$.
Moreover, the main result of~\cite[Section~6.2]{k2006hyperplane} (see also~\cite[Theorem~5.5]{icm2014}) ensures that 
the classical projective duality between the spinor varieties~$X \subset \P(\SS)$ and $X^\vee \subset \P(\SS^\vee)$ extends
to a homological projective duality.
The main theorem of homological projective duality~\cite[Theorem~6.3]{k2007hpd} implies the following set 
of semiorthogonal decompositions relating derived categories of $X_K$ and $X^\vee_K$.

\begin{theorem}
\label{theorem:db-xk}
Assume $X_K$ and $X^\vee_K$ are both dimensionally transverse and $k = \dim K \le 8$.
Denote by~$\cU_{X_K}$ the restriction $\cU\vert_{X_K}$ of the tautological bundle.
Then there is a semiorthogonal decomposition
\begin{equation*}
\bD(X_K) = \langle  \bD(X^\vee_K), \cO_{X_K}, \cU^\vee_{X_K}, \dots, \cO_{X_K}(7- k), \cU_{X_K}^\vee(7-k) \rangle.
\end{equation*}
Moreover, for $k \le 7$ we have 
\begin{equation}
\label{eq:ext-o-u}
\Ext^\bullet(\cO_{X_K},\cU^\vee_{X_K}) \cong \Ext^\bullet(\cO_X,\cU^\vee) \cong \rV,
\qquad 
\Ext^\bullet(\cO_{X_K},\cU_{X_K}(1)) \cong \Ext^\bullet(\cO_X,\cU(1)) \cong \SS^\vee.
\end{equation}
In particular, the subcategory in $\bD(X_K)$ generated by the exceptional pair $\cO_{X_K},\cU^\vee_{X_K}$ is equivalent 
to the subcategory in $\bD(X)$ generated by $\cO_X$ and $\cU^\vee$ via the restriction functor.
\end{theorem}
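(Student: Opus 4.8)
The plan is to obtain the semiorthogonal decomposition directly from homological projective duality, and then to establish \eqref{eq:ext-o-u} and the concluding equivalence by a Koszul-resolution argument that reduces everything to the semiorthogonality already recorded in Proposition~\ref{proposition:db-x}. By that proposition $\bD(X)$ carries a rectangular Lefschetz decomposition with respect to $\cO_X(1)$ whose $8$ blocks all coincide with the starting pair $\langle \cO_X, \cU^\vee \rangle$. Since $X^\vee$ is the homological projective dual of $X$ (as recalled in the introduction via~\cite[Section~6.2]{k2006hyperplane}), I would apply the main theorem of HPD~\cite[Theorem~6.3]{k2007hpd} to the $k$-dimensional subspace $K \subset \SS^\vee$. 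For $k \le 8$ it produces exactly the asserted decomposition: the first component is $\bD(X^\vee_K)$, and the remaining $8-k$ components are the images under the restriction functor $\bD(X) \to \bD(X_K)$ of the blocks $\langle \cO_X(j), \cU^\vee(j) \rangle$ for $0 \le j \le 7-k$, namely the pairs $\langle \cO_{X_K}(j), \cU^\vee_{X_K}(j) \rangle$. In particular these pairs are automatically exceptional and semiorthogonal in $\bD(X_K)$, so the first assertion needs no further work.

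For \eqref{eq:ext-o-u} I would use the Koszul resolution of the section. As $X_K \subset X$ is dimensionally transverse of codimension $k$, it is the zero locus of a regular section of $K^\vee \otimes \cO_X(1)$, whence a resolution
\begin{equation*}
0 \to \bw{k}K \otimes \cO_X(-k) \to \cdots \to \bw{2}K \otimes \cO_X(-2) \to K \otimes \cO_X(-1) \to \cO_X \to \cO_{X_K} \to 0 .
\end{equation*}
Tensoring with a locally free sheaf $\cF$ and taking cohomology on $X$ yields a hypercohomology spectral sequence with first page $E_1^{-p,q} = \bw{p}K \otimes \Ho^q(X, \cF(-p))$ converging to $\Ho^{q-p}(X_K, \cF\vert_{X_K})$. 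The plan is to run this with $\cF = \cU^\vee$ and with $\cF = \cU(1)$: the columns $p=0$ give $\Ho^\bullet(X,\cU^\vee) = \rV$ and $\Ho^\bullet(X,\cU(1)) = \SS^\vee$, so it suffices to show that all terms with $1 \le p \le k$ vanish, after which the spectral sequence degenerates to the $p=0$ column.

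The required vanishing I would extract from semiorthogonality. For $\cF = \cU^\vee$ the correction terms are $\Ho^\bullet(X,\cU^\vee(-p)) = \Ext^\bullet(\cO_X(p),\cU^\vee)$, which vanish for $1 \le p \le 7$ because $\cO_X(p)$ appears strictly after $\cU^\vee$ in the collection of Proposition~\ref{proposition:db-x}. For $\cF = \cU(1)$ the correction terms $\Ho^\bullet(X,\cU(1-p))$ are handled by twisting the tautological sequence $0 \to \cU \to \rV \otimes \cO_X \to \cU^\vee \to 0$ by $\cO_X(1-p)$: for $2 \le p \le 7$ both $\Ho^\bullet(X,\cO_X(1-p))$ and $\Ho^\bullet(X,\cU^\vee(1-p))$ vanish by semiorthogonality, while for $p=1$ the sequence reads $0 \to \cU \to \rV\otimes\cO_X \to \cU^\vee \to 0$ and the map $\rV = \rV \otimes \Ho^0(\cO_X) \to \Ho^0(\cU^\vee) = \rV$ is the canonical isomorphism, forcing $\Ho^\bullet(X,\cU) = 0$. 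Since \eqref{eq:ext-o-u} is claimed only in the range $k \le 7$, every index $1 \le p \le k$ falls within the ranges above, so all correction terms vanish and \eqref{eq:ext-o-u} follows.

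The final equivalence I would then deduce formally. By the first paragraph $(\cO_{X_K},\cU^\vee_{X_K})$ is exceptional and semiorthogonal, so $\Ext^\bullet(\cO_{X_K},\cO_{X_K}) = \Ext^\bullet(\cU^\vee_{X_K},\cU^\vee_{X_K}) = \Bbbk$ and $\Ext^\bullet(\cU^\vee_{X_K},\cO_{X_K}) = 0$, matching the corresponding groups for $(\cO_X,\cU^\vee)$; the one remaining group $\Ext^\bullet(\cO_{X_K},\cU^\vee_{X_K}) \cong \rV$ matches as well by \eqref{eq:ext-o-u}. As the restriction functor is exact and induces an isomorphism on each $\Ext$-group between the two generators, a dévissage over cones and shifts shows it is fully faithful on $\langle \cO_X, \cU^\vee \rangle$; since it sends the generators to generators of $\langle \cO_{X_K}, \cU^\vee_{X_K} \rangle$, it restricts to an equivalence of these subcategories. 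The step I expect to demand the most care is the cohomology vanishing of the previous paragraph, and in particular the borderline case $p=1$ for $\cF = \cU(1)$, where $\cU(1)$ is not itself a member of the collection and one must invoke the tautological sequence and verify that the connecting isomorphism indeed kills both surviving contributions; one must also track precisely that the Koszul twists stay within the range where $k \le 7$ guarantees no nonzero correction term appears.
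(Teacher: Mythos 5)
Your proposal is correct and follows the same route as the paper, which states this theorem as a direct consequence of the homological projective duality of $X$ and $X^\vee$ (the cited \cite[Theorem~6.3]{k2007hpd}) and supplies no further argument. Your Koszul-resolution verification of \eqref{eq:ext-o-u} correctly fills in detail the paper leaves implicit; note only that the first isomorphism there already follows from the full faithfulness of the restriction functor on the block $\langle \cO_X, \cU^\vee \rangle$ asserted by HPD, so the spectral-sequence computation is genuinely needed only for $\Ext^\bullet(\cO_{X_K},\cU_{X_K}(1)) \cong \SS^\vee$.
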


Smoothness of $X_K$ and $X^\vee_K$ is unnecessary for the theorem, but we will usually assume it below.

Let us spell out what the above semiorthogonal decompositions tell:
\begin{itemize}
\item 
For $0 \le k \le 5$ the dimensional transversality assumption ensures that $X_K$ is a Fano variety of dimension $10 - k$ and $X^\vee_K = \varnothing$. 
Therefore the semiorthogonal decomposition reduces just to an exceptional collection of length $16 - 2k$
\begin{equation}
\label{eq:db-xk-05}
\bD(X_K) = \langle \cO_{X_K}, \cU_{X_K}^\vee, \dots, \cO_{X_K}(7- k), \cU_{X_K}^\vee(7-k) \rangle, 
\end{equation}
that can be considered as a reduced replica of the original collection.
\item 
For $k = 6$ the dimensional transversality assumption ensures that $X_K$ is a Fano fourfold and~$X^\vee_K$ is a finite scheme of length~12. 
Assuming also that $X^\vee_K$ is reduced (by Lemma~\ref{lemma:smoothness} this is equivalent to smoothness of~$X_K$), 
we obtain a semiorthogonal decomposition
\begin{equation}
\label{eq:db-xk-6}
\bD(X_K) = \langle \cE_1, \cE_2, \dots, \cE_{12}, \cO_{X_K}, \cU_{X_K}^\vee, \cO_{X_K}(1), \cU_{X_K}^\vee(1) \rangle, 
\end{equation}
where $\cE_1$, \dots, $\cE_{12}$ is a completely orthogonal exceptional collection.
In fact, one can check that each of $\cE_i$ is a vector bundle of rank 2.
\item 
For $k = 7$ the dimensional transversality assumption ensures that $X_K$ is a Fano threefold and~$X^\vee_K$ is a curve of arithmetic genus~7.
The semiorthogonal decomposition takes the form
\begin{equation}
\label{eq:db-xk-7}
\bD(X_K) = \langle \bD(X^\vee_K), \cO_{X_K}, \cU_{X_K}^\vee \rangle.
\end{equation}
In the smooth case one can check that $X^\vee_K$ is the moduli space of rank 2 vector bundles on $X_K$ and 
the embedding of the derived category is given by the Fourier--Mukai functor with kernel the universal bundle, 
see~\cite[Corollary~2.5, Theorem~4.4]{k2005v12}.
\item 
For $k = 8$ the dimensional transversality assumption ensures that both $X_K$ and~$X^\vee_K$ are polarized K3 surfaces of degree 12.
The semiorthogonal decomposition then reduces to an equivalence of categories
\begin{equation}
\label{eq:db-xk-8}
\bD(X_K) \cong \bD(X^\vee_K).
\end{equation}
In the smooth case this is the classical equivalence discovered by Mukai (\cite[Example~1.3]{mukai1999})); 
the surface $X^\vee_K$ again can be identified with the moduli space of rank 2 
vector bundles on $X_K$ and the equivalence of the derived category is given by the Fourier--Mukai functor with kernel the universal bundle.
\end{itemize}

The semiorthogonal decompositions of Theorem~\ref{theorem:db-xk} have many consequences for geometry of the varieties involved in them.
The simplest of these is the computation of the Grothendieck group:
\begin{equation}
\label{eq:k0-xk}
\rk K_0(X_K) = 
\begin{cases}
16 - 2k, & \text{for $0 \le k \le 5$},\\
16, & \text{for $k = 6$},
\end{cases}
\end{equation}
(the first line follows from~\eqref{eq:db-xk-05} and the second from~\eqref{eq:db-xk-6}) as soon as $X_K$ is smooth.

The next result is also quite useful.

\begin{corollary}
\label{corollary:isomorphism-conjugation}
Let $X_{K_1}$, $X_{K_2}$ be dimensionally transverse linear sections of $X$ of codimension $k \le 7$.
If $X_{K_1} \cong X_{K_2}$ there  is an element $g \in \Spin(\rV)$ such that $g(X_{K_1}) = X_{K_2}$ and $g(K_1) = K_2$.
\end{corollary}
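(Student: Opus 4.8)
The plan is to show that any isomorphism $\phi\colon X_{K_1}\xrightarrow{\sim}X_{K_2}$ is the restriction of the action of a single element $g\in\Spin(\rV)$; once such a $g$ is produced with $g(X_{K_1})=X_{K_2}$ inside $\P(\SS)$, the equality $g(K_1)=K_2$ follows formally. Indeed, a dimensionally transverse $X_{K_2}$ is nondegenerate, hence spans $\P(K_2^\perp)$; since the spinor embedding is $\Spin(\rV)$-equivariant, $g(X_{K_1})=X_{K_2}$ forces $g(\P(K_1^\perp))=\P(K_2^\perp)$, so $g(K_1^\perp)=K_2^\perp$, and taking annihilators with respect to the $\Spin(\rV)$-invariant pairing $\SS\otimes\SS^\vee\to\Bbbk$ gives $g(K_1)=K_2$.

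First I would fix the polarization. For $k\le 7$ we have $\dim X_K\ge 3$, so the Lefschetz hyperplane theorem gives $\Pic(X_K)=\ZZ\cdot\cO_{X_K}(1)$ with $\cO_{X_K}(1)$ the ample generator. Any isomorphism preserves the ample generator, so $\phi^*\cO_{X_{K_2}}(1)\cong\cO_{X_{K_1}}(1)$; since $X_K$ is projectively normal in its spinor embedding (being a dimensionally transverse section of the arithmetically Cohen--Macaulay variety $X$), $\phi$ is then the restriction of a linear isomorphism $a\colon K_1^\perp\xrightarrow{\sim}K_2^\perp$ of the ambient spaces.

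The heart of the argument is to upgrade $a$ to an element of $\Spin(\rV)$, and for this I would recover the orthogonal geometry of $\rV$ intrinsically from the polarized variety $(X_K,\cO_{X_K}(1))$. Concretely, I would identify the space of quadrics through $X_K\subset\P(K^\perp)$ with the vector representation $\rV$: on $X$ itself the quadrics through $X$ form the $10$-dimensional irreducible module $H^0(\P(\SS),\cI_X(2))\cong\rV$ (the same $\rV$ that appears as $\Ext^\bullet(\cO_X,\cU^\vee)$ in Theorem~\ref{theorem:db-xk}), and one checks that for $k\le 7$ the restriction $\rV=H^0(\cI_X(2))\to H^0(\cI_{X_K/\P(K^\perp)}(2))$ is an isomorphism. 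The invariant quadratic form $\bq_\rV$ is then recovered as the discriminant locus of this net of quadrics (the locus of $v$ for which $q_v$ is singular), so the map $b\colon\rV\to\rV$ induced by $a$ on quadric spaces preserves $\bq_\rV$ up to a scalar; after rescaling (which acts trivially on $\P(\rV)$ and on $\OGr_+(5,\rV)$) we may take $b\in\GO(\rV)$. Finally, the Clifford structure encoded by these quadrics reconstructs the tautological map $x\mapsto\cU_x$ from $X_K$ to $\OGr_+(5,\rV)$ canonically, so $b$ carries the map for $X_{K_1}$ to the one for $X_{K_2}$; that is, $b(X_{K_1})=X_{K_2}$ inside $X=\OGr_+(5,\rV)$ and $\phi=b|_{X_{K_1}}$. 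As $b$ sends the component $\OGr_+$ to itself it lies in $\SO(\rV)$, and lifting $b$ to $g\in\Spin(\rV)$ concludes the argument as in the first paragraph.

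I expect the main obstacle to be precisely this canonical recovery of $\cU^\vee_{X_K}$ (equivalently, the statement $\phi^*\cU^\vee_{X_{K_2}}\cong\cU^\vee_{X_{K_1}}$): this is essentially equivalent to the corollary itself, so it cannot follow from a bare uniqueness-of-exceptional-bundle assertion and genuinely requires the intrinsic reconstruction above. The two technical points to nail down are the isomorphism $\rV\cong H^0(\cI_{X_K/\P(K^\perp)}(2))$ for all $k\le 7$ --- that the section acquires and loses no quadrics, which amounts to checking $\rV\cap\bigl(K\cdot\SS^\vee\bigr)=0$ in $\Sym^2\SS^\vee$ --- and the verification that the Clifford/quadric data reconstruct the map to $\OGr_+(5,\rV)$ compatibly with $\phi$. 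Alternatively, one could attempt the same recovery entirely inside $\bD(X_K)$, using the identifications $\Ext^\bullet(\cO_{X_K},\cU^\vee_{X_K})\cong\rV$ and $\Ext^\bullet(\cO_{X_K},\cU_{X_K}(1))\cong\SS^\vee$ of Theorem~\ref{theorem:db-xk} together with their multiplicative (Clifford) structure, but reconstructing the orthogonal form $\bq_\rV$ remains the crux in either approach.
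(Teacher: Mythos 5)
Your opening and closing moves coincide with the paper's: Lefschetz to pin down the polarization, extension of $\phi$ to a linear isomorphism $\bar\varphi\colon K_1^\perp\to K_2^\perp$ of ambient spaces, production of an element of $\GO(\rV)$ and the formal deduction of $g(K_1)=K_2$ at the end. But there is a genuine gap exactly where you say you expect one: you never actually carry out the ``canonical recovery'' of the map $X_K\to\OGr_+(5,\rV)$, you only assert that ``the Clifford structure encoded by these quadrics reconstructs the tautological map'' and then list this reconstruction as a point still ``to nail down.'' Since you yourself observe that this step is essentially equivalent to the corollary, the proposal as written does not close the argument. The same goes for the auxiliary claim $\rV\xrightarrow{\sim}H^0(\P(K^\perp),\cI_{X_K}(2))$ for all $k\le 7$: injectivity amounts to showing no rank-$8$ quadric $\tcQ_{v,-}$ (which contains linear spaces of dimension up to $12$) contains $\P(K^\perp)$, and surjectivity requires a cohomology computation with the resolution of Corollary~\ref{corollary:resolution-x}; neither is done.

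The paper's proof resolves the crux by a much more direct mechanism that your route misses: since $X_K\subset\P(K^\perp)$ is a dimensionally transverse section, its normal bundle is the restriction of $\cN_{X/\P(\SS)}\cong\cU(2)$ (Corollary~\ref{corollary:normal-x}), so $\cN_{X_{K}/\P(K^\perp)}\cong\cU_{X_K}(2H)$. Because $\phi$ extends to a linear isomorphism of the ambient projective spaces, it automatically identifies the normal bundles, whence $\varphi^*\cU_{X_{K_2}}\cong\cU_{X_{K_1}}$ with no reconstruction of Clifford data needed. Taking $H^0$ of $\cU^\vee$ via~\eqref{eq:ext-o-u} then gives $g_\rV\in\GL(\rV)$, which preserves $\rQ$ because the locus of $v\in\P(\rV)$ whose section of $\cU^\vee_{X_K}$ has nonempty zero locus is intrinsic (Remark~\ref{remark:qv-ogr-zero}) --- this replaces your discriminant-of-quadrics argument --- and the tautological nature of the resulting map to $\OGr_+(5,\rV)$ does the rest. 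If you want to salvage your approach, the fix is to substitute this normal-bundle identification for the unproved Clifford reconstruction; the two technical points you flag then simply disappear.
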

\begin{proof}
Let $\varphi \colon X_{K_1} \to X_{K_2}$ be an isomorphism.
By Lefschetz theorem both $\Pic(X_{K_1})$ and $\Pic(X_{K_2})$ are generated by the restrictions $H_1$ and $H_2$ of the hyperplane class of $X \subset \P(\SS)$.
Therefore 
\begin{equation*}
\varphi^*(\cO_{X_{K_2}}(H_2)) \cong \cO_{X_{K_1}}(H_1).
\end{equation*}
Choosing such an isomorphism, we obtain an isomorphism $\bar\varphi$ between the vector spaces
\begin{equation*}
K_1^\perp \cong H^0(X_{K_1},\cO_{X_{K_1}}(H_1))^\vee \cong H^0(X_{K_2},\cO_{X_{K_2}}(H_2))^\vee \cong K_2^\perp
\end{equation*}
such that the diagram
\begin{equation*}
\xymatrix{
X_{K_1} \ar[r]^\varphi \ar[d] &
X_{K_2} \ar[d]
\\
\P(K_1^\perp) \ar[r]^{\bar{\varphi}} &
\P(K_2^\perp)
}
\end{equation*}
is commutative, where 
the vertical arrows are the natural embeddings.
By using the standard identification of the normal bundle of $X$ (see also Corollary~\ref{corollary:normal-x} below)
and the dimension transversality of $X_{K_i}$, we deduce an isomorphism
\begin{equation*}
\cU_{X_{K_1}}(2H_1) \cong 
\cN_{X_{K_1}/\P(K_1^\perp)} \cong
\varphi^*\cN_{X_{K_2}/\P(K_2^\perp)} \cong
\varphi^*\cU_{X_{K_2}}(2H_2).
\end{equation*}
Finally, the isomorphism $\cU_{X_{K_1}} \cong \varphi^*\cU_{X_{K_2}}$
with~\eqref{eq:ext-o-u} taken into account, produces an isomorphism
\begin{equation*}
\rV \cong 
H^0(X_{K_1},\cU_{X_{K_1}}^\vee) \cong
H^0(X_{K_2},\cU_{X_{K_2}}^\vee) \cong
\rV
\end{equation*}
that is an element $g_\rV \in \GL(\rV)$.
Since the zero locus of a global section $v \in \rV$ of $\cU^\vee_{X_{K_i}}$ is empty 
if and only if $v$ lies on the quadric $\rQ \subset \P(\rV)$ (see Remark~\ref{remark:qv-ogr-zero}),
it follows that $g_\rV$ preserves the quadric $\rQ$, i.e.,~$g_\rV \in \GO(\rV)$.
Finally, it is easy to see that the induced action of $g_\rV$ on $\P(\SS)$ takes $X_{K_1}$ to $X_{K_2}$ and~$K_1$ to~$K_2$.
The element $g$ can be defined as any lift to $\Spin(\rV)$ of the image of $g_\rV$ in $\PSO(\rV)$.
\end{proof}

In what follows, to unburden notation we will denote the restriction $\cU_{X_K}$ simply by $\cU$.

\subsection{Rational Chow motives}

For a smooth projective variety $Y$ we denote by $\bM(Y)$ its Chow motive and by $\bM_\QQ(Y)$ its Chow motive with rational coefficients.

\begin{corollary}
\label{corollary:motives-q-xk-06}
Let $X_K$ be a smooth linear section of the spinor tenfold of codimension $k \le 6$.
Then the rational Chow motive of $X_K$ is of Lefschetz type:
\begin{equation*}
\bM_\QQ(X_K) = 
\begin{cases}
1 \oplus \bL_\QQ \oplus \bL_\QQ^2 \oplus 2\bL_\QQ^3 \oplus 2\bL_\QQ^4 \oplus 2\bL_\QQ^5 \oplus 2\bL_\QQ^6 \oplus 2\bL_\QQ^7 \oplus \bL_\QQ^8 \oplus \bL_\QQ^9 \oplus \bL_\QQ^{10},
& \text{for $k = 0$},\\
1 \oplus \bL_\QQ \oplus \bL_\QQ^2 \oplus 2\bL_\QQ^3 \oplus 2\bL_\QQ^4 \oplus 2\bL_\QQ^5 \oplus 2\bL_\QQ^6 \oplus \bL_\QQ^7 \oplus \bL_\QQ^8 \oplus \bL_\QQ^9,
& \text{for $k = 1$},\\
1 \oplus \bL_\QQ \oplus \bL_\QQ^2 \oplus 2\bL_\QQ^3 \oplus 2\bL_\QQ^4 \oplus 2\bL_\QQ^5 \oplus \bL_\QQ^6 \oplus \bL_\QQ^7 \oplus \bL_\QQ^8,
& \text{for $k = 2$},\\
1 \oplus \bL_\QQ \oplus \bL_\QQ^2 \oplus 2\bL_\QQ^3 \oplus 2\bL_\QQ^4 \oplus \bL_\QQ^5 \oplus \bL_\QQ^6 \oplus \bL_\QQ^7,
& \text{for $k = 3$},\\
1 \oplus \bL_\QQ \oplus \bL_\QQ^2 \oplus 2\bL_\QQ^3 \oplus \bL_\QQ^4 \oplus \bL_\QQ^5 \oplus \bL_\QQ^6,
& \text{for $k = 4$},\\
1 \oplus \bL_\QQ \oplus \bL_\QQ^2 \oplus \bL_\QQ^3 \oplus \bL_\QQ^4 \oplus \bL_\QQ^5,
& \text{for $k = 5$},\\
1 \oplus \bL_\QQ \oplus 12\bL_\QQ^2 \oplus \bL_\QQ^3 \oplus \bL_\QQ^4,
& \text{for $k = 6$}.
\end{cases}
\end{equation*}
Moreover, $\CH^i(X_K) \otimes \QQ \cong \QQ^{n_i}$, where the dimensions $n_i$ is equal to the multiplicity of the corresponding Lefschetz motive $\bL_\QQ^i$ in $\bM_\QQ(X_K)$.
\end{corollary}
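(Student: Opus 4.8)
The plan is to extract everything from the full exceptional collection on $X_K$ together with two classical topological constraints. For $k \le 5$ this collection is~\eqref{eq:db-xk-05}, and for $k = 6$ it is~\eqref{eq:db-xk-6}: here $X^\vee_K$ is a reduced scheme of length $12$, so $\bD(X^\vee_K)$ is generated by $12$ completely orthogonal exceptional objects, and the whole collection again consists of exceptional objects, of total length $\rk K_0(X_K)$. Writing $n_i(Y)$ for the multiplicity of $\bL_\QQ^i$ in $\bM_\QQ(Y)$, my first step would be to invoke the standard fact that a smooth projective variety whose derived category admits a full exceptional collection has rational Chow motive of Lefschetz type (the integral refinement being exactly the content of Theorem~\ref{theorem:motives-z-xk-06}). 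This gives $\bM_\QQ(X_K) = \bigoplus_i n_i(X_K)\,\bL_\QQ^i$, hence $\CH^i(X_K)\otimes\QQ \cong \QQ^{n_i(X_K)}$ and the last assertion of the statement; moreover it forces all odd cohomology to vanish and all even cohomology to be of Hodge--Tate type, so $n_i(X_K) = \dim_\QQ H^{2i}(X_K,\QQ)$, together with the normalization
\begin{equation*}
\textstyle\sum_i n_i(X_K) = \rk K_0(X_K),
\end{equation*}
whose right-hand side is supplied by~\eqref{eq:k0-xk}. It then remains only to determine the individual $n_i(X_K)$.

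For the base case $k = 0$ I would read off the Betti numbers of the minuscule homogeneous variety $X = \OGr_+(5,\rV)$ from its Schubert cell decomposition: there are $16$ cells, distributed by complex codimension as $1,1,1,2,2,2,2,2,1,1,1$, which is precisely the stated motive.

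For $k \ge 1$ I would run the two constraints. Using genericity (to descend one step) together with the over-section remark following Lemma~\ref{lemma:smoothness} (which gives smoothness of all further over-sections once a smooth section of codimension $\le 5$ is reached), one obtains a chain of smooth linear sections $X_K \subset \dots \subset X$ in which each term is a hyperplane section of the next. Iterating the weak Lefschetz theorem along this chain yields $n_i(X_K) = n_i(X)$ for $2i < \dim X_K = 10 - k$, and Poincaré duality on the smooth projective $X_K$ gives $n_i(X_K) = n_{10-k-i}(X_K)$, determining the multiplicities above the middle from those below. When $10 - k$ is odd (the cases $k \in \{1,3,5\}$) the central cohomology lies in odd degree and hence vanishes, so these two inputs already fix all $n_i(X_K)$. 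When $10 - k$ is even (the cases $k \in \{2,4,6\}$) the only remaining unknown is the middle multiplicity $n_{(10-k)/2}(X_K)$, which is then forced by the normalization above with $\rk K_0(X_K)$ equal to $16 - 2k$ for $k \le 5$ and to $16$ for $k = 6$. Matching all of this against the base-case numbers reproduces each of the seven displayed motives.

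I expect the only genuinely delicate point to be the very first step, the passage from \emph{full exceptional collection} to \emph{rational motive of Lefschetz type}; everything afterwards is the mechanical bookkeeping of weak Lefschetz, Poincaré duality, and the Euler-characteristic count. The two things to watch are that for the even-dimensional sections the middle Betti number is invisible to Lefschetz from below and must be recovered from the $K_0$-computation, and that the exceptional case $k = 6$ (where $X^\vee_K \neq \varnothing$) is nevertheless handled uniformly, since its derived category still carries a full exceptional collection of total length $16$ and a smooth over-section of codimension $5$ can be chosen by a general hyperplane avoiding the $12$ points of $X^\vee_K$.
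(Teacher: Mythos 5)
Your proposal is correct and follows essentially the same route as the paper: invoke the Marcolli--Tabuada/Galkin et al.\ result that a full exceptional collection forces the rational Chow motive to be of Lefschetz type, then pin down the multiplicities by iterating the Lefschetz hyperplane theorem along a chain of smooth over-sections, Poincar\'e duality, and the rank of $K_0$ supplied by the semiorthogonal decompositions (with the middle multiplicity in the even-dimensional cases recovered from the $K_0$-count, exactly as in the paper). The only cosmetic difference is the base case $k=0$, where you read the Betti numbers off the Schubert cells while the paper also records a self-contained alternative bounding $n_5 \le 2$ by comparing with $\rk K_0$ of a hyperplane section; both are fine.
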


\begin{proof}
The motive of $X_K$ is of Lefschetz type by ~\cite[Theorem~1.1]{marcolli2015}, see also a simplified proof in~\cite[Proposition~2.1]{galkin}.
The multiplicities in case $k = 0$ can be read off the Hodge diamond of $X$, which is well known.
Alternatively, one can argue as follows.
Clearly, we have
\begin{equation*}
\bM_\QQ(X_K) = \bigoplus_{i = 0}^{10} n_i \bL_\QQ^i,
\end{equation*}
where $1 = n_0 \le n_1 \le n_2 \le n_3 \le n_4 \le n_5 \ge n_6 \ge n_7 \ge n_8 \ge n_9 \ge n_{10} = 1$.
Moreover, $n_{11 - i} = n_i$ by Poincar\'e duality,
and~$\sum n_i = 16$, since this is the rank of the Grothendieck group $K_0(X)$, see~\eqref{eq:k0-xk}.
So, to determine $n_i$ it is enough to check that $n_5 \le 2$.
Assume on a contrary that $n_5 \ge 3$.
Then for a smooth hyperplane section $X_1 \subset X$, since $\bM_\QQ(X_1)$ is of Lefschetz type,
by Lefschetz hyperplane theorem it follows that 
\begin{equation*}
\bM_\QQ(X_1) = \left( \bigoplus_{i = 0}^4 n_i \bL_\QQ^i \right) \oplus \left( \bigoplus_{i = 6}^{10} n_i \bL_\QQ^{i-1} \right).
\end{equation*}
From the assumption we have $\sum_{i=0}^4 n_i + \sum_{i=6}^{10} n_i = \sum_{i=0}^{10} n_i - n_5 \le 16 - 3 = 13 < 14 = \rk K_0(X_1)$, 
see~\eqref{eq:k0-xk}, a contradiction.
It proves that $n_5 \le 2$, and thus gives the required expression for $\bM_\QQ(X)$.

Next, the description of $\bM_\QQ(X_K)$ for $1 \le k \le 6$ follows from a combination of Lefschetz hyperplane theorem 
(that allows to determine the multiplicities of all Lefschetz motives, except possibly for the middle one)
with~\eqref{eq:k0-xk},
which allows to determine the multiplicity of the middle Lefschetz motive when~$k$ is even.
The result for the Chow groups follows immediately from the expression for the motive.
\end{proof}

\section{The blowup of the spinor tenfold}
\label{section:blowup-x}

In this section we discuss a description of the blowup of the projective space $\P(\SS)$ along the spinor tenfold $X$ 
and its consequences for linear sections of $X$.

\subsection{The blowup of the space of spinors along $X$}

Recall the notation of Section~\ref{subsection:notation}.
The next result can be extracted from~\cite[Theorem~III.3.8(4)]{zak1993book}.
For completeness, we provide a proof using the blowup lemma.

\begin{proposition}
\label{proposition:blowup-x}
Let $X \subset \P(\SS)$ be the spinor tenfold, let $\rQ \subset \P(\rV)$ be the corresponding $8$-dimensional quadric,
and let $\cS = \cS_8$ be the spinor bundle on $\rQ$.
The left part of~\eqref{diagram:x-q-x} extends to a diagram
\begin{equation}
\label{diagram:blowup-x-s}
\vcenter{\xymatrix@C=5em{
\Bl_X(\P(\SS)) \ar@{=}[r]^\sim \ar[d]_{f_\SS} &
\P_\rQ(\cS_8) \ar[dl] &
\cQ \ar@{_{(}->}[l] \ar[dl]  \ar[dr] 
\\
\P(\SS) &
X \ar@{_{(}->}[l] &&
\rQ
}}
\end{equation}
Under the isomorphism $\Bl_X(\P(\SS)) \cong \P_\rQ(\cS_8)$ the exceptional divisor of the blowup morphism $f_\SS$ 
coincides with the family of quadrics $\cQ \subset \P_\rQ(\cS_8)$.
Moreover, if $H_\SS$ is the hyperplane class of $\P(\SS)$ and $H_\rQ$ is the hyperplane class of~$\rQ$ 
then the class of the divisor $\cQ$ in $\Pic(\P_\rQ(\cS_8))$ can be expressed as
\begin{equation}
\label{eq:blowup-x-picard-relation}
\cQ \sim 2H_\SS - H_\rQ.
\end{equation}
\end{proposition}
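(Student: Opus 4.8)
The plan is to realize the candidate morphism $f_\SS \colon \P_\rQ(\cS_8) \to \P(\SS)$ as the blowup of $\P(\SS)$ along $X$ by checking the hypotheses of the blowup lemma (Lemma~\ref{lemma:blowup}), and then to read off the class of the exceptional divisor from a comparison of canonical classes.

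First I would construct $f_\SS$ and describe its fibres. The spinor sequence of Lemma~\ref{lemma:spinor-sequence} exhibits $\cS_8$ as a subbundle $\cS_8 \hookrightarrow \SS \otimes \cO_\rQ$, so composing the tautological inclusion $\cO_{\P_\rQ(\cS_8)}(-1) \hookrightarrow \pi^*\cS_8$ with this embedding gives a nowhere-vanishing map $\cO(-1) \hookrightarrow \SS \otimes \cO$, hence a morphism $f_\SS$ with $f_\SS^*\cO_{\P(\SS)}(-1) \cong \cO_{\P_\rQ(\cS_8)}(-1)$, that is $f_\SS^* H_\SS = H$ for the relative hyperplane class $H$. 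The fibre of $f_\SS$ over $[s]$ is $\{v \in \rQ : s \in (\cS_8)_v\}$; since $(\cS_8)_v$ is the kernel of Clifford multiplication $\gamma_v \colon \SS \to \SS_-$ by $v$, the Clifford relation shows that $A_s := \{v \in \rV : \gamma_v(s) = 0\}$ is isotropic, so every fibre is a linear subspace $\P(A_s) \subseteq \rQ$. Over a pure spinor $[s] = [U_5]$ the annihilator is $A_s = U_5$ and the fibre is $\P(U_5) \cong \P^4$; comparing with \eqref{eq:cq-in-ps} and \eqref{eq:cq-pu} identifies $f_\SS^{-1}(X)$ with $\cQ$ and $f_\SS|_\cQ$ with the $\P^4$-fibration $\cQ \cong \P_X(\cU) \to X$, so the diagram \eqref{diagram:blowup-x-s} commutes.

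The hard part is birationality of $f_\SS$, which I would obtain from equivariance. The morphism is $\Spin(\rV)$-equivariant and its image is a closed $\Spin(\rV)$-invariant subset containing $X = f_\SS(\cQ)$. Recall that $\Spin(\rV)$ acts on $\P(\SS)$ with exactly two orbits, the closed orbit $X$ and its open complement; hence the image is either $X$ or all of $\P(\SS)$. The first is impossible, since a surjection onto the $10$-dimensional $X$ from the $15$-dimensional $\P_\rQ(\cS_8)$ would force every fibre to have dimension at least $5$, whereas the fibre over a pure spinor is only $\P^4$. Therefore $f_\SS$ is surjective and, the two sides having the same dimension, generically finite; as each fibre is a (connected) linear subspace of $\rQ$, the generic fibre is a single point, so in characteristic zero $f_\SS$ has degree one and is birational.

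With birationality established the remaining hypotheses of Lemma~\ref{lemma:blowup} are immediate: $\P_\rQ(\cS_8)$, $\P(\SS)$ and $X$ are smooth, $\codim_{\P(\SS)}X = 5 \ge 2$, the divisor $\cQ \cong \P_X(\cU)$ is irreducible, $p = f_\SS|_\cQ$ is surjective, and $\rho(\P_\rQ(\cS_8)/\P(\SS)) = 1$ because $\Pic(\P_\rQ(\cS_8)) = \ZZ H \oplus \ZZ H_\rQ$ while $f_\SS^*\Pic(\P(\SS)) = \ZZ H$. The lemma then gives $\P_\rQ(\cS_8) \cong \Bl_X(\P(\SS))$ with exceptional divisor $\cQ$. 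Finally, for \eqref{eq:blowup-x-picard-relation} I would compare canonical classes. The blowup formula gives $K_{\P_\rQ(\cS_8)} = f_\SS^* K_{\P(\SS)} + 4\,[\cQ] = -16H + 4\,[\cQ]$, while the relative canonical class of the $\P^7$-bundle $\pi$ gives $\omega_{\P_\rQ(\cS_8)/\rQ} \cong \cO(-8)\otimes \pi^*\det\cS_8^\vee$, so $K_{\P_\rQ(\cS_8)} = -8H + \pi^*(K_\rQ + \det\cS_8^\vee)$; here $K_\rQ = -8H_\rQ$ and, by Lemma~\ref{lemma:cs-dual}, $\det\cS_8 \cong \cO_\rQ(-4)$, whence the right-hand side equals $-8H - 4H_\rQ$. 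Equating the two expressions yields $4\,[\cQ] = 8H - 4H_\rQ$, i.e.\ $[\cQ] = 2H - H_\rQ = 2H_\SS - H_\rQ$, as claimed.
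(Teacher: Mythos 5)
Your proof is correct, but at the two non-routine steps it takes a genuinely different route from the paper. For birationality of $f_\SS$, the paper observes that $\P_\rQ(\cS_8)\subset \rQ\times\P(\SS)$ is the zero locus of a section of $\cS_{8,-}^\vee\boxtimes\cO(1)$, so the fibres of $f_\SS$ are zero loci of sections of $\cS_{8,-}^\vee$, and concludes from $c_8(\cS_{8,-}^\vee)=1$ that the general fibre is a point; you instead combine the two-orbit structure of $\P(\SS)$ with the fibre-dimension count over $X$ and the observation that every fibre is a linear space $\P(A_s)$, which is a clean and more geometric argument. The one caveat is logical ordering: in this paper the statement that $\Spin(\rV)$ acts on $\P(\SS)\setminus X$ transitively is Corollary~\ref{corollary:transitivity}, which is \emph{deduced from} Proposition~\ref{proposition:blowup-x}; so if your argument were substituted for the paper's, you would have to import the two-orbit fact from the classical literature (Sato--Kimura, as the paper notes it is well known) rather than from Corollary~\ref{corollary:transitivity}, on pain of circularity. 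The Chern-class argument avoids this dependence entirely, which is presumably why the paper uses it. For the relation~\eqref{eq:blowup-x-picard-relation} the paper reads off the class of $\cQ$ directly from the self-duality $\cS_8^\vee\cong\cS_8(H_\rQ)$, which exhibits $\cQ$ as a relative quadric cut out by a section of $\Sym^2\cS_8^\vee(-H_\rQ)$, whereas you compare the blowup and projective-bundle formulas for the canonical class; your computation (discrepancy $4[\cQ]$, $\det\cS_8\cong\cO_\rQ(-4)$ from Lemma~\ref{lemma:cs-dual}, $K_\rQ=-8H_\rQ$) checks out and gives the same answer, at the cost of being slightly less explicit about \emph{why} $\cQ$ is a relative quadric.
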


\begin{proof}
The canonical embedding $\cS_8 \hookrightarrow \SS \otimes \cO_\rQ$ 
(we denote the corresponding quotient bundle by~$\SS/\cS_8$)
induces a $\Spin(\rV)$-equivariant morphism
\begin{equation}
\label{eq:blowup-spinor}
f_\SS \colon \P_\rQ(\cS_8) \to \P(\SS).
\end{equation}
We claim that this morphism is a blowup along the spinor variety $X$.

First, let us check that the morphism $f_\SS$ is birational.
Indeed, the image of $\P_\rQ(\cS_8)$ in $\rQ \times \P(\SS)$ is the zero locus of a global section 
of the vector bundle $(\SS/\cS_8) \boxtimes \cO(1) \cong \cS_{8,-}^\vee \boxtimes \cO(1)$
(the isomorphism follows from a combination of Lemma~\ref{lemma:spinor-sequence} and~\ref{lemma:cs-dual}).
Therefore, the fibers of~\eqref{eq:blowup-spinor} are the zero loci of global sections of $\cS_{8,-}^\vee$.
Since $\cS_{8,-}^\vee$ is globally generated of rank~8 with top Chern class equal to 1 (see~\cite[Remark~2.9]{ottaviani}) 
it follows that the general fiber is a single point, hence $f_\SS$ is birational.

Next, we apply the blowup lemma to the morphism $f_\SS \colon \P_\rQ(\cS_8) \to \P(\SS)$.
We have~$\Pic(\P_\rQ(\cS_8)) \cong \ZZ^2$, while~$\Pic(\P(\SS)) \cong \ZZ$, so the relative Picard number for $f_\SS$ equals~1.
On the other hand, we have a natural embedding $\cQ \hookrightarrow \P_\rQ(\cS_8)$ (see~\eqref{eq:cq-in-ps}),
and its composition with the map $f_\SS$ is defined by the pullback 
of the spinor line bundle from $X$, hence the middle parallelogram in~\eqref{diagram:blowup-x-s} is commutative.
Since $\cQ \subset \P_\rQ(\cS_8)$ is a divisor and its image $f_\SS(\cQ) = X \subset \P(\SS)$ is smooth,
we conclude by Lemma~\ref{lemma:blowup} that $f_\SS$ is the blowup of $X$ and $\cQ$ is its exceptional divisor.


Finally, the equation of the relative quadric $\cQ \subset \P_\rQ(\cS_8)$ is induced 
by the self-duality isomorphism~$\cS_8^\vee \cong \cS_8(H_\rQ)$ (see Lemma~\ref{lemma:cs-dual}), 
which means that $\cQ = 2H_\SS - H_\rQ$, thus proving~\eqref{eq:blowup-x-picard-relation}.
\end{proof}

This result has several useful consequences for the geometry of $X$.
First, it gives a simple proof of transitivity of $\Spin(\rV)$-action on $\P(\SS) \setminus X$
(which is well-known, see, e.g., \cite[Proposition~2.1]{manivel2017double},
\cite[Remark~2.13(1)]{fu2015special}, and also~\cite[Proposition~31]{Sato1977}).

\begin{corollary}
\label{corollary:transitivity}
The action of $\Spin(\rV)$ on $\P(\SS) \setminus X$ is transitive.
\end{corollary}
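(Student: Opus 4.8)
The plan is to exploit the blowup description of Proposition~\ref{proposition:blowup-x}: the $\Spin(\rV)$-equivariant morphism $f_\SS$ of~\eqref{eq:blowup-spinor} is the blowup of $X$ with exceptional divisor $\cQ$, so it restricts to a $\Spin(\rV)$-equivariant isomorphism $\P_\rQ(\cS_8)\setminus\cQ \xrightarrow{\ \sim\ } \P(\SS)\setminus X$. Thus the statement is equivalent to transitivity of the $\Spin(\rV)$-action on $\P_\rQ(\cS_8)\setminus\cQ$, and I would work on the latter space, where the fibration over $\rQ$ is available.

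First I would use the projection $\P_\rQ(\cS_8)\setminus\cQ\to\rQ$, which is $\Spin(\rV)$-equivariant with base $\rQ\cong\Spin(\rV)/\bP_1$ a single orbit. Given two points of the total space, I can therefore move them into a common fiber over some $v\in\rQ$, reducing the problem to transitivity of the stabilizer $\Spin(\rV)_v=\bP_1$ on that fiber. By~\eqref{eq:qv-qvminus} and the last isomorphism in~\eqref{eq:small-ogr}, this fiber is $\P^7\setminus\cQ_v$, where $\P^7$ is the fiber of $\P_\rQ(\cS_8)$ over $v$ and $\cQ_v=\OGr_+(4,v^\perp/v)\cong Q^6$ is a smooth $6$-dimensional quadric spinor-embedded in $\P^7$.

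The hard part will be showing that $\bP_1$ acts transitively on $\P^7\setminus Q^6$. Here I would observe that the reductive part of $\bP_1$ contains a subgroup isomorphic to $\Spin(v^\perp/v)\cong\Spin(8)$, and that the fiber $\P^7$ is the projectivization of a half-spinor representation of this $\Spin(8)$, with closed orbit $\cQ_v\cong Q^6$. Invoking triality to identify the half-spinor representation with the vector representation $\rW$ of $\SO(8)$ (so that $Q^6\subset\P(\rW)=\P^7$ becomes the quadric of isotropic lines), I would conclude transitivity on the complement as follows: given $[w_1],[w_2]$ with $\bq(w_i)\ne 0$, rescale the homogeneous representatives so that $\bq(w_1)=\bq(w_2)$ (possible over an algebraically closed field), and then apply Witt's theorem, equivalently transitivity of $\SO(8)$ on a nonempty affine quadric $\{\bq=c\}$ with $c\ne 0$, to find $g\in\SO(8)$ with $gw_1=w_2$. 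This shows that $\SO(8)$, and hence $\Spin(8)\subset\bP_1$, already acts transitively on $\P^7\setminus Q^6$, which closes the fiberwise step and completes the argument.
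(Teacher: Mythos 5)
Your proposal is correct and follows essentially the same route as the paper: reduce via the $\Spin(\rV)$-equivariant isomorphism $\P(\SS)\setminus X\cong\P_\rQ(\cS_8)\setminus\cQ$ to transitivity of the stabilizer of $v\in\rQ$ on the fiber $\P(\cS_{8,v})\setminus\cQ_v$, and then use the subgroup $\Spin(v^\perp/v)\cong\Spin(\cS_{8,v})$ of the stabilizer. The only difference is that you spell out the final step (triality plus Witt's theorem) which the paper compresses into ``hence the claim.''
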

\begin{proof}
The blowup morphism induces a $\Spin(\rV)$-equivariant isomorphism
\begin{equation*}
\P(\SS) \setminus X \cong \P_\rQ(\cS_8) \setminus \cQ.
\end{equation*}
Since the action of $\Spin(\rV)$ on $\rQ$ is transitive, it is enough to check that the stabilizer of a point $v \in \rQ$ in $\Spin(\rV)$
acts transitively on $\P(\cS_{8,v}) \setminus \cQ_v$.
But the stabilizer contains $\Spin(v^\perp/v) \cong \Spin(\cS_{8,v})$ as a subgroup, hence the claim.
\end{proof}

As another consequence, we deduce a resolution for the structure sheaf of $\cO_X$ on $\P(\SS)$, 
which was also deduced by other tools earlier (see~\cite[Section~5.1]{weyman2012geometry}).

\begin{corollary}
\label{corollary:resolution-x}
There is an exact sequence 
\begin{equation*}
0 \to \cO(-8) \to \rV(-6) \to \SS^\vee(-5) \to \SS(-3) \to \rV(-2) \to \cO \to \cO_X \to 0.
\end{equation*}
In particular, $X$ is an intersection of quadrics and $H^0(\P(\SS),I_X(2)) \cong \rV$,
an isomorphism of $\Spin(\rV)$-representations.
\end{corollary}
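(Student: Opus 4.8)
The plan is to read off the resolution from the blowup description of Proposition~\ref{proposition:blowup-x} by computing the derived pushforward along $f_\SS$ of a suitable line bundle. Write $f = f_\SS \colon \P_\rQ(\cS_8) \cong \Bl_X(\P(\SS)) \to \P(\SS)$ and let $E = \cQ$ be the exceptional divisor, so that by~\eqref{eq:blowup-x-picard-relation} one has $\cO(-E) \cong \cO(H_\rQ - 2H_\SS)$. The starting point is the structure sequence
\begin{equation*}
0 \to \cO_{\P_\rQ(\cS_8)}(-E) \to \cO_{\P_\rQ(\cS_8)} \to \cO_E \to 0.
\end{equation*}
Since $f$ is the blowup of a smooth center in a smooth variety, $\mathrm{R}f_*\cO_{\P_\rQ(\cS_8)} = \cO_{\P(\SS)}$; since $E = \cQ \to X$ is a $\P^4$-bundle (the outer arrow of~\eqref{diagram:x-q-x}), $\mathrm{R}f_*\cO_E = \cO_X$. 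Applying $\mathrm{R}f_*$ to the sequence therefore identifies $\mathrm{R}f_*\cO_{\P_\rQ(\cS_8)}(-E) \cong I_X$, the ideal sheaf of $X$, concentrated in degree $0$. Thus it suffices to resolve $\mathrm{R}f_*\cO(H_\rQ - 2H_\SS)$ by vector bundles.

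To this end I would use the embedding $\iota \colon \P_\rQ(\cS_8) \hookrightarrow \rQ \times \P(\SS)$ from~\eqref{eq:cq-in-ps} and the proof of Proposition~\ref{proposition:blowup-x}, under which $\P_\rQ(\cS_8)$ is the zero locus of a regular section of $\cE := \cS_{8,-}^\vee \boxtimes \cO(1)$ (using $\SS/\cS_8 \cong \cS_{8,-}^\vee$, from Lemmas~\ref{lemma:spinor-sequence} and~\ref{lemma:cs-dual}). The Koszul complex of this section resolves $\iota_*\cO_{\P_\rQ(\cS_8)}$ with terms $\bw{i}{\cE^\vee} = \bw{i}{\cS_{8,-}} \boxtimes \cO(-i)$. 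Twisting by $p_1^*\cO_\rQ(1) \otimes p_2^*\cO(-2)$ (so as to produce $\iota_*\cO(H_\rQ - 2H_\SS)$) and pushing forward along $p_2$, whose restriction to $\P_\rQ(\cS_8)$ is $f$, the projection formula and Künneth reduce the computation to the cohomology groups $\RGamma(\rQ, \bw{i}{\cS_{8,-}}(1))$ for $0 \le i \le 8$, each placed in the term $\cO_{\P(\SS)}(-i-2)$.

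The heart of the argument --- and the step I expect to be the main obstacle --- is the Borel--Weil--Bott computation of these cohomology groups on the eight-dimensional quadric $\rQ = \Spin(\rV)/\bP_1$. Using the self-dualities $\cS_{8,-}^\vee \cong \cS_{8,-}(H_\rQ)$ and $\det\cS_{8,-} \cong \cO(-4)$ of Lemma~\ref{lemma:cs-dual} (which also give $\bw{8-i}{\cS_{8,-}}(1) \cong \bw{i}{\cS_{8,-}}(i-3)$), together with the spinor sequences of Lemma~\ref{lemma:spinor-sequence}, one checks that all of these groups are concentrated in cohomological degree $0$ and vanish except for
\begin{equation*}
\RGamma(\rQ,\cO_\rQ(1)) = \rV,\quad
\RGamma(\rQ,\cS_{8,-}(1)) = \SS,\quad
\RGamma(\rQ,\bw{3}{\cS_{8,-}}(1)) = \SS^\vee,\quad
\RGamma(\rQ,\bw{4}{\cS_{8,-}}(1)) = \rV,\quad
\RGamma(\rQ,\bw{6}{\cS_{8,-}}(1)) = \Bbbk,
\end{equation*}
that is, for $i = 0,1,3,4,6$. (The first two use $H^0(\rQ,\cO_\rQ(1)) = \rV$ and $H^0(\rQ,\cS_{8,-}^\vee) = \SS_-^\vee \cong \SS$ by~\eqref{eq:dual-s}; the remaining representations are forced, up to $\rV \cong \rV^\vee$ and~\eqref{eq:dual-s}, by $\Spin(\rV)$-equivariance and by the self-duality of the resulting resolution, reflecting $\omega_X \cong \cO_X(-8)$.) Because all the cohomology sits in degree $0$, the hypercohomology spectral sequence degenerates and the pushforward is represented by an honest complex of vector bundles placed in the twists $-2,-3,-5,-6,-8$, which is exactly the claimed resolution of $I_X$; splicing it with $0 \to I_X \to \cO \to \cO_X \to 0$ yields the six-term resolution.

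Finally, the two stated consequences are immediate. The surjection $\rV(-2) \twoheadrightarrow I_X$ at the right end of the resolution shows that $I_X$ is generated by quadrics, so $X$ is scheme-theoretically an intersection of quadrics. Twisting the resolution by $\cO(2)$ and taking global sections, the vanishing $H^0(\P(\SS),\SS\otimes\cO(-1)) = 0$ --- together with the vanishing of the relevant cohomology of the further terms, all of which are direct sums of $\cO(j)$ with $j<0$ on $\P^{15}$ --- gives $H^0(\P(\SS), I_X(2)) \cong \rV$ as $\Spin(\rV)$-representations.
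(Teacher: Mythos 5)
Your strategy coincides with the paper's: realize $\P_\rQ(\cS_8)$ inside $\rQ\times\P(\SS)$ as the zero locus of a regular section of $\cS_{8,-}^\vee\boxtimes\cO(1)$, twist the Koszul complex so that its pushforward computes $I_X(2)$, and evaluate the terms via Borel--Bott--Weil on $\rQ$. But the step you yourself identify as the heart of the argument is asserted incorrectly, and the error is fatal rather than cosmetic. The groups $\RGamma(\rQ,\bw{i}{\cS_{8,-}}(1))$ are \emph{not} all concentrated in cohomological degree $0$: the Borel--Bott--Weil computation carried out in the paper's proof gives $H^\bullet(\rQ,\bw{3}{\cS_{8,-}}(1))=\SS^\vee$ in degree $1$, $H^\bullet(\rQ,\bw{4}{\cS_{8,-}}(1))=\rV$ in degree $1$ (from one of the two irreducible summands of $\bw{4}{\cS_{8,-}}(1)$), and $H^\bullet(\rQ,\bw{6}{\cS_{8,-}}(1))=\Bbbk$ in degree $2$; only the terms for $i=0,1$ sit in degree $0$. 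Equivariance and self-duality constrain which representations may appear, but they say nothing about the cohomological degrees, and that is exactly where the content of the computation lies.

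These shifts are not something one can wave away: they are what makes the argument work. In the hypercohomology spectral sequence the Koszul term contributes in column $-i$, so under your vanishing pattern the five surviving terms would occupy total degrees $0,-1,-3,-4,-6$, the sequence would degenerate in the single row $q=0$, and $E_2^{-3,0}=\Coker\bigl(\rV(-6)\to\SS^\vee(-5)\bigr)$ would be a nonzero contribution to $R^{-3}f_{\SS*}$ of a sheaf --- a contradiction; equivalently, a complex with zero entries in positions $-2$ and $-5$ but nonzero entries in positions $-3,-4,-6$ cannot be acyclic there, so no resolution of $I_X(2)$ would emerge. With the correct degrees $0,0,1,1,2$ for $i=0,1,3,4,6$ the terms align in consecutive total degrees $0,\dots,-4$ and assemble into the stated five-term resolution, which after splicing with $0\to I_X\to\cO\to\cO_X\to 0$ gives the corollary. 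You therefore need to actually perform the Borel--Bott--Weil analysis (decompose each $\bw{i}{\cS_{8,-}}(1)$ into irreducible homogeneous bundles and locate the regular weights together with their lengths). The final deductions about generation by quadrics and $H^0(\P(\SS),I_X(2))\cong\rV$ are fine once the resolution is in place.
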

\begin{proof}
It was explained in the proof of Proposition~\ref{proposition:blowup-x} that the projective bundle $\P_\rQ(\cS_8)$ can be written inside the product $\rQ \times \P(\SS)$ as the zero locus 
of a global section of the vector bundle $\cS_{8,-}^\vee \boxtimes \cO(1)$.
Consequently, its structure sheaf has a Koszul resolution
\begin{equation*}
0 \to \bw8\cS_{8,-} \otimes \cO(-8) \to \bw7\cS_{8,-} \otimes \cO(-7) \to \dots \to 
\cS_{8,-} \otimes \cO(-1) \to \cO \to \cO_{\P_\rQ(\cS_8)} \to 0.
\end{equation*}
Since $\cO_{\P_\rQ(\cS_8)}(-\cQ) \cong \cO_{\P_\rQ(\cS_8)}(H_\rQ - 2H_\SS)$ by~\eqref{eq:blowup-x-picard-relation}, 
the pushforward of $\cO_{\P_\rQ(\cS_8)}(H_\rQ)$ provides a resolution 
for the twisted ideal sheaf $I_X(2)$.
The twists of wedge powers of $\cS_{8,-}$ are direct sums of irreducible homogeneous vector bundles on $\rQ$, 
and the corresponding weights of the group $\Spin(\rV)$ are listed in the second lines of the two tables below:
\begin{equation*}
\begin{array}{||c||c||c||c||c|c||}
\hline
\cO(1) & \cS_{8,-}(1) & \bw2\cS_{8,-}(1) & \bw3\cS_{8,-}(1) & \multicolumn{2}{||c||}{\bw4\cS_{8,-}(1)} 
\\
\hline
0 & \omega_4 - \omega_1 & \omega_3 - 2\omega_1 & \omega_2 + \omega_5 - 3\omega_1 & 
2\omega_2 - 4\omega_1 & 2\omega_5 - 3\omega_1 
\\
\hline
\rV & \SS & 0 & \SS^\vee[-1] & \rV[-1] & 0 
\\
\hline
\end{array}
\end{equation*}
\begin{equation*}
\begin{array}{||c||c||c||c||c||}
\hline
\bw5\cS_{8,-}(1) & \bw6\cS_{8,-}(1) & \bw7\cS_{8,-}(1) & \bw8\cS_{8,-}(1)
\\
\hline
\omega_2 + \omega_5 - 4\omega_1 & \omega_3 - 4\omega_1 & \omega_4 - 4\omega_1 & -4\omega_1 \\
\hline
0 & \Bbbk[-2] & 0 & 0 \\
\hline
\end{array}
\end{equation*}
The third lines of the tables list the cohomology on $\rQ$ of the corresponding bundles 
(computed via Borel--Bott--Weil theorem) with the cohomology degree in brackets.
As a result, we obtain the required resolution.
\end{proof}

Yet another useful consequence of the proposition is the following well-known isomorphism.

\begin{corollary}
\label{corollary:normal-x}
The normal bundle of the spinor tenfold has the following description
\begin{equation*}
\cN_{X/\P(\SS)} \cong \bw4\cU^\vee \cong \cU(2),
\end{equation*}
where $\cU$ is the resriction of the tautological bundle.
\end{corollary}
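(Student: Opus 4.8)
The plan is to read the normal bundle off the blowup description of Proposition~\ref{proposition:blowup-x}, exploiting the fact that the exceptional divisor of a smooth blowup is the projectivization of the normal bundle. First I would recall that for the blowup $f_\SS\colon\Bl_X(\P(\SS))\to\P(\SS)$ along the smooth center $X$, the exceptional divisor is canonically isomorphic, over $X$, to $\P_X(\cN_{X/\P(\SS)})$ (projectivized normal bundle, in the paper's ``lines'' convention for $\P_S(\cdot)$). By Proposition~\ref{proposition:blowup-x} this exceptional divisor is the family of quadrics $\cQ$, and by \eqref{eq:cq-pu} one has $\cQ\cong\P_X(\cU)$ compatibly with the projections to $X$. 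Combining these identifications yields an isomorphism $\P_X(\cN_{X/\P(\SS)})\cong\P_X(\cU)$ of $X$-schemes.

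Next I would invoke the standard fact that an $X$-isomorphism of projective bundles $\P_X(\cN_{X/\P(\SS)})\cong\P_X(\cU)$ forces $\cN_{X/\P(\SS)}\cong\cU\otimes\cL$ for some $\cL\in\Pic(X)$; since $\Pic(X)=\ZZ\cdot\cO_X(1)$ by Lefschetz, we may write $\cL=\cO_X(a)$. To pin down $a$ I would compare determinants. From the normal bundle sequence $0\to T_X\to T_{\P(\SS)}\vert_X\to\cN_{X/\P(\SS)}\to0$, together with $\det(T_{\P(\SS)}\vert_X)\cong\cO_X(16)$ (the anticanonical class of $\P^{15}$ restricted) and $\det T_X\cong\omega_X^{-1}\cong\cO_X(8)$ (recall $\omega_X\cong\cO_X(-8)$ from Section~\ref{subsection:notation}), one gets $\det\cN_{X/\P(\SS)}\cong\cO_X(8)$. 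On the other hand $\det(\cU(a))\cong\det\cU\otimes\cO_X(5a)\cong\cO_X(5a-2)$, so $5a-2=8$, i.e.\ $a=2$, giving $\cN_{X/\P(\SS)}\cong\cU(2)$.

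Finally I would record the purely linear-algebraic identity $\bw4\cU^\vee\cong\cU(2)$: for the rank-$5$ bundle $\cU$ the perfect pairing $\bw4\cU^\vee\otimes\cU^\vee\to\bw5\cU^\vee=\det\cU^\vee$ gives $\bw4\cU^\vee\cong\cU\otimes\det\cU^\vee\cong\cU(2)$, using $\det\cU^\vee\cong\cO_X(2)$. Chaining this with the previous isomorphism completes the proof.

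The only real point requiring care is the convention bookkeeping: one must use that the exceptional divisor is $\P_X(\cN)$ and that \eqref{eq:cq-pu} produces $\P_X(\cU)$ rather than $\P_X(\cU^\vee)$. This is reassuringly self-correcting, since $\cU$ and $\cU^\vee$ are not twist-equivalent on $X$ (their first Chern classes are $-2H$ and $2H$, so $\cU^\vee\cong\cU\otimes\cL$ would force $5\deg\cL=4$, impossible); equivalently, the determinant equation $5a-2=8$ has an integral solution for the bundle $\cU$ but its analogue $5b+2=8$ for $\cU^\vee$ does not. Thus the determinant computation both fixes the twist and confirms the identification, and I expect no substantive obstacle beyond this verification.
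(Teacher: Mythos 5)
Your proposal is correct and is essentially the paper's own argument: the paper likewise identifies the exceptional divisor $\cQ\cong\P_X(\cU)$ with the projectivized normal bundle to conclude that $\cN_{X/\P(\SS)}$ is a twist of $\cU$, and then fixes the twist by comparing $\det\cN_{X/\P(\SS)}\cong\cO_X(8)$ (adjunction) with $\det\cU\cong\cO_X(-2)$. Your additional bookkeeping (the $\P_X(\cE)\cong\P_X(\cF)\Rightarrow\cF\cong\cE\otimes\cL$ step, the $\cU$ versus $\cU^\vee$ sanity check, and the identity $\bw4\cU^\vee\cong\cU\otimes\det\cU^\vee$) just makes explicit what the paper leaves implicit.
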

\begin{proof}
Since the exceptional divisor of a blowup is isomorphic to the projectivization of the normal bundle 
and $\cQ \cong \P_X(\cU)$ by~\eqref{eq:cq-pu},
it follows that the normal bundle is isomorphic to a twist of~$\cU$. 
On the other hand, by the adjunction formula $\det \cN_{X/\P(\SS)} \cong \cO_X(8)$, while $\det \cU \cong \cO_X(-2)$ by~\eqref{eq:detu-s}, 
so the required twist is given by $\cO_X(2)$.
\end{proof}

Of course, the argument of Proposition~\ref{proposition:blowup-x} can be applied to the blowup of $\P(\SS^\vee)$ along $X^\vee$,
with a completely analogous result (or one can formally apply an outer automorphism of $\Spin(\rV)$ to the diagram~\eqref{diagram:blowup-x-s}).
On the next diagram we merge the resulting digram with~\eqref{diagram:blowup-x-s}:
\begin{equation}
\label{diagram:blowups-x-xvee}
\vcenter{\xymatrix{
&&
\cQ \ar[ddll] \ar[d] &&
\cQ_- \ar[ddrr] \ar[d] 
\\
&&
\P_\rQ(\cS_8) \ar[dl]^p \ar[dr]_q &&
\P_\rQ(\cS_{8,-}) \ar[dl]^{q_-} \ar[dr]_{p_-} \\
X \ar[r] &
\P(\SS) &&
\rQ &&
\P(\SS^\vee) &
X^\vee \ar[l]
}}
\end{equation} 
The rational map $\gamma := q_- \circ p_- \colon \P(\SS^\vee) \dashrightarrow \rQ$ is essential for the paper.

\subsection{Quadrics on the spinor tenfold}

One can also use the Proposition~\ref{proposition:blowup-x} to describe quadrics on~$X$.
Denote by 
\begin{equation*}
G_d(X) = \Hilb^{(t+1)\cdots(t+d-1)(2t+d)/d!}(X)
\end{equation*}
the Hilbert scheme of quadrics of dimension $d$ in $X$.

\begin{corollary}
\label{corollary:quadrics-in-xk}
Assume $Z \subset X$ is a quadric of dimension $d$.
Then $d \le 6$ and either there exists a unique point $v \in \rQ$ and 
a unique linear subspace $\P^{d+1} \subset \P(\cS_{8,v}) = q^{-1}(v)$ such that 
\begin{equation*}
Z = p(\P^{d+1} \cap \cQ_v);
\end{equation*}
or $d \le 3$ and there is a unique linear space $\Pi^{d+1} \subset X$ such that $Z \subset \Pi^{d+1}$.
\end{corollary}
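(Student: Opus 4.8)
The plan is to reduce the classification to the blowup geometry of Proposition~\ref{proposition:blowup-x}, splitting into two cases according to the position of the linear span of $Z$. I would write $L := \langle Z\rangle \subset \P(\SS)$ for the linear span; since $Z$ is a $d$-dimensional quadric, $L \cong \P^{d+1}$ and $Z \subset L$ is a quadric hypersurface. The dichotomy in the statement will correspond exactly to whether or not $L \subset X$.

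The case $L \subset X$ is immediate: then $L$ is a linear subspace of $X$, so Theorem~\ref{theorem:linear-spaces} forces $\dim L \le 4$, i.e. $d \le 3$, and $Z \subset \Pi^{d+1} := L$. Any linear space of dimension $d+1$ containing $Z$ must contain, hence equal, its span $L$, which gives uniqueness. This is the second alternative.

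The substance is the case $L \not\subset X$. Using that $X$ is scheme-theoretically cut out by the quadrics parametrized by $H^0(\P(\SS),I_X(2)) \cong \rV$ (Corollary~\ref{corollary:resolution-x}) together with the fact that the only quadrics in $L$ vanishing on $Z$ are the multiples of its equation $q_Z$, one sees that $I_X\cdot\cO_L = (q_Z)$ is principal; thus $X\cap L = Z$ as schemes, $Z$ is a Cartier divisor in $L$, and the strict transform $\tilde L \subset \P_\rQ(\cS_8) = \Bl_X\P(\SS)$ maps isomorphically onto $L$ under $p$ (blowing up a Cartier divisor changes nothing). Moreover $\cQ|_{\tilde L} = \cO_L(2)$, since $I_X\cdot\cO_{\tilde L} = \cO_{\tilde L}(-\cQ|_{\tilde L})$ corresponds under this isomorphism to $(q_Z) = \cO_L(-2)$. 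The argument then collapses to a Picard computation on $\tilde L \cong \P^{d+1}$: restricting the relation $\cQ \sim 2H_\SS - H_\rQ$ of~\eqref{eq:blowup-x-picard-relation} and inserting $H_\SS|_{\tilde L} = \cO_L(1)$ and $\cQ|_{\tilde L} = \cO_L(2)$ yields $H_\rQ|_{\tilde L} = 0$. As $H_\rQ = q^*\cO_\rQ(1)$ is the pullback of an ample class, $q|_{\tilde L}$ must be constant, with some value $v^* \in \rQ$. Hence $\tilde L$ is a linear subspace $\P^{d+1} \subset q^{-1}(v^*) = \P(\cS_{8,v^*})$, so $d+1 \le 7$ and $d \le 6$; and since $p$ restricts on this fiber to the linear embedding $\P(\cS_{8,v^*}) \hookrightarrow \P(\SS)$ and is injective there, $Z = X \cap L = p(\P^{d+1}\cap\cQ_{v^*})$. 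The value $v^*$ is unique, being the constant value of $q|_{\tilde L}$, and $\P^{d+1} = \langle Z\rangle$ is intrinsic; this is the first alternative, and the two cases together give $d \le 6$.

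The step I expect to be the main obstacle is the scheme-theoretic input: establishing that $I_X\cdot\cO_L$ is principal, so that $\tilde L \to L$ is an isomorphism and $\cQ|_{\tilde L} = \cO_L(2)$ (it is precisely this ``$2$'' that cancels against $2H_\SS$ and kills $H_\rQ|_{\tilde L}$). For an irreducible, or more generally reduced, quadric $Z$ this is exactly the statement that a quadric in $\P^{d+1}$ vanishing on $Z$ is a scalar multiple of the equation of $Z$. The one genuinely degenerate possibility, that $Z$ is a non-reduced (double) hyperplane, I would treat separately, noting that its support is then a linear space $\Pi^d \subset X$.
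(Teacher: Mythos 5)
Your proposal is correct and follows essentially the same route as the paper: split on whether $\langle Z\rangle\subset X$, use that $X$ is cut out by quadrics to get the scheme-theoretic equality $\langle Z\rangle\cap X=Z$, and conclude that $q$ contracts the strict transform of $\langle Z\rangle$ to a point of $\rQ$. The only difference is presentational — you justify the contraction by restricting the relation $\cQ\sim 2H_\SS-H_\rQ$ to the strict transform, while the paper leaves this step implicit (it follows equally from the fact that $q$ is given by the quadrics through $X$, all of which restrict to multiples of $q_Z$ on $\langle Z\rangle$); your separate worry about double hyperplanes is unnecessary, since the degree-two part of the saturated ideal of any quadric hypersurface, reduced or not, is spanned by its equation.
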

\begin{proof}
Let $\Pi := \langle Z \rangle \subset \P(\SS)$ be the linear span of $Z$ in $\P(\SS)$.
If $\Pi$ is contained in $X$ there is nothing to prove (since by Theorem~\ref{theorem:linear-spaces} 
the maximal linear space in $X$ has dimension at most~4, the dimension of such $Z$ is bounded by~3), 
so assume $\Pi \not\subset X$.

Since $X$ is an intersection of quadrics (Corollary~\ref{corollary:resolution-x}), 
we have a scheme theoretic equality $\Pi \cap X = Z$.
Therefore, the map $q$ contracts the strict transform~$\widetilde\Pi$ of~$\Pi$ in $\Bl_X(\P(\SS))$ to a point.
Denoting this point by $v$ we see that $\widetilde\Pi \subset q^{-1}(v) = \P(\cS_{8,v})$ and 
$Z = p(\widetilde\Pi) \cap X = p(\widetilde\Pi \cap \cQ_v)$.
\end{proof}

\begin{remark}
In fact, one can push forward the results of the Corollary~\ref{corollary:quadrics-in-xk} to get a description
of the Hilbert scheme of quadrics on $X$ as follows:
\begin{align*}
\Bl_{\P_{\OGr(3,\rV)}(\Sym^2(\cS_2))}(G_0(X)) 					&\cong \Bl_{\OGr_{\rQ}(2,\cS_8)}(\Gr_{\rQ}(2,\cS_8));\\
\Bl_{\P_{\OFl_-(2,5;\rV)}(\Sym^2(\cU_{5,-}/\cU_2))}(G_1(X)) 				&\cong \Bl_{\OGr_{\rQ}(3,\cS_8)}(\Gr_{\rQ}(3,\cS_8));\\
\Bl_{\P_{\OGr(2,\rV)}(\Sym^2(\cS_4))}(G_2(X)) 					&\cong \Bl_{\OGr_{\rQ}(4,\cS_8)}(\Gr_{\rQ}(4,\cS_8));\\
G_3(X)									&\cong \Gr_{\rQ}(5,\cS_8) \sqcup \P_{\OGr_-(5,\rV)}(\Sym^2(\cU_{5,-}));\\
G_4(X)									&\cong \Gr_{\rQ}(6,\cS_8);\\
G_5(X)									&\cong \Gr_{\rQ}(7,\cS_8);\\
G_6(X)									&\cong \rQ.
\end{align*}
However, we do not need these results, so we leave this as an exercise. 
\end{remark}

In particular, we see that maximal (6-dimensional) quadrics in $X$ are all of the form $\cQ_v$ for $v \in \rQ$.

Later we will need a description of intersections of maximal quadrics with maximal linear spaces on the spinor tenfold $X$.

\begin{lemma}
\label{lemma:linear-quadric-intersection}
Let $\Pi^4_{U_{5,-}} \subset X$ be a linear $4$-space and let $\cQ_v \subset X$ be a $6$-dimensional quadric on the spinor tenfold $X$.
Then
\begin{equation*}
\Pi^4_{U_{5,-}} \cap \cQ_v = 
\begin{cases}
\Spec(\Bbbk), &  \text{if $v \not \in U_{5,-}$},\\
\Pi^3_{v,U_{5,-}}, & \text{otherwise.}
\end{cases}
\end{equation*}
\end{lemma}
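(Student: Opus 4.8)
The plan is to work directly with the subspace descriptions of the two subvarieties. Recall from~\eqref{def:pi4-u5} that $\Pi^4_{U_{5,-}} = \Gr(4,U_{5,-})$, where a point $[W_4]$ of this $4$-space corresponds to the unique isotropic $5$-space $U_5 \subset \rV$ containing the $4$-dimensional subspace $W_4 \subset U_{5,-}$; and from Remark~\ref{remark:qv-ogr-zero} that $\cQ_v = \OGr_+(4,v^\perp/v)$ is exactly the zero locus on $X$ of the global section $v$ of $\cU^\vee$. Thus a point $[U_5] \in X$ lies on $\cQ_v$ if and only if $v \in U_5$. I would therefore translate the intersection into the condition
\begin{equation*}
\Pi^4_{U_{5,-}} \cap \cQ_v = \{\, [U_5] : U_5 \supset W_4 \text{ for some } W_4 \subset U_{5,-},\ \dim W_4 = 4,\ \text{and } v \in U_5 \,\}.
\end{equation*}

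**The two cases.** First suppose $v \notin U_{5,-}$. I would argue that for $[U_5] \in \Pi^4_{U_{5,-}}$, the $4$-space $W_4 = U_5 \cap U_{5,-}$ already has dimension $4$, so $U_5 = W_4 + \langle w \rangle$ for a single vector $w \notin U_{5,-}$; the requirement $v \in U_5$ then forces $v \in W_4 + \langle w \rangle$, and since $v \notin U_{5,-}$ this pins down $U_5$ uniquely. More robustly, I would note that $U_5 \cap U_{5,-}$ is $4$-dimensional for every point of the $4$-space, and the extra condition $v \in U_5$ cuts a single point: concretely, $W_4 := U_{5,-} \cap v^\perp$ is the unique $4$-subspace of $U_{5,-}$ orthogonal to $v$ (using that $v \notin U_{5,-}$ together with nondegeneracy of $\bq_\rV$ on $U_{5,-} \oplus \langle v\rangle$ modulo radical), and its isotropic $5$-dimensional extension contains $v$; any $U_5 \in \Pi^4_{U_{5,-}}$ containing $v$ must equal this one because $U_5 \supset W_4$ and $U_5 \ni v$ determine $U_5 = W_4 + \langle v\rangle$. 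This yields the single reduced point $\Spec(\Bbbk)$; I would also remark that the intersection is transverse by comparing the dimension count $4 + 6 - 10 = 0$.

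**The second case.** Now suppose $v \in U_{5,-}$, so in particular $v$ is isotropic, i.e.\ $[v] \in \rQ$. Then every $4$-subspace $W_4 \subset U_{5,-}$ that contains $v$ produces an isotropic $U_5 \supset W_4 \ni v$, so the whole subvariety $\Pi^3_{v,U_{5,-}} = \Gr(3, U_{5,-}/\langle v\rangle)$ from~\eqref{def:pi-us-u5} lies in the intersection. Conversely, if $[U_5]\in\Pi^4_{U_{5,-}}$ satisfies $v \in U_5$, I would show the associated $W_4 = U_5 \cap U_{5,-}$ contains $v$: since $v \in U_5$ and $v \in U_{5,-}$ we get $v \in U_5 \cap U_{5,-} = W_4$, placing $[W_4]$ in $\Gr(3,U_{5,-}/\langle v\rangle) = \Pi^3_{v,U_{5,-}}$. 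This gives equality of the two loci set-theoretically, and I would check scheme-theoretically that the section $v$ of $\cU^\vee$ restricted to $\Pi^4_{U_{5,-}} \cong \P(U_{5,-})$ cuts out the hyperplane $\P^3$ reduced (the section is the image of $v$ under the tautological surjection $U_{5,-}^\vee \to \cU^\vee|_{\Pi^4}$, which is linear, hence vanishes on a reduced hyperplane when $v \in U_{5,-}$).

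**Main obstacle.** The routine parts are the dimension counts; the step that needs genuine care is verifying the \emph{scheme structure} of the intersection in the second case — that $\cQ_v$, being the zero scheme of a section of $\cU^\vee$, meets the linear $4$-space in a reduced $\P^3$ rather than a thickened one. I would handle this by identifying $\Pi^4_{U_{5,-}}$ with $\P(U_{5,-})$ in such a way that $\cU^\vee|_{\Pi^4} \cong \cO(1) \otimes U_{5,-}^\vee / (\text{tautological}) $ and observing that the section $v$ becomes a genuinely linear (degree-one) equation, so its vanishing is a reduced hyperplane exactly when $v \in U_{5,-}$ and is empty-to-a-point when $v \notin U_{5,-}$, matching both branches of the claimed formula.
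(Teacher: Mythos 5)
Your proof is correct and follows essentially the same route as the paper: in both cases one identifies points of $\Pi^4_{U_{5,-}}\cap\cQ_v$ with isotropic $U_5$ satisfying $\dim(U_5\cap U_{5,-})=4$ and $v\in U_5$, getting the single point $\langle v,\,U_{5,-}\cap v^\perp\rangle$ when $v\notin U_{5,-}$ and $\Gr(3,U_{5,-}/v)$ when $v\in U_{5,-}$. The only addition is your check that the section of $\cU^\vee|_{\Pi^4}$ cuts the hyperplane out reducedly, which the paper leaves implicit; that extra care is sound.
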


\begin{proof}
Recall that $\cQ_v = \OGr_+(4,v^\perp/v)$.
First, assume that $v \not\in U_{5,-}$.
Then $v$ is not orthogonal to~$U_{5,-}$ (since $U_{5,-}^\perp = U_{5,-}$), hence the intersection $U_{5,-} \cap v^\perp$ is 4-dimensional.
On the other hand, if $U_{5}$ contains~$v$ and has a 4-dimensional intersection with $U_{5,-}$, this intersection should be contained in $v^\perp$, 
hence $U_{5}$ is equal to the linear span $\langle v, U_{5,-} \cap v^\perp \rangle$, and this is the only intersection point of $\Pi^4_{U_{5,-}}$ and $\cQ_v$.

Now assume that $v \in U_{5,-}$.
Then isotropic subspaces $U_5 \subset \rV$ that have a 4-dimensional intersection with~$U_{5,-}$ 
are parameterized by the 4-space~$\Pi^4_{U_{5,-}} = \Gr(4,U_{5-})$, and 
those of them that contain $v$ are parameterized by the 3-space $\Pi^3_{v,U_{5,-}} = \Gr(3,U_{5,-}/v)$.
\end{proof}

\subsection{Blowups of linear sections}

The diagram~\eqref{diagram:blowups-x-xvee} induces a similar diagram for linear sections of the spinor tenfold.
To state the result we introduce the following notation.

Let $K \subset \SS^\vee$ be a subspace of dimension $k$.
Consider the composition of morphisms of sheaves on $\rQ$:
\begin{equation}
\label{eq:map-k-s}
\sigma_K \colon K \otimes \cO_{\rQ} \hookrightarrow \SS^\vee \otimes \cO_{\rQ} \twoheadrightarrow \cS_8^\vee,
\end{equation} 
where the first morphism is induced by the embedding $K \hookrightarrow \SS^\vee$ and the second is 
the evaluation morphism for the natural identification $H^0(\rQ,\cS_8^\vee) \cong \SS^\vee$ (see Section~\ref{subsection:spinors}).
We denote by 
\begin{equation*}
\rQ = \fD_{\ge 0}(\sigma_K) \supset \fD_{\ge 1}(\sigma_K) \supset \fD_{\ge 2}(\sigma_K) \supset \dots 
\end{equation*}
the discriminant stratification of the quadric $\rQ$ by the corank strata of the morphism $\sigma_K$.
In other words, $\fD_{\ge c}(\sigma_K)$ is
the subscheme of $\rQ$ where the corank of $\sigma_K$ is at least $c$
(the ideal of this subscheme is generated by minors of the map~\eqref{eq:map-k-s} of size $k - s + 1$).
We also put $\fD_{c}(\sigma_K) := \fD_{\ge c}(\sigma_K) \setminus \fD_{\ge c+1}(\sigma_K)$.

\begin{proposition}
\label{proposition:blowup-xk}
Assume that $X_K$ and $X^\vee_K$ are dimensionally transverse linear sections of $X$ and $X^\vee$ respectively.
Then there is a diagram
\begin{equation}
\label{diagram:blowup-xk}
\vcenter{\xymatrix@C=2em{
&
& 
\Bl_{X_K}(\P(K^\perp)) \ar[dl]_p \ar[dr]^q
&&
\Bl_{X^\vee_K}(\P(K)) \ar[dl]_{q_-} \ar[dr]^{p_-}
\\
X_K \ar@{^{(}->}[r]
& 
\P(K^\perp) 
&&
\rQ
&&
\P(K)
&
X^\vee_K \ar@{_{(}->}[l]
}}
\end{equation}
where the maps $p$, $q$, $p_-$, and $q_-$ are the restrictions of the same named maps in~\eqref{diagram:blowups-x-xvee}.
The maps $p$ and~$p_-$ are the blowup maps, and the maps $q$ and $q_-$ are piecewise Zariski locally trivial fibrations 
with fibers over the stratum $\fD_c(\sigma_K) \subset \rQ$ isomorphic to $\P^{7 + c - k}$ and $\P^{c-1}$ respectively.
In particular,
\begin{equation*}
\fD_{\ge 1}(\sigma_K) = q_-(\Bl_{X^\vee_K}(\P(K))).
\end{equation*}
\end{proposition}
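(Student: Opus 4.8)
The plan is to realize both $\Bl_{X_K}(\P(K^\perp))$ and $\Bl_{X^\vee_K}(\P(K))$ as strict transforms inside the projective bundles of diagram~\eqref{diagram:blowups-x-xvee}, and then to read off the fibres of $q$ and $q_-$ from the kernel of the map $\sigma_K$. First I would observe that, since $X_K = X \cap \P(K^\perp)$ is dimensionally transverse, it is a local complete intersection of codimension $5$ in $\P(K^\perp)$, so blowing up commutes with restriction to $\P(K^\perp)$: the strict transform $\widetilde{\P(K^\perp)}$ of $\P(K^\perp)$ in $\Bl_X(\P(\SS)) = \P_\rQ(\cS_8)$ is canonically $\Bl_{X_K}(\P(K^\perp))$, and $p$ restricted to it is the blowup morphism, while $q$ is the restriction of the bundle projection $\P_\rQ(\cS_8) \to \rQ$. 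The same applies on the dual side with $\P_\rQ(\cS_{8,-})$, $p_-$, $q_-$, and this sets up diagram~\eqref{diagram:blowup-xk}.

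Next I would compute the fibre of $q$ over $v \in \rQ$. A point of $\P_\rQ(\cS_8)$ over $v$ is a line in $\cS_{8,v} \subset \SS$, and it lies over $\P(K^\perp)$ if and only if its spinor pairs to zero with every $\kappa \in K$; by the definition of $\sigma_K$ in~\eqref{eq:map-k-s} this says it lies in $\Ker(\sigma^\vee_{K,v} \colon \cS_{8,v} \to K^\vee)$. Hence the total transform $p^{-1}(\P(K^\perp))$ meets $\P(\cS_{8,v})$ in the linear subspace $\P(\Ker\sigma^\vee_{K,v})$, which over the stratum $\fD_c(\sigma_K)$, where $\rk\sigma_{K,v} = k-c$, has dimension $7-k+c$. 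Over the reduced stratum $\fD_c$ the map $\sigma_K$ has constant rank, so $\Ker(\sigma^\vee_K)\vert_{\fD_c}$ is a subbundle of $\cS_8\vert_{\fD_c}$ of rank $8-k+c$; thus $q^{-1}(\fD_c) \cap \widetilde{\P(K^\perp)}$ is the projective subbundle $\P_{\fD_c}(\Ker(\sigma^\vee_K)\vert_{\fD_c})$, a Zariski locally trivial $\P^{7-k+c}$-fibration, giving the asserted piecewise structure.

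The point that needs care — and which I expect to be the main technical step — is that the \emph{strict} transform, not merely the total transform, already contains the \emph{whole} fibre $\P(\Ker\sigma^\vee_{K,v})$; a priori the part lying on the exceptional divisor $\cQ$ could be lost (this really matters on loci where $\P(\Ker\sigma^\vee_{K,v})$ happens to be an isotropic linear subspace contained in $\cQ_v$). I would resolve this by splitting the fibre as
\begin{equation*}
q^{-1}(v) \cap \widetilde{\P(K^\perp)} = \bigl(\P(\Ker\sigma^\vee_{K,v}) \setminus \cQ_v\bigr) \sqcup \bigl(X_K \cap \cQ_v\bigr),
\end{equation*}
where the exceptional divisor $E_{X_K}$ of $\widetilde{\P(K^\perp)}$ is identified with $\cQ\vert_{X_K} = \P_{X_K}(\cU)$ via $\cN_{X_K/\P(K^\perp)} \cong \cU(2)\vert_{X_K}$ (Corollary~\ref{corollary:normal-x} and transversality), and $q\vert_{\cQ}\colon \cQ = \P_X(\cU) \to \rQ$ sends $(U_5,v)\mapsto v$, so that $E_{X_K}\cap q^{-1}(v) = X_K \cap \cQ_v$. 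The key observation is that under $p$ these exceptional points are exactly the quadric section $\P(\Ker\sigma^\vee_{K,v}) \cap \cQ_v$ removed in the first summand; hence the two pieces glue back to all of $\P(\Ker\sigma^\vee_{K,v}) \cong \P^{7-k+c}$, so the fibre is full for every $v$.

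Finally, for $q_-$ I would run the symmetric argument on $\P_\rQ(\cS_{8,-})$, whose fibre over $v$ is $\P(K \cap \cS_{8,-,v})$. To express $K \cap \cS_{8,-,v}$ through $\sigma_K$ I would use the canonical exact sequence
\begin{equation*}
0 \to \cS_{8,-} \to \SS^\vee \otimes \cO_\rQ \to \cS_8^\vee \to 0,
\end{equation*}
obtained from Lemma~\ref{lemma:spinor-sequence} together with the self-duality $\cS_8(1) \cong \cS_8^\vee$ of Lemma~\ref{lemma:cs-dual}, whose surjection is the evaluation map defining $\sigma_K$. Then $\cS_{8,-,v} = \Ker(\SS^\vee \to \cS_{8,v}^\vee)$, so $K \cap \cS_{8,-,v} = \Ker\sigma_{K,v}$, of dimension $c$ over $\fD_c$. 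Therefore $q_-$ has fibre $\P^{c-1}$ over $\fD_c$, empty exactly when $c = 0$, and its image is precisely $\fD_{\ge 1}(\sigma_K)$, which is the last assertion.
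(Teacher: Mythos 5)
Your proposal is correct and follows essentially the same route as the paper: both realize $\Bl_{X_K}(\P(K^\perp))$ and $\Bl_{X^\vee_K}(\P(K))$ inside the projective bundles $\P_\rQ(\cS_8)$ and $\P_\rQ(\cS_{8,-})$ and read the fibres of $q$, $q_-$ off the kernels of $\sigma_K^\vee$ and $\sigma_K$, using the spinor exact sequence to match the two stratifications. The only difference is bookkeeping: the paper identifies the scheme-theoretic preimage $p^{-1}(\P(K^\perp))$ (the zero locus of the induced section of $K^\vee\otimes\cO(H_\SS)$) with the blowup outright, which renders the fibres $\P(\Ker\sigma^\vee_{K,v})$ immediately, whereas you work with the strict transform and supply the (correct) extra gluing argument that no part of the fibre is lost along the exceptional divisor.
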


\begin{proof}
Consider the diagram~\eqref{diagram:blowups-x-xvee}.
The $p$-preimage in $\Bl_X(\P(\SS)) \cong \P_{\rQ}(\cS_8)$ of a hyperplane in $\P(\SS)$ is a relative hyperplane section of $q \colon \P_\rQ(\cS_8) \to \rQ$..
Further, under the transversality assumption we have
\begin{equation*}
p^{-1}(\P(K^\perp)) \cong \Bl_{X_K}(\P(K^\perp)),
\end{equation*}
and this is the zero locus in $\P_\rQ(\cS_8)$ of the natural section of the vector bundle $K^\vee \otimes \cO_{\P_\rQ(\cS_8)}(H_\SS)$ 
that corresponds to the morphisms $\sigma_K$, or more precisely, to its dual
\begin{equation}
\label{eq:map-s-k}
\sigma_K^\vee \colon \cS_8 \hookrightarrow \SS \otimes \cO_{\rQ} \twoheadrightarrow K^\vee \otimes \cO_{\rQ}.
\end{equation}
So, the fiber of the map $q$ in the diagram~\eqref{diagram:blowup-xk} over a point $v \in \rQ$ is the projectivization 
of the kernel of~$\sigma_K^\vee$ at $v$.
Therefore, $q$ is a piecewise Zariski locally trivial fibration over $\rQ$ with fiber isomorphic to~$\P^{7 + c - k}$ 
over the stratum $\fD_c(\sigma_K^\vee) = \fD_c(\sigma_K) \subset \rQ$.

On the other hand, the first map in~\eqref{eq:map-s-k} is a fiberwise monomorphism 
and by Lemma~\ref{lemma:spinor-sequence} its cokernel is the natural epimorphism $\SS \otimes \cO_{\rQ} \twoheadrightarrow \cS_{8,-}^\vee$,
while the second map in~\eqref{eq:map-s-k} is an epimorphism 
whose kernel is the natural fiberwise monomorphism $K^\perp \otimes \cO_{\rQ} \hookrightarrow \SS \otimes \cO_{\rQ}$.
Therefore the rank stratification for~\eqref{eq:map-s-k} coincides with the rank stratification for the composition
\begin{equation}
\label{eq:map-kperp-s}
\sigma_K^\perp \colon K^\perp \otimes \cO_{\rQ} \hookrightarrow \SS \otimes \cO_{\rQ} \twoheadrightarrow \cS_{8,-}^\vee
\end{equation}
of the above kernel and cokernel maps.
Taking the dual of~$\sigma_K^\perp$ and repeating the above arguments we conclude 
that $q_-$ is also a piecewise Zariski locally trivial fibration over $\rQ$ with fiber $\P^{c-1}$ 
over the stratum $\fD_c(\sigma^\perp_K) \subset \rQ$.
But we already checked that $\fD_c(\sigma^\perp_K) = \fD_c(\sigma_K)$.
Since $\fD_{\ge 1}(\sigma^\perp_K)$ is the locus of non-empty fibers of $q_-$, we conclude that $\fD_{\ge 1}(\sigma_K) = q_-(\Bl_{X^\vee_K}(\P(K)))$.
This completes the proof.
\end{proof}

The following particular case of the proposition will be used very extensively.

\begin{corollary}
\label{corollary:stratification-small-k}
Assume that $X_K$ is smooth of codimension $k = \dim K \le 5$ in $X$, so that $X_K^\vee = \varnothing$.
Then $\Bl_{X_K^\vee}(\P(K)) = \P(K)$ and the map $q_- \circ p_-^{-1}$ is a closed embedding
\begin{equation*}
\gamma := q_- \circ p_-^{-1} \colon \P(K) \hookrightarrow \rQ
\end{equation*}
such that $\gamma^*(\cO_\rQ(1)) \cong \cO_{\P(K)}(2)$.
The map $q$ has fibers $\P^{8-k}$ over $\gamma(\P(K))$ and $\P^{7-k}$ over its complement.
\end{corollary}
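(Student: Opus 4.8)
The plan is to deduce the statement from Proposition~\ref{proposition:blowup-xk} together with the geometry of the maximal quadrics $\cQ_{-,v} \subset X^\vee$. Since $X^\vee_K = \varnothing$ by Lemma~\ref{lemma:smoothness}, the blowup $p_-$ has empty centre, so $\Bl_{X^\vee_K}(\P(K)) = \P(K)$ and $p_- \colon \Bl_{X^\vee_K}(\P(K)) \to \P(K)$ is an isomorphism; under it $\P(K)$ sits inside $\P_\rQ(\cS_{8,-})$ as $p_-^{-1}(\P(K))$, and $\gamma$ is simply the restriction of $q_-$ to this copy of $\P(K)$. First I would describe the fibres of $\gamma$ geometrically. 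For $v \in \rQ$ the fibre $q_-^{-1}(v) = \P(\cS_{8,-,v}) \cong \P^7$ contains the six-dimensional quadric $\cQ_{-,v}$ as a hypersurface (by \eqref{eq:cq-in-ps} and \eqref{eq:small-ogr}), and $p_-$ restricts on it to the linear embedding induced by $\cS_{8,-,v} \hookrightarrow \SS^\vee$, carrying $\cQ_{-,v}$ into $X^\vee$. Consequently the fibre $\gamma^{-1}(v) = p_-^{-1}(\P(K)) \cap \P(\cS_{8,-,v})$ is a linear subspace of $\P(\cS_{8,-,v})$ disjoint from $\cQ_{-,v}$.

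The crucial point --- and the main obstacle --- is to show that each such fibre is empty or a single point, equivalently that $\fD_{\ge 2}(\sigma_K) = \varnothing$. This follows from the disjointness just established: a positive-dimensional linear subspace of $\P(\cS_{8,-,v}) \cong \P^7$ contains a line, and a line in $\P^7$ always meets the quadric hypersurface $\cQ_{-,v}$, since the restriction of the quadratic equation to the line is a degree-two form on $\P^1$ and hence has a root over the algebraically closed field $\Bbbk$. Thus a positive-dimensional fibre would force $\gamma^{-1}(v) \cap \cQ_{-,v} \ne \varnothing$, and pushing forward by $p_-$ this would give a point of $X^\vee \cap \P(K)$, contradicting $X^\vee_K = \varnothing$. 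This is precisely the place where the smoothness hypothesis (the emptiness of $X^\vee_K$) enters in an essential way.

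Granting $\fD_{\ge 2}(\sigma_K) = \varnothing$, the remaining assertions are formal. By Proposition~\ref{proposition:blowup-xk}, $q_-$ is a Zariski locally trivial $\P^{c-1}$-fibration over each stratum $\fD_c(\sigma_K)$; with $\fD_{\ge 2} = \varnothing$ its fibres are empty over $\fD_0$ and single points over the closed locus $\fD_1 = \fD_{\ge 1}(\sigma_K)$, and a Zariski locally trivial fibration in points is an isomorphism. Hence $\gamma$ identifies $\P(K)$ with the closed subscheme $\fD_{\ge 1}(\sigma_K) \subset \rQ$, so it is a closed embedding. To compute $\gamma^*\cO_\rQ(1)$ I would restrict to $p_-^{-1}(\P(K)) \cong \P(K)$ the relation $\cQ_- \sim 2H_{\SS^\vee} - H_\rQ$, the $X^\vee$-analogue of \eqref{eq:blowup-x-picard-relation}: there $H_{\SS^\vee}$ restricts to $\cO_{\P(K)}(1)$, $H_\rQ$ restricts to $\gamma^*\cO_\rQ(1)$, while $\cQ_-$ restricts to the trivial class because $\cQ_- \cap p_-^{-1}(\P(K)) = p_-^{-1}(X^\vee_K) = \varnothing$, giving $\gamma^*\cO_\rQ(1) \cong \cO_{\P(K)}(2)$. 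Finally, applying the same vanishing to the $q$-side, where the fibre over $\fD_c(\sigma_K)$ is $\P^{7+c-k}$, yields fibres $\P^{8-k}$ over $\fD_1 = \gamma(\P(K))$ and $\P^{7-k}$ over its complement $\fD_0$, as claimed.
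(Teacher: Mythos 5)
Your proposal is correct and follows essentially the same route as the paper: you rule out $\fD_{\ge 2}(\sigma_K)\ne\varnothing$ by observing that a positive-dimensional (linear) fibre of $q_-$ would contain a line, which must meet the quadric hypersurface $\cQ_{-,v}\subset\P(\cS_{8,v,-})$ and hence produce a point of $X^\vee\cap\P(K)$, and you then get the degree relation by restricting the analogue of~\eqref{eq:blowup-x-picard-relation} to $\P(K)$, where $\cQ_-$ restricts trivially. The only difference is that you spell out a couple of steps the paper leaves implicit (why a line meets the quadric, and why a fibration in single points is an isomorphism onto $\fD_{\ge 1}(\sigma_K)$), which is harmless.
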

\begin{proof}
First, let us check that $\fD(\sigma_K)_{\ge 2} = \varnothing$.
By Proposition~\ref{proposition:blowup-xk} it is enough to check that no fiber of~$q_-$ contains a $\P^1$.
But if $\P^1 \subset q_-^{-1}(v) \subset \P(\cS_{8,v,-})$, we have $q_-^{-1}(v) \cap \cQ_{-,v} \ne \varnothing$, and hence
\begin{equation*}
\varnothing \ne p_-(q_-^{-1}(v) \cap \cQ_{-,v}) \subset X^\vee \cap \P(K) = X^\vee_K
\end{equation*}
which contradicts to the assumptions.
Thus, $q_- \colon \Bl_{X^\vee_K}(\P(K)) \to \rQ$ is a closed embedding, 
and since the map $p_- \colon \Bl_{X^\vee_K}(\P(K)) \to \P(K)$ is an isomorphism, 
the composition $\gamma = q_- \circ p_-^{-1} \colon \P(K) \to \rQ$ is a closed embedding.

Further, note that $\P(K) = \Bl_{X^\vee_K}(\P(K)) \subset \Bl_{X^\vee}(\P(\SS^\vee))$ 
does not intersect the exceptional divisor~\mbox{$\cQ_- \subset \P_\rQ(\cS_{8,-})$},
hence the class of $\cQ_-$ restricts trivially to $\P(K)$.
Taking into account the analogue of~\eqref{eq:blowup-x-picard-relation} for the right half of the diagram~\eqref{diagram:blowups-x-xvee},
we obtain an isomorphism $\gamma^*(\cO_\rQ(1)) \cong \cO_{\P(K)}(2)$.
The last part of the corollary follows immediately from Proposition~\ref{proposition:blowup-xk}.
\end{proof}

\begin{remark}
\label{remark:stratification-explicit}
Let us also spell out the conclusion of the proposition in the case when $X_K$ (and hence also~$X^\vee_K$) is 
smooth and dimensionally transverse of codimension $k \ge 6$.
\begin{itemize}
\item 
Assume $k = 6$.
Then $X^\vee_K$ is the set of 12 reduced points, and the map $q_-$ contracts the strict transforms of the 66 lines connecting these points.
Consequently, we have three strata: 
$\fD_2(\sigma_K)$ consists of 66 points (the images of the strict transforms of the lines),
$\fD_{\ge 1}(\sigma_K) = q_-(\Bl_{X^\vee_K}(\P(K))$, and $\fD_0(\sigma_K)$ is its open complement.
\item 
Assume $k = 7$.
Then $X^\vee_K$ is a smooth (canonical) curve of genus 7, and the map $q_-$ contracts the strict transforms of its secant lines.
Consequently, we have three strata: 
\mbox{$\fD_2(\sigma_K) \cong \Sym^2(X^\vee_K)$} is the image of the secant variety,
$\fD_{\ge 1}(\sigma_K) = q_-(\Bl_{X^\vee_K}(\P(K))$, and 
$\fD_0(\sigma_K)$ is its open complement.
\item 
Assume $k = 8$.
Then $X^\vee_K$ is a smooth K3 surface of degree 12, and the map $q_-$ contracts the strict transforms of its secant lines
and strict transforms of planes intersecting $X^\vee_K$ along a conic.
Consequently, we have at most four strata: 
$\fD_3(\sigma_K)$ is a finite number (possibly zero) of points (the images of the planes spanned by conics on $X^\vee_K$),
$\fD_{\ge 2}(\sigma_K)$ is the image of the secant variety (a contraction of~$\Sym^2(X^\vee_K)$),
$\fD_{\ge 1}(\sigma_K) = q_-(\Bl_{X^\vee_K}(\P(K))$, and $\fD_{0}(\sigma_K)$ is its open complement.
\end{itemize}
\end{remark}

\begin{remark}
In the case $k = 7$, considering the fibers of the map $q \colon \Bl_{X_K}(\P(K^\perp)) \to \rQ$ one can also show 
that $\fD_{\ge 2}(\sigma_K) \cong G_1(X_K)$, the Hilbert scheme of conics on the Fano threefold $X_K$.
In a combination with the observation of Remark~\ref{remark:stratification-explicit} this gives a geometric construction of an isomorphism 
\begin{equation*}
G_1(X_K) \cong \Sym^2(X^\vee_K)
\end{equation*}
that was originally proved in~\cite[Theorem~5.3]{k2005v12} by means of derived categories.
\end{remark}

The diagram~\eqref{diagram:blowup-xk} in some aspects is not too convenient, because the map $q$ is not flat.
In the case~$k \le 5$ it can be, however, transformed into a flat $\P^{7- k}$-bundle by an extra blowup.

\begin{proposition}
\label{proposition:double-blowup-xk}
Let $X_K$ be a smooth and dimensionally transverse linear section of $X$ of codimension~$k \le 5$,
so that~$X^\vee_K = \varnothing$.
Then there is a commutative diagram
\begin{equation}
\label{diagram:double-blowup-xk}
\vcenter{\xymatrix@C=1.2em{
&
& 
\Bl_{X_K}(\P(K^\perp)) \ar[dl]_p \ar[dr]^q
&&
\Bl_{q^{-1}(\gamma(\P(K)))}(\Bl_{X_K}(\P(K^\perp))) \ar[ll]_-{\tilde{r}} \ar[dr]^{\tilde{q}}
\\
X_K \ar@{^{(}->}[r]
& 
\P(K^\perp) 
&&
\rQ
&&
\Bl_{\gamma(\P(K))}(\rQ) \ar[ll]_-r
}}
\end{equation}
and the map $\tilde{q}$ is the projectivization of a vector bundle of rank $8 - k$.
\end{proposition}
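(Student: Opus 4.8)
The plan is to resolve the jumping of the kernel of the bundle map
$$\sigma_K^\vee\colon \cS_8 \longrightarrow K^\vee\otimes\cO_\rQ$$
from the proof of Proposition~\ref{proposition:blowup-xk} by a single blowup of its degeneracy locus. Write $W:=\Bl_{X_K}(\P(K^\perp))$, $Z:=\gamma(\P(K))$ and $N:=9-k$, and recall from that proof that $W$ is the zero locus in $\P_\rQ(\cS_8)$ of the section of $K^\vee\otimes\cO(H_\SS)$ induced by $\sigma_K^\vee$, so that the fibre of $q$ over $v\in\rQ$ is $\P(\Ker(\sigma_K^\vee)_v)$. By Corollary~\ref{corollary:stratification-small-k} we have $\fD_{\ge 2}(\sigma_K)=\varnothing$, hence $\sigma_K^\vee$ has corank at most $1$, its kernel is locally free of rank $8-k=N-1$ away from $Z=\fD_1(\sigma_K)$ and jumps to rank $N$ along $Z$; correspondingly $q^{-1}(Z)$ is exactly the locus of $\P^{8-k}$-fibres. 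The goal is to produce on $\tilde\rQ:=\Bl_Z(\rQ)$ a rank-$(8-k)$ bundle whose projectivization is $\widetilde W$.

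The computation rests on a local normal form for $\sigma_K^\vee$ near $Z$. Since $\gamma$ is a closed embedding, $Z\cong\P^{k-1}$ is smooth of codimension $9-k$ in $\rQ$, which coincides with the expected codimension $(8-(k-1))(k-(k-1))$ of the corank-$1$ locus; together with the corank being exactly $1$ there, this yields local trivializations in which $\sigma_K^\vee=\mathrm{diag}\big(I_{k-1},(z_1,\dots,z_N)\big)$, where $z_1,\dots,z_N$ is a regular system of local equations of $Z$. In such coordinates $\Ker(\sigma_K^\vee)$ is the syzygy sheaf of $(z_1,\dots,z_N)$, the total space $W$ is, locally near $q^{-1}(Z)$, the smooth divisor $\{\sum_l c_lz_l=0\}$ in $\rQ\times\P^{N-1}_{[c]}$, and $q^{-1}(Z)$ is cut out in it by $z_1=\dots=z_N=0$; thus $q^{-1}(Z)$ is a smooth $\P^{N-1}$-bundle over $Z$ and $q^\ast I_Z\cdot\cO_W$ is its ideal sheaf.

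Now set $\cK':=\Ker\big(r^\ast\sigma_K^\vee\colon r^\ast\cS_8\to K^\vee\otimes\cO_{\tilde\rQ}\big)$ on $\tilde\rQ$. In the local model one has $z_l=s\,u_l$, where $s$ is a local equation of the exceptional divisor $E$ and $(u_1:\dots:u_N)$ are the blowup coordinates, so that the nontrivial row of $r^\ast\sigma_K^\vee$ factors as multiplication by $s$ after a surjection onto $\cO_{\tilde\rQ}(-E)$; since $s$ is a nonzerodivisor, $\cK'$ is the kernel of that surjection, hence locally free of rank $N-1=8-k$. Because $\cK'=\Ker(r^\ast\sigma_K^\vee)$, every line $\ell\subset\cK'_x$ lies in $\Ker(\sigma_K^\vee)_{r(x)}$, whence $(r(x),\ell)\in W$; this defines a morphism $\tilde r_P\colon P:=\P_{\tilde\rQ}(\cK')\to W$ over $\rQ$, an isomorphism over $W\setminus q^{-1}(Z)$ that contracts the irreducible divisor $\P_{\tilde\rQ}(\cK'|_E)$ onto $q^{-1}(Z)$. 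As $P$ is smooth (a projective bundle over the smooth $\tilde\rQ$), $W$ is smooth, $q^{-1}(Z)$ is smooth of codimension $8-k\ge 3$, and the exceptional locus of $\tilde r_P$ is a single irreducible divisor (so $\rho(P/W)=1$), Lemma~\ref{lemma:blowup} identifies $P$ with $\Bl_{q^{-1}(Z)}(W)=\widetilde W$. Under this identification the bundle projection $P\to\tilde\rQ$ becomes the required map $\tilde q$, exhibiting it as the projectivization of the rank-$(8-k)$ bundle $\cK'$.

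I expect the main obstacle to be the local normal form in the second paragraph, i.e.\ verifying that the smoothness of $Z$ together with its expected codimension and the vanishing $\fD_{\ge 2}(\sigma_K)=\varnothing$ genuinely force $\sigma_K^\vee$ to be equivalent to $\mathrm{diag}(I_{k-1},(z_1,\dots,z_N))$ with the $z_l$ a regular system of parameters. This is precisely what makes $\cK'$ locally free of the correct rank and $W$ the expected $(1,1)$-divisor. Once it is in place, the remaining steps---the factorization of $r^\ast\sigma_K^\vee$ through $\cO_{\tilde\rQ}(-E)$, the construction of $\tilde r_P$, and the Picard-rank bookkeeping needed for the blowup lemma---are routine.
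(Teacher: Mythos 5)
Your strategy is essentially the paper's: both proofs take for $\cF$ the kernel of $r^*\sigma_K^\vee$ on $\Bl_{\gamma(\P(K))}(\rQ)$, check it is locally free of rank $8-k$, map its projectivization to $\Bl_{X_K}(\P(K^\perp))$ via the inclusion $\cF\subset r^*\cS_8$, and conclude by a Picard-rank argument. The only structural difference is at the end: you apply Lemma~\ref{lemma:blowup} to $P\to W$ to identify $P$ with $\Bl_{q^{-1}(Z)}(W)$, whereas the paper first factors the map through the double blowup using the universal property and then observes that a projective birational morphism between smooth varieties of equal Picard number $3$ is an isomorphism. Both routes are fine (and the surjectivity hypothesis of Lemma~\ref{lemma:blowup} does hold, since over a point of $Z$ the fibres of $\cF|_E$ sweep out all hyperplanes of the $(9-k)$-dimensional kernel).

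The gap is exactly where you predicted it, and the justification you offer for the local normal form is not sufficient. Smoothness of the \emph{reduced} degeneracy locus $Z$, its having the expected codimension $9-k$, and $\fD_{\ge2}(\sigma_K)=\varnothing$ only force the entries of the $1\times(9-k)$ block $\psi$ to be a regular sequence cutting out a complete intersection \emph{supported} on $Z$; they do not force that scheme to be reduced. For example, $(x^2,y)\colon\cO^2\to\cO$ on $\mathbb{A}^2$ satisfies all three conditions, yet its degeneracy scheme is $V(x^2,y)$; on the blowup of the origin the pulled-back entries are $s\cdot(s u_1^2,u_2)$, the inner row is not surjective onto $\cO(-E)$, and the kernel fails to be a subbundle of the pullback of $\cO^2$ --- so the morphism to $W$ is only rational and the whole identification collapses. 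What is genuinely needed is the scheme-theoretic equality $\fD_{\ge1}(\sigma_K^\vee)=\gamma(\P(K))$, equivalently that $\Coker(\sigma_K^\vee)\cong\gamma_*\cO_{\P(K)}(1)$ is a line bundle on the reduced $Z$; this is the input the paper supplies via the comparison of $\sigma_K^\vee$ with $\sigma_K^\perp$ in the proof of Proposition~\ref{proposition:blowup-xk} (sequence~\eqref{eq:maps-s-k-extended}). Alternatively, you can repair the step inside your own framework: in your local model $W=\{\sum_l c_lz_l=0\}\subset U\times\P^{N-1}$, the smoothness of $W=\Bl_{X_K}(\P(K^\perp))$ along $q^{-1}(Z)$ is equivalent to the linear independence of the differentials $dz_1,\dots,dz_N$ along $Z$, which is precisely the regular-system-of-parameters statement you need. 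With that supplied, the rest of your argument goes through.
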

\begin{proof}
The scheme theoretic preimage of the subscheme $\gamma(\P(K)) \subset \rQ$ in $\Bl_{X_K}(\P(K^\perp))$ is $q^{-1}(\gamma(\P(K)))$ (by definition), 
hence its scheme theoretic preimage in $\Bl_{q^{-1}(\gamma(\P(K)))}(\Bl_{X_K}(\P(K^\perp)))$ is the exceptional divisor of the blowup $\tilde{r}$.
Therefore, by the universal property of the blowup the composition $q \circ \tilde{r}$
factors through the blowup $\Bl_{\gamma(\P(K))}(\rQ)$, thus defining the map $\tq$, that makes the diagram commutative.
It remains to show that $\tq$ is the projectivization of a vector bundle.

Consider the morphisms $\sigma^\vee_K$ and $\sigma^\perp_K$ defined by~\eqref{eq:map-s-k} and~\eqref{eq:map-kperp-s}.
By the argument of Proposition~\ref{proposition:blowup-xk} and Corollary~\ref{corollary:stratification-small-k} 
the corank of $\sigma^\vee_K$ is less or equal than~1, and its degeneration scheme $\fD_{\ge 1}(\sigma^\vee_K)$ 
coincides with the subscheme $\gamma(\P(K)) \subset \rQ$.
Therefore, its cokernel is a line bundle on $\gamma(\P(K))$.
Moreover, the proof of the equality $\fD_{\ge 1}(\sigma^\vee_K) = \fD_{\ge 1}(\sigma^\perp_K)$ also shows that 
the cokernel sheaf is isomorphic to the line bundle~$\cO_{\P(K)}(1)$.
In other words, we have an exact sequence
\begin{equation}
\label{eq:maps-s-k-extended}
\cS_8 \xrightarrow{\ \sigma^\vee_K\ } K^\vee \otimes \cO_{\rQ} \to \gamma_*(\cO_{\P(K)}(1)) \to 0.
\end{equation}
Pulling it back to the blowup $\Bl_{\gamma(\P(K))}(\rQ)$, we obtain an exact sequence
\begin{equation}
\label{eq:maps-s-k-blowup}
r^*(\cS_8) \xrightarrow{\ r^*(\sigma^\vee_K)\ } K^\vee \otimes \cO_{\Bl_{\gamma(\P(K))}(\rQ)} \to i_*\pr^*(\cO_{\P(K)}(1)) \to 0.
\end{equation}
where $i \colon E \hookrightarrow \Bl_{\gamma(\P(K))}(\rQ)$ is the embedding of the exceptional divisor of the blowup $r$, 
while the map~$\pr \colon E \to \P(K)$ is the natural projection.
Since $E$ is a Cartier divisor, the image and the kernel of the left map are vector bundles.
We denote the kernel by $\cF$, so that we have an exact sequence 
\begin{equation}
\label{eq:tcf}
0 \to \cF \to r^*(\cS_8) \xrightarrow{\ r^*(\sigma^\vee_K)\ } K^\vee \otimes \cO_{\Bl_{\gamma(\P(K))}(\rQ)} \to i_*\pr^*(\cO_{\P(K)}(1)) \to 0,
\end{equation}
with the first map being a fiberwise monomorphism.
Below we prove that the map $\tq$ is the projectivization of the vector bundle $\cF$.

First, we consider the composition
\begin{equation*}
f \colon 
\P_{\Bl_{\gamma(\P(K))}(\rQ)}(\cF) \hookrightarrow
\P_{\Bl_{\gamma(\P(K))}(\rQ)}(r^*(\cS_8)) \to
\P_\rQ(\cS_8) \cong
\Bl_X(\P(\SS)),
\end{equation*}
where the first map is induced by the first map in~\eqref{eq:tcf}, the second is induced by the blowup $r$, 
and the third map is the isomorphism of Proposition~\ref{proposition:blowup-x}.
Clearly, the image of the morphism $f$ is the strict transform of $\P(K^\perp)$, i.e.\ $\Bl_{X_K}(\P(K^\perp)) \subset \Bl_X(\P(\SS))$.
Moreover, the composition 
\begin{equation*}
\P_{\Bl_{\gamma(\P(K))}(\rQ)}(\cF) \xrightarrow{\ f\ } \Bl_{X_K}(\P(K^\perp)) \xrightarrow{\ q\ } \rQ
\end{equation*}
by construction coincides with the composition of the maps $\P_{\Bl_{\gamma(\P(K))}(\rQ)}(\cF) \to \Bl_{\gamma(\P(K))}(\rQ) \xrightarrow{\ r\ } \rQ$.
Therefore, the scheme-theoretic preimage of the subscheme $\gamma(\P(K)) \subset \rQ$ under this composition
is a Cartier divisor.
It follows that, the map~$f$ factors through a map 
\begin{equation*}
\tilde{f} \colon \P_{\Bl_{\gamma(\P(K))}(\rQ)}(\cF) \to \Bl_{q^{-1}(\gamma(\P(K)))}(\Bl_{X_K}(\P(K^\perp))).
\end{equation*}
Now note that $\tilde{f}$ is a proper map between smooth varieties of the same dimension $15 - k$,
which is an isomorphism over the open subset $\rQ \setminus \gamma(\P(K)) \subset \rQ$, hence is birational.
Therefore, it is a blowup of an ideal.
But these two varieties have the same Picard number $3$, hence the map $\tilde{f}$ is an isomorphism.
\end{proof}

\begin{remark}
\label{remark:flattening-codim-6}
One can probably construct a similar birational flattening of the morphism $q$ in case $k = 6$.
A natural guess is that one has to blow up first the 66 points set $\fD_2(\sigma_K)$, and then to blow up 
the strict transform of~$\fD_{\ge 1}(\sigma_K) = q_-(\Bl_{12}(\P(K)))$ (see Remark~\ref{remark:stratification-explicit}).
Then there should be a rank-2 vector bundle over this blowup, whose projectivization is also an iterated blowup of $\Bl_{X_K}(\P(K^\perp))$.
This description should be useful for an identification of the Chow motive of $X_K$.
\end{remark}

\subsection{Integral Chow motives}

The first application of the blowup relation is to the integral Chow motives.
We prove an analogue of Corollary~\ref{corollary:motives-q-xk-06} on the integral level.

\begin{theorem}
\label{theorem:motives-z-xk-06}
Let $X_K$ be a smooth linear section of the spinor tenfold $X$ of codimension $k \le 5$.
Then the integral Chow motive of $X_K$ is of Lefschetz type:
\begin{equation*}
\bM(X_K) = 
\begin{cases}
1 \oplus \bL \oplus \bL^2 \oplus 2\bL^3 \oplus 2\bL^4 \oplus 2\bL^5 \oplus 2\bL^6 \oplus 2\bL^7 \oplus \bL^8 \oplus \bL^9 \oplus \bL^{10},
& \text{for $k = 0$},\\
1 \oplus \bL \oplus \bL^2 \oplus 2\bL^3 \oplus 2\bL^4 \oplus 2\bL^5 \oplus 2\bL^6 \oplus \bL^7 \oplus \bL^8 \oplus \bL^9,
& \text{for $k = 1$},\\
1 \oplus \bL \oplus \bL^2 \oplus 2\bL^3 \oplus 2\bL^4 \oplus 2\bL^5 \oplus \bL^6 \oplus \bL^7 \oplus \bL^8,
& \text{for $k = 2$},\\
1 \oplus \bL \oplus \bL^2 \oplus 2\bL^3 \oplus 2\bL^4 \oplus \bL^5 \oplus \bL^6 \oplus \bL^7,
& \text{for $k = 3$},\\
1 \oplus \bL \oplus \bL^2 \oplus 2\bL^3 \oplus \bL^4 \oplus \bL^5 \oplus \bL^6,
& \text{for $k = 4$},\\
1 \oplus \bL \oplus \bL^2 \oplus \bL^3 \oplus \bL^4 \oplus \bL^5,
& \text{for $k = 5$},\\
\end{cases}
\end{equation*}
Moreover, $\CH^i(X_K) \cong \ZZ^{n_i}$, where the ranks $n_i$ are equal to the multiplicities of the corresponding Lefschetz motives $\bL^i$ in $\bM(X_K)$.
\end{theorem}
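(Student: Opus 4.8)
The plan is to read off $\bM(X_K)$ from the double blowup of~\eqref{diagram:double-blowup-xk}: writing $W := \Bl_{q^{-1}(\gamma(\P(K)))}(\Bl_{X_K}(\P(K^\perp)))$ and $B := \Bl_{\gamma(\P(K))}(\rQ)$, I will show that $\bM(W)$ is of Lefschetz type, that $\bM(X_K)$ is a direct summand of $\bM(W)$ up to a Tate twist, and that a summand of a sum of Lefschetz motives is again one. By Proposition~\ref{proposition:double-blowup-xk} the map $\tq\colon W \to B$ is the projectivization of a rank-$(8-k)$ bundle, so the projective bundle formula gives $\bM(W) \cong \bigoplus_{i=0}^{7-k}\bM(B)\otimes\bL^i$. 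By Corollary~\ref{corollary:stratification-small-k} the center $\gamma(\P(K)) \cong \P^{k-1}$ is a smooth linear section of codimension $9-k$ in the smooth $8$-dimensional quadric $\rQ$, so (Lemma~\ref{lemma:smoothness-criterion}) $B$ is smooth and the blowup formula expresses $\bM(B)$ as a sum of copies of $\bM(\rQ)$ and of $\bM(\P^{k-1})\otimes\bL^j$, $1 \le j \le 8-k$. As $\bM(\rQ)$ (a smooth quadric over an algebraically closed field is cellular) and $\bM(\P^{k-1})$ are of Lefschetz type, so are $\bM(B)$ and hence $\bM(W)$.

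Next I relate $\bM(W)$ to $\bM(X_K)$. Both $W$ (a projective bundle over the smooth $B$) and $\Bl_{X_K}(\P(K^\perp))$ (the blowup of the smooth $\P^{15-k}$ along the smooth $X_K$) are smooth projective by Lemma~\ref{lemma:smoothness-criterion}, and $\tr\colon W \to \Bl_{X_K}(\P(K^\perp))$ is a blowup, hence a proper birational morphism of smooth projective varieties; therefore $\bM(\Bl_{X_K}(\P(K^\perp)))$ is a direct summand of $\bM(W)$. On the other hand, $p\colon \Bl_{X_K}(\P(K^\perp)) \to \P(K^\perp) = \P^{15-k}$ is the blowup of $X_K$, of codimension $5$, so the blowup formula yields $\bM(\Bl_{X_K}(\P(K^\perp))) \cong \bM(\P^{15-k}) \oplus \bigoplus_{i=1}^{4}\bM(X_K)\otimes\bL^i$. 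Combining the two, $\bM(X_K)\otimes\bL$ is a direct summand of the Lefschetz motive $\bM(W)$.

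It remains to pass from ``summand'' to ``Lefschetz''. Here I use the algebraic fact that any direct summand $N$ of $\bigoplus_i n_i\bL^i$ is isomorphic to $\bigoplus_i m_i\bL^i$ for suitable $m_i \le n_i$: such $N$ is cut out by an idempotent of $\End\!\big(\bigoplus_i n_i\bL^i\big) = \prod_i \mathrm{Mat}_{n_i}(\ZZ)$, i.e. by a tuple of idempotent integer matrices $e_i$, and the image of each $e_i$ is a finitely generated projective, hence free, $\ZZ$-module of rank $m_i$, which cuts out the summand $m_i\bL^i$. Applying this to $N = \bM(X_K)\otimes\bL$ and untwisting by $\bL$ shows $\bM(X_K) \cong \bigoplus_i n_i\bL^i$. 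Tensoring with $\QQ$ and comparing with Corollary~\ref{corollary:motives-q-xk-06} forces the $n_i$ to be exactly those listed in the statement, and then $\CH^i(X_K) = \Hom(\bL^i,\bM(X_K)) \cong \ZZ^{n_i}$.

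The main obstacle is not the motivic bookkeeping, which is formal once the inputs are in place, but securing the geometric hypotheses that make the blowup and projective bundle formulas applicable --- in particular the identification of $\tq$ as an honest projective bundle and of the relevant centers as smooth. These are precisely what Proposition~\ref{proposition:double-blowup-xk}, Corollary~\ref{corollary:stratification-small-k}, and Lemma~\ref{lemma:smoothness-criterion} provide; granting them, the only essential algebraic input is the indecomposability of $\bL$ together with the freeness of finitely generated projective $\ZZ$-modules.
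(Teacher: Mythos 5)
Your proposal is correct and follows essentially the same route as the paper: both read off $\bM(X_K)$ from the double-blowup diagram~\eqref{diagram:double-blowup-xk} by computing the motive of $\Bl_{q^{-1}(\gamma(\P(K)))}(\Bl_{X_K}(\P(K^\perp))) \cong \P_{\Bl_{\gamma(\P(K))}(\rQ)}(\cF)$ via the blowup and projective-bundle formulas, concluding that $\bM(X_K)\otimes\bL$ is a summand of a sum of Lefschetz motives, and pinning down multiplicities by Corollary~\ref{corollary:motives-q-xk-06}. Your only slip is calling $\gamma(\P(K))$ a \emph{linear} section of $\rQ$ (it is embedded by quadrics, since $\gamma^*\cO_\rQ(1)\cong\cO_{\P(K)}(2)$), but this is immaterial since only its smoothness and codimension are used; your explicit idempotent argument for ``a summand of a sum of Lefschetz motives is again one'' usefully fills in a step the paper leaves implicit.
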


\begin{proof}
By Proposition~\ref{proposition:blowup-xk} we have an isomorphism
\begin{equation*}
\Bl_{q^{-1}(\gamma(\P(K)))}(\Bl_{X_K}(\P(K^\perp))) \cong \P_{\Bl_{\gamma(\P(K))}(\rQ)}(\cF),
\end{equation*}
where $\cF$ is a vector bundle of rank~$8 - k$.
Using the blowup formula for the motives, we deduce
\begin{multline*}
\bM(\Bl_{q^{-1}(\gamma(\P(K)))}(\Bl_{X_K}(\P(K^\perp)))) = \\
\bM(\P(K^\perp)) \oplus 
\bM(X_K) \otimes (\bL \oplus \bL^2 \oplus \bL^3 \oplus \bL^4) \oplus 
\bM(\P(K)) \otimes \bM(\P^{7+c-k}) \otimes (\bL \oplus \dots \oplus \bL^{7-k}).
\end{multline*}
Similarly, using the blowup and the projective bundle formulas, we deduce 
\begin{multline*}
\bM(\P_{\Bl_{\gamma(\P(K))}(\rQ)}(\cF)) = 
\bM(\Bl_{\gamma(\P(K))}(\rQ)) \otimes \bM(\P^{7-k}) = 
\\
\Big(\bM(\rQ) \oplus \bM(\P(K)) \otimes (\bL \oplus \dots \oplus \bL^{8-k})\Big) \otimes (1 \oplus \bL \oplus \dots \oplus \bL^{7-k}).
\end{multline*}
The left hand sides of the equalities are isomorphic.
On the other hand, the right hand side in the second equality is a sum of Lefschetz motives.
Therefore, $\bM(X_K) \otimes \bL$, being a summand of the first equality, is also a sum of Lefschetz motives,
and hence also $\bM(X_K)$ is a sum of Lefschetz motives.

Of course, the multiplicities of $\bL^i$ in $\bM(X_K)$ are determined by multiplicities of $\bL_\QQ^i$ 
in the decomposition of the motive~$\bM_\QQ(X_K)$, which was computed in Corollary~\ref{corollary:motives-q-xk-06}.
This proves the desired formulas for~$\bM(X_K)$.
The isomorphisms for the Chow groups of $X_K$ follow immediately from the obtained expression for the motive $\bM(X_K)$.
\end{proof}

Using the approach sketched in Remark~\ref{remark:flattening-codim-6} one can also show that 
$\bM(X_K) = 1 \oplus \bL \oplus 12\bL^2 \oplus \bL^3 \oplus \bL^4$ in the case $k = 6$.

\section{Linear sections of codimension 1}
\label{section:hyperplanes}

As we already mentioned (Corollary~\ref{corollary:transitivity}), the $\Spin(\rV)$-action 
on the projective space $\P(\SS^\vee)$ of hyperplanes in $\P(\SS)$ has just two orbits,
the dual spinor variety $X^\vee \subset \P(\SS^\vee)$ and its complement $\P(\SS^\vee) \setminus X^\vee$.
Consequently, there are two isomorphism classes of hyperplane sections: singular and smooth.
In this section we will give a geometric description for both. 

\subsection{Blowup of a 4-space on $X$}

Let $U_{5,-} \subset \rV$ be an isotropic subspace corresponding to a point of~$X^\vee$.
Recall the 4-space $\Pi^4_{U_{5,-}} = \Gr(4,U_{5,-}) \cong \P(\bw4U_{5,-}) \subset X$ associated with it, see~\eqref{def:pi4-u5}.
Consider the corresponding embedding $\bw4U_{5,-} \hookrightarrow \SS$ and set 
\begin{equation}
\label{eq:w-quotient}
W := \SS/\bw4U_{5,-}.
\end{equation}
This is a vector space of dimension 11, and $\P(W) \cong  \P^{10}$.

The next result can be found in~\cite[Theorem~III.3.8(5)]{zak1993book}, 
and the rational map $f_X \circ f_W^{-1}$ constructed below is an example of 
a special birational transformations of type $(2,1)$ from~\cite{fu2015special}.
We give an independent proof, again based on the blowup lemma.

\begin{proposition}
\label{proposition:blowup-x-pi4}
There is an isomorphism $\Bl_{\Pi^4_{U_{5,-}}}(X) \cong \Bl_{\Gr(2,U_{5,-})}(\P(W))$ and a diagram
\begin{equation}
\vcenter{\xymatrix{
&
E_\Pi \ar[r] \ar[dl] &
\Bl_{\Pi^4_{U_{5,-}}}(X) \ar@{=}[r]^-\sim \ar[dl]^{f_X} &
\Bl_{\Gr(2,U_{5,-})}(\P(W)) \ar[dr]_{f_W} &
E_{\Gr} \ar[l] \ar[dr]
\\
\Pi^4_{U_{5,-}} \ar[r] &
X &&&
\P(W) &
\Gr(2,U_{5,-}) \ar[l]
}}
\end{equation}
If $H_X$ and $H_W$ denote the hyperplane classes of $X$ and $\P(W)$, while $E_\Pi$ and $E_{\Gr}$ denote the exceptional divisors of the blowups, then
\begin{equation}
\label{eq:picard-relations-blowup-gr-p10}
\begin{aligned}
H_X &= 2H_W - E_{\Gr}, \qquad	&	H_W &= H_X - E_\Pi,\\
E_\Pi &= H_W - E_{\Gr},		&	E_{\Gr} &= H_X - 2E_\Pi.
\end{aligned}
\end{equation} 
Finally, the birational map $f_W \circ f_X^{-1} \colon X \dashrightarrow \P(W)$ is induced by the projection $\SS \to W$,
so that this map is the linear projection with center in~$\Pi^4_{U_{5,-}}$.
\end{proposition}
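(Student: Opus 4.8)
The plan is to pin down the rational map $f_W \circ f_X^{-1}$ by identifying the complete linear system that defines it and matching that system with the one defining the linear projection. Since $f_X \colon \Bl_{\Pi^4_{U_{5,-}}}(X) \to X$ restricts to an isomorphism over the complement $X \setminus \Pi^4_{U_{5,-}}$ of the (smooth) blowup center, the composite $f_W \circ f_X^{-1}$ is automatically a morphism on $X \setminus \Pi^4_{U_{5,-}}$, hence a well-defined rational map $X \dashrightarrow \P(W)$; it remains only to recognize it.

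First I would invoke the Picard relation $H_W = H_X - E_\Pi$ from~\eqref{eq:picard-relations-blowup-gr-p10}: it gives $f_W^*\cO_{\P(W)}(1) \cong \cO(H_X - E_\Pi)$ on $\Bl_{\Pi^4_{U_{5,-}}}(X)$, so $f_W$ is defined by a base-point-free subspace of $H^0(\Bl_{\Pi^4_{U_{5,-}}}(X), \cO(H_X - E_\Pi))$. Next I would compute this space of sections. Because the center $\Pi^4_{U_{5,-}}$ is smooth and the twist by $-E_\Pi$ selects the sections vanishing along it to first order, one has $H^0(\Bl_{\Pi^4_{U_{5,-}}}(X), \cO(H_X - E_\Pi)) \cong \{\, s \in H^0(X, \cO_X(1)) : s|_{\Pi^4_{U_{5,-}}} = 0 \,\}$. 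Since $H^0(X, \cO_X(1)) = \SS^\vee$ and $\Pi^4_{U_{5,-}} = \P(\bw4 U_{5,-})$, these are exactly the functionals annihilating $\bw4 U_{5,-}$, that is $(\SS/\bw4 U_{5,-})^\vee = W^\vee$, a space of dimension $11$.

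Now the key step: $f_W$ maps onto $\P(W) \cong \P^{10}$ through a base-point-free linear system contained in the $11$-dimensional space $W^\vee$; as this is all of $H^0$, the map is given by the complete linear system $\P(W^\vee)$, with the natural identification $H^0(\P(W), \cO_{\P(W)}(1)) = W^\vee$ matching the inclusion $W^\vee \hookrightarrow \SS^\vee$ dual to $\SS \twoheadrightarrow W$. Pushing this system down along $f_X$, it becomes the subsystem of $|\cO_X(1)| = \P(\SS^\vee)$ cut out by the hyperplanes of $\P(\SS)$ containing $\Pi^4_{U_{5,-}}$, and the rational map such a subsystem defines is, by definition, the linear projection from the center $\P(\bw4 U_{5,-}) = \Pi^4_{U_{5,-}}$ induced by $\SS \to W$. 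This identifies $f_W \circ f_X^{-1}$ with the asserted projection.

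The main obstacle I anticipate is making the last identification \emph{canonical} rather than merely valid up to an automorphism of $\P(W)$: knowing $f_W$ is given by a complete linear system pins it down only up to $\PGL(W)$. To exclude a spurious projective automorphism I would either appeal to the construction of the isomorphism $\Bl_{\Pi^4_{U_{5,-}}}(X) \cong \Bl_{\Gr(2,U_{5,-})}(\P(W))$ itself, in which $f_W$ is realized as the resolution of the projection $\SS \to W$, or invoke $\Spin(\rV)$-equivariance: both $f_W \circ f_X^{-1}$ and the linear projection are equivariant for the stabilizer $\bP_4$ of $[U_{5,-}] \in X^\vee$, and its action on $W$ is rigid enough to force the two maps to coincide. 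Everything else reduces to the dimension count above together with the standard base-point-freeness formalism.
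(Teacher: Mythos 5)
Your proposal addresses only the final clause of the proposition (that $f_W \circ f_X^{-1}$ is the linear projection from $\Pi^4_{U_{5,-}}$) and leaves the main content unproved. The heart of the statement is the \emph{existence} of an isomorphism $\Bl_{\Pi^4_{U_{5,-}}}(X) \cong \Bl_{\Gr(2,U_{5,-})}(\P(W))$ together with the Picard relations~\eqref{eq:picard-relations-blowup-gr-p10}; your argument takes both as given. You explicitly ``invoke the Picard relation $H_W = H_X - E_\Pi$ from~\eqref{eq:picard-relations-blowup-gr-p10}'' as input, but that relation is one of the conclusions to be established, so the argument is circular as written. Likewise, your fallback for pinning down the map canonically is to ``appeal to the construction of the isomorphism itself, in which $f_W$ is realized as the resolution of the projection'' --- but that construction is exactly the missing step. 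The paper's proof spends almost all of its effort here: it builds a map $f_X \colon \Bl_{\Gr(2,V_5)}(\P(W)) \to X = \OGr_+(5, V_5^\vee \oplus V_5)$ by exhibiting an explicit isotropic rank-$5$ subbundle $\cF''$ of $(V_5^\vee \oplus V_5)\otimes\cO(-3H_W)$ (extracted from the pullback of a resolution of $\cO_{\Gr(2,V_5)}$ on $\P(W)$, with isotropy checked on the open chart where $\cF''$ is the graph of a skew-symmetric map $\xi$), proves birationality on that chart, identifies the divisor $E_\Pi = \P_{\Gr(4,V_5)}(\bw2\cU_4)$ contracted to $\Gr(4,V_5)$, and applies the blowup lemma (Lemma~\ref{lemma:blowup}); the relation $H_X = 2H_W - E_{\Gr}$ then comes from a Chern class computation with $\rc_1(\cU) = -2H_X$, not from general principles.

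The part you do argue is essentially sound once the isomorphism and the relation $H_W = H_X - E_\Pi$ are available: the identification $H^0(\Bl_{\Pi^4_{U_{5,-}}}(X), \cO(H_X - E_\Pi)) \cong W^\vee$ as the hyperplanes through $\P(\bw4 U_{5,-})$, and the conclusion that $f_W$ is given by this complete linear system, is in substance the same duality argument the paper runs via the exact sequence $0 \to \bw4 V_5 \to \SS \to W \to 0$. But as a proof of the proposition the proposal has a genuine gap: it establishes none of the first two assertions and derives the third from them. Note also that the paper still has to do one more thing you omit: identify the image $f_X(E_\Pi) = \Gr(4,V_5) = \P(\bw4 V_5)$ with the specific $4$-space $\Pi^4_{U_{5,-}}$ under the identification $V_5 = U_{5,-}$, which requires examining $\cF''$ over $\P(\bw2 V_5)\setminus\Gr(2,V_5)$.
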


\begin{proof}
Consider an abstract 5-dimensional vector space $V_5$ and define 
\begin{equation}
\label{eq:w-decomposition}
W = \bw2V_5 \oplus \Bbbk
\end{equation}
(later we will identify $V_5$ with $U_{5,-}$ and the above direct sum with the quotient $\SS/\bw4U_{5,-}$).
Then we have a natural embedding $\Gr(2,V_5) \hookrightarrow \P(\bw2V_5) \hookrightarrow \P(W)$.
Consider the blowup 
\begin{equation*}
f_W \colon \Bl_{\Gr(2,V_5)}(\P(W)) \to \P(W).
\end{equation*}
Below we construct a map $f_X$ from the blowup $\Bl_{\Gr(2,V_5)}(\P(W))$ to the spinor tenfold $X = \OGr_+(5,\rV)$ 
by producing an isotropic rank-5 vector subbundle in the trivial vector bundle with fiber $\rV$.
After that we will check that $f_X$ is birational, and apply the blowup lemma~\ref{lemma:blowup} 
to show that $f_X$ is the blowup of a 4-space on $X$.

Recall that on $\P(\bw2 V_5)$ there is a natural resolution 
\begin{equation*}
0 \to \cO(-5) \xrightarrow{\ \xi \wedge \xi\ } V_5^\vee(-3) \xrightarrow{\ \xi\ } V_5(-2) \xrightarrow{\ \xi \wedge \xi\ } \cO \to \cO_{\Gr(2,V_5)} \to 0,
\end{equation*}
where $\xi \in H^0(\P(\bw2V_5),\bw2V_5(1))$ is the tautological section and $\xi \wedge \xi \in H^0(\P(\bw2V_5),\bw4V_5(2))$ is its exterior square.
Combining it with the Koszul resolution 
\begin{equation*}
0 \to \cO_{\P(W)}(-1) \xrightarrow{\ \eta\ } \cO_{\P(W)} \to \cO_{\P(\wedge^2V_5)} \to 0,
\end{equation*}
where $\eta \in H^0(\P(W), \cO_{\P(W)}(1))$ is the equation of the hyperplane $\P(\bw2V_5) \subset \P(W)$, 
we obtain on $\P(W)$ the following resolution
\begin{multline}
\label{eq:resolution-gr-p10}
0 \to 
\cO_{\P(W)}(-6) 
    \xrightarrow{\ \left(\begin{smallmatrix} -\eta \\ \xi \wedge \xi \end{smallmatrix}\right)\ }
\cO_{\P(W)}(-5) \oplus V_5^\vee \otimes \cO_{\P(W)}(-4) 
\\
    \xrightarrow{\ \left(\begin{smallmatrix} \xi \wedge \xi & \eta \\ 0 & \xi \end{smallmatrix}\right)\ } 
(V_5^\vee \oplus V_5) \otimes \cO_{\P(W)}(-3) 
    \xrightarrow{\ \left(\begin{smallmatrix} \xi & -\eta \\ 0 & \xi \wedge \xi \end{smallmatrix}\right)\ }
\\
V_5 \otimes \cO_{\P(W)}(-2) \oplus \cO_{\P(W)}(-1) 
    \xrightarrow{\ (\xi \wedge \xi,\eta)\ } 
\cO_{\P(W)} 
    \to 
\cO_{\Gr(2,V_5)} \to 0.
\end{multline}
We pullback this complex to the blowup $\Bl_{\Gr(2,V_5)}(\P(W))$ 
(we denote by $\pr \colon E_{\Gr} \to \Gr(2,V_5)$ the projection of its exceptional divisor).
Of course, it is no longer exact, and in fact, its cohomology sheaves are isomorphic to the exterior powers of the excess conormal bundle
\begin{equation*}
\bar\cN^\vee := \Ker \left( \pr^*\left(\cN^\vee_{\Gr(2,V_5)/\P(W)}\right) \xrightarrow{\qquad} \cO_{E_{\Gr}}(-E_{\Gr}) \right).
\end{equation*}
In other words, we have the following exact sequences on $\Bl_{\Gr(2,V_5)}(\P(W))$: 
\begin{align*}
0 \to \cF' \to V_5 \otimes \cO(-2H_W) \oplus \cO(-H_W) \to \cO \to \cO_{E_{\Gr}} \to 0,\\
0 \to \cF'' \to (V_5^\vee \oplus V_5) \otimes \cO(-3H_W) \to \cF' \to \bar\cN^\vee \to 0,\\
0 \to \cF''' \to \cO(-5H_W) \oplus V_5^\vee \otimes \cO(-4H_W) \to \cF'' \to \bw{2}{\bar\cN^\vee} \to 0,\\
0 \to \cO(-6H_W) \to \cF''' \to \bw{3}{\bar\cN^\vee} \to 0.
\end{align*}
Consider the vector space $V_5^\vee \oplus V_5$ with its natural non-degenerate quadratic form (induced by the pairing between the summands).
We claim that the sheaf 
\begin{equation}
\label{eq:cf-pp}
\cF'' := \Ker\left(  (V_5^\vee \oplus V_5) \otimes \cO(-3H_W) 
\xrightarrow{\ \left(\begin{smallmatrix} \xi & -\eta \\ 0 & \xi \wedge \xi \end{smallmatrix}\right)\ }
V_5 \otimes \cO_{\P(W)}(-2) \oplus \cO_{\P(W)}(-1) \right)
\end{equation}
defined by the second of the above sequences is an isotropic subbundle in $(V_5^\vee \oplus V_5) \otimes \cO(-3H_W)$ 
and defines a regular map $\Bl_{\Gr(2,V_5)}(\P(W)) \to \OGr_+(5,V_5^\vee \oplus V_5) = X$.

Indeed, the sheaf $\cO_{E_{\Gr}}$ in the first sequence is locally free on a divisor, hence the sheaf $\cF'$ is locally free of rank 5.
Similarly, the sheaf $\bar\cN^\vee$ in the second sequence is locally free on a divisor, 
hence the kernel of the map $\cF' \to \bar\cN^\vee$ is locally free of rank 5.
Consequently, the sheaf $\cF''$ is locally free of rank~5 and its embedding 
into $(V_5^\vee \oplus V_5) \otimes \cO(-3H_W)$ is a fiberwise monomorphism.

Let us show that $\cF''$ is isotropic as a subbundle in $(V_5^\vee \oplus V_5) \otimes \cO(-3H_W)$.
Clearly, it is enough to check this on the open subset 
\begin{equation}
\label{eq:open-blowup-gr-p10}
\P(W) \setminus \P(\bw2 V_5) \subset \Bl_{\Gr(2,V_5)}(\P(W)),
\end{equation} 
the complement of the linear span of the Grassmannian.
Another way to describe this open subset is by inequality $\eta \ne 0$.
So, by rescaling we may assume $\eta = 1$ on~\eqref{eq:open-blowup-gr-p10} and use $\xi$ as a coordinate.

On the open set~\eqref{eq:open-blowup-gr-p10} the complex~\eqref{eq:resolution-gr-p10} is acyclic, 
hence the bundle $\cF''$ is just the image of the second map in the complex.
Since the image of the first map $\left(\begin{smallmatrix} -1 \\ \xi \wedge \xi \end{smallmatrix}\right)$
surjects over the first summand $\cO(-5)$ in~$\cO(-5) \oplus V_5^\vee \otimes \cO(-4)$, therefore $\cF''$ is the image of the second summand.
Thus 
\begin{equation*}
\cF''\vert_{\P(W) \setminus \P(\wedge^2 V_5) } = \Ima \left( 
V_5^\vee \otimes \cO(-4) 
    \xrightarrow{\ \left(\begin{smallmatrix}  1 \\ \xi \end{smallmatrix}\right)\ } 
(V_5^\vee \oplus V_5) \otimes \cO(-3) 
\right).
\end{equation*}
In other words, $\cF''\vert_{\P(W) \setminus \P(\wedge^2 V_5) }$ is the graph of the map $V_5^\vee \otimes \cO(-4) \xrightarrow{\ \xi\ } V_5 \otimes \cO(-3)$.
The map~$\xi$ is skew-symmetric by definition, hence its graph is isotropic for the natural quadratic form.

Next, we choose an isomorphism 
\begin{equation*}
\rV \cong V_5^\vee \oplus V_5,
\end{equation*}
which is compatible with the quadratic forms on these vector spaces, and such that 
the isotropic subspace~\mbox{$V_5 \subset V_5^\vee \oplus V_5 \cong \rV$} corresponds to a point of $X^\vee$.
Then we obtain a map 
\begin{equation*}
f_X \colon \Bl_{\Gr(2,V_5)}(\P(W)) \to X
\end{equation*}
such that $f_X^*(\cU) \cong \cF''(3H_W)$.
Indeed, the map to $\OGr(5,\rV)$ is given by the universal property of the Grassmannian,
and it lands into the connected component $X$ because
the graph of any map $V_5^\vee \to V_5$ does not intersect the subspace $V_5 \subset V_5^\vee \oplus V_5$, 
and hence by~\eqref{eq:parity-intersection} when isotropic it corresponds to a point of $X$.
This proves that the image of the open subset~\eqref{eq:open-blowup-gr-p10} is in $X$, 
hence the same is true for the entire $X$ by continuity.

To show that $f_X$ is birational, just note that its restriction to the open subset~\eqref{eq:open-blowup-gr-p10} 
is an isomorphism onto the open subset of $X \cong \OGr_+(5,V_5^\vee \oplus V_5)$ parameterizing isotropic subspaces 
that do not intersect the subspace $V_5$.
Indeed, any such subspace is the graph of a map $V_5^\vee \to V_5$, and a graph is isotropic 
if and only if the corresponding map is skew-symmetric.

Using exact sequences defining the sheaves $\cF'$, $\cF''$ and $\cF'''$ it is easy to compute that 
\begin{equation*}
\rc_1(\cF'(3H_W)) = 4 H_W + E_{\Gr},\qquad
\rc_1(\cF''(3H_W)) = - (4H_W + E_{\Gr}) + 3E_{\Gr} = - 2(2H_W - E_{\Gr}).
\end{equation*}
Since $\rc_1(\cU_5) = -2H_X$ by~\eqref{eq:detu-s}, it follows that $H_X = 2H_W - E_{\Gr}$, 
thus proving the first of the relations of~\eqref{eq:picard-relations-blowup-gr-p10}.
In other words, the map $f_X$ is given by quadrics passing through $\Gr(2,V_5)$.

Next, consider the restriction of $f_X$ to the linear span $\P(\bw2V_5) \subset \P(W)$ of the Grassmannian.
It is classical that the map given by the Pl\"ucker quadrics defines an isomorphism
\begin{equation*}
\Bl_{\Gr(2,V_5)}(\P(\bw2V_5)) \cong \P_{\Gr(4,V_5)}(\bw2\cU_4)
\end{equation*}
(actually, this is an analogue for $\Gr(2,5)$ of the isomorphism of Proposition~\ref{proposition:blowup-x}).
Therefore, the map~$f_X$ contracts the strict transform of~$\P(\bw2V_5)$ onto $\Gr(4,V_5) \subset X$. 
In other words, we have a commutative diagram
\begin{equation*}
\xymatrix@C=5em{
\P_{\Gr(4,V_5)}(\bw2\cU_4) \ar[d] \ar@{^{(}->}[r] & 
\Bl_{\Gr(2,V_5)}(\P(W)) \ar[d]^{f_X}
\\
\Gr(4,V_5)
\ar@{^{(}->}[r] & 
X
}
\end{equation*}
It is clear that the relative Picard number for the map $f_X$ equals 1.
Therefore, by Lemma~\ref{lemma:blowup} we conclude that $f_X$ is the blowup of $\Gr(4,V_5) \subset X$ and
\begin{equation*}
E_\Pi := \P_{\Gr(4,V_5)}(\bw2\cU_4)
\end{equation*}
is the exceptional divisor of~$f_X$.
Since $E_\Pi$ is the strict transform of a hyperplane in $\P(W)$ passing through~$\Gr(2,V_5)$, 
we have a linear equivalence $E_\Pi = H_W - E_{\Gr}$.
Combining it with the linear equivalence~$H_X = 2H_W - E_{\Gr}$ proved above, we deduce~\eqref{eq:picard-relations-blowup-gr-p10}.
%
%
This completes the proof of the first two parts of the proposition.
It remains to identify the image $\Gr(4,V_5) = f_X(E_\Pi) \subset X$ with $\Pi^4_{U_{5,-}}$ and 
the rational map~$f_X \circ f_W^{-1}$ with the linear projection from~$\Pi^4_{U_{5,-}}$.

First, by~\eqref{eq:picard-relations-blowup-gr-p10} we have an isomorphism
\begin{equation*}
\SS^\vee \cong H^0(X,\cO_X(H_X)) \cong H^0(\P(W),I_{\Gr(2,V_5)}(2H_W)).
\end{equation*}
The right hand side is the space of quadrics in $\P(W)$ through $\Gr(2,V_5)$;
consequently we get an exact sequence
\begin{equation*}
0 \to W^\vee \xrightarrow{\ \eta\ } \SS^\vee \to \bw4V_5^\vee \to 0,
\end{equation*}
where the first term is the space of quadrics containing the hyperplane $\P(\bw2V_5) \subset \P(W)$,
and the second term is the space of Pl\"ucker quadrics in $\P(\bw2V_5)$.
After dualization we obtain
\begin{equation}
\label{eq:w-sequence-3}
0 \to \bw4V_5 \to \SS \to W \to 0,
\end{equation}
and the image $f_X(E_\Pi)$ of the exceptional divisor $E_\Pi$ is identified with $\Gr(4,V_5) = \P(\bw4V_5) \subset \P(\SS)$.
Moreover, the composition of the map $f_X$ with the linear projection $\P(\SS) \dashrightarrow \P(W)$ coincides
with the map given by the linear system of quadrics in $\P(W)$ containing the hyperplane $\P(\bw2V_5)$,
which coincides with the linear system of all linear functions. 
This proves that the map $f_W \circ f_X^{-1}$ is the linear projection from $\P(\bw4V_5)$.

Finally, it remains to check, that setting $U_{5,-} = V_5$, we obtain an identification of $\P(\bw4V_5)$ with $\Pi^4_{U_{5,-}}$.
For this we consider the restriction of the map $f_X \circ f_W^{-1}$ to the subscheme $\P(\bw2V_5) \setminus \Gr(2,V_5) \subset E_\Pi$.
On this subscheme of $\P(W)$ we have $\eta = 0$ and $\xi$ is a skew-symmetric matrix of rank~4.
By~\eqref{eq:cf-pp} the intersection of each fiber of the subbundle $\cF'' \subset (V_5^\vee \oplus V_5) \otimes \cO(-3H_W) $ 
with the subspace $V_5 \subset V_5^\vee \oplus V_5$ is the kernel of 
$\xi \wedge \xi \colon V_5 \otimes \cO(-3H_W) \to \cO(-H_W)$, i.e., a 4-dimensional subspace in $V_5$.
Therefore, the image $f_X(E_\Pi)$ is contained in the locus of $U_5 \subset \rV$ that have a 4-dimensional intersection with $V_5$.
Thus we have $\P(\bw4V_5) \subset \Pi^4_{V_5} = \Pi^4_{U_{5,-}}$. 
Since both sides are 4-spaces, this is an equality.
\end{proof}

\begin{remark}
\label{remark:w-splitting}
Note that the direct sum decomposition~\eqref{eq:w-decomposition} that we used to prove the proposition is not canonical 
(actually, it corresponds to the isotropic direct sum decomposition $\rV = V_5^\vee \oplus V_5$ 
that we obtained from~\eqref{eq:w-decomposition} during the proof).
On the other hand, there is a canonical exact sequence
\begin{equation}
\label{eq:w-sequence}
0 \to \bw2U_{5,-} \to W \to \Bbbk \to 0,
\end{equation}
where we identified $V_5 = U_{5,-}$ as in the proof.
Indeed, the subspace $\bw2U_{5,-}$ corresponds to the linear span of the Grassmannian $\Gr(2,U_{5,-}) \subset \P(W)$
(the center of the blowup $f_W$).
\end{remark}

\subsection{Blowups of 4-spaces on hyperplane sections of $X$}

As before we let $U_{5,-} \subset \rV$ be an isotropic subspace corresponding to a point of~$X^\vee$ and 
let $W$ be the space defined by~\eqref{eq:w-quotient}.
Consider the preimage in $\SS$ of the hyperplane~$\bw2U_{5,-} \subset W$ (from~\eqref{eq:w-sequence}) 
with respect to the projection $\SS \to W$ of~\eqref{eq:w-quotient}.
This is a hyperplane in~$\SS$.
We denote the corresponding hyperplane section of $X$ by $X_{U_{5,-}} \subset X$.
Denote by~$\cU_2$ the tautological rank-$2$ bundle on $\Gr(2,U_{5,-})$ and by $\cU_2^\perp$ 
the tautological rank-3 bundle on the same Grassmannian.

\begin{corollary}
\label{corollary:singular-hyperplane-sections}
The singular locus of the hyperplane section $X_{U_{5,-}} \subset X$ is the $4$-space $\Pi^4_{U_{5,-}} \subset X$.
Moreover, there is an isomorphism $\Bl_{\Pi^4_{U_{5,-}}}(X_{U_{5,-}}) \cong \P_{\Gr(2,U_{5,-})}(\cU_2^\perp \oplus \cO(-1))$,
such that the exceptional divisor $E'_\Pi$ of the blowup is identified with $\P_{\Gr(2,U_{5,-})}(\cU_2^\perp)$, 
and there is a diagram
\begin{equation}
\vcenter{\xymatrix{
&&
E'_\Pi \ar[ddll] \ar[d] \ar@{=}[r]^-\sim &
\P_{\Gr(2,U_{5,-})}(\cU_2^\perp) \ar[d] \ar@{=}[r]^-\sim &
\Fl(2,4;U_{5,-}) \ar@/^5ex/[ddl]
\\
&&
\Bl_{\Pi^4_{U_{5,-}}}(X_{U_{5,-}}) \ar@{=}[r]^-\sim \ar[dl] &
\P_{\Gr(2,U_{5,-})}(\cU_2^\perp \oplus \cO(-1)) \ar[d] 
\\
\Pi^4_{U_{5,-}} \ar[r] &
X_{U_{5,-}} &&
\Gr(2,U_{5,-}) 
}}
\end{equation}
\end{corollary}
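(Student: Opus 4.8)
The plan is to read off everything from Proposition~\ref{proposition:blowup-x-pi4}. Write $V_5 := U_{5,-}$, set $B := \Bl_{\Pi^4_{U_{5,-}}}(X) \cong \Bl_{\Gr(2,V_5)}(\P(W))$, and let $E_\Pi$, $E_{\Gr}$ be the exceptional divisors of $f_X$ and $f_W$; recall the relations~\eqref{eq:picard-relations-blowup-gr-p10}, in particular $E_{\Gr} = H_X - 2E_\Pi$. The key first step is to identify the strict transform of $X_{U_{5,-}}$ under $f_X$ with $E_{\Gr}$. By construction $X_{U_{5,-}}$ is cut out on $X$ by the linear form $\SS \to W \to \Bbbk$ with kernel $\bw2V_5$; on the open set $\{U_5 \cap V_5 = 0\}$, which Proposition~\ref{proposition:blowup-x-pi4} identifies with $\P(W)\setminus\P(\bw2V_5)$, this form is a unit, so $X_{U_{5,-}}$ is disjoint from it. Since $V_5 \in X^\vee$ and $U_5 \in X$ lie in different components, $\dim(U_5 \cap V_5)$ is even by~\eqref{eq:parity-intersection}, so $X_{U_{5,-}}$ is the Schubert divisor $\{[U_5] : \dim(U_5 \cap V_5) \ge 2\}$. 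On the graph chart one checks that the projection $f_W \circ f_X^{-1}$ sends such a point to the rank-two skew form $U_5 \cap V_5 \in \Gr(2,V_5) \subset \P(\bw2V_5)$; hence the strict transform $\widetilde{X}$ of $X_{U_{5,-}}$ lies over $\Gr(2,V_5) = f_W(E_{\Gr})$, and being a prime divisor inside $E_{\Gr}$ it equals $E_{\Gr}$. As the strict transform of a subvariety under a blowup is the blowup of that subvariety along the centre, this gives $\Bl_{\Pi^4_{U_{5,-}}}(X_{U_{5,-}}) \cong E_{\Gr}$, with $f_W|_{E_{\Gr}}$ playing the role of the map to $\Gr(2,U_{5,-})$ in the diagram.

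The singular locus then follows. From $\widetilde{X} = E_{\Gr} = H_X - 2E_\Pi$ the multiplicity of $X_{U_{5,-}}$ along $\Pi^4_{U_{5,-}}$ equals $2$, so every point of $\Pi^4_{U_{5,-}}$ is singular; conversely $f_X$ restricts to an isomorphism $E_{\Gr} \setminus E_\Pi \xrightarrow{\ \sim\ } X_{U_{5,-}} \setminus \Pi^4_{U_{5,-}}$ of the smooth source, so there are no further singularities. Hence $\Sing(X_{U_{5,-}}) = \Pi^4_{U_{5,-}}$.

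It remains to identify the projective bundle. As the exceptional divisor of the blowup of the smooth centre $\Gr(2,V_5)$, one has $E_{\Gr} \cong \P_{\Gr(2,V_5)}(\cN_{\Gr(2,V_5)/\P(W)})$. I would compute this rank-$4$ normal bundle through the splitting $W = \bw2V_5 \oplus \Bbbk$ of Remark~\ref{remark:w-splitting}: projecting $\P(W)$ away from the point $[\Bbbk] \notin \P(\bw2V_5)$ retracts onto the hyperplane $\P(\bw2V_5)$ and splits the normal sequence, giving $\cN_{\Gr/\P(W)} \cong \cN_{\Gr/\P(\bw2V_5)} \oplus \cO(1)$. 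A standard Euler-sequence computation for the Plücker embedding yields $\cN_{\Gr/\P(\bw2V_5)} \cong \bw2\cU_2^\vee \otimes \bw2{(V_5/\cU_2)} \cong \cU_2^\perp(2)$, where $\cU_2^\perp := (V_5/\cU_2)^\vee$ is the rank-$3$ tautological bundle. Thus $\cN_{\Gr/\P(W)} \cong (\cU_2^\perp \oplus \cO(-1))(2)$, and since $\P_S(-)$ is insensitive to twists, $E_{\Gr} \cong \P_{\Gr(2,U_{5,-})}(\cU_2^\perp \oplus \cO(-1))$. Finally the exceptional divisor $E'_\Pi$ of $\Bl_{\Pi^4_{U_{5,-}}}(X_{U_{5,-}}) \to X_{U_{5,-}}$ is $E_{\Gr} \cap E_\Pi$, i.e.\ the sub-bundle of normal directions tangent to $\P(\bw2V_5)$, which is $\P_{\Gr}(\cN_{\Gr/\P(\bw2V_5)}) = \P_{\Gr}(\cU_2^\perp) \cong \Fl(2,4;U_{5,-})$; the induced contraction $E'_\Pi \to \Pi^4_{U_{5,-}} = \Gr(4,V_5)$ is the projection forgetting $U_2$, which is the curved arrow of the diagram.

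I expect the main obstacle to be the first step: keeping straight which of the two blow-downs contracts which exceptional divisor and verifying that $X_{U_{5,-}}$ corresponds to $E_{\Gr}$ and not to $E_\Pi$ — the cleanest check being the Schubert-divisor description together with the formula $[U_5] \mapsto U_5 \cap V_5$ for the projection. The normal-bundle computation is then routine; its only delicate point is that the splitting comes from the noncanonical decomposition $W = \bw2V_5 \oplus \Bbbk$, which is harmless for the stated abstract isomorphism.
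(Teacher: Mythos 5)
Your proof is correct and follows the paper's strategy: restrict the birational description of Proposition~\ref{proposition:blowup-x-pi4} to the hyperplane $\P(\bw2{U_{5,-}}) \subset \P(W)$, identify the strict transform of $X_{U_{5,-}}$ with $E_{\Gr}$, and read off the singular locus, the projective bundle structure, and $E'_\Pi = E_\Pi \cap E_{\Gr}$. Two steps are done by locally different (but equally valid) means. First, you identify $\widetilde{X} = E_{\Gr}$ via the Schubert description $X_{U_{5,-}} = \{\dim(U_5 \cap U_{5,-}) \ge 2\}$ and the pure-spinor computation $[U_5] \mapsto [\bw2{(U_5\cap U_{5,-})}]$, whereas the paper gets the same conclusion purely from the divisor-class identity $E_\Pi + E_{\Gr} = H_W$ of~\eqref{eq:picard-relations-blowup-gr-p10}, which shows $f_W^{-1}(\P(\bw2{U_{5,-}})) = E_\Pi \cup E_{\Gr}$ with nothing left over; your route is more computational but also makes the Schubert interpretation of $X_{U_{5,-}}$ explicit, which the paper only uses later. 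Second, you split the normal sequence $0 \to \cN_{\Gr/\P(\bw2{U_{5,-}})} \to \cN_{\Gr/\P(W)} \to \cO(1) \to 0$ geometrically, via the retraction of $\P(W)\setminus\{[\Bbbk]\}$ onto the hyperplane; the paper instead invokes Borel--Bott--Weil to get $\Ext^1(\cO(1),\cU_2^\perp(2)) = H^1(\Gr(2,U_{5,-}),\cU_2^\perp(1)) = 0$. Your splitting argument is more elementary and buys a canonical splitting attached to the choice of decomposition $W = \bw2{U_{5,-}} \oplus \Bbbk$, which, as you note, is harmless for the stated isomorphism. The only blemish is cosmetic: the curved arrow in the diagram is the forgetful map $\Fl(2,4;U_{5,-}) \to \Gr(2,U_{5,-})$ (forgetting $U_4$), while the contraction $E'_\Pi \to \Pi^4_{U_{5,-}} = \Gr(4,U_{5,-})$ (forgetting $U_2$) is the separate left-hand arrow; you conflate the two, but both maps are correctly identified in substance.
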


\begin{proof}
We use the notation introduced in the proof of Proposition~\ref{proposition:blowup-x-pi4} with the identification~\mbox{$V_5 = U_{5,-}$}.
Consider the preimage $f_W^{-1}(\P(\bw2U_{5,-})) \subset \Bl_{\Gr(2,U_{5,-})}(\P(W))$ of the hyperplane $\P(\bw2U_{5,-}) \subset \P(W)$.
Since $\Gr(2,U_{5,-}) \subset \P(\bw2U_{5,-})$, this preimage contains the exceptional divisor $E_{\Gr}$.
On the other hand, the strict transform of this hyperplane is the exceptional divisor $E_\Pi$.
Since $E_\Pi + E_{\Gr} = H_W$ by~\eqref{eq:picard-relations-blowup-gr-p10}, 
it follows that there are no other components in the preimage, i.e.,
\begin{equation*}
f_W^{-1}(\P(\bw2U_{5,-})) = E_\Pi \cup E_{\Gr}.
\end{equation*}
The morphism $f_X$ contracts $E_\Pi$ to the 4-space $\Pi^4_{U_{5,-}}$, 
and maps $E_{\Gr}$ to a hyperplane section of $X$ singular along $\Pi^4_{U_{5,-}}$ (this follows from the relation $E_{\Gr} = H_X - 2E_\Pi$).
On the other hand, since the rational map~$X \dashrightarrow \P(W)$ of Proposition~\ref{proposition:blowup-x-pi4} is the linear projection
induced by the map~$\SS \to W$, the strict transform of the hyperplane~$\P(\bw2U_{5,-})$ in $X$ 
is the hyperplane section~\mbox{$X_{U_{5,-}} \subset X$}.
Therefore, the morphism~$f_X$ induces a birational map $E_{\Gr} \to X_{U_{5,-}}$.
Moreover, $E_{\Gr}$ is the strict transform of~$X_{U_{5,-}}$ in~$\Bl_{\Pi^4_{U_{5,-}}}(X)$, and hence
\begin{equation*}
\Bl_{\Pi^4_{U_{5,-}}}(X_{U_{5,-}}) \cong E_{\Gr} \cong \P_{\Gr(2,U_{5,-})}(\cN_{\Gr(2,U_{5,-})/\P(W)}).
\end{equation*}
It remains to show that $X_{U_{5,-}} \setminus \Pi^4_{U_{5,-}}$ is smooth,
and that the normal bundle of $\Gr(2,U_{5,-})$ in $\P(W)$ is isomorphic to a twist of $\cU_2^\perp \oplus \cO(-1)$.
The first follows evidently from smoothness of $E_{\Gr}$.
For the second we use the following exact sequence
\begin{equation*}
0 \to \cN_{\Gr(2,U_{5,-})/\P(\wedge^2 U_{5,-})} \to \cN_{\Gr(2,U_{5,-})/\P(W)} \to \cO_{\P(W)}(1)\vert_{\Gr(2,U_{5,-})} \to 0
\end{equation*}
(the last term comes from the normal bundle of the hyperplane $\P(\bw2 U_{5,-}) \subset \P(W)$).
It is well known that the first term is isomorphic to $\cU_2^\perp(2)$ (see, for instance~\cite[Proposition~A.7]{debarre2015gushel}), 
hence the middle term is an extension of $\cO(1)$ by $\cU_2^\perp(2)$.
On the other hand, by Borel--Bott--Weil we have
\begin{equation*}
\Ext^1(\cO(1),\cU_2^\perp(2)) \cong H^1(\Gr(2,U_{5,-}), \cU_2^\perp(1)) = 0,
\end{equation*}
hence the extension splits and we deduce an isomorphism
\begin{equation*}
\cN_{\Gr(2,U_{5,-})/\P(W)} \cong \cU_2^\perp(2) \oplus \cO(1).
\end{equation*}
Since the projectivization is not affected by a twist, we deduce the required isomorphism.

Finally, we have to identify the exceptional divisor $E'_\Pi$.
The above argument shows that $E'_\Pi = E_\Pi \cap E_{\Gr}$, 
so $E'_\Pi$ is nothing but the exceptional divisor of the blowup of $\P(\bw2U_{5,-})$ along $\Gr(2,U_{5,-})$.
Therefore, 
\begin{equation*}
E'_\Pi \cong \P_{\Gr(2,U_{5,-})}(\cN_{\Gr(2,U_{5,-})/\P(\wedge^2 U_{5,-})}) \cong \P_{\Gr(2,U_{5,-}}(\cU_2^\perp)
\end{equation*}
which embeds into $\P_{\Gr(2,U_{5,-})}(\cU_2^\perp \oplus \cO(-1))$ as the projectivization of the first summand.
\end{proof}

\begin{remark}
It is easy to see that the hyperplane section $X_{U_{5,-}}$ considered above is nothing but the singular hyperplane section of $X$
associated with the point $[U_{5,-}] \in X^\vee$ in view of their projective duality.
One of the ways to do this is the following.
By projective duality every singular hyperplane section of $X$ corresponds to a point of $X^\vee$, in particular $X_{U_{5,-}}$ does.
This defines an automorphism of~$X^\vee$ which is canonical and hence $\Spin(\rV)$-equivariant.
Therefore, it belongs to the center of the group~$\Aut(X^\vee) \cong \PSO(\rV)$ which is trivial.
\end{remark}

As an application of the above results we describe the Hilbert schemes $F_4(X_{U_{5,-}})$ and $G_6(X_{U_{5,-}})$.

\begin{corollary}
If $X_{U_{5,-}} \subset X$ is the singular hyperplane section of $X$ corresponding to an isotropic subspace~$U_{5,-} \subset \rV$ 
then $F_4(X_{U_{5,-}}) \cong \Cone(\Gr(3,U_{5,-}))$
\textup(the cone in the Pl\"ucker embedding\textup).
\end{corollary}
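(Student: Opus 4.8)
The plan is to exploit that any linearly embedded $\P^4$ contained in the hyperplane section $X_{U_{5,-}}\subset X$ is in particular a linear $4$-space on $X$, hence, by Theorem~\ref{theorem:linear-spaces}(4), equals $\Pi^4_{U'}$ for a unique point $[U']\in X^\vee=\OGr_-(5,\rV)$. Writing $H\subset\P(\SS)$ for the hyperplane with $X_{U_{5,-}}=X\cap H$, this identifies $F_4(X_{U_{5,-}})$ with the incidence locus $\{[U']\in\OGr_-(5,\rV):\Pi^4_{U'}\subset H\}$ inside $F_4(X)\cong X^\vee$. So the first step is purely to describe this incidence locus. By the preceding remark $H$ is the singular hyperplane dual to $[U_{5,-}]$, so by Remark~\ref{remark:filtrations} (for $m=5$) we have $H=\Ker\bigl(\SS\to\bw{5}{U_{5,-}^\vee}\bigr)$, the projection onto the top factor of the $U_{5,-}$-filtration of Lemma~\ref{lemma:filtration-ss}. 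In the exterior-algebra (Fock) model of $\SS$ relative to $U_{5,-}$ the lowest nonzero component of a pure spinor $s_{U_5^+}$ sits in degree $\dim(U_5^+\cap U_{5,-})$, so its image in the top factor is nonzero exactly when $U_5^+$ is transverse to $U_{5,-}$. Since $U_5^+\in\OGr_+$ and $U_{5,-}\in\OGr_-$ lie in opposite families, $\dim(U_5^+\cap U_{5,-})$ is even by~\eqref{eq:parity-intersection}, so
\[
[U_5^+]\in H\iff\dim(U_5^+\cap U_{5,-})\ge 2 .
\]

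Next I would run the elementary linear algebra converting this pointwise criterion into a condition on $U'$. Recall $\Pi^4_{U'}=\{[U_5^+]\in\OGr_+:\dim(U_5^+\cap U')=4\}$ (see~\eqref{def:pi4-u5}), and as $[U_5^+]$ runs over $\Pi^4_{U'}$ the subspace $W_4:=U_5^+\cap U'$ runs over all of $\Gr(4,U')$. If $\dim(U'\cap U_{5,-})\ge 3$ then any $W_4\subset U'$ meets $U'\cap U_{5,-}$ in dimension $\ge 4+3-5=2$, so $\dim(U_5^+\cap U_{5,-})\ge\dim(W_4\cap U_{5,-})\ge 2$ for every point, whence $\Pi^4_{U'}\subset H$. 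Conversely, since $U',U_{5,-}$ lie in the same family, $\dim(U'\cap U_{5,-})\in\{1,3,5\}$; in the remaining case $\dim(U'\cap U_{5,-})=1$ one may choose $W_4\subset U'$ missing the line $U'\cap U_{5,-}$, giving $W_4\cap U_{5,-}=0$, hence $\dim(U_5^+\cap U_{5,-})\le 1$, which is $0$ by parity, so $\Pi^4_{U'}\not\subset H$. This yields
\[
F_4(X_{U_{5,-}})=S:=\{[U']\in\OGr_-(5,\rV):\dim(U'\cap U_{5,-})\ge 3\}.
\]

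To recognize $S$ as the cone I would return to the half-spinor side. By Remark~\ref{remark:filtrations} the space $\SS^\vee=\SS_-$ carries a $U_{5,-}$-filtration with graded pieces $\bw{0}{U_{5,-}^\vee},\bw{2}{U_{5,-}^\vee},\bw{4}{U_{5,-}^\vee}$, which after the harmless twist by $\det U_{5,-}$ become $\bw{5}{U_{5,-}},\bw{3}{U_{5,-}},\bw{1}{U_{5,-}}$; the distinguished line $\bw{5}{U_{5,-}}=\det U_{5,-}$ is precisely the point $[U_{5,-}]$ in the spinor embedding. As above, the lowest nonzero component of $s_{U'}$ sits in degree $\dim(U'\cap U_{5,-})$ and is the decomposable form on $U'\cap U_{5,-}$. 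Hence $[U']\in S$ iff the top component (in $\bw{1}{U_{5,-}}$) vanishes, i.e.\ iff $s_{U'}$ lies in the sub-$\P^{10}$ $\P\bigl(\bw{5}{U_{5,-}}\oplus\bw{3}{U_{5,-}}\bigr)$: the vertex $[U_{5,-}]=\P(\bw{5}{U_{5,-}})$ is the stratum $\dim\cap=5$, while for $\dim\cap=3$ the middle component is the decomposable $3$-vector $\bw{3}(U'\cap U_{5,-})$ representing $U'\cap U_{5,-}\in\Gr(3,U_{5,-})\subset\P(\bw{3}{U_{5,-}})$. As $U'$ varies over the $\mathbb{A}^1$ of Lagrangians in $\OGr_-$ sharing a fixed $W_3=U'\cap U_{5,-}$, the $\bw{5}{U_{5,-}}$-coordinate sweeps the corresponding ruling of the cone minus its vertex. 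Thus the spinor embedding carries $S$ bijectively onto the projective cone over $\Gr(3,U_{5,-})$ in its Pl\"ucker embedding, with vertex $[U_{5,-}]$.

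Finally, to upgrade this bijection to an isomorphism of schemes I would use that $X^\vee$ is an intersection of quadrics with $H^0(\P(\SS^\vee),I_{X^\vee}(2))\cong\rV$ (the $X^\vee$-analogue of Corollary~\ref{corollary:resolution-x}); then $S=X^\vee\cap\P\bigl(\bw{5}{U_{5,-}}\oplus\bw{3}{U_{5,-}}\bigr)$ is cut out by the restrictions of these quadrics, and a Borel--Bott--Weil weight computation should identify these restrictions with the Pl\"ucker quadrics of $\Gr(3,U_{5,-})$, read as quadrics on $\bw{3}{U_{5,-}}$ independent of the cone coordinate --- exactly the equations of $\Cone(\Gr(3,U_{5,-}))$, which is normal. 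I expect the hard part to be precisely these two classical pure-spinor inputs --- that the extreme component of a pure spinor detects the intersection with $U_{5,-}$ and recovers the decomposable form on it --- together with the scheme-theoretic (rather than merely set-theoretic) matching of the quadrics; all are standard (Chevalley), but they must be set up with the parity and duality conventions of Section~\ref{section:preliminaries} so that the two families $\OGr_\pm$ and the two filtrations align correctly.
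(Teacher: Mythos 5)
Your reduction of the problem to the locus $S=\{[U']\in X^\vee:\dim(U'\cap U_{5,-})\ge 3\}$ is essentially the paper's first step, carried out by the same linear algebra (sufficiency of $\dim(U'\cap U_{5,-})\ge 3$ via $\dim(W_4\cap U'\cap U_{5,-})\ge 2$, and necessity by exhibiting a $W_4\subset U'$ missing $U'\cap U_{5,-}$ when that intersection is a line); you additionally justify the membership criterion $[U_5]\in X_{U_{5,-}}\iff\dim(U_5\cap U_{5,-})\ge 2$ via pure spinors, which the paper takes as read from its construction of $X_{U_{5,-}}$. Where you genuinely diverge is in recognizing $S$ as the cone. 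The paper introduces the incidence scheme $\tF_4(X_{U_{5,-}})$ of pairs $(U_3,U')$ with $U_3\subset U'\cap U_{5,-}$, identifies it with $\P_{\Gr(3,U_{5,-})}(\cS_{2,-}|_{\Gr(3,U_{5,-})})\cong\P_{\Gr(3,U_{5,-})}(\cO\oplus\cO(-1))$ using the universal line~\eqref{eq:universal-lines} and the splitting of the restricted spinor bundle from Remark~\ref{remark:filtrations}, and then contracts the exceptional section to get the cone. You instead work inside the spinor embedding, showing that $S=X^\vee\cap\P\bigl(\bw{5}{U_{5,-}}\oplus\bw{3}{U_{5,-}}\bigr)$ with the lines $L^-_{W_3}$ as rulings through the vertex $[U_{5,-}]$; this is a perfectly viable and arguably more vivid picture (and it makes visible that the cone sits linearly inside $\P(\SS^\vee)$, which the paper's proof does not). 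The trade-off is your last step: upgrading the set-theoretic identification to an isomorphism of schemes is only sketched (``a Borel--Bott--Weil weight computation should identify these restrictions with the Pl\"ucker quadrics''), and since the degree-$2$ part of the ideal of $\Cone(\Gr(3,U_{5,-}))$ in $\P^{10}$ is only $5$-dimensional while the restriction of $\rV\cong H^0(I_{X^\vee}(2))$ is $10$-dimensional, one really does have to verify that the restriction map has exactly a $5$-dimensional kernel and image the Pl\"ucker quadrics; the paper's route via the $\P^1$-bundle $\tF_4$ and its contraction sidesteps this computation entirely. So: correct strategy, same first half, a legitimately different second half whose final verification you would still need to carry out.
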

\begin{proof}
By Theorem~\ref{theorem:linear-spaces} every 4-space on $X$ is equal to $\Pi^4_{U'_{5,-}}$ for some isotropic subspace $U'_{5,-} \subset \rV$.
So, we should figure out, when $\Pi^4_{U'_{5,-}} \subset X_{U_{5,-}}$. 
In other words, which condition on $U'_{5,-}$ ensures that 
for any isotropic subspace $U_5 \subset \rV$ such that $\dim(U_5 \cap U'_{5,-}) = 4$
we have $\dim(U_5 \cap U_{5,-}) \ge 2$.
Clearly, $\dim(U'_{5,-} \cap U_{5,-}) \ge 3$ is a sufficient condition.
Let us show it is also necessary.


So, assume $\dim(U'_{5,-} \cap U_{5,-}) = 1$ (recall that the dimension of such intersection is always odd).
Consider a subspace $U_4 \subset U'_{5,-}$ such that $U_4 \cap U_{5,-} = 0$, and let $U_5$ be its unique extension 
to an isotropic subspace corresponding to a point of $X$. 
Then, of course, $\dim(U_5 \cap U_{5,-}) \le 1$.

Thus, we have checked that $F_4(X_{U_{5,-}}) = \{ U'_{5,-} \mid \dim(U'_{5,-} \cap U_{5,-}) \ge 3 \}$.
Let us show this is a cone over the Grassmannian.
For this consider the scheme $\tF_4(X_{U_{5,-}})$ parameterizing pairs of subspaces $(U_3,U'_{5,-})$ such that $U_3 \subset U'_{5,-} \cap U_{5,-}$.
Forgetting $U'_{5,-}$ defines a map $\varphi \colon \tF_4(X_{U_{5,-}}) \to \Gr(3,U_{5,-}) \subset \OGr(3,\rV)$ 
whose fiber over a point $[U_3] \in \Gr(3,U_{5,-})$ is the line $L^-_{U_3} \subset X^\vee$ associated with the isotropic subspace~$U_3$.
The identification of the universal line $L^-$ from the right half of~\eqref{eq:universal-lines} shows that
\begin{equation*}
\tF_4(X_{U_{5,-}}) = \P_{\Gr(3,U_{5,-})}(\varphi^*(\cS_{2,-})),
\end{equation*}
where $\cS_{2,-}$ is the spinor bundle on $\OGr(3,\rV)$.
According to Remark~\ref{remark:filtrations} the restriction of this spinor bundle to $\Gr(3,U_{5,-})$ admits a filtration
which takes the form of a short exact sequence
\begin{equation*}
0 \to \cO \to \cS_{2,-}\vert_{\Gr(3,U_{5,-})} \to \bw2(U_{5,-}/\cU_3)^\vee \to 0.
\end{equation*}
Clearly, the right term is isomorphic to $\cO(1)$, and since on $\Gr(3,U_{5,-})$ there are no non-trivial extensions between $\cO(1)$ and $\cO$,
the restriction of the spinor bundle is isomorphic to $\cO \oplus \cO(1)$. Therefore
\begin{equation*}
\tF_4(X_{U_{5,-}}) \cong \P_{\Gr(3,U_{5,-})}(\cO \oplus \cO(-1)).
\end{equation*}
The projection $\tF_4(X_{U_{5,-}}) \to F_4(X_{U_{5,-}})$, of course, contracts the exceptional section of the above projective bundle
(that parameterizes pairs $(U_3,U'_{5,-})$ with $U'_{5,-} = U_{5,-}$) to the point of $F_4(X_{U_{5,-}})$ corresponding to the subspace $U_{5,-}$.
The result of such a contraction is of course the cone $\Cone(\Gr(3,U_{5,-}))$.
\end{proof}

\begin{lemma}
If $X_{U_{5,-}} \subset X$ is the singular hyperplane section of $X$ corresponding to an isotropic subspace~$U_{5,-} \subset \rV$ 
then $G_6(X_{U_{5,-}}) \cong \P(U_{5,-})$.
\end{lemma}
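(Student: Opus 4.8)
The plan is to identify $G_6(X_{U_{5,-}})$ inside the Hilbert scheme $G_6(X)$ of all $6$-dimensional quadrics on $X$, which by Corollary~\ref{corollary:quadrics-in-xk} (and the remark following it) is identified with $\rQ$ via $v \mapsto \cQ_v$. Since $X_{U_{5,-}} \subset X$ is a hyperplane section, every $6$-dimensional quadric contained in $X_{U_{5,-}}$ is in particular such a quadric on $X$, and the locus of those contained in $X_{U_{5,-}}$ is closed; hence $G_6(X_{U_{5,-}})$ is the closed subscheme of $G_6(X) \cong \rQ$ parameterizing the $v$ with $\cQ_v \subseteq X_{U_{5,-}}$. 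I would first determine this locus set-theoretically and then pin down its scheme structure.

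For the set-theoretic description I use, as in the computation of $F_4(X_{U_{5,-}})$ above, that $X_{U_{5,-}} = \{[U_5] \in X : \dim(U_5 \cap U_{5,-}) \ge 2\}$, together with the parity relation~\eqref{eq:parity-intersection}, which forces $\dim(U_5 \cap U_{5,-})$ to be even. If $v \in U_{5,-}$, then every isotropic $U_5 \supset v$ has $v \in U_5 \cap U_{5,-}$, so $\dim(U_5 \cap U_{5,-}) \ge 1$, hence $\ge 2$ by parity; since $\cQ_v = \OGr_+(4,v^\perp/v)$ parameterizes exactly the isotropic $U_5 \supset v$ lying on $X$, this gives $\cQ_v \subseteq X_{U_{5,-}}$. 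Conversely, if $v \notin U_{5,-}$ I produce a point of $\cQ_v$ off $X_{U_{5,-}}$: the $4$-dimensional space $U_{5,-} \cap v^\perp$ maps isomorphically to a maximal isotropic $\bar U \subset v^\perp/v$, which by Lemma~\ref{lemma:linear-quadric-intersection} is the unique point of $\Pi^4_{U_{5,-}} \cap \cQ_v$ and so lies in the same component $\OGr_+(4,v^\perp/v)$ as the points of $\cQ_v$. Choosing in $v^\perp/v$ a maximal isotropic $\bar U'$ with $\bar U' \cap \bar U = 0$ (such $\bar U'$ lies automatically in this same family, its intersection with $\bar U$ having even dimension $0 \equiv 4 \pmod 2$, cf.~\eqref{eq:parity-intersection}) and lifting it to an isotropic $U_5 \supset v$, one gets $\dim(U_5 \cap U_{5,-}) = \dim(\bar U' \cap \bar U) = 0$, so $[U_5] \in \cQ_v \setminus X_{U_{5,-}}$. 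Thus $\cQ_v \subseteq X_{U_{5,-}}$ if and only if $v \in U_{5,-}$, and since $U_{5,-}$ is isotropic we have $\P(U_{5,-}) \subset \rQ$; so the support of $G_6(X_{U_{5,-}})$ is exactly $\P(U_{5,-}) \cong \P^4$.

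It remains to check that $G_6(X_{U_{5,-}})$ carries the reduced structure, which is the main point. By Proposition~\ref{proposition:blowup-x} the fibers of $q \colon \P_\rQ(\cS_8) \cong \Bl_X(\P(\SS)) \to \rQ$ realize $\langle \cQ_v \rangle = \P(\cS_{8,v})$, so the condition $\cQ_v \subseteq X_{U_{5,-}} = X \cap H_{U_{5,-}}$ amounts to the vanishing on $\cS_{8,v}$ of the linear form $s_- \in \SS^\vee$ cutting out $H_{U_{5,-}}$. Hence $G_6(X_{U_{5,-}})$ is scheme-theoretically the zero locus $Z(\psi) \subset \rQ$ of the section $\psi \in H^0(\rQ,\cS_8^\vee)$ obtained by restricting $s_-$ to $\cS_8 \hookrightarrow \SS \otimes \cO_\rQ$. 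Under the self-duality isomorphism $\cS_8^\vee \cong \cS_8(H_\rQ)$ of Proposition~\ref{proposition:blowup-x} (equivalently Lemma~\ref{lemma:cs-dual}), this $\psi$ is identified with the restriction to the tautological line $\cO_\rQ(-1) \subset \rV \otimes \cO_\rQ$ of the Clifford map $v \mapsto v \cdot s_-$, i.e.\ of a \emph{fixed} linear map $\rV \to \SS$. Since $s_- = s_{U_{5,-}}$ is a pure spinor, this map has constant rank $5$ with kernel exactly $U_{5,-}$, so its zero scheme in $\P(\rV)$ is the reduced linear space $\P(U_{5,-})$; intersecting with $\rQ \supseteq \P(U_{5,-})$ leaves $Z(\psi) = \P(U_{5,-})$ reduced, yielding $G_6(X_{U_{5,-}}) \cong \P(U_{5,-})$.

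The hardest point is this last one: $Z(\psi)$ is a priori the zero locus of a section of a rank-$8$ bundle on the $8$-dimensional $\rQ$, of expected dimension $0$ (for generic $s_-$, i.e.\ a smooth hyperplane section, it is a single point, matching Lemma~\ref{lemma:g6-xkappa-smooth}), and the jump to a $4$-dimensional yet reduced zero locus when $[s_-] \in X^\vee$ is precisely what the pure-spinor identification controls. Should one prefer to avoid the Clifford description, an alternative is to observe that the stabilizer $\bP_4$ of $[U_{5,-}]$ in $\Spin(\rV)$ preserves $X_{U_{5,-}}$ and hence acts on $G_6(X_{U_{5,-}})$, acting transitively on its support $\P(U_{5,-})$; then reducedness may be verified at a single point by computing that the Zariski tangent space $\ker(d\psi_{v_0}) \subset T_{v_0}\rQ$ is $4$-dimensional, which is a direct local computation with the spinor bundle.
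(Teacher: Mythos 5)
Your proof is correct, and its set-theoretic core --- characterizing the $v$ with $\cQ_v \subseteq X_{U_{5,-}}$ as exactly those with $v \in U_{5,-}$, using the description of $X_{U_{5,-}}$ as the locus of $U_5$ meeting $U_{5,-}$ and the parity rule --- is the same computation the paper performs. Two differences are worth recording. First, for the direction ``$v \notin U_{5,-}$ implies $\cQ_v \not\subseteq X_{U_{5,-}}$'' the paper extends a $U_4 \subset U_{5,-}$ not contained in $v^\perp$ to a $U_5$ \emph{not} containing $v$; what is actually needed is a $U_5$ \emph{containing} $v$ and disjoint from $U_{5,-}$, and your construction (lifting a maximal isotropic $\bar U' \subset v^\perp/v$ complementary to the image of $U_{5,-}\cap v^\perp$, with the component pinned down via the parity rule and Lemma~\ref{lemma:linear-quadric-intersection}) supplies exactly such a witness; on this point your argument is the more complete one. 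Second, the paper's proof is purely set-theoretic and does not discuss the scheme structure on the Hilbert scheme, whereas you identify $G_6(X_{U_{5,-}})$ with the zero scheme of the section of $\cS_8^\vee$ attached to $s_{U_{5,-}}$ (the same mechanism as in Lemma~\ref{lemma:g6-xkappa-smooth}) and deduce reducedness from the constant-rank, pure-spinor description of Clifford multiplication; this is a genuine addition that the statement, read as an isomorphism of schemes, really does require. The one step you should make explicit in a write-up is the identification of the evaluation map $\SS^\vee\otimes\cO_\rQ \to \cS_8^\vee\cong\cS_8(H_\rQ)\subset\SS\otimes\cO_\rQ(H_\rQ)$ with Clifford multiplication by the tautological vector; this is standard (it is how the spinor bundles are built in \cite{ottaviani}) but is nowhere stated in the paper, and your homogeneity-plus-tangent-space fallback covers it in any case.
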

\begin{proof}
Recall that every 6-dimensional quadric on $X$ is equal to $\cQ_v = \OGr_+(4,v^\perp/v)$ and parameterizes isotropic subspaces $U_5 \subset \rV$ that contain $v$.
On the other hand, the singular hyperplane section $X_{U_{5,-}}$ parameterizes those $U_5$ that intersect a given subspace $U_{5,-}$.
So, let us show that any $U_5$ containing $v$ intersects $U_{5,-}$ if and only if $v \in U_{5,-}$.

One direction is evident.
For the other direction, assume $v \not\in U_{5,-}$, so that $U_{5,-} \not\subset v^\perp$.
Let $U_4 \subset U_{5,-}$ be a subspace which is not contained in $v^\perp$.
Then the unique isotropic extension of $U_4$ to $U_5 \subset \rV$ is not contained in $v^\perp$, hence does not contain $v$.
\end{proof}

One can also use Proposition~\ref{proposition:blowup-x-pi4} for a description of smooth hyperplane sections of $X$.
The following birational transformation is again an example of
a special birational transformations of type $(2,1)$ from~\cite{fu2015special}.

\begin{corollary}
\label{corollary:blowup-xkappa-smooth-pi4}
Let $\kappa \in \P(\SS^\vee) \setminus X^\vee$ be a point and
let $X_\kappa \subset X$ be the corresponding smooth hyperplane section of $X$.
If\/ $\Pi^4_{U_{5,-}} \subset X_\kappa$ then there is an isomorphism $\Bl_{\Pi^4_{U_{5,-}}}(X_\kappa) \cong \Bl_{Z_\kappa}(\P(W_\kappa))$,
and a diagram
\begin{equation}
\vcenter{\xymatrix{
&
E_{\Pi,\kappa} \ar[r] \ar[dl] &
\Bl_{\Pi^4_{U_{5,-}}}(X_\kappa) \ar@{=}[r]^-\sim \ar[dl] &
\Bl_{Z_\kappa}(\P(W_\kappa)) \ar[dr] &
E_{Z_\kappa} \ar[l] \ar[dr]
\\
\Pi^4_{U_{5,-}} \ar[r] &
X_\kappa &&&
\P(W_\kappa) &
Z_\kappa \ar[l]
}}
\end{equation}
where $W_\kappa \subset W$ is the hyperplane corresponding to $\kappa$, 
and $Z_\kappa = \Gr(2,U_{5,-}) \cap \P(W_\kappa)$ is a smooth hyperplane section of the Grassmannian.
\end{corollary}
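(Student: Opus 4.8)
The plan is to deduce the corollary from Proposition~\ref{proposition:blowup-x-pi4} by restricting the birational isomorphism $\Bl_{\Pi^4_{U_{5,-}}}(X) \cong \Bl_{\Gr(2,U_{5,-})}(\P(W))$ to a suitable strict transform. First I would reinterpret the hypothesis $\Pi^4_{U_{5,-}} \subset X_\kappa$: since $\Pi^4_{U_{5,-}} = \P(\bw4U_{5,-})$, the functional $\kappa$ must vanish on $\bw4U_{5,-}$, hence factors through the quotient $W = \SS/\bw4U_{5,-}$ of~\eqref{eq:w-quotient} and cuts out a hyperplane $W_\kappa \subset W$. Because the birational map $f_W \circ f_X^{-1}\colon X \dashrightarrow \P(W)$ of Proposition~\ref{proposition:blowup-x-pi4} is precisely the linear projection from $\Pi^4_{U_{5,-}}$ induced by $\SS \to W$, the hyperplane section $X_\kappa$ is the closure of the preimage of $\P(W_\kappa)$ under this projection.

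The heart of the argument is the identification of two strict transforms inside the common blowup. On the spinor side, since $\Pi^4_{U_{5,-}} \subset X_\kappa$, the strict transform of $X_\kappa$ under $f_X$ equals $\Bl_{\Pi^4_{U_{5,-}}}(X_\kappa)$. On the projective side, the strict transform of $\P(W_\kappa)$ under $f_W$ equals $\Bl_{Z_\kappa}(\P(W_\kappa))$, where $Z_\kappa = \Gr(2,U_{5,-}) \cap \P(W_\kappa)$, as soon as $\Gr(2,U_{5,-}) \not\subset \P(W_\kappa)$. I would then check that these two strict transforms coincide: both are irreducible divisors in the linear system $|H_W|$ determined by $\kappa$ (recall $H_W = H_X - E_\Pi$ from~\eqref{eq:picard-relations-blowup-gr-p10}, and $X_\kappa$ meets $\Pi^4_{U_{5,-}}$ with multiplicity one while $\P(W_\kappa)$ misses the center $\Gr(2,U_{5,-})$), and they agree on the dense open locus where $f_X$ and $f_W$ are both isomorphisms intertwined by the projection, hence agree everywhere. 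Intersecting this common divisor with the exceptional divisors $E_\Pi$ and $E_{\Gr}$ of the two blowups produces $E_{\Pi,\kappa}$ and $E_{Z_\kappa}$, yielding the whole diagram.

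The main obstacle is verifying the two side conditions that make this identification meaningful. The first is the non-containment $\Gr(2,U_{5,-}) \not\subset \P(W_\kappa)$. Here I would use that $\Gr(2,U_{5,-})$ spans the hyperplane $\P(\bw2U_{5,-}) \subset \P(W)$ coming from the canonical sequence $0 \to \bw2U_{5,-} \to W \to \Bbbk \to 0$ of Remark~\ref{remark:w-splitting}; were $\Gr(2,U_{5,-})$ contained in $\P(W_\kappa)$, these two hyperplanes of $\P(W)$ would coincide, forcing $\kappa$ to be the projection $W \to \Bbbk$, whose pullback to $\SS$ cuts out exactly the singular hyperplane section $X_{U_{5,-}}$ of Corollary~\ref{corollary:singular-hyperplane-sections}, associated with the point $[U_{5,-}] \in X^\vee$ (see the remark following that corollary). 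Since $\kappa \notin X^\vee$ by hypothesis, this case is excluded, so $Z_\kappa$ is a genuine hyperplane section of the Grassmannian. The second condition is the smoothness of $Z_\kappa$: as $X_\kappa$ is smooth and $\Pi^4_{U_{5,-}} \cong \P^4$ is a smooth (hence locally complete intersection) subvariety, Lemma~\ref{lemma:smoothness-criterion} shows $\Bl_{\Pi^4_{U_{5,-}}}(X_\kappa)$ is smooth; transporting smoothness across the isomorphism and applying the converse direction of the same lemma to the lci subvariety $Z_\kappa \subset \P(W_\kappa)$ of the smooth Grassmannian gives the smoothness of $Z_\kappa$, completing the proof.
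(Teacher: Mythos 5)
Your proposal is correct and follows essentially the same route as the paper: both pass through Proposition~\ref{proposition:blowup-x-pi4}, identify the strict transform of $X_\kappa$ with the preimage of $\P(W_\kappa)$ in $\Bl_{\Gr(2,U_{5,-})}(\P(W))$, rule out $W_\kappa = \bw2U_{5,-}$ because that hyperplane yields the singular section $X_{U_{5,-}}$, and obtain smoothness of $Z_\kappa$ from Lemma~\ref{lemma:smoothness-criterion}. Your extra verification that the two strict transforms coincide (via the class $H_W = H_X - E_\Pi$ and agreement on a dense open set) is just a more explicit rendering of a step the paper treats as immediate.
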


\begin{proof}
Since the map $X \dashrightarrow \P(W)$ is a linear projection with center at $\Pi^4_{U_{5,-}}$, 
hyperplanes in $\P(\SS)$ containing $\Pi^4_{U_{5,-}}$ correspond to hyperplanes in $\P(W)$.
Let $W_\kappa \subset W$ be the hyperplane corresponding to~$\kappa$.
Note that this hyperplane is distinct from the hyperplane $\bw2U_{5,-} \subset W$, 
since the latter corresponds to a singular hyperplane section of $X$.
Therefore, the intersection $Z_\kappa = \Gr(2,U_{5,-}) \cap \P(W_\kappa)$ is dimensionally transverse.

The preimage of $\P(W_\kappa)$ in $\Bl_{\Gr(2,U_{5,-})}(\P(W))$ is isomorphic to the blowup $\Bl_{Z_\kappa}(\P(W_\kappa))$ and,
at the same time, it is the strict transform of $X_\kappa$, hence is isomorphic to the blowup $\Bl_{\Pi^4_{U_{5,-}}}(X_\kappa)$.
This gives the required diagram.

It only remains to check that $Z_\kappa$ is smooth.
For this just note that $\Bl_{\Pi^4_{U_{5,-}}}(X_\kappa)$ is smooth and $Z_\kappa$ is a locally complete intersection.
Therefore, Lemma~\ref{lemma:smoothness-criterion} applies.
\end{proof}

Later we will show that $F_4(X_\kappa) \ne \varnothing$ (Corollary~\ref{corollary:f4-x-codim2}), so the above description is applicable.

\subsection{Blowup of a 6-quadric on $X$}

Next we present yet another description of the spinor tenfold $X$ by projecting from a maximal quadric 
and use it for an alternative description of its smooth hyperplane section.
Recall that for each point $v \in \rQ$ there is an exact sequence
\begin{equation}
\label{eq:cs-v-sequence}
0 \to \cS_{8,v} \to  \SS \to \cS_{8,v,-} \to 0
\end{equation} 
(this is the fiber at $v$ of the sequence of Lemma~\ref{lemma:spinor-sequence}).
The projective spaces $\P(\cS_{8,v})$ and $\P(\cS_{8,v,-})$ from this sequence contain smooth 6-dimensional quadrics
\begin{equation*}
\cQ_v \subset \P(\cS_{8,v})
\qquad\text{and}\qquad
\cQ_{v,-} \subset \P(\cS_{8,v,-}),
\end{equation*}
see~\eqref{eq:cq-in-ps}. 
Recall also that $\cQ_v = \OGr_+(4,v^\perp/v)$ and $\cQ_{v,-} = \OGr_-(4,v^\perp/v)$, see~\eqref{eq:qv-qvminus}. 
We denote by $\cU_4$ and $\cU_{4,-}$ the tautological bundles on $\cQ_v$ and $\cQ_{v,-}$, considered as isotropic Grassmannians.

\begin{proposition}
\label{proposition:blowup-x-ogr48}
There is an isomorphism
$\Bl_{\cQ_v} (X) \cong \P_{\cQ_{v,-}}(\cO(-1) \oplus \cU_{4,-}^\vee(-1))$ such that
the exceptional divisor $E_\cQ$ of the blowup is identified with $\P_{\cQ_{v,-}}(\cU_{4,-}^\vee(-1))$,
so that we have a diagram
\begin{equation}
\label{diagram:blowup-x-ogr48}
\vcenter{\xymatrix{
&&
E_\cQ \ar[ddll] \ar[d] \ar@{=}[r]^-\sim &
\P_{\cQ_{v,-}}(\cU_{4,-}^\vee(-1)) \ar[d] 
\\
&&
\Bl_{\cQ_v}(X) \ar@{=}[r]^-\sim \ar[dl]^{g_X} &
\P_{\cQ_{v,-}}(\cO(-1) \oplus \cU_{4,-}^\vee(-1)) \ar[d]_{g_\cQ} 
\\
\cQ_v \ar[r] &
X &&
\cQ_{v,-}
}}
\end{equation} 
The morphism $g_\cQ \colon \Bl_{\cQ_v} (X) \to \cQ_{v,-}$ is given by the linear system $|H_X - E_\cQ|$,
and the rational map $g_\cQ \circ g_X^{-1} \colon X \dashrightarrow \cQ_{v,-}$ is the linear projection 
$\P(\SS) \dashrightarrow \P(\cS_{8,v,-})$ induced by the second map from~\eqref{eq:cs-v-sequence}.
\end{proposition}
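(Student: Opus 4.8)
The plan is to imitate the proof of Proposition~\ref{proposition:blowup-x-pi4}: construct a morphism $g_X$ from the projective bundle $P:=\P_{\cQ_{v,-}}(\cO(-1)\oplus\cU_{4,-}^\vee(-1))$ to $X$ by exhibiting an isotropic rank-$5$ subbundle $\cF\subset\rV\otimes\cO_P$, and then recognize $g_X$ as the blowup of $\cQ_v$ via Lemma~\ref{lemma:blowup}. First I would fix the flag $0\subset v\subset v^\perp\subset\rV$ and set $W:=v^\perp/v$, so that $\cQ_v=\OGr_+(4,W)$ and $\cQ_{v,-}=\OGr_-(4,W)$ by~\eqref{eq:qv-qvminus}; here $\cU_{4,-}\subset W\otimes\cO$ is the tautological bundle, and its preimage in $v^\perp$ is $\cU_{5,-}|_{\cQ_{v,-}}$, an extension of $\cU_{4,-}$ by $v\otimes\cO$.

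The geometric idea behind $\cF$ is the following. On the open locus $\{U_5:v\notin U_5\}\subset X$ an isotropic $U_5$ is recovered from $\bar U_4:=\overline{U_5\cap v^\perp}\in\OGr(4,W)$ together with a splitting of $0\to v\otimes\cO\to\cU_{5,-}\to\cU_{4,-}\to0$ cutting out $U_5\cap v^\perp$, which is a point of an affine bundle under $\Hom(\cU_{4,-},v)\cong\cU_{4,-}^\vee$; the remaining generator of $U_5$ is then the unique isotropic vector of $(U_5\cap v^\perp)^\perp$ lying outside $v^\perp$, hence is determined. A parity count via~\eqref{eq:parity-intersection} shows $\bar U_4$ lands in the $-$-component $\cQ_{v,-}$, which is why the base is $\cQ_{v,-}$ and the fibre coordinate ranges over $\cU_{4,-}^\vee$; projectivizing after the harmless twist by $\cO(-1)$ (which does not change the projectivization but normalizes $\cF$) produces the $\P^4$-bundle $P$. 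I would package the resulting assignment as the saturated image, or better as the kernel, of an explicit bundle map on $P$ built from a Koszul-type complex in the spirit of~\eqref{eq:resolution-gr-p10}, so that $\cF$ is automatically a rank-$5$ subbundle over all of $P$; isotropy is then clear on $\{t\neq0\}$ since $\cF$ is there the graph of the skew map $-\phi(\cdot)\,v$, and extends by continuity. The universal property of $\OGr(5,\rV)$ gives $g_X\colon P\to\OGr(5,\rV)$, and the graph argument combined with~\eqref{eq:parity-intersection} (exactly as in Proposition~\ref{proposition:blowup-x-pi4}) places the image in $X=\OGr_+(5,\rV)$ rather than in $X^\vee$.

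Next I would verify the hypotheses of Lemma~\ref{lemma:blowup}. The morphism $g_X$ is an isomorphism over the open set $\{v\notin U_5\}\subset X$, with inverse $U_5\mapsto(\bar U_4,\phi)$, hence is birational. On the divisor $E_\cQ:=\{t=0\}=\P_{\cQ_{v,-}}(\cU_{4,-}^\vee(-1))$ the fibre of $\cF$ over $(\bar U_4,\langle\phi\rangle)$ specializes to the preimage in $v^\perp$ of the unique $+$-isotropic $4$-space of $W$ containing $\ker\phi\subset\bar U_4$; this exhibits $g_X|_{E_\cQ}$ as the flag-bundle projection $E_\cQ\to\cQ_v$, which is surjective, so the square with $\cQ_v\hookrightarrow X$ commutes. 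Since $\codim_X\cQ_v=4\ge2$, $E_\cQ$ is an irreducible divisor, and $\rho(P/X)=1$ (because $\Pic P\cong\ZZ H_{\cQ_{v,-}}\oplus\ZZ\,\xi$ while $\Pic X\cong\ZZ$), Lemma~\ref{lemma:blowup} identifies $g_X$ with the blowup of $\cQ_v$ and $E_\cQ$ with its exceptional divisor, giving $\Bl_{\cQ_v}(X)\cong P$ together with the stated description of $E_\cQ$.

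Finally, to obtain the linear-system statements I would compute $\rc_1(\cF)$ from the defining complex (using $\det\cU_{5,-}\cong\cO(-2)$ from~\eqref{eq:detu-s}) and compare with $\rc_1(g_X^*\cU)=-2H_X$ to express $H_X$ in $\Pic P=\ZZ H_{\cQ_{v,-}}\oplus\ZZ\,\xi$; this yields $H_X-E_\cQ=g_\cQ^*H_{\cQ_{v,-}}$, so $g_\cQ$ is the morphism defined by $|H_X-E_\cQ|$. Since $\cQ_{v,-}\hookrightarrow\P(\cS_{8,v,-})$ is the spinor embedding and the quotient $\SS\to\cS_{8,v,-}$ of~\eqref{eq:cs-v-sequence} is exactly the projection away from the span $\P(\cS_{8,v})=\langle\cQ_v\rangle$, the composite $g_\cQ\circ g_X^{-1}$ is the asserted linear projection $\P(\SS)\dashrightarrow\P(\cS_{8,v,-})$. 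The main obstacle is the construction in the second paragraph: producing $\cF$ as an honest subbundle regular across $E_\cQ$, where the naive generators degenerate, and simultaneously confirming its isotropy and that it lands in $X$ rather than $X^\vee$; the regularization of the graph construction and the component check via~\eqref{eq:parity-intersection} are the delicate points, whereas the blowup-lemma bookkeeping and the Chern-class computation are routine.
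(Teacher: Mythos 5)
Your overall strategy is sound and all the bookkeeping (birationality over $\{v\notin U_5\}$, surjectivity of $E_\cQ\to\cQ_v$, the parity check placing $\bar U_4$ in $\OGr_-(4,v^\perp/v)$, the relative Picard rank, the Chern-class identification of $H_X-E_\cQ$, and the linear-projection statement) matches what is needed; but your route to the morphism $g_X$ is genuinely different from the paper's, and the difference matters exactly at the point you yourself flag as delicate. You propose to imitate Proposition~\ref{proposition:blowup-x-pi4} by writing down an isotropic rank-$5$ subbundle $\cF\subset\rV\otimes\cO_P$ via a Koszul-type complex and a graph construction, and you leave the regularization of $\cF$ across $E_\cQ$ (where the naive generators degenerate) as an unexecuted step. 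The paper avoids this entirely: it uses the incidence variety $\OGr(4,\rV)\cong\Spin(\rV)/\bP_{4,5}\subset X\times X^\vee$, which is simultaneously $\P_{X^\vee}(\cU_{5,-}^\vee(-1))$ and a variety mapping to $X$ (by the unique isotropic extension of a $4$-space), restricts it over $\cQ_{v,-}\subset X^\vee$ to get a regular map $\P_{\cQ_{v,-}}(\cU_{5,-}^\vee(-1))\to X$ for free, and then splits the bundle using $\Ext^1(\cU_{4,-},\cO)=0$ (Borel--Bott--Weil) applied to the sequence $0\to\cO\xrightarrow{v}\cU_{5,-}\vert_{\cQ_{v,-}}\to\cU_{4,-}\to0$; birationality is then read off from Lemma~\ref{lemma:linear-quadric-intersection} applied on the $X^\vee$ side (the fiber of $g_X$ over $[U_5]$ is $\Gr(4,U_5)\cap\cQ_{v,-}$, a single point unless $v\in U_5$). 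What your approach buys is explicitness (you see the isotropic subbundle and the skew graph directly, in parallel with the $\P^{10}\dashrightarrow X$ picture); what the paper's buys is that the only nontrivial verifications are a cohomology vanishing and an intersection count, with no boundary analysis. If you want to complete your version, the cleanest fix is to observe that your sought-for $\cF$ is precisely the pullback of $\cU_5$ along the composite $P\cong\P_{\cQ_{v,-}}(\cU_{5,-}^\vee(-1))\hookrightarrow\OGr(4,\rV)\to X$, i.e., to fall back on the incidence correspondence rather than building $\cF$ by hand.
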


\begin{proof}
Recall that $\OGr(4,\rV) \cong \Spin(\rV)/\bP_{4,5}$, the homogeneous space of $\Spin(\rV)$ 
that corresponds to a non-maximal parabolic subgroup $\bP_{4,5}$ (associated wit the fundamental weights $\omega_4$ and $\omega_5$).
Consequently, it is a subvariety in the product $X \times X^\vee$, and is isomorphic to the projectivization 
of $\cU_5^\vee(-1)$ over $X$, resp.\ of $\cU_{5,-}^\vee(-1)$ over $X^\vee$.
On the other hand, we have a natural embedding $\cQ_{v,-} = \OGr_-(4,v^\perp/v) \hookrightarrow X^\vee$.
Consider the diagram
\begin{equation*}
\xymatrix{
\cQ_{v,-} \times_{X^\vee} \OGr(4,\rV) \ar[r] \ar[d] &
\OGr(4,\rV) \ar[r] \ar[d] &
X
\\
\cQ_{v,-} \ar[r] &
X^\vee
}
\end{equation*}
where the square is cartesian.
Clearly, 
\begin{equation*}
\cQ_{v,-} \times_{X^\vee} \OGr(4,\rV) \cong \P_{\cQ_{v,-}}\left(\cU_{5,-}^\vee(-1) \vert_{\cQ_{v,-}}\right).
\end{equation*}
On the other hand, since $\cQ_{v,-} = \OGr_-(4,v^\perp/v)$ parameterizes isotropic subspaces $U_{5,-} \subset \rV$ 
that contain the vector~$v$, hence we have an exact sequence
\begin{equation}
\label{eq:u5-qv}
0 \to \cO \xrightarrow{\ v\ } \cU_{5,-} \vert_{\cQ_{v,-}} \to \cU_{4,-} \to 0.
\end{equation}
Using Borel--Bott--Weil it is easy to check that $\Ext^1(\cU_{4,-},\cO) = 0$, hence the sequence splits, 
and after dualization and twist we get an isomorphism
\begin{equation*}
\cU_{5,-}^\vee(-1)\vert_{\cQ_{v,-}} \cong \cO(-1) \oplus \cU_{4,-}^\vee(-1).
\end{equation*}
Composing the arrows at the top row of the diagram, we thus obtain a map 
\begin{equation*}
g_X \colon \P_{\cQ_{v,-}}(\cO(-1) \oplus \cU_{4,-}^\vee(-1)) \cong \cQ_{v,-} \times_{X^\vee} \OGr(4,\rV) \to X.
\end{equation*}
By definition, its fiber over a point $[U_5]$ of $X$ is the intersection of the 4-space $\Gr(4,U_5) \subset X^\vee$ 
with the 6-quadric~$\OGr_-(4,v^\perp/v)$.
The argument of Lemma~\ref{lemma:linear-quadric-intersection} (applied to $X^\vee$ instead of $X$) shows that this intersection is a single point
(unless $v \in U_5$).
Hence the map $g_X$ is birational (and is an isomorphism over the complement of $\cQ_v = \OGr_+(4,v^\perp/v) \subset X$ 
which parameterizes subspaces $U_5$ that contain $v$).

Finally, define the scheme 
\begin{equation*}
E_\cQ := \OGr(3,v^\perp/v) 
\cong \P_{\cQ_{v,-}}(\cU_{4,-}^\vee(-1))
\cong \P_{\cQ_v}(\cU_{4,+}^\vee(-1)).
\end{equation*}
Clearly, it is a subscheme in $\OGr(4,\rV)$ and its projection to $X^\vee$ equals $\cQ_{v,-}$.
Hence it is contained in the fiber product and is a divisor in it.
On the other hand, its projection to $X$ equals $\cQ_v$, and thus we have the following commutative diagram
\begin{equation*}
\xymatrix@C=5em{
E_\cQ \ar@{=}[r] &
\P_{\cQ_v}(\cU_{4,+}^\vee(-1)) \ar[d] \ar@{^{(}->}[r] &
\P_{\cQ_{v,-}}(\cO(-1) \oplus \cU_{4,-}^\vee(-1)) \ar[d]^{g_X}
\\
&
\cQ_v \ar@{^{(}->}[r] &
X
}
\end{equation*}
It is clear that the relative Picard number for the map $g_X$ is equal to 1.
Since $E_\cQ$ is a divisor in~$\P_{\cQ_{v,-}}(\cO(-1) \oplus \cU_{4,-}^\vee(-1))$,
it follows from Lemma~\ref{lemma:blowup} that $g_X$ is the blowup of $\cQ_v \subset X$
and $E_\cQ$ is its exceptional divisor.

According to the above identification the divisor $E_\cQ$ is the zero locus of the natural map 
\begin{equation*}
\cO(-H_X) \to g_X^*(\cO(-1) \oplus \cU_{4,-}^\vee(-1)) \to g_X^*\cO(-1) = \cO(-H_\cQ),
\end{equation*}
where $H_X$ is the hyperplane class of $X$ and $H_\cQ$ is the hyperplane class of $\cQ_{v,-}$.
Therefore $H_\cQ = H_X - E_\cQ$.

Finally, since $H_\cQ = H_X - E_\cQ$, the map $g_\cQ \circ g_X^{-1}$ is given by the complete linear system $|H_X - E_\cQ|$
hence is a linear projection from the quadric $\cQ_v$, and is induced by the linear projection of $\P(\SS)$ 
from the linear span $\P(\cS_{8,v})$.
\end{proof}

\begin{remark}
\label{remark:degenerate-quadrics-through-x}
Consider the quadratic cone 
\begin{equation*}
\tcQ_{v,-} := \Cone_{\P(\cS_{8,v})}(\cQ_{v,-}) \subset \P(\SS)
\end{equation*}
over $\cQ_{v,-}$ with vertex $\P(\cS_{8,v}) \subset \P(\SS)$
(with respect to the linear projection from~\eqref{eq:cs-v-sequence}).
Since the projection of $X$ from $\P(\cS_{8,v})$ is contained in $\cQ_{v,-}$ by Proposition~\ref{proposition:blowup-x-ogr48},
the quadric $\tcQ_{v,-}$ contains $X$.
This is a geometric way to describe quadrics passing through $X$, see Corollary~\ref{corollary:resolution-x}.

Similarly, the quadratic cone 
\begin{equation*}
\tcQ_v := \Cone_{\P(\cS_{8,v,-})}(\cQ_v) \subset \P(\SS^\vee)
\end{equation*}
is a quadric containing $X^\vee$.
%
\end{remark}

\subsection{Blowups of 6-quadrics on smooth hyperplane sections of $X$}

Proposition~\ref{proposition:blowup-x-ogr48} can be applied for a description of a smooth hyperplane section of $X$.
But first, we check that any such hyperplane section contains a 4-space.
Recall the map $\gamma = q_- \circ p_- \colon \P(\SS^\vee) \setminus X^\vee \to \rQ$ defined by~\eqref{diagram:blowups-x-xvee}.
We denote the image of a point $\kappa \in \P(\SS^\vee) \setminus X^\vee$ under this map simply by $\gamma(\kappa) \in \rQ$.

\begin{lemma}
\label{lemma:g6-xkappa-smooth}
Let $\kappa \in \P(\SS^\vee) \setminus X^\vee$ and let $X_\kappa$ be the corresponding smooth hyperplane section of $X$.
Then $X_\kappa$ contains a unique $6$-dimensional quadric, i.e., 
\begin{equation*}
G_6(X_\kappa) \cong \Spec(\Bbbk),
\end{equation*}
and this quadric is nothing but $\cQ_v = \OGr_+(4,v^\perp/v)$, where $v = \gamma(\kappa) \in \rQ$.
\end{lemma}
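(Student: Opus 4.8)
The plan is to recognize the statement as the $k=1$ case of the degeneracy-locus description in Proposition~\ref{proposition:blowup-xk}, using that every maximal quadric on $X$ is one of the $\cQ_v$.

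First I would recall from Corollary~\ref{corollary:quadrics-in-xk} and the remark following it that the $6$-dimensional quadrics on $X$ are exactly the $\cQ_v = \OGr_+(4,v^\perp/v)$, $v \in \rQ$, with $G_6(X) \cong \rQ$ via $v \mapsto [\cQ_v]$. Since $\cQ_v \hookrightarrow \P(\cS_{8,v})$ is the spinor embedding of a smooth $6$-dimensional quadric into $\P^7$ (see~\eqref{eq:cq-in-ps}), it is nondegenerate, so its linear span in $\P(\SS)$ is all of $\P(\cS_{8,v})$. As $H_\kappa \subset \P(\SS)$ is a hyperplane, the containment $\cQ_v \subset X_\kappa = X \cap H_\kappa$ is thus equivalent to $\P(\cS_{8,v}) \subset H_\kappa$, i.e.\ to the vanishing at $v$ of the section of $\cS_8^\vee$ obtained by composing $\cS_8 \hookrightarrow \SS \otimes \cO_\rQ$ with the linear form $\kappa \colon \SS \otimes \cO_\rQ \to \cO_\rQ$.

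The key observation is that, after trivializing the line $K := \langle \kappa \rangle \subset \SS^\vee$, this section is exactly the map $\sigma_K$ of~\eqref{eq:map-k-s}; hence the locus of $v$ with $\cQ_v \subset X_\kappa$ is the degeneracy scheme $\fD_{\ge 1}(\sigma_K) \subset \rQ$. Since $\kappa \notin X^\vee$, we have $X^\vee_K = X^\vee \cap \P(K) = \varnothing$, so $X_\kappa$ is a smooth dimensionally transverse section of codimension $1$ and Corollary~\ref{corollary:stratification-small-k} applies, giving $\fD_{\ge 1}(\sigma_K) = \gamma(\P(K))$ with $\gamma \colon \P(K) \hookrightarrow \rQ$ a closed embedding. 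As $K$ is one-dimensional, $\P(K) = \Spec(\Bbbk)$, so $\fD_{\ge 1}(\sigma_K)$ is the single reduced point $\gamma(\kappa)$. This yields at once the uniqueness of the $6$-quadric, its reducedness as a point of the Hilbert scheme, and the identification $v = \gamma(\kappa)$.

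The only step requiring genuine care is checking that the Hilbert scheme $G_6(X_\kappa)$, as a closed subscheme of $G_6(X) \cong \rQ$, coincides scheme-theoretically with $\fD_{\ge 1}(\sigma_K)$ and not merely set-theoretically --- that is, that the condition ``the universal quadric is contained in $X_\kappa$'' cuts out $\rQ$ along the vanishing scheme of $\sigma_K$. Granting this compatibility, which follows from tracing the universal family of~\eqref{diagram:x-q-x} through the containment $\P(\cS_{8,v}) \subset H_\kappa$, everything else is formal given the earlier results, and we conclude $G_6(X_\kappa) \cong \Spec(\Bbbk)$ with the unique quadric equal to $\cQ_{\gamma(\kappa)}$.
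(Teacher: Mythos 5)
Your proof is correct and follows essentially the same route as the paper: both identify $G_6(X_\kappa)$ with the vanishing locus on $\rQ$ of the section of $\cS_8^\vee$ determined by $\kappa$ (your $\sigma_K$ for $K=\langle\kappa\rangle$), and then recognize that locus as the single reduced point $\gamma(\kappa)$ via the blowup diagram~\eqref{diagram:blowups-x-xvee}. The scheme-theoretic compatibility you flag is exactly the content the paper absorbs into the phrase ``equals the zero locus of the global section of $q_{-*}p_-^*(\cO_{\P(\SS^\vee)}(1)) \cong \cS_8^\vee$,'' so no new idea is needed.
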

\begin{proof}
From the description of six-dimensional quadrics in Corollary~\ref{corollary:quadrics-in-xk} it follows that 
the Hilbert scheme~$G_6(X_\kappa)$ equals the zero locus of the global section of the vector bundle 
\begin{equation*}
q_{-*}p_-^*(\cO_{\P(\SS^\vee)}(1)) \cong \cS_8^\vee
\end{equation*}
on $\rQ$ (see diagram~\eqref{diagram:blowups-x-xvee}) that corresponds to $\kappa \in \SS^\vee = H^0(\rQ,\cS_8^\vee)$.
But this zero locus is just the point~$\gamma(\kappa)$ --- this can be explained by an argument that  is completely analogous 
to the argument of Proposition~\ref{proposition:blowup-x}, one should only replace $X$ by $X^\vee$ and $\cS_{8,-}^\vee$ by $\cS_8^\vee$.
\end{proof}

For each $\kappa \in \P(\SS^\vee) \setminus X^\vee$ we set $v = \gamma(\kappa)$.
Note that $\kappa \in q_-^{-1}(v) = \P(\cS_{8,v,-})$.
Define
\begin{equation}
\label{def:cq-kappa-minus}
\cQ_{\kappa,-} = \cQ_{v,-} \cap \P(\kappa^\perp),
\end{equation}
where $\P(\kappa^\perp)$ is the orthogonal in the space $\P(\cS_{8,-,v})$ to the point $\kappa$
with respect to the natural quadratic form of this space.
This is a hyperplane section of the smooth quadric $\cQ_{v,-}$, and as we will see below it is itself smooth.
Recall that by triality the vector bundle $\cU_{4,-}$ on the 6-dimensional quadric $\cQ_{v,-}$ can be identified 
with one of its spinor bundles, and the bundle $\cU_{4,-}^\vee(-1)$ with the other spinor bundle.
The restriction of both bundles to the 5-dimensional quadric $\cQ_{\kappa,-}$ are then identified 
with the unique spinor bundle~$\cS_4$ on it.

Combining Proposition~\ref{proposition:blowup-x-ogr48} and Lemma~\ref{lemma:g6-xkappa-smooth}, we obtain the following result,
that was also mentioned in~\cite[Lemma~1.17]{pasquier2009}.

\begin{corollary}
\label{corollary:blowup-xkappa-smooth}
If $X_\kappa$ is a smooth hyperplane section of $X$ and $\cQ_v \subset X$ is the $6$-dimensional quadric contained in $X_\kappa$ 
then there is an isomorphism $\Bl_{\cQ_v} (X_\kappa) \cong \P_{\cQ_{\kappa,-}}(\cO(-1) \oplus \cS_4)$ and a diagram
\begin{equation}
\label{diagram:blowup-xkappa-ogr48}
\vcenter{\xymatrix{
&&
E'_\cQ \ar[ddll] \ar[d] \ar@{=}[r]^-\sim &
\P_{\cQ_{\kappa,-}}(\cS_4) \ar[d] \ar@{=}[r]^-\sim &
\OFl(1,3;7) \ar@/^5ex/[ddl]
\\
&&
\Bl_{\cQ_v}(X_\kappa) \ar@{=}[r]^-\sim \ar[dl]^{g_X} &
\P_{\cQ_{\kappa,-}}(\cO(-1) \oplus \cS_4) \ar[d]_{g_\cQ} 
\\
\cQ_v \ar[r] &
X_\kappa &&
\cQ_{\kappa,-}
}}
\end{equation} 
Moreover, the quadric $\cQ_{\kappa,-}$ is smooth.
\end{corollary}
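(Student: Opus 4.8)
The plan is to obtain the corollary by restricting the isomorphism of Proposition~\ref{proposition:blowup-x-ogr48} to the hyperplane section of the $6$-dimensional quadric $\cQ_{v,-}$ cut out by $\kappa$. The starting data come from Lemma~\ref{lemma:g6-xkappa-smooth}: the quadric $\cQ_v \subset X_\kappa$ is unique, $v = \gamma(\kappa)$, and $\kappa$ lies in the fiber $q_-^{-1}(v) = \P(\cS_{8,v,-}) \subset \P(\SS^\vee)$. Using the self-duality $\cS_{8,v,-} \cong \cS_{8,v,-}^\vee$ of Lemma~\ref{lemma:cs-dual} together with the two spinor sequences of Lemma~\ref{lemma:spinor-sequence}, I would first record that the subspace $\cS_{8,v,-} \subset \SS^\vee$ is the annihilator of $\cS_{8,v} \subset \SS$; hence the linear form $\kappa$ vanishes on $\cS_{8,v}$, so the hyperplane $\P(\kappa^\perp) \subset \P(\SS)$ contains the center $\P(\cS_{8,v})$ of the linear projection of \eqref{eq:cs-v-sequence}, and, viewing $\kappa$ as a point of $\P(\cS_{8,v,-})$, its polar functional cuts out $\cQ_{\kappa,-}$ as in \eqref{def:cq-kappa-minus}.

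Next I would pass to $g_X \colon \Bl_{\cQ_v}(X) \cong \P_{\cQ_{v,-}}(\cO(-1) \oplus \cU_{4,-}^\vee(-1)) \to X$ and take the strict transform of $X_\kappa$; since $\cQ_v \subset X_\kappa$ this strict transform is $\Bl_{\cQ_v}(X_\kappa)$. Because $X_\kappa$ is a hyperplane section through $\cQ_v$, smooth along it, its strict transform lies in $|H_X - E_\cQ|$, which by Proposition~\ref{proposition:blowup-x-ogr48} is the linear system defining $g_\cQ$; thus $\Bl_{\cQ_v}(X_\kappa)$ is the $g_\cQ$-preimage of a hyperplane section of $\cQ_{v,-}$. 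Since $g_\cQ \circ g_X^{-1}$ is the linear projection from $\P(\cS_{8,v})$ and, by the first paragraph, $\P(\kappa^\perp)$ contains this center, that hyperplane section is exactly $\cQ_{v,-} \cap \P(\kappa^\perp) = \cQ_{\kappa,-}$. Restricting the projective bundle to the base $\cQ_{\kappa,-}$ then gives
\begin{equation*}
\Bl_{\cQ_v}(X_\kappa) = g_\cQ^{-1}(\cQ_{\kappa,-}) \cong \P_{\cQ_{\kappa,-}}\big((\cO(-1) \oplus \cU_{4,-}^\vee(-1))\vert_{\cQ_{\kappa,-}}\big),
\end{equation*}
and intersecting with $E_\cQ = \P_{\cQ_{v,-}}(\cU_{4,-}^\vee(-1))$ identifies the exceptional divisor as $E'_\cQ = \P_{\cQ_{\kappa,-}}(\cU_{4,-}^\vee(-1)\vert_{\cQ_{\kappa,-}})$.

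To finish I would argue smoothness and rewrite the bundles. The left-hand side $\Bl_{\cQ_v}(X_\kappa)$ is the blowup of the smooth $X_\kappa$ along the smooth (hence locally complete intersection) center $\cQ_v$, so it is smooth by Lemma~\ref{lemma:smoothness-criterion}; as it is a projectivization of a vector bundle over $\cQ_{\kappa,-}$, the first part of that lemma forces $\cQ_{\kappa,-}$ to be smooth. With $\cQ_{\kappa,-}$ now a smooth $5$-dimensional quadric I may invoke the triality identification stated before the corollary, $\cU_{4,-}^\vee(-1)\vert_{\cQ_{\kappa,-}} \cong \cS_4$, to obtain $\Bl_{\cQ_v}(X_\kappa) \cong \P_{\cQ_{\kappa,-}}(\cO(-1) \oplus \cS_4)$ and $E'_\cQ \cong \P_{\cQ_{\kappa,-}}(\cS_4)$. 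Finally, the isomorphism $\P_{\cQ_{\kappa,-}}(\cS_4) \cong \OFl(1,3;7)$ is the relative spinor embedding for the quadric $\cQ_{\kappa,-} \cong \OGr(1,7)$ from Section~\ref{subsection:spinors}; on fibers it is the spinor isomorphism $\OGr(2,5) \cong \P^3$.

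I expect the main obstacle to be the compatibility asserted in the first paragraph — that the inclusion $\cS_{8,v,-} \hookrightarrow \SS^\vee$ realizing $\kappa$ agrees, under the self-duality $\cS_{8,v,-} \cong \cS_{8,v,-}^\vee$, with the polarity on $\cS_{8,v,-}$ built into the definition \eqref{def:cq-kappa-minus} of $\cQ_{\kappa,-}$. This is precisely what guarantees that the projection $g_\cQ \circ g_X^{-1}$ sends $X_\kappa$ onto the polar hyperplane section $\cQ_{\kappa,-}$ rather than some other section of $\cQ_{v,-}$; once it is pinned down, every remaining step is a formal consequence of Proposition~\ref{proposition:blowup-x-ogr48} and Lemma~\ref{lemma:smoothness-criterion}.
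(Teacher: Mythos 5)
Your proposal is correct and follows essentially the same route as the paper: both identify the hyperplane section of $\cQ_{v,-}$ cut out by $\kappa$ as the polar section $\cQ_{\kappa,-}$ via the compatibility of the dual spinor sequences (the point you flag as the ``main obstacle'' is exactly what the paper settles by observing that \eqref{eq:s-dual-sequence} is the dual of \eqref{eq:cs-v-sequence} and that the induced pairing on $\cS_{8,v,-}$ is the natural quadratic form), then take the strict transform of $X_\kappa$ under $g_X$, restrict the projective bundle to $\cQ_{\kappa,-}$, and deduce smoothness of $\cQ_{\kappa,-}$ from Lemma~\ref{lemma:smoothness-criterion}. No gaps.
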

\begin{proof}
The map $g_\cQ \circ g_X^{-1} \colon X \dashrightarrow \cQ_{v,-}$
constructed in Proposition~\ref{proposition:blowup-x-ogr48} is a linear projection,
hence the hyperplane $\P(\kappa^\perp) \subset \P(\SS)$ defined by $\kappa \in \P(\SS^\vee)$ is 
the preimage of a hyperplane in the ambient space~$\P(\cS_{8,-,v})$ of the quadric $\cQ_{v,-}$.
To understand which hyperplane it is, recall that, besides~\eqref{eq:cs-v-sequence},
we also have the following exact sequence (this is the second sequence of Lemma~\ref{lemma:spinor-sequence} for $\SS^\vee \cong \SS_-$):
\begin{equation}
\label{eq:s-dual-sequence}
0 \to \cS_{8,v,-} \to \SS^\vee \to \cS_{8,v} \to 0.
\end{equation}
The duality between $\SS$ and $\SS^\vee$ is compatible with these exact sequences, 
i.e., the above sequence is the dual of~\eqref{eq:cs-v-sequence},
and the induced pairing on $\cS_{8,v,-}$ coincides with the one given by the natural quadratic form on it
(whose associated quadric is $\cQ_{v,-}$).
This proves that the hyperplane in $\P(\cS_{8,v,-})$ corresponding to the hyperplane in $\P(\SS)$ defined by $\kappa$
is the orthogonal of $\kappa$ with respect to the natural quadratic form.

Taking the strict transform of the hyperplane section $X_\kappa$ on the left hand side of the diagram~\eqref{diagram:blowup-x-ogr48}
and the preimage of the hyperplane section $\cQ_{\kappa,-}$ of the quadric $\cQ_{v,-}$ on the right hand side, 
we deduce the isomorphism of the corollary and obtain the diagram~\eqref{diagram:blowup-xkappa-ogr48}.

Since $X_\kappa$ and $\cQ_v$ are smooth, the blowup $\Bl_{\cQ_v}(X_\kappa)$ is smooth, 
hence $\cQ_{\kappa,-}$ is smooth by Lemma~\ref{lemma:smoothness-criterion}.
\end{proof}

\begin{corollary}
\label{corollary:f4-x-codim2}
If $X_\kappa \subset X$ is a smooth hyperplane section of $X$ then
\begin{equation*}
F_4(X_\kappa) \cong \cQ_{\kappa,-},
\end{equation*}
and the universal family of $4$-spaces is given by $\P_{\cQ_{\kappa,-}}(\cO(-1) \oplus \cS_4)$.
\end{corollary}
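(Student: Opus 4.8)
The plan is to identify $F_4(X_\kappa)$ with a closed subscheme of $F_4(X)=X^\vee$ and to recognize it as $\cQ_{\kappa,-}$, reading off the universal family from Corollary~\ref{corollary:blowup-xkappa-smooth}. By Theorem~\ref{theorem:linear-spaces} every $4$-space on $X$ is $\Pi^4_{U_{5,-}}$ for a unique $[U_{5,-}]\in X^\vee$, and since $X_\kappa\subset X$, such a $4$-space lies on $X_\kappa$ if and only if it also lies in the hyperplane $\P(\kappa^\perp)$. Thus set-theoretically $F_4(X_\kappa)=\{[U_{5,-}]\in X^\vee \mid \Pi^4_{U_{5,-}}\subset X_\kappa\}$, and the task is to show this equals $\cQ_{\kappa,-}$. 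Throughout I set $v=\gamma(\kappa)$.

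First I would prove that $\Pi^4_{U_{5,-}}\subset X_\kappa$ forces $v\in U_{5,-}$, i.e.\ $[U_{5,-}]\in\cQ_{v,-}$; this is the main point. Suppose instead $v\notin U_{5,-}$. By Lemma~\ref{lemma:linear-quadric-intersection} the intersection $\Pi^4_{U_{5,-}}\cap\cQ_v$ is a single point, and since $\cQ_v=X\cap\P(\cS_{8,v})$ and $\Pi^4_{U_{5,-}}\subset X$, this point is exactly $\Pi^4_{U_{5,-}}\cap\P(\cS_{8,v})$, the intersection of the $4$-space with the center of the linear projection $g_\cQ\circ g_X^{-1}\colon X\dashrightarrow\cQ_{v,-}$ of Proposition~\ref{proposition:blowup-x-ogr48}. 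Projecting the linear space $\Pi^4_{U_{5,-}}\cong\P^4$ from this single point yields a $\P^3$; as $\Pi^4_{U_{5,-}}\subset X_\kappa$ and $X_\kappa$ projects into $\cQ_{\kappa,-}$ by Corollary~\ref{corollary:blowup-xkappa-smooth}, this $\P^3$ lies in $\cQ_{\kappa,-}$. But $\cQ_{\kappa,-}$ is a smooth $5$-dimensional quadric, whose maximal linear subspaces have dimension $2$, so it contains no $\P^3$ --- a contradiction. Hence $v\in U_{5,-}$ and $F_4(X_\kappa)\subseteq\cQ_{v,-}$.

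Next I would pin down, among $[U_{5,-}]\in\cQ_{v,-}$, those with $\Pi^4_{U_{5,-}}\subset X_\kappa$, using the fibre structure of Proposition~\ref{proposition:blowup-x-ogr48}. The identification $\Bl_{\cQ_v}(X)\cong\cQ_{v,-}\times_{X^\vee}\OGr(4,\rV)$ exhibits $g_\cQ\colon\Bl_{\cQ_v}(X)\to\cQ_{v,-}$ as the restriction over $\cQ_{v,-}$ of the universal family of $4$-spaces on $X$, the fibre over $[U_{5,-}]$ being $\Gr(4,U_{5,-})$ mapped by $g_X$ isomorphically onto $\Pi^4_{U_{5,-}}$. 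By Corollary~\ref{corollary:blowup-xkappa-smooth} the strict transform $\Bl_{\cQ_v}(X_\kappa)\cong\P_{\cQ_{\kappa,-}}(\cO(-1)\oplus\cS_4)$ is precisely $g_\cQ^{-1}(\cQ_{\kappa,-})$. If $[U_{5,-}]\in\cQ_{v,-}$ satisfies $\Pi^4_{U_{5,-}}\subset X_\kappa$, then, since $\Pi^4_{U_{5,-}}\not\subset\cQ_v$, the open part of the fibre $g_\cQ^{-1}([U_{5,-}])$ lying over $X_\kappa\setminus\cQ_v$ sits inside the strict transform, so by taking closures the whole fibre lies in $g_\cQ^{-1}(\cQ_{\kappa,-})$; applying $g_\cQ$ gives $[U_{5,-}]\in\cQ_{\kappa,-}$. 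Conversely every fibre over $\cQ_{\kappa,-}$ maps into $X_\kappa$, so $\Pi^4_{U_{5,-}}\subset X_\kappa$. This proves $F_4(X_\kappa)=\cQ_{\kappa,-}$ as sets and at the same time identifies the universal family with $\P_{\cQ_{\kappa,-}}(\cO(-1)\oplus\cS_4)$.

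Finally, for the scheme-theoretic statement I would realize $F_4(X_\kappa)$ as the zero locus on $X^\vee=F_4(X)$ of the section of a rank-$5$ bundle $\pi_* e^*\cO_{\P(\SS)}(1)\cong (\bw4{\cU_-})^\vee$ induced by $\kappa$, where $e,\pi$ are the projections of the universal $4$-space $\OGr(4,\rV)$ to $X$ and $X^\vee$; its vanishing encodes $\Pi^4_{U_{5,-}}\subset\P(\kappa^\perp)$. Since $\dim X^\vee=10$ and the rank is $5$, this zero locus has pure dimension $\ge 5$, hence exactly $5$ as it is supported on the smooth quadric $\cQ_{\kappa,-}$; being a local complete intersection it is Cohen--Macaulay, and being generically reduced along $\cQ_{\kappa,-}$ it is reduced, so it coincides with $\cQ_{\kappa,-}$. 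The main obstacle is the crux of the second paragraph --- excluding $v\notin U_{5,-}$ --- for which the projection-and-no-$\P^3$ argument is the essential geometric input; the scheme-theoretic refinement is then routine.
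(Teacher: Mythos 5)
Your proof is correct and follows essentially the same route as the paper: the key step in both is to apply Lemma~\ref{lemma:linear-quadric-intersection} to $\Pi^4_{U_{5,-}}\cap\cQ_v$ and rule out the single-point case by observing that the projection would produce a $\P^3$ inside the smooth five-dimensional quadric $\cQ_{\kappa,-}$, which is impossible, after which $\Pi$ is identified with a fiber of $g_\cQ$. Your additional scheme-theoretic refinement via the zero locus of a section of $(\bw4{\cU_-})^\vee$ is a reasonable supplement that the paper leaves implicit, but it does not change the substance of the argument.
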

\begin{proof}
Assume $\Pi = \Pi^4_{U_{5,-}}$ is a 4-space lying on $X_\kappa$.
By Lemma~\ref{lemma:linear-quadric-intersection} its intersection with $\cQ_v$ is either a point, or a 3-space.
If it is a point, then the image of $\Pi$ in the smooth five-dimensional quadric~$\cQ_{\kappa,-}$ should be a 3-space, which is of course impossible.
Therefore, the intersection is a 3-space, the image of~$\Pi$ in~$\cQ_{\kappa,-}$ is just a point,
and $\Pi$ is a fiber of the map $g_\cQ \colon \P_{\cQ_{\kappa,-}}(\cO(-1) \oplus \cS_4) \to \cQ_{\kappa,-}$.
\end{proof}

The isomorphism of the corollary gives an alternative proof of Theorem~\ref{theorem:motives-z-xk-06} 
for smooth hyperplane sections of $X$.

\section{Linear sections of codimension 2 and the spinor quadratic line complex}
\label{section:codimension-2}

The situation with linear sections of codimension 2 is more interesting than with hyperplane sections.
We show below that there are two isomorphism classes of smooth codimension 2 linear sections.
We use this result to define an important subvariety of $\Gr(2,\SS^\vee)$ which we call
the spinor quadratic complex.

\subsection{Special linear sections of codimension 2}

Let $K \subset \SS^\vee$ be a subspace of dimension $\dim K = 2$ such that $\P(K) \cap X^\vee = \varnothing$, 
so that $X_K$ is a smooth linear section of $X$ of codimension~2.
The easiest way to distinguish between different isomorphism classes of $X_K$ is by looking at their Hilbert scheme~$F_4(X_K)$.
Recall the map~$\gamma \colon \P(K) \to \rQ$, defined in Corollary~\ref{corollary:stratification-small-k}
(its image is a conic $\gamma(\P(K)) \subset \rQ$), and the 11-dimensional quotient space 
\begin{equation*}
W = \SS/\bw4U_{5,-}
\end{equation*}
associated with an isotropic subspace $U_{5,-} \subset \rV$, 
see~\eqref{eq:w-sequence-3} and the proof of Proposition~\ref{proposition:blowup-x-pi4}.
Recall also the line $L^-_{U_3} \subset X^\vee$ associated with an isotropic subspace $U_3 \subset \rV$, see~\eqref{def:l-u3-minus}.
The birational transformation in the next proposition is again an example of
a special birational transformations of type $(2,1)$ from~\cite{fu2015special}.

\begin{proposition}
\label{proposition:x2-with-p4}
Let $X_K$ be a smooth dimensionally transverse codimension $2$ linear section of $X$.
The following conditions are equivalent:
\begin{enumerate}
\item The Hilbert scheme $F_4(X_K)$ of linear $4$-spaces on $X_K$ is non-empty;
\item The linear span of the conic $\gamma(\P(K)) \subset \rQ$ is contained in $\rQ$;
\end{enumerate}
If both of these conditions hold true and $\Pi^4_{U_{5,-}}$ is a $4$-space on $X_K$, 
then there is an isomorphism $\Bl_{\Pi^4_{U_{5,-}}}(X_K) \cong \Bl_{Z_K}(\P(W_K))$,
and a diagram
\begin{equation}
\vcenter{\xymatrix{
&
E_{\Pi,K} \ar[r] \ar[dl] &
\Bl_{\Pi^4_{U_{5,-}}}(X_K) \ar@{=}[r]^-\sim \ar[dl] &
\Bl_{Z_K}(\P(W_K)) \ar[dr] &
E_{Z_K} \ar[l] \ar[dr]
\\
\Pi^4_{U_{5,-}} \ar[r] &
X_K &&&
\P(W_K) &
Z_K \ar[l]
}}
\end{equation}
where $W_K \subset W$ is a $9$-dimensional subspace corresponding to $K$, 
and $Z_K = \Gr(2,U_{5,-}) \cap \P(W_K)$ is a smooth linear section of the Grassmannian of codimension~$2$.
Moreover, in this case 
\begin{equation}
\label{eq:f4-xk-l-minus}
F_4(X_K) = L^-_{U_{3}}, 
\end{equation} 
where $\P(U_3) \subset \P(\rV)$ is the linear span of the conic $\gamma(\P(K))$.
\end{proposition}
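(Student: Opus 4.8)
The plan is to analyze $F_4(X_K)$ as a linear section of $F_4(X)$ and to deduce the blowup description from the codimension-$1$ case. Throughout I write $K = \langle \kappa_1,\kappa_2\rangle$ and $C = \gamma(\P(K)) \subset \rQ$, which by Corollary~\ref{corollary:stratification-small-k} (the map $\gamma$ is a closed embedding with $\gamma^*\cO_\rQ(1) \cong \cO_{\P(K)}(2)$) is a smooth conic spanning a plane $\P(U_3) := \langle C\rangle$, so $\dim U_3 = 3$. First I would reduce $F_4(X_K)$ to an intersection inside $F_4(X)$: a $4$-space $\Pi \subset X$ lies on $X_K = X \cap \P(\kappa_1^\perp)\cap\P(\kappa_2^\perp)$ if and only if $\Pi \in F_4(X_{\kappa_1})\cap F_4(X_{\kappa_2})$, and by Theorem~\ref{theorem:linear-spaces} all these live in $F_4(X) \cong X^\vee$. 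Using Corollary~\ref{corollary:f4-x-codim2} and~\eqref{def:cq-kappa-minus}, for each $\kappa \in \P(K)$ we have $F_4(X_\kappa) = \cQ_{\kappa,-} \subset \cQ_{\gamma(\kappa),-} = \{[U_{5,-}] \in X^\vee : \gamma(\kappa) \in U_{5,-}\}$. Hence any $[U_{5,-}] \in F_4(X_K)$ satisfies $\gamma(\kappa)\in U_{5,-}$ for every $\kappa\in\P(K)$, so $C \subset \P(U_{5,-})$ and therefore $U_3 \subseteq U_{5,-}$. This shows $F_4(X_K) \subseteq L^-_{U_3} := \OGr_-(2,U_3^\perp/U_3) \cong \P^1$, and since $U_{5,-}$ is isotropic so is $U_3$, i.e.\ $\P(U_3) \subset \rQ$. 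This establishes the implication $(1)\Rightarrow(2)$ together with the inclusion ``$\subseteq$'' of~\eqref{eq:f4-xk-l-minus}.

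Next I would establish the blowup isomorphism, which is a direct adaptation of Corollary~\ref{corollary:blowup-xkappa-smooth-pi4} from codimension $1$ to codimension $2$. Given a $4$-space $\Pi^4_{U_{5,-}} \subset X_K$, Proposition~\ref{proposition:blowup-x-pi4} identifies $\Bl_{\Pi^4_{U_{5,-}}}(X) \cong \Bl_{\Gr(2,U_{5,-})}(\P(W))$ and realizes $X \dashrightarrow \P(W)$, $W = \SS/\bw4U_{5,-}$, as the linear projection with center $\Pi^4_{U_{5,-}}$. Since $\kappa_1,\kappa_2$ vanish on $\bw4U_{5,-}$, they lie in $W^\vee = (\bw4U_{5,-})^\perp$ and descend to two hyperplanes in $\P(W)$, cutting out $\P(W_K)$ with $W_K = K^\perp\cap W$ of dimension $9$. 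The strict transform of $X_K$ is then the $f_W$-preimage of $\P(W_K)$, namely $\Bl_{Z_K}(\P(W_K))$ with $Z_K = \Gr(2,U_{5,-})\cap\P(W_K)$, a codimension-$2$ linear section of the Grassmannian; its smoothness follows from smoothness of the blowup together with Lemma~\ref{lemma:smoothness-criterion}. This yields the claimed diagram.

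The remaining and hardest task is the reverse inclusion $L^-_{U_3}\subseteq F_4(X_K)$ under hypothesis $(2)$, which simultaneously gives $(2)\Rightarrow(1)$ and the equality in~\eqref{eq:f4-xk-l-minus}. For $[U_{5,-}] \supseteq U_3$ and any $\kappa\in\P(K)$ we have $\gamma(\kappa)\in U_3\subset U_{5,-}$, so $[U_{5,-}]\in\cQ_{\gamma(\kappa),-}$; what must be shown is that the extra linear condition $[U_{5,-}]\in\P(\kappa^\perp)\cap\cQ_{\gamma(\kappa),-}=\cQ_{\kappa,-}$ holds automatically, i.e.\ that the whole line of $4$-subspaces of $U_{5,-}/\gamma(\kappa)$ containing $U_3/\gamma(\kappa)$ lies in the hyperplane $\kappa^\perp$ of $\P(\cS_{8,\gamma(\kappa),-})$. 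I expect this to be the main obstacle. The route I would take is to translate $\Pi^4_{U_{5,-}}\subset\P(\kappa^\perp)$ into the condition $\kappa\perp\bw4U_{5,-}$, which by the spinor filtration of Remark~\ref{remark:filtrations} (compatible with the $\SS$–$\SS^\vee$ duality) amounts to $K$ lying in the kernel of the projection $\SS^\vee\to\bw4U_{5,-}^\vee$ onto the top filtration factor; then I would verify this vanishing from $U_3\subset U_{5,-}$ using the characterization of $\gamma(\kappa)$ as the zero of the section of $\cS_8^\vee$ determined by $\kappa$ (Lemma~\ref{lemma:g6-xkappa-smooth}). This is an explicit computation with the filtration factors $\bw{2i}U_{5,-}^\vee$ on $\SS^\vee$; carrying it out cleanly, rather than the formal blowup bookkeeping, is where the real work lies.
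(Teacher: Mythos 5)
Your handling of $(1)\Rightarrow(2)$, of the containment $F_4(X_K)\subseteq L^-_{U_3}$, and of the blowup diagram matches the paper's proof essentially step for step (the paper additionally observes that $Z_K$ must be dimensionally transverse --- otherwise its preimage would be an irreducible component of the irreducible variety $\Bl_{\Pi^4_{U_{5,-}}}(X_K)$ --- before Lemma~\ref{lemma:smoothness-criterion} can be applied; you should add this line). The genuine gap is exactly where you flag it: the reverse inclusion $L^-_{U_3}\subseteq F_4(X_K)$, which carries both the implication $(2)\Rightarrow(1)$ and the equality~\eqref{eq:f4-xk-l-minus}, is only announced as a plan (``the route I would take\dots'', ``where the real work lies'') and never executed. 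As written, the proposal therefore does not establish that condition $(2)$ implies condition $(1)$, so the classification content of the proposition is not proved. For the record, the paper closes this step geometrically rather than by your filtration computation: for $[U_{5,-}]\in L^-_{U_3}$ and $v=\gamma(\kappa)\in\P(U_3)$ the line $L_{U_3}$ lies in $\Pi^4_{U_{5,-}}\cap\cQ_v$, so by Lemma~\ref{lemma:linear-quadric-intersection} this intersection is a $\P^3$, and Corollary~\ref{corollary:f4-x-codim2} is then invoked to conclude $\Pi^4_{U_{5,-}}\subset X_\kappa$ for every $\kappa\in\P(K)$.

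Since you ask where the real work lies, here is why your proposed route does go through, and why it genuinely needs the whole pencil $K$ rather than a single $\kappa$. The condition $\Pi^4_{U_{5,-}}\subset\P(\kappa^\perp)$ is the vanishing of the image of $\kappa$ under the projection $\SS^\vee\to\bw4U_{5,-}^\vee$ onto the top factor of the filtration of Remark~\ref{remark:filtrations} (note $\bw4U_{5,-}^\vee\cong U_{5,-}\otimes\det U_{5,-}^\vee$). On the other hand, the condition $\gamma(\kappa)=v$ with $v\in U_{5,-}$ says $\kappa\in\cS_{8,v,-}=\ker(v\cdot\colon\SS^\vee\to\SS)$, and a direct check in the Clifford model shows this forces the $\bw4U_{5,-}^\vee$-component of $\kappa$ to be \emph{proportional to $v$ itself}, not to vanish --- this is why a single $\kappa$ with $\gamma(\kappa)\in U_{5,-}$ does not suffice (indeed $\cQ_{\kappa,-}$ is a proper hyperplane section of $\cQ_{v,-}$). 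The component map $\kappa\mapsto\pr_{\bw4U_{5,-}^\vee}(\kappa)$ is \emph{linear} in $\kappa$, whereas $\kappa\mapsto\gamma(\kappa)$ is \emph{quadratic} by Corollary~\ref{corollary:stratification-small-k}; a linear map $\P(K)\dashrightarrow\P(U_{5,-})$ has image a point or a line, so it cannot agree with the smooth conic $\gamma$ on a dense subset, and hence the linear projection must vanish identically on $K$. That degree comparison is the missing idea; without it (or the paper's geometric substitute) the proof is incomplete.
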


\begin{proof}
For each point $\kappa \in \P(K)$ the hyperplane section~$X_\kappa$ of $X$ is smooth by Lemma~\ref{lemma:smoothness}.
By Corollary~\ref{corollary:f4-x-codim2} the Hilbert scheme $F_4(X_\kappa)$ of 4-spaces in $X_\kappa$ is equal to $\cQ_{\kappa,-}$.
Since 
\begin{equation}
\label{eq:f4x2}
F_4(X_K) = \bigcap_{\kappa \in \P(K)} F_4(X_\kappa) = \bigcap_{\kappa \in \P(K)} \cQ_{\kappa,-}
\end{equation}
and $\cQ_{\kappa,-} \subset \cQ_{v,-} = \OGr_-(4,v^\perp/v)$, where $v = \gamma(\kappa)$,
we see that the condition (1) implies the existence of a common point $[U_{5,-}]$ of all $\OGr_-(4,v^\perp/v)$ for~$v \in \gamma(\P(K))$.
Putting this in different terms, this means that the conic $\gamma(\P(K))$ is contained inside the 4-space $\P(U_{5,-}) \subset \rQ$.
Therefore, the linear span of the conic is contained in $\rQ$.

Conversely, assume that the linear span of the conic $\gamma(\P(K))$ is contained in $\rQ$.
Then it is equal to~$\P(U_3)$ for an isotropic space $U_3 \subset \rV$.
It follows that the intersection of $\OGr_-(4,v^\perp/v)$ for $v = \gamma(\kappa)$ and $\kappa$ running over $\P(K)$
is equal to the line~$L^-_{U_3}$.
This proves that 
\begin{equation*}
F_4(X_K) \subset L^-_{U_3}.
\end{equation*}
Below we show that this is an equality.

For this we consider also the line $L_{U_3} \subset X$.
Take any $[U_{5,-}] \in L^-_{U_3}$ and any $\kappa \in \P(K)$ and set $v = \gamma(\kappa)$. 
We have
\begin{equation*}
L_{U_3} \subset \Pi^4_{U_{5,-}} \cap \cQ_v.
\end{equation*}
Indeed, the inclusion $L_{U_3} \subset \Pi^4_{U_{5,-}}$ follows from $U_3 \subset U_{5,-}$, 
while $L_{U_3} \subset \cQ_v$ follows from~$v \in \P(U_3)$.
By Lemma~\ref{lemma:linear-quadric-intersection} we conclude that $\Pi^4_{U_{5,-}} \cap \cQ_v \cong \P^3$,
hence by Corollary~\ref{corollary:f4-x-codim2} we have $\Pi^4_{U_{5,-}} \subset X_\kappa$.
Since this holds for any $\kappa \in \P(K)$, we conclude that $\Pi^4_{U_{5,-}} \subset X_K$
and since this holds for any $[U_{5,-}] \in L^-_{U_3}$ we conclude that $L^-_{U_3} \subset F_4(X_K)$.
This proves that the conditions~(1) and (2) are equivalent, and that the equality~\eqref{eq:f4-xk-l-minus} holds.

Now assume that both conditions (1) and (2) hold, and let $U_{5,-}$ be a subspace corresponding to a point of~$F_4(X_K)$.
Consider the isomorphism of Proposition~\ref{proposition:blowup-x-pi4}.
Let 
\begin{equation*}
\tX_K \cong \Bl_{\Pi^4_{U_{5,-}}}(X_K)
\end{equation*}
be the strict transform of $X_K$ in the blowup of $X$ along $\Pi^4_{U_{5,-}}$.
Each hyperplane in $\SS$ corresponding to a point of $\P(K) \subset P(\SS^\vee)$ 
contains the 4-space~$\Pi^4_{U_{5,-}}$, hence by~\eqref{eq:picard-relations-blowup-gr-p10}
it corresponds to a hyperplane in the space $\P(W)$ that intersects the linear span of the Grassmannian $\Gr(2,U_{5,-})$ transversely and smoothly.
Therefore, $\tX_K$ is isomorphic to the blowup of a codimension two subspace $\P(W_K) \subset \P(W)$ 
along the corresponding linear section~$Z_K$ of the Grassmannian $\Gr(2,U_{5,-})$.
If the linear section $Z_K$ is not dimensionally transverse, then its preimage in $\tX_K$ is an irreducible component of the latter, which is absurd.
So, since $\tX_K$ is smooth, it follows that~$Z_K$ is smooth as well, see Lemma~\ref{lemma:smoothness-criterion}.
\end{proof}

\begin{definition}
\label{definition:special-section}
A smooth linear section $X_K \subset X$ of the spinor tenfold is called {\sf special} 
if both of the equivalent conditions of Proposition~\ref{proposition:x2-with-p4} hold for $X_K$.
\end{definition}

\begin{remark}
\label{remark:special-x2-unique}
The birational transformation of Proposition~\ref{proposition:x2-with-p4} shows that a special section $X_K$ of $X$
is unique up to an isomorphism (and hence up to a $\Spin(\rV)$-action, see Corollary~\ref{corollary:isomorphism-conjugation}).
Indeed, this follows from the classical fact that a smooth linear section of $\Gr(2,5)$ of codimension 2 is unique up to a projective isomorphism.
\end{remark}

Yet another characterization of special linear sections is the following.

\begin{lemma}
\label{lemma:normal-bundle-special-line}
A smooth linear section $X_K \subset X$ of codimension $2$ is special
if and only if there exists a line $L \subset X_K$ such that 
\begin{equation}
\label{eq:normal-special-line}
\cN_{L/X_K} \cong \cO_L(-2) \oplus \cO_L(1)^{\oplus 6}.
\end{equation} 
Moreover, such line is unique and is equal to the intersection of all $4$-spaces on $X_K$.
\end{lemma}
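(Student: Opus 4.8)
The plan is to prove both implications through the line $L = L_{U_3}\subset X$ of \eqref{def:l-u3}, where $\P(U_3)\subset\P(\rV)$ is the linear span of the conic $\gamma(\P(K))$ (in the special case) or the $3$-space attached to a candidate line in general. The key local input is the normal bundle of a line on the ambient spinor tenfold: for $L = L_{U_3}$ one has $\cN_{L/X}\cong\cO_L(1)^{\oplus 6}\oplus\cO_L^{\oplus 3}$. Its degree $6$ follows from adjunction ($-K_X = \cO_X(8)$), and the balanced splitting type is pinned down by $h^0(\cN_{L/X}) = 15 = \dim\OGr(3,\rV) = \dim F_1(X)$. Most importantly, I would identify the positive summand $\cO_L(1)^{\oplus 6}$ with the span $\cN_{L/\Pi'}\oplus\cN_{L/\Pi''}$ of the normal directions of two distinct $4$-spaces $\Pi',\Pi''\subset X$ through $L$: every line $L_{U_3}$ lies in the pencil of $4$-spaces $\{\Pi^4_{U_{5,-}} : U_3\subset U_{5,-}\}\cong L^-_{U_3}\cong\P^1$ by \eqref{def:pi4-u5}, two general members meet exactly along $L$ (a common point must contain $U_3$ by the parity constraint \eqref{eq:parity-intersection}, as in the proof of Proposition~\ref{proposition:x2-with-p4}), and since $\Hom(\cO_L(1),\cO_L) = 0$ any such rank-$6$ subbundle is forced to equal the full positive part.

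For the implication special $\Rightarrow$ existence, assume $X_K$ is special, so $F_4(X_K) = L^-_{U_3}\cong\P^1$ by Proposition~\ref{proposition:x2-with-p4}, and set $L = L_{U_3}$, which lies on $X_K$ since it lies on every $\Pi^4_{U_{5,-}}$. Picking two distinct $4$-spaces $\Pi',\Pi''\subset X_K$ through $L$ meeting exactly along $L$, their normal directions give a subbundle $\cN_{L/\Pi'}\oplus\cN_{L/\Pi''}\cong\cO_L(1)^{\oplus 6}\hookrightarrow\cN_{L/X_K}$ (a genuine subbundle, since for the linear subspaces $\Pi',\Pi''$ one has $T_x\Pi'\cap T_x\Pi'' = T_xL$ at every $x\in L$). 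As $\deg\cN_{L/X_K} = (-K_{X_K})\cdot L - 2 = 6 - 2 = 4$, the quotient is a line bundle of degree $-2$, i.e.\ $\cO_L(-2)$, and the extension splits because $\Ext^1(\cO_L(-2),\cO_L(1)^{\oplus 6}) = H^1(\cO_L(3))^{\oplus 6} = 0$. This yields \eqref{eq:normal-special-line}.

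For the converse, suppose $L\subset X_K$ satisfies \eqref{eq:normal-special-line}; by Theorem~\ref{theorem:linear-spaces}(1), $L = L_{U_3}$ for a unique $U_3$. I would use the conormal sequence $0\to\cN_{L/X_K}\to\cN_{L/X}\to\cN_{X_K/X}|_L\to 0$ with $\cN_{X_K/X}|_L\cong\cO_L(1)^{\oplus 2}$. Twisting by $\cO_L(-1)$ and taking cohomology, the equality $h^0(\cN_{L/X_K}(-1)) = 6 = h^0(\cN_{L/X}(-1))$ forces the restriction of the map $\cN_{L/X}\to\cO_L(1)^{\oplus 2}$ to the positive summand $\cO_L(1)^{\oplus 6}$ to vanish. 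By the identification above this says the $4$-space directions $\cN_{L/\Pi'},\cN_{L/\Pi''}$ lie in $\cN_{L/X_K}$; since $\Pi',\Pi''$ are linear and $X_K$ is a linear section, tangency along $L$ upgrades to containment $\Pi',\Pi''\subset X_K$. Hence $F_4(X_K)\neq\varnothing$ and $X_K$ is special in the sense of Definition~\ref{definition:special-section}.

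Finally, for uniqueness and the intersection statement I would show $L_{U_3} = \bigcap_{[U_{5,-}]\in F_4(X_K)}\Pi^4_{U_{5,-}}$ directly: the inclusion $\subseteq$ is as above, while for $\supseteq$ a subspace $U_5$ lying in two general $4$-spaces satisfies $\dim(U_5\cap U'_{5,-}) = \dim(U_5\cap U''_{5,-}) = 4$ with $U'_{5,-}\cap U''_{5,-} = U_3$, which forces $U_3\subset U_5$ and hence $[U_5]\in L_{U_3}$. Any line $L'$ with normal bundle \eqref{eq:normal-special-line} produces, by the converse argument, two distinct $4$-spaces of $X_K$ containing it, whose intersection is exactly $L_{U_3}$; thus $L' = L_{U_3}$. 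I expect the main obstacle to be the two bundle-theoretic inputs — establishing the precise splitting of $\cN_{L/X}$ together with the geometric identification of its positive part as the span of the $4$-space normals, and the cohomological bookkeeping that converts the splitting type \eqref{eq:normal-special-line} into the vanishing of the second fundamental form on that positive part; the intersection lemma itself is a short computation with the parity rule \eqref{eq:parity-intersection}.
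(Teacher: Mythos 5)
Your proof is correct and follows essentially the same route as the paper: both directions rest on the pencil of $4$-spaces through $L_{U_3}$, the subbundle $\cN_{L/\Pi'}\oplus\cN_{L/\Pi''}\cong\cO_L(1)^{\oplus 6}\subset\cN_{L/X_K}$ with a degree count for the forward implication, and the normal-bundle exact sequence forcing every $4$-space through $L$ into $X_K$ for the converse (the paper simply quotes $\cN_{L/X}\cong\cO_L^{\oplus 3}\oplus\cO_L(1)^{\oplus 6}$ from Ranestad--Schreyer rather than rederiving it). One small caveat: $h^0(\cN_{L/X})=15$ alone does not pin down the splitting type (e.g.\ $\cO_L(2)^{\oplus 3}\oplus\cO_L^{\oplus 6}$ is globally generated with the same $h^0$), but your subsequent identification of the $\cO_L(1)^{\oplus 6}$ subbundle via two $4$-spaces, together with global generation of the resulting rank-$3$ degree-$0$ quotient, does establish it.
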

\begin{proof}
We will use the notation introduced in the proof of Proposition~\ref{proposition:x2-with-p4}.
In particular, let $\P(U_3) \subset \rQ$ be the linear span of the conic $\gamma(\P(K)) \subset \rQ$.

For one direction let us prove that the line $L = L_{U_3}$ is contained in $X_K$ and has the required normal bundle.
For this just note that there is a pencil of 4-spaces passing through~$L$ (which correspond to the pencil of $U_{5,-}$ containing $U_3$), 
and $L$ is the scheme-theoretic intersection of any two distinct 4-spaces $\Pi_1$ and $\Pi_2$ in the pencil.
Therefore, there is a natural embedding of vector bundles
\begin{equation*}
\cN_{L/\Pi_1} \oplus \cN_{L/\Pi_2} \hookrightarrow \cN_{L/X_K}.
\end{equation*}
It remains to note that the left hand side is isomorphic to $\cO_L(1)^{\oplus 6}$, 
while the right hand side is a bundle of rank $8 - 1 = 7$ and degree $6 - 2 = 4$.
Therefore, the cokernel of the above embedding is isomorphic to~$\cO_L(-2)$, hence the required formula for the normal bundle of $L$.

For the opposite direction, assume that $L = L_{U_3} \subset X_K$ is a line with the normal bundle as in~\eqref{eq:normal-special-line}.
Let $\Pi$ be any 4-space on $X$ containing $L$ (such 4-spaces correspond to $U_{5,-}$ containing $U_3$ and hence form a pencil).
If we show that $\Pi$ is contained in $X_K$, then it would follow that $F_4(X_K) \ne \varnothing$, hence $X_K$ is special.
It would also follow that $L$ is contained in the intersection of the pencil of 4-spaces on $X_K$, hence is the only such line on $X_K$.
So, consider the following diagram
\begin{equation*}
\xymatrix{
&&
\cN_{L/\Pi} \ar[d] \ar@{..>}[dl]
\\
0 \ar[r] &
\cN_{L/X_K} \ar[r] &
\cN_{L/X} \ar[r] &
\cN_{X_K/X}\vert_L \ar[r] &
0.
}
\end{equation*}
Its bottom line can be rewritten as
\begin{equation}
\label{eq:normal-bundles-sequence-special}
\xymatrix@1{
0 \ar[r] &
\cO_L(-2) \oplus \cO_L(1)^{\oplus 6} \ar[r] &
\cO_L^{\oplus 3} \oplus \cO_L(1)^{\oplus 6} \ar[r] &
\cO_L(1)^{\oplus 2} \ar[r] &
0
}
\end{equation}
(for the first term we use the assumption of the lemma, and for the second we use~\cite[Lemma~8.1]{Ranestad2000}).
It follows that the first map is an isomorphism on the summands $\cO_L(1)$.
Since moreover $\cN_{L/\Pi} \cong \cO_L(1)^{\oplus 3}$, it follows that the vertical arrow in the diagram factors through a dotted arrow.
Geometrically, this means that the tangent space to $\Pi$ at each point of $L$ is contained in the tangent space to $X_K$.
But since~$\Pi \subset X$ and $X_K$ is a linear section of $X$, it follows that $\Pi \subset X_K$.
\end{proof}

\begin{definition}
The line $L$ on a special linear section $X_K \subset X$ of codimension~$2$ such that~\eqref{eq:normal-special-line} holds
is called its {\sf special line}.
\end{definition}

The characterization of Lemma~\ref{lemma:normal-bundle-special-line} can be reformulated as follows.

\begin{corollary}
\label{corollary:f1-x1}
Let $X_K \subset X$ be a smooth linear section of codimension $2$.
The Hilbert scheme $F_1(X_K)$ of lines on $X_K$ is singular if and only if $X_K$ is special.
Moreover, if $X_K$ is special then the singular locus of $F_1(X_K)$ consists of a unique point and that point corresponds to the special line of $X_K$.
\end{corollary}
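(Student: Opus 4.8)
The plan is to read off the geometry of $F_1(X_K)$ from the normal bundles of the lines it parametrizes. I would start from the standard deformation-theoretic facts: for a line $L$ on a smooth variety $Y$, the Hilbert scheme $F_1(Y)$ has Zariski tangent space $H^0(L,\cN_{L/Y})$ at $[L]$, and is smooth of dimension $h^0(L,\cN_{L/Y})$ there as soon as $H^1(L,\cN_{L/Y})=0$. The first step is to pin down the possible splitting types of $\cN_{L/X_K}$. Writing $\cN_{L/X_K}\cong\bigoplus_{i=1}^{7}\cO_L(a_i)$, its rank is $\dim X_K-1=7$; since $\omega_{X_K}\cong\cO_{X_K}(-6)$, adjunction gives $\deg\cN_{L/X_K}=-K_{X_K}\cdot L-2=4$, so $\sum a_i=4$. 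The exact sequence $0\to\cN_{L/X_K}\to\cN_{L/X}\to\cN_{X_K/X}|_L\to0$, together with $\cN_{L/X}\cong\cO_L^{\oplus3}\oplus\cO_L(1)^{\oplus6}$ from \cite[Lemma~8.1]{Ranestad2000} and $\cN_{X_K/X}|_L\cong\cO_L(1)^{\oplus2}$, exhibits $\cN_{L/X_K}$ as a subbundle of $\cO_L^{\oplus3}\oplus\cO_L(1)^{\oplus6}$; since a nonzero map $\cO_L(a)\to\cO_L^{\oplus3}\oplus\cO_L(1)^{\oplus6}$ forces $a\le1$, I get $a_i\le1$ for all $i$.

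From $\sum a_i=4$, $a_i\le1$, and rank $7$, a short case analysis then yields $h^0(\cN_{L/X_K})\le12$, with equality precisely for the splitting type $\cO_L(-2)\oplus\cO_L(1)^{\oplus6}$; this is also the only type with $H^1(\cN_{L/X_K})\neq0$, because any summand of degree $\le-2$ forces the remaining six summands to sum to at least $6$ and hence all to equal $1$. By Lemma~\ref{lemma:normal-bundle-special-line} a line with this normal bundle exists if and only if $X_K$ is special, in which case it is unique and is the special line. Thus if $X_K$ is not special, every line $L\subset X_K$ has $H^1(\cN_{L/X_K})=0$ and $F_1(X_K)$ is smooth; this already settles the ``only if'' direction and shows that, for special $X_K$, every line other than the special line is a smooth point.

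The genuinely delicate step — and the one I expect to be the main obstacle — is to show that for special $X_K$ the special line $L$ is an honest singular point of $F_1(X_K)$, not merely an obstructed one: a nonzero $H^1$ guarantees obstructions but not singularity. I would close this gap by bounding the local dimension directly. Using diagram~\eqref{eq:universal-lines} and the identification $\SS^\vee=H^0(\OGr(3,\rV),\cS_2^\vee)$, a line $L_{U_3}$ lies on $X_K=X\cap\P(K^\perp)$ if and only if every $\kappa\in K$ vanishes on the linear span of $L_{U_3}$; hence $F_1(X_K)$ is the zero scheme on $\OGr(3,\rV)$ of the tautological section of $\cS_2^\vee\otimes K^\vee$, a bundle of rank $4$ on a variety of dimension $15$. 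Consequently every irreducible component of $F_1(X_K)$ has dimension at least $11$. If some component $C$ through $[L]$ had $\dim C\ge12$, its generic line would have tangent space, hence $h^0(\cN)$, at least $12$, forcing the normal bundle $\cO_L(-2)\oplus\cO_L(1)^{\oplus6}$; but such a line is unique by Lemma~\ref{lemma:normal-bundle-special-line}, contradicting $\dim C\ge12$. Therefore $\dim_{[L]}F_1(X_K)=11<12=h^0(\cN_{L/X_K})=\dim T_{[L]}F_1(X_K)$, so $[L]$ is singular, and by the previous paragraph the singular locus is exactly this point.
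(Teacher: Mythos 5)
Your proof is correct and follows the same basic strategy as the paper's: determine the possible splitting types of $\cN_{L/X_K}$ from the exact sequence $0\to\cN_{L/X_K}\to\cN_{L/X}\to\cN_{X_K/X}\vert_L\to 0$ and invoke Lemma~\ref{lemma:normal-bundle-special-line} to characterize when the obstructed type $\cO_L(-2)\oplus\cO_L(1)^{\oplus 6}$ occurs. The paper pins down the splitting type by observing that the induced map $\cO_L(1)^{\oplus 6}\to\cO_L(1)^{\oplus 2}$ has constant rank and listing the three resulting possibilities; your adjunction-plus-subbundle argument ($\sum a_i=4$, $a_i\le 1$, rank $7$) is a coarser but equally valid substitute, and your numerical case analysis correctly isolates $\cO_L(-2)\oplus\cO_L(1)^{\oplus 6}$ as the only type with nonvanishing $H^1$. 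The genuine added value of your write-up is the final paragraph: the paper simply equates $H^1(L,\cN_{L/X_K})\ne 0$ with singularity of $F_1(X_K)$ at $[L]$, which in general only yields obstructedness, not singularity. Your dimension count --- realizing $F_1(X_K)$ as the zero locus of a section of the rank-$4$ bundle $\cS_2^\vee\otimes K^\vee$ on the $15$-dimensional $\OGr(3,\rV)$, so that every component has dimension at least $11$, and then using the uniqueness of the special line to rule out a component of dimension $\ge 12$ through $[L]$ while the tangent space there has dimension $12$ --- closes this gap cleanly. So your argument is, if anything, more complete than the one printed in the paper.
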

\begin{proof}
We have the standard exact sequence
\begin{equation}
\label{eq:normal-sequence-l-x2}
\xymatrix@1{
0 \ar[r] &
\cN_{L/X_K} \ar[r] &
\cO_L^{\oplus 3} \oplus \cO_L(1)^{\oplus 6} \ar[r] &
\cO_L(1)^{\oplus 2} \ar[r] &
0.
}
\end{equation}
Consider the restriction of the right map to the second summands $\cO_L(1)^{\oplus 6} \to \cO_L(1)^{\oplus 2}$.
Clearly, this is a map of constant rank.
If the rank equals 2, then $\cN_{L/X_K} \cong \cO_L^{\oplus 3} \oplus \cO_L(1)^{\oplus 4}$.
If the rank equals 1, then~$\cN_{L/X_K} \cong \cO_L(-1) \oplus \cO_L \oplus \cO_L(1)^{\oplus 5}$.
Finally, if the rank equals 0, then $\cN_{L/X_K} \cong \cO_L(-2) \oplus \cO_L(1)^{\oplus 6}$.
We see that $H^1(L,\cN_{L/X}) \ne 0$, i.e., the Hilbert scheme $F_1(X_K)$ is singular at point $[L]$ if and only if~\eqref{eq:normal-special-line} holds,
hence $X_K$ is special and $L$ is its special line.
\end{proof}

\subsection{Non-special linear sections of codimension 2}

In this subsection we show that there is a unique isomorphism class of smooth linear sections $X_K \subset X$ of codimension 2 
that do not contain a 4-space.

For each subspace $K \subset \SS$ and each $\kappa \in \P(K) \setminus X^\vee$ we denote 
\begin{equation}
\label{def:q-kappa-k}
\cQ_{\kappa,K} := \cQ_v \cap \P(K^\perp),
\end{equation} 
where $v = \gamma(\kappa)$ and $\cQ_v = \OGr_+(4,v^\perp/v)$.
Note that the quadric $\cQ_v$ is contained in the hyperplane~$\P(\kappa^\perp)$ (Lemma~\ref{lemma:g6-xkappa-smooth}), 
hence $\cQ_{\kappa,K}$ is a linear section of $\cQ_v$ of codimension at most $k - 1$, where $k = \dim K$.
Recall also the smooth five-dimensional quadric $\cQ_{\kappa,-}$ defined by~\eqref{def:cq-kappa-minus}.
Finally, recall that $\rc_4(\cS_4) = 0$ (see~\cite[Remark~2.9]{ottaviani}), 
hence 
a general morphism $\cS_4 \to \cO_{\cQ_{\kappa,-}}$ is surjective.
We denote by $\bcS_4$ the kernel of such morphism, so that we have an exact sequence
\begin{equation}
\label{eq:bcs4}
0 \to \bcS_4 \to \cS_4 \to \cO_{\cQ_{\kappa,-}} \to 0.
\end{equation}
This is a rank-3 vector bundle on $\cQ_{\kappa,-}$.
Since the group $\Spin(7)$ acts transitively on the open subset of $\P(\Hom(\cS_4,\cO_{\cQ_{\kappa,-}}))$
corresponding to surjective morphisms (see for more details the proof of Proposition~\ref{proposition:blowup-quadric-x2}), 
this bundle is defined uniquely up to an action of~$\Spin(7)$.

\begin{proposition}
\label{proposition:blowup-quadric-x2}
Let $X_K$ be a smooth dimensionally transverse linear section of $X$ of codimension~$2$.
The blowup $\Bl_{Q_{\kappa,K}}(X_K)$ is a relative hyperplane section 
in the $\P^4$-bundle $\P_{\cQ_{\kappa,-}}(\cO(-1) \oplus \cS_4)$ over $\cQ_{\kappa,-}$.
It is a flat $\P^3$-bundle if and only if $F_4(X_K) = \varnothing$, and in the latter case
\begin{equation*}
\Bl_{Q_{\kappa,K}}(X_K) \cong \P_{\cQ_{\kappa,-}}(\cO(-1) \oplus \bcS_4),
\end{equation*}
where $\bcS_4$ is the rank-$3$ bundle defined by~\eqref{eq:bcs4}.
In particular, such $X_K$ is unique up to an isomorphism.
\end{proposition}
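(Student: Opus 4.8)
The plan is to deduce everything from the description of the blowup of a smooth hyperplane section in Corollary~\ref{corollary:blowup-xkappa-smooth}. Write $K=\langle\kappa,\kappa'\rangle$, so that $X_K=X_\kappa\cap\P(\kappa'^\perp)$, and recall $\Bl_{\cQ_v}(X_\kappa)\cong\P_{\cQ_{\kappa,-}}(\cO(-1)\oplus\cS_4)=:P$, a $\P^4$-bundle, with blowdown $g_X\colon P\to X_\kappa$ and bundle projection $\pi\colon P\to\cQ_{\kappa,-}$. From the relation $H_\cQ=H_X-E_\cQ$ of Proposition~\ref{proposition:blowup-x-ogr48} the relative hyperplane class $\xi$ satisfies $\xi=g_X^*H_X$, so $g_X^*\cO_{X_\kappa}(1)\cong\cO_\xi(1)$. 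Since $\cQ_{\kappa,K}=\cQ_v\cap\P(K^\perp)$ is a proper hyperplane section of $\cQ_v$, the center $\cQ_v$ is not contained in the divisor $X_K\subset X_\kappa$; hence $\mathrm{mult}_{\cQ_v}X_K=0$ and the total transform $g_X^*X_K$ equals the strict transform $\Bl_{\cQ_{\kappa,K}}(X_K)$. Thus $\Bl_{\cQ_{\kappa,K}}(X_K)$ is the zero locus of the section of $\cO_\xi(1)$ corresponding to $\kappa'$, i.e. a relative hyperplane section of $P$, which proves the first assertion.

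For the second assertion I would argue fibrewise. By Corollary~\ref{corollary:f4-x-codim2} the fibre of $\pi$ over $t\in\cQ_{\kappa,-}$ is a $4$-space $\Pi_t\subset X_\kappa$, and $g_X$ identifies the fibre with $\Pi_t$. A relative hyperplane section meets $\Pi_t\cong\P^4$ in a $\P^3$ unless the defining linear form vanishes on the whole fibre, in which case it contains $\Pi_t$. Since $g_X(\Bl_{\cQ_{\kappa,K}}(X_K))=X_K$, the fibre over $t$ is all of $\Pi_t$ precisely when $\Pi_t\subset X_K$, that is when $t\in F_4(X_K)$ under the inclusion $F_4(X_K)\subset F_4(X_\kappa)\cong\cQ_{\kappa,-}$. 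Hence $\Bl_{\cQ_{\kappa,K}}(X_K)\to\cQ_{\kappa,-}$ is a flat $\P^3$-bundle if and only if $F_4(X_K)=\varnothing$.

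Assume now $F_4(X_K)=\varnothing$. Then the defining section of $\cO_\xi(1)$ is nowhere zero, so the dual map $\varphi=(a,\phi)\colon\cO(-1)\oplus\cS_4\to\cO$ is a surjection of bundles, $\cF:=\ker\varphi$ is locally free of rank $4$, and $\Bl_{\cQ_{\kappa,K}}(X_K)\cong\P_{\cQ_{\kappa,-}}(\cF)$. The heart of the proof is to identify $\cF\cong\cO(-1)\oplus\bcS_4$. The idea is to bring $\varphi$ to the normal form $(0,\phi)$ by a shearing automorphism $\left(\begin{smallmatrix}1&0\\\delta&1\end{smallmatrix}\right)$ with $\delta\in\Hom(\cO(-1),\cS_4)=H^0(\cS_4(1))$, which replaces $a$ by $a+\phi\circ\delta$; so it suffices to show the multiplication map $H^0(\cS_4(1))\to H^0(\cO(1))$, $\delta\mapsto\phi\circ\delta$, is surjective. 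Here $\cQ_{\kappa,-}\cong Q^5$ carries a transitive action of $\Spin(7)$, the spaces $H^0(\cS_4(1))$ and $H^0(\cO(1))$ are $\Spin(7)$-representations with $H^0(\cO(1))$ the irreducible vector representation, and $\phi\neq0$ (otherwise $\varphi=(a,0)$ would force the line-bundle map $\cO(-1)\xrightarrow{a}\cO$ to be an isomorphism, which is impossible); by equivariance the image is a nonzero subrepresentation, hence all of $H^0(\cO(1))$. After this reduction $\varphi=(0,\phi)$ is still a surjection, so $\phi\colon\cS_4\to\cO$ is surjective and $\cF=\cO(-1)\oplus\ker\phi=\cO(-1)\oplus\bcS_4$, as claimed. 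I expect the surjectivity of this multiplication map to be the main obstacle; it is a statement about the $\Spin(7)$-module structure of the cohomology of twisted spinor bundles on $Q^5$ and should be checked by Borel--Bott--Weil.

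Finally, for uniqueness I would first record the $\Spin(7)$-transitivity used above: via triality $\Spin(7)\hookrightarrow\SO(8)$ realises $H^0(\cS_4^\vee)$ as the vector representation, whose nonzero vectors form two orbits according to whether the spinor norm vanishes; the surjective morphisms $\phi$ are exactly those of nonzero norm and hence form a single open orbit. Consequently $\bcS_4$, and therefore $P=\P_{\cQ_{\kappa,-}}(\cO(-1)\oplus\bcS_4)$, is determined up to isomorphism independently of $X_K$. Since $P$ has Picard rank $2$ with exactly two extremal contractions — the $\P^3$-bundle $\pi$, of fibre type, and the divisorial blowdown $g_X$ contracting $E'_\cQ$ — the target $X_K$ of the divisorial contraction is intrinsic to $P$. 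Thus all non-special smooth codimension-$2$ sections $X_K$ are mutually isomorphic, and by Corollary~\ref{corollary:isomorphism-conjugation} they are even $\Spin(\rV)$-conjugate.
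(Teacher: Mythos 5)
Your overall strategy is the same as the paper's: restrict the blowup description of Corollary~\ref{corollary:blowup-xkappa-smooth} to the second hyperplane $\kappa'$, observe that the strict transform of $X_K$ is the relative hyperplane section of $\P_{\cQ_{\kappa,-}}(\cO(-1)\oplus\cS_4)$ defined by the composition $(a,\phi)\colon\cO(-1)\oplus\cS_4\hookrightarrow\SS\otimes\cO\to\cO$, and read off flatness fibrewise via Corollary~\ref{corollary:f4-x-codim2}. Those parts, as well as your recovery of $X_K$ from the two extremal contractions of the Picard-rank-two variety $\P_{\cQ_{\kappa,-}}(\cO(-1)\oplus\bcS_4)$, are sound.

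The gap is in the key identification $\Ker(a,\phi)\cong\cO(-1)\oplus\bcS_4$. Your shearing trick reduces everything to the surjectivity of $\mu_\phi\colon H^0(\cS_4(1))\to H^0(\cO(1))$, $\delta\mapsto\phi\circ\delta$, which you justify by saying that its image is a nonzero $\Spin(7)$-subrepresentation of the irreducible $7$-dimensional representation $H^0(\cO(1))$. But $\mu_\phi$ is not $\Spin(7)$-equivariant: it depends on the fixed spinor $\phi\in H^0(\cS_4^\vee)$, and one only has $g\circ\mu_\phi=\mu_{g\phi}\circ g$, so its image is invariant only under $\mathrm{Stab}(\phi)$. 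Worse, the claim is actually false under the only hypothesis you establish, namely $\phi\neq 0$: if $\phi$ is a nonzero \emph{null} spinor, its zero locus is a plane $\P^2\subset\cQ_{\kappa,-}$, every $\phi\circ\delta$ vanishes on that plane, and $\mathrm{im}(\mu_\phi)$ lies in the proper ($4$-dimensional) subspace of linear forms vanishing on it. What rescues the argument is the orbit analysis that the paper carries out and that you omit: surjectivity of $(a,\phi)$ forces $\phi$ into the open $\Spin(7)$-orbit, because for null $\phi$ the morphism restricted to the plane $\P^2$ reduces to $a\colon\cO(-1)\to\cO$, which must vanish somewhere on that plane. Once $\phi$ is known to be generic you can finish your way ($\mathrm{Stab}(\phi)\cong\mathrm{G}_2$ still acts irreducibly on $H^0(\cO(1))$ and $\mu_\phi\neq0$), but at that point the paper's route is shorter: a generic $\phi$ is itself surjective, so $\Ker(a,\phi)$ is an extension of $\cO(-1)$ by $\bcS_4$, which splits since $\Ext^1(\cO(-1),\bcS_4)=H^1(\cQ_{\kappa,-},\bcS_4(1))=0$.
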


\begin{proof}
Let $\kappa,\kappa' \in \P(K)$ be a basis and set $v = \gamma(\kappa)$, so that $Q_{\kappa,K} = \cQ_v \cap \P({\kappa'}^\perp)$.
Consider the isomorphism of Corollary~\ref{corollary:blowup-xkappa-smooth}.
The strict transform of $X_K = X_\kappa \cap \P({\kappa'}^\perp)$ is then isomorphic to the blowup of $X_K$ along~$Q_{\kappa,K}$.
On the other hand, it is a relative hyperplane section of the projective bundle~$\P_{\cQ_{\kappa,-}}(\cO(-1) \oplus \cS_4)$
corresponding to the composition of the maps 
\begin{equation}
\label{eq:morphism-defining-cf}
\cO(-1) \oplus \cS_4 \hookrightarrow \SS \otimes \cO \xrightarrow{\ \kappa'\ } \cO.
\end{equation}
So, we only have to check that the composition~\eqref{eq:morphism-defining-cf} is surjective if and only if $F_4(X_K) = \varnothing$.
Indeed, if the morphism is not surjective at some point, the fiber $\P^4$ of $\P_{\cQ_{\kappa,-}}(\cO(-1) \oplus \cS_4)$
over this point is contained in $\Bl_{\cQ_{\kappa,K}}(X_K)$, hence gives a 4-space in $X_K$.
Conversely, if $\Pi \subset X_K$ is a 4-space then $\Pi \subset X_\kappa$ and by Corollary~\ref{corollary:f4-x-codim2} we know that $\Pi$ is 
the image of a fiber of $\P_{\cQ_{\kappa,-}}(\cO(-1) \oplus \cS_4)$ over some point of $\cQ_{\kappa,-}$.
Furthermore, this fiber is equal to the strict transform of $\Pi$ in $\Bl_{\cQ_{\kappa,K}}(X_K)$,
hence the morphism~\eqref{eq:morphism-defining-cf} is zero at this point.

Now assume that the composition~\eqref{eq:morphism-defining-cf} is surjective. 
Its component $\cS_4 \to \cO$ is determined by a global section of the bundle $\cS_4^\vee$ on $\cQ_{\kappa,-}$.
The space of such global sections is the 8-dimensional spinor representation of $\Spin(7)$, 
that can be identified with the space $\cS_{8,v}$.
The $\Spin(7)$-action on its projectivization has two orbits, 
the smooth 6-dimensional quadric $\cQ_v \subset \P(\cS_{8,v})$ and its open complement.
It is easy to see that each global section of $\cS_4$ corresponding 
to a point of the closed orbit $\cQ_v$ vanishes on a certain plane $\P^2 \subset \cQ_{\kappa,-}$,
hence any its extension to a morphism~\eqref{eq:morphism-defining-cf} is not surjective.
Thus, if~\eqref{eq:morphism-defining-cf} is surjective then its kernel is an extension of $\cO(-1)$ by $\bcS_4$.
It is easy to check that $\Ext^1(\cO(-1),\bcS_4) = 0$, hence the kernel of~\eqref{eq:morphism-defining-cf} 
is isomorphic to $\cO(-1) \oplus \bcS_4$.

Consequently, $X_K$ is the image of $\P_{\cQ_{\kappa,-}}(\cO(-1) \oplus \bcS_4)$ under the map 
given by the linear system of relative hyperplane sections.
Its uniqueness up to an isomorphism follows from the uniqueness of $\bcS_4$ up to an action of $\Spin(7)$.
\end{proof}

In a combination with Remark~\ref{remark:special-x2-unique} this proves the following 

\begin{corollary}
There are exactly two isomorphism classes of smooth linear sections $X_K \subset X$ of codimension $2$,
described in Proposition~\textup{\ref{proposition:x2-with-p4}} and Proposition~\textup{\ref{proposition:blowup-quadric-x2}} respectively.
\end{corollary}

Using Corollary~\ref{corollary:isomorphism-conjugation}, we can rephrase this in the following form.

\begin{corollary}
The action of the group $\Spin(\rV)$ on the open subset of $\Gr(2,\SS)$ parameterizing lines that do not intersect the spinor tenfold $X$ has exactly two orbits,
one is open and the other is closed.
\end{corollary}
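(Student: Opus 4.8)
The plan is to recognize this statement as a direct translation of the preceding corollary into the language of group actions, the dictionary being supplied by Corollary~\ref{corollary:isomorphism-conjugation}. First I would fix the parameter space: by Lemma~\ref{lemma:smoothness} a $2$-dimensional subspace $K \subset \SS^\vee$ gives a smooth dimensionally transverse section $X_K$ of codimension~$2$ precisely when $\P(K) \cap X^\vee = \varnothing$, so such sections are parameterized by the open subset $U^\vee := \{[K] \in \Gr(2,\SS^\vee) : \P(K) \cap X^\vee = \varnothing\}$. The self-duality of the spinor tenfold, i.e.\ the outer automorphism $\tau$ of $\Spin(\rV)$ inducing a $\tau$-equivariant isomorphism $\P(\SS) \xrightarrow{\sim} \P(\SS^\vee)$ carrying $X$ to $X^\vee$, identifies $U^\vee$ with the open set $U := \{[L] \in \Gr(2,\SS) : \P(L) \cap X = \varnothing\}$ of the statement; since $\tau$ is a group automorphism this identification sends $\Spin(\rV)$-orbits to $\Spin(\rV)$-orbits bijectively. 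Hence it suffices to count orbits on $U^\vee$ and analyze their closure relations.

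Next I would set up the bijection between isomorphism classes and orbits. Corollary~\ref{corollary:isomorphism-conjugation} gives one direction: if $X_{K_1} \cong X_{K_2}$ then $g(K_1) = K_2$ for some $g \in \Spin(\rV)$. The converse is immediate, since any $g \in \Spin(\rV)$ acts on $\SS$ and $\SS^\vee$ compatibly with the pairing, so $g(K^\perp) = g(K)^\perp$ and $g$ restricts to an isomorphism $X_{K_1} \xrightarrow{\sim} X_{K_2}$ whenever $g(K_1) = K_2$. Thus isomorphism classes of smooth codimension-$2$ sections correspond bijectively to $\Spin(\rV)$-orbits on $U^\vee$, and the preceding corollary, which exhibits exactly the two classes of Proposition~\ref{proposition:x2-with-p4} and Proposition~\ref{proposition:blowup-quadric-x2}, shows there are exactly two orbits.

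Finally I would deduce the open/closed alternative from general principles. The set $U^\vee$ is a nonempty open subset of the irreducible variety $\Gr(2,\SS^\vee)$, hence is itself irreducible; orbits of an algebraic group action are locally closed, so writing $U^\vee = O_1 \sqcup O_2$ and passing to closures, irreducibility forces one orbit, say $O_1$, to be dense, whence $O_1$, being open in its closure $U^\vee$, is open, and $O_2 = U^\vee \setminus O_1$ is closed. To match names, I would note that being special is a closed condition, as condition~(2) of Proposition~\ref{proposition:x2-with-p4} requires the linear span of the conic $\gamma(\P(K))$ to lie on $\rQ$, equivalently $F_4(X_K) \neq \varnothing$, and this fails for a general $K$; therefore the non-special orbit is the dense open one and the special orbit is the closed one.

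I do not expect a serious obstacle here: the substantive content is already contained in the codimension-$2$ classification, and what remains is the formal translation via Corollary~\ref{corollary:isomorphism-conjugation} together with the elementary orbit-closure argument. The only point requiring genuine care is the passage between $\SS$ and $\SS^\vee$ (equivalently between $X$ and $X^\vee$): I must invoke the outer automorphism $\tau$ in a $\Spin(\rV)$-equivariant way so that the orbit count and the open/closed dichotomy transfer faithfully, rather than tacitly treating $\SS$ and $\SS^\vee$ as isomorphic $\Spin(\rV)$-modules, which they are only up to $\tau$.
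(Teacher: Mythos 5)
Your proposal is correct and is essentially the argument the paper intends: the paper derives this corollary from the two-isomorphism-classes statement purely by invoking Corollary~\ref{corollary:isomorphism-conjugation} (isomorphic sections are conjugate under $\Spin(\rV)$, the converse being trivial), exactly as you do, with the open/closed dichotomy following from irreducibility and the fact that the special locus is cut out by the quadratic line complex $R$. Your explicit attention to the passage between $\Gr(2,\SS)$ and $\Gr(2,\SS^\vee)$ via the outer automorphism is a point the paper leaves implicit in its appeal to self-duality, and you handle it correctly.
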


This corollary, together with Lemma~\ref{lemma:secondary-quadric} below provides a refining of~\cite[Proposition~32]{Sato1977}.

By using the description of singular hyperplane sections one can also describe the orbits of $\Spin(\rV)$
on the subset $\Gr(2,\SS)$ parameterizing lines intersecting $X$.
We leave this to the interested reader.

We also prove the following fact about the quadrics $\cQ_{\kappa,K}$ defined by~\eqref{def:q-kappa-k}; it will become useful later.

\begin{corollary}
\label{corollary:q-kappa-K}
If $X_K$ is not special then for any $\kappa \in K$ the quadric $Q_{\kappa,K}$ is smooth.
On a contrary, if $X_K$ is special then for any $\kappa \in K$ the quadric $Q_{\kappa,K}$ is singular.
\end{corollary}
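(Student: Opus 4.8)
The plan is to identify $Q_{\kappa,K}$ with a hyperplane section of a smooth six-dimensional quadric and then play off the elementary fact that a five-dimensional quadric is singular precisely when it contains a three-space. Concretely, set $v=\gamma(\kappa)$. By Lemma~\ref{lemma:g6-xkappa-smooth} the quadric $\cQ_v$ lies in the hyperplane $\P(\kappa^\perp)$, so writing $K=\langle\kappa,\kappa'\rangle$ we get
\begin{equation*}
Q_{\kappa,K}=\cQ_v\cap\P(K^\perp)=\cQ_v\cap\P({\kappa'}^\perp)=\cQ_v\cap X_K .
\end{equation*}
Since $\gamma$ is injective on $\P(K)$ (Corollary~\ref{corollary:stratification-small-k}), the form $\kappa'$ is not identically zero on $\cQ_v$, hence $Q_{\kappa,K}$ is a genuine hyperplane section of the smooth quadric $\cQ_v\cong Q^6$. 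A smooth $Q^5$ contains linear subspaces of dimension at most $2$, whereas a singular one is a cone over $Q^4$ and therefore contains a $\P^3$; thus $Q_{\kappa,K}$ is singular if and only if it contains a three-space, and this is the criterion I would exploit throughout.

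First I would treat the \emph{special} case. If $X_K$ is special then, by Proposition~\ref{proposition:x2-with-p4}, the span of the conic $\gamma(\P(K))$ equals $\P(U_3)\subset\rQ$ and $F_4(X_K)=L^-_{U_3}$. As $v\in\P(U_3)$, every $U_{5,-}$ with $[U_{5,-}]\in L^-_{U_3}$ contains $v$, so Lemma~\ref{lemma:linear-quadric-intersection} gives $\Pi^4_{U_{5,-}}\cap\cQ_v=\Pi^3_{v,U_{5,-}}\cong\P^3$. This $\P^3$ lies in $X_K$ because $\Pi^4_{U_{5,-}}$ does, hence $\P^3\subset\cQ_v\cap X_K=Q_{\kappa,K}$, and $Q_{\kappa,K}$ is singular for every $\kappa$.

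For the converse I would invoke Proposition~\ref{proposition:blowup-quadric-x2}: when $X_K$ is not special, $F_4(X_K)=\varnothing$ and $\Bl_{Q_{\kappa,K}}(X_K)\cong\P_{\cQ_{\kappa,-}}(\cO(-1)\oplus\bcS_4)$ is a flat $\P^3$-bundle over the smooth quadric $\cQ_{\kappa,-}$ of Corollary~\ref{corollary:blowup-xkappa-smooth}, hence smooth. By Lemma~\ref{lemma:smoothness-criterion} it then remains only to check that $Q_{\kappa,K}$ is a local complete intersection in $X_K$, for then smoothness of the blowup forces smoothness of the center.

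Establishing this lci property is the step I expect to require the most care, and I would carry it out through the factorization $Q_{\kappa,K}\subset\cQ_v\subset X_\kappa$. The quadric $\cQ_v$ is a smooth (hence lci) subvariety of the smooth ninefold $X_\kappa$ with rank-$3$ normal bundle, and $Q_{\kappa,K}$ is a Cartier divisor in $\cQ_v$, so $Q_{\kappa,K}$ is lci in $X_\kappa$ with locally free conormal sheaf of rank $4$. Now $X_K=X_\kappa\cap\{\kappa'=0\}$ is a smooth Cartier divisor of $X_\kappa$ through $Q_{\kappa,K}$; in the right-exact conormal sequence for $Q_{\kappa,K}\subset X_K\subset X_\kappa$ the section $d\kappa'$ is nowhere vanishing on $Q_{\kappa,K}$ (precisely because $X_K$ is smooth), so the conormal line bundle of $X_K$ includes as a sub-bundle of the rank-$4$ conormal bundle of $Q_{\kappa,K}$ in $X_\kappa$, and the cokernel is a rank-$3$ bundle identified with $\cN^\vee_{Q_{\kappa,K}/X_K}$. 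This proves the lci property, so Lemma~\ref{lemma:smoothness-criterion} yields that $Q_{\kappa,K}$ is smooth, completing the non-special direction. Apart from this normal-sheaf bookkeeping the argument is formal.
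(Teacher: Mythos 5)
Your proof is correct. The non-special direction is the paper's own argument: Proposition~\ref{proposition:blowup-quadric-x2} identifies $\Bl_{Q_{\kappa,K}}(X_K)$ with a $\P^3$-bundle over the smooth quadric $\cQ_{\kappa,-}$, hence it is smooth, and Lemma~\ref{lemma:smoothness-criterion} then forces the center to be smooth; your explicit verification that $Q_{\kappa,K}$ is a local complete intersection in $X_K$ fills in a hypothesis of that lemma which the paper leaves tacit, though it can be had more cheaply: $Q_{\kappa,K}=\cQ_v\cap X_K$ has codimension $3$ in the smooth $X_K$ and is locally cut out by the three equations of $\cQ_v$, and in a regular local ring an ideal of height $c$ generated by $c$ elements is automatically a complete intersection (note that your final step, from ``locally free conormal sheaf of rank equal to the codimension'' to ``lci'', silently relies on a statement of exactly this kind, e.g.\ Vasconcelos' theorem). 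The special direction is where you genuinely diverge from the paper. The paper stays inside the bundle picture: the section of $\cO(1)\oplus\cS_4^\vee$ cutting out the relative hyperplane section $\Bl_{Q_{\kappa,K}}(X_K)\subset\P_{\cQ_{\kappa,-}}(\cO(-1)\oplus\cS_4)$ vanishes along $F_4(X_K)\cong\P^1$, so that divisor is singular and Lemma~\ref{lemma:smoothness-criterion} applies once more. You instead combine $F_4(X_K)=L^-_{U_3}$ with the observation that $v=\gamma(\kappa)\in\P(U_3)\subset\P(U_{5,-})$, so that Lemma~\ref{lemma:linear-quadric-intersection} produces a $\P^3=\Pi^4_{U_{5,-}}\cap\cQ_v$ lying inside the five-dimensional quadric $Q_{\kappa,K}$, which is therefore singular. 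Your route is more elementary and self-contained for this half --- it needs no blowup machinery and avoids having to justify that a relative hyperplane section whose defining section vanishes somewhere is actually singular --- at the modest cost of invoking the bound on linear spaces contained in a smooth quadric. Both arguments are sound.
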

\begin{proof}
The first follows immediately from Lemma~\ref{lemma:smoothness-criterion}, 
since the blowup of $X_K$ along $Q_{\kappa,K}$ is a $\P^3$-bundle over a smooth quadric $\cQ_{\kappa,-}$, hence smooth.

For the second note that the zero locus of the section $\kappa'$ of $\cO(1) \oplus \cS_4^\vee$ is equal to $F_4(X_K) \cong \P^1$,
hence the corresponding relative hyperplane section $\Bl_{\cQ_{\kappa,K}}(X_\kappa)$ inside $\P_{\cQ_{\kappa,-}}(\cO(-1) \oplus \cS_4)$ 
is not smooth, hence the blowup center $Q_{\kappa,K}$ is not smooth as well.
\end{proof}

\subsection{Spinor quadratic line complex}

We denote by $R_0 \subset \Gr(2,\SS^\vee)$ the closed $\Spin(\rV)$-orbit in the open subset of $\Gr(2,\SS^\vee)$,
parameterizing smooth special linear sections of $X$ of codimension~2 and by~$R$ its closure in~$\Gr(2,\SS^\vee)$.

\begin{lemma}
\label{lemma:secondary-quadric}
The subscheme $R \subset \Gr(2,\SS^\vee)$ is a divisor which is cut out by a smooth quadric in~$\P(\bw2\SS^\vee)$.
\end{lemma}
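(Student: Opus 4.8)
The plan is to produce the defining quadric explicitly from the map $\gamma$, and to invoke representation theory only at the very end, for smoothness. First I would repackage $\gamma$ algebraically. By Corollary~\ref{corollary:resolution-x} applied to $X^\vee$, the space $H^0(\P(\SS^\vee),I_{X^\vee}(2))$ of quadrics through $X^\vee$ is the irreducible representation $\rV$; since the rational map $\gamma\colon\P(\SS^\vee)\dashrightarrow\rQ\subset\P(\rV)$ of~\eqref{diagram:blowups-x-xvee} is undefined exactly along $X^\vee$ and satisfies $\gamma^*\cO_\rQ(1)\cong\cO(2)$ (Corollary~\ref{corollary:stratification-small-k}), it is given by this very linear system. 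Hence there is a $\Spin(\rV)$-equivariant symmetric bilinear map $\mu\colon\SS^\vee\times\SS^\vee\to\rV$ with $\gamma([\kappa])=[\mu(\kappa,\kappa)]$; write $\mu(\kappa):=\mu(\kappa,\kappa)$ and let $\langle\,,\rangle$ be the bilinear form of $\bq_\rV$ (so $\langle v,v\rangle=2\bq_\rV(v)$). Because $\gamma$ lands in $\rQ$, the quartic $\kappa\mapsto\bq_\rV(\mu(\kappa))$ vanishes identically on $\SS^\vee$.

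Next I would compute what ``special'' means. Fix a smooth section, i.e.\ $K=\langle\kappa_1,\kappa_2\rangle$ with $\P(K)\cap X^\vee=\varnothing$, and set $a_0=\mu(\kappa_1)$, $a_1=2\mu(\kappa_1,\kappa_2)$, $a_2=\mu(\kappa_2)$, so that the conic $\gamma(\P(K))$ is $[s:t]\mapsto[s^2a_0+sta_1+t^2a_2]$ with linear span $\P\langle a_0,a_1,a_2\rangle$. Expanding the identity $\bq_\rV(s^2a_0+sta_1+t^2a_2)\equiv0$ in $s,t$ yields the automatic relations $\bq_\rV(a_0)=\bq_\rV(a_2)=0$, $\langle a_0,a_1\rangle=\langle a_1,a_2\rangle=0$ and $\langle a_0,a_2\rangle=-\bq_\rV(a_1)$. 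By Proposition~\ref{proposition:x2-with-p4}, $X_K$ is special iff $\P\langle a_0,a_1,a_2\rangle\subset\rQ$, i.e.\ iff the span is totally isotropic; comparing with the automatic relations, all of the isotropy conditions hold except $\bq_\rV(a_1)=0$, so the section is special iff
\[
\langle\mu(\kappa_1),\mu(\kappa_2)\rangle=0 .
\]

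I would then check that $F(\kappa_1,\kappa_2):=\langle\mu(\kappa_1),\mu(\kappa_2)\rangle$ descends to a section of $\cO(2)$ on the Grassmannian. It is symmetric and bi-quadratic, and using $\langle a_0,a_1\rangle=0$ together with $\bq_\rV(a_0)=0$ one sees it is unchanged under $\kappa_2\mapsto\kappa_2+\lambda\kappa_1$ (and symmetrically), while it scales as $\cO(2)$ under rescaling of the basis. Thus on the open locus $U\subset\Gr(2,\SS^\vee)$ of smooth sections it is a regular section of $\cO(2)$; as the complement of $U$, the locus of lines meeting $X^\vee$, has codimension $\ge2$, this extends over the normal variety $\Gr(2,\SS^\vee)$ to a global $\tilde F\in H^0(\Gr(2,\SS^\vee),\cO(2))$, which is $\Spin(\rV)$-invariant because it is canonical. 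It is nonzero, since non-special smooth sections exist (Proposition~\ref{proposition:blowup-quadric-x2}), so $R_0$ is the zero locus of $\tilde F$ on $U$ and $R=\overline{R_0}=\Gr(2,\SS^\vee)\cap\{\tilde F=0\}$ (the locus of lines meeting $X^\vee$ has dimension $\le24<27$, so $R_0$ is dense in $\Gr(2,\SS^\vee)\cap\{\tilde F=0\}$). Since $H^0(\Gr(2,\SS^\vee),\cO(2))$ is the quotient of $\Sym^2(\bw2{\SS^\vee})$ by the Plücker relations and $\Spin(\rV)$ is reductive, $\tilde F$ lifts to a $\Spin(\rV)$-invariant quadratic form on $\bw2{\SS^\vee}$, i.e.\ to a quadric $Q\subset\P(\bw2{\SS^\vee})$ with $R=\Gr(2,\SS^\vee)\cap Q$.

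Smoothness is then immediate: $\bw2{\SS^\vee}\cong\bw3\rV$ is an irreducible self-dual $\Spin(\rV)$-module, so the radical of the nonzero invariant form defining $Q$ is a proper subrepresentation and hence zero; thus $Q$ is nondegenerate, i.e.\ smooth. The main obstacle is the middle step: pinning down that the ``isotropic plane'' condition collapses, via the automatic relations, to the single equation $\langle\mu(\kappa_1),\mu(\kappa_2)\rangle=0$, and that this bi-quadratic expression genuinely descends to a quadratic form in the Plücker coordinate $\kappa_1\wedge\kappa_2$. Once this explicit invariant quadric is in hand, that it is a divisor, that it cuts out $R$, and that it is smooth all follow formally from the existence of non-special sections and from the irreducibility of $\bw2{\SS^\vee}$.
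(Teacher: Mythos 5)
Your proof is correct and is essentially the paper's argument in explicit form: the paper obtains the same invariant by noting that the pullback of $\bq_\rV$ along the universal map $\rV\otimes\cO\to\Sym^2\cK_2^\vee$ (the dual of $\gamma$) lies in the summand $\Sym^2(\bw2{\cK_2^\vee})\cong\cO_{\Gr(2,\SS^\vee)}(2)$ of $\Sym^2(\Sym^2\cK_2^\vee)$ because its image in $\Sym^4\cK_2^\vee$ vanishes, and this residual component is exactly your $\langle\mu(\kappa_1),\mu(\kappa_2)\rangle$ after polarization. The identification of its zero locus with $R$ via Proposition~\ref{proposition:x2-with-p4} and the smoothness argument via irreducibility of $\bw2{\SS^\vee}\cong\bw3{\rV}$ coincide with the paper's.
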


\begin{proof}
Let $\cK_2$ denote the tautological bundle of the Grassmannian~$\Gr(2,\SS^\vee)$.
Consider the composition
\begin{equation}
\label{eq:composition-v-k2}
\rV \otimes \cO \to \Sym^2\SS \otimes \cO \to \Sym^2\cK_2^\vee,
\end{equation}
where the first map is induced by the embedding of $\rV$, considered as the space of quadratic equations of~$X^\vee$ 
(see Corollary~\ref{corollary:resolution-x}), and the second map is tautological.
Then the dual map of~\eqref{eq:composition-v-k2} is the universal version of the map $\gamma \colon \P(K_2) \to \rQ \subset \P(\rV)$
discussed in Corollary~\ref{corollary:stratification-small-k}.

Now consider the composition
\begin{equation*}
\cO \xrightarrow{\ \bq_\rV\ } \Sym^2\rV \otimes \cO \to  \Sym^2(\Sym^2 \cK_2^\vee) \to \Sym^4\cK_2^\vee,
\end{equation*}
where the first map is given by the equation of the quadric $\rQ$, the second is the symmetric square of~\eqref{eq:composition-v-k2}, 
and the last is the multiplication map.
The composition is identically zero, because for general $[K_2] \in \Gr(2,\SS^\vee)$ we have $\P(K_2) \cap X^\vee = \varnothing$,
hence $\gamma(\P(K_2)) \subset \rQ$ by Corollary~\ref{corollary:stratification-small-k}.
Therefore the composition of the first two arrows factors through the kernel of the third, which is nothing but 
\begin{equation*}
\Ker\Big( \Sym^2(\Sym^2 \cK_2^\vee) \to \Sym^4\cK_2^\vee \Big) 
\cong
\Sym^2(\bw2\cK_2^\vee) 
\cong
\cO_{\Gr(2,\SS^\vee)}(2),
\end{equation*}
and thus gives a global section of $\cO_{\Gr(2,\SS^\vee)}(2)$ and determines a quadratic divisor in $\Gr(2,\SS^\vee)$.
Furthermore, for general $[K_2]$ this global section vanishes at $[K_2]$ if and only if 
the composition $\cO \to \Sym^4\cK_2^\vee$ vanishes at $[K_2]$,  
i.e., if and only if the linear span of the conic $\gamma(\P(K_2)) \subset \rQ$ is contained in $\rQ$.
By Proposition~\ref{proposition:x2-with-p4} this is equivalent to speciality of $X_{K_2}$.
Thus, the constructed global section of $\cO_{\Gr(2,\SS^\vee)}(2)$ defines the subscheme $R$.

Since $R \subset \Gr(2,\SS^\vee)$ is $\Spin(\rV)$-invariant, it follows that the quadric in $\P(\bw2\SS^\vee)$
corresponding to the constructed global section of $\cO_{\Gr(2,\SS^\vee)}(2)$ is also $\Spin(\rV)$-invariant.
On the other hand, $\bw2\SS^\vee$ is an irreducible representation of $\Spin(\rV)$ 
(actually, its highest weight is $\omega_3$ and thus it is isomorphic to $\bw3\rV$),
hence any $\Spin(\rV)$-invariant quadratic form on $\P(\bw2\SS^\vee)$ is non-degenerate
and the corresponding quadric is smooth.
\end{proof}

Recall that quadratic divisors in the Grassmannians of lines are traditionally called quadratic line complexes.
The one constructed in the above Lemma is very important for the geometry of linear sections of the spinor threefold.
Because of that we sugeest the following terminology.

\begin{definition}
\label{definition:r}
The quadratic divisor $R \subset \Gr(2,\SS^\vee)$ described in Lemma~\ref{lemma:secondary-quadric} is called the {\sf spinor quadratic line complex}.
\end{definition}

Before going further we discuss some properties of the spinor quadratic line complex $R$.
Recall that $R_0 \subset R$ denotes the open subset parameterizing points $[K_2] \in R$ 
such that $\P(K_2) \cap X^\vee = \varnothing$, i.e., those that correspond to smooth special linear sections of $X$ of codimension~2.

For each point $\kappa \in \P(\SS^\vee)$ there is a natural isomorphism between the projective space $\P(\SS^\vee/\kappa) \cong \P^{14}$
and the closed subvariety of $\Gr(2,\SS^\vee)$ parameterizing lines in $\P(\SS^\vee)$ passing through $\kappa$.
We will use implicitly this isomorphism by considering $\P(\SS^\vee/\kappa)$ as a subvariety of $\Gr(2,\SS^\vee)$.
We denote
\begin{equation}
\label{def:rkappa}
R_\kappa := R \cap \P(\SS^\vee/\kappa) \subset \P(\SS^\vee/\kappa).
\end{equation} 
The next lemma describes these subschemes of $\P(\SS^\vee/\kappa)$.
Recall the quadric $\tcQ_v \subset \P(\SS^\vee)$ described in Remark~\ref{remark:degenerate-quadrics-through-x}
and note that $\kappa$ is its singular point.

\begin{lemma}
Let $\kappa \in \P(\SS^\vee) \setminus X^\vee$.
The subscheme $R_\kappa \subset \P(\SS^\vee/\kappa)$ is the image of the quadric $\tcQ_v$ under the linear projection from $\kappa$,
so that $\tcQ_v = \Cone_\kappa(R_\kappa)$.
In particular, $R_\kappa = \Cone_{\P(\cS_{8,-,v}/\kappa)}(\cQ_v)$ and contains the projection of $X^\vee$ from $\kappa$.
\end{lemma}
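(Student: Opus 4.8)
The plan is to reduce everything to the single identity $R_\kappa=\pi_\kappa(\tcQ_v)$, where $\pi_\kappa\colon\P(\SS^\vee)\dashrightarrow\P(\SS^\vee/\kappa)$ is the linear projection from $\kappa$; the remaining assertions then follow formally. First I would record that $v=\gamma(\kappa)$ forces $\kappa\in\P(\cS_{8,v,-})$ by Lemma~\ref{lemma:g6-xkappa-smooth}, and that $\P(\cS_{8,v,-})$ is precisely the vertex of the cone $\tcQ_v=\Cone_{\P(\cS_{8,v,-})}(\cQ_v)$ from Remark~\ref{remark:degenerate-quadrics-through-x}. Hence $\kappa$ lies in the radical of the quadratic form $\tcQ_v$, the projection $\pi_\kappa(\tcQ_v)$ is again a quadric, cut out on $\P(\SS^\vee/\kappa)$ by the induced form $\bar\kappa'\mapsto\tcQ_v(\kappa')$, and one reads off at once that $\pi_\kappa(\tcQ_v)=\Cone_{\P(\cS_{8,v,-}/\kappa)}(\cQ_v)$. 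Since $X^\vee\subset\tcQ_v$ and $\kappa\notin X^\vee$, projecting gives $\pi_\kappa(X^\vee)\subset\pi_\kappa(\tcQ_v)$. Thus the \emph{In particular} clause and the last sentence are immediate consequences once the main identity is proved.

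The core is to identify the quadric $R_\kappa$. By construction $R_\kappa$ is cut out on the linear subspace $\P(\kappa\wedge\SS^\vee)=\P(\SS^\vee/\kappa)\subset\P(\bw2{\SS^\vee})$ by the quadratic form defining $R$ in Lemma~\ref{lemma:secondary-quadric}, so I would compute this restriction explicitly. Write $\phi\colon\rV\hookrightarrow\Sym^2\SS$ for the embedding of $\rV$ as the space of quadratic equations of $X^\vee$ (so $\tcQ_w=\phi(w)$), and use the identification $\rV\cong\rV^\vee$. For $K_2=\langle\kappa,\kappa'\rangle$ the map \eqref{eq:composition-v-k2} sends $w$ to $\tcQ_w|_{K_2}$, whose coefficients in the basis $\kappa^{*2},\kappa^*\kappa'^*,\kappa'^{*2}$ of $\Sym^2 K_2^\vee$ are $\bq_\rV(\gamma(\kappa),w)$, $2\bq_\rV(m,w)$ and $\bq_\rV(\gamma(\kappa'),w)$, where $m\in\rV$ is defined by $\bq_\rV(m,w)=\langle\phi(w),\kappa\odot\kappa'\rangle$. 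Contracting the symmetric square of this map with $\bq_\rV$ and projecting onto the discriminant line $\Sym^2(\bw2{K_2^\vee})$, i.e.\ the kernel of $\Sym^2(\Sym^2 K_2^\vee)\to\Sym^4 K_2^\vee$, yields, up to a universal nonzero constant, the value $\bq_\rV(m,m)-\bq_\rV(\gamma(\kappa),\gamma(\kappa'))$.

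The decisive input is then Corollary~\ref{corollary:stratification-small-k}: for $K_2$ with $\P(K_2)\cap X^\vee=\varnothing$ the conic $\gamma(\P(K_2))$ lies on $\rQ$, and this conic is parametrized by $[a:b]\mapsto a^2\gamma(\kappa)+2ab\,m+b^2\gamma(\kappa')$. Expanding $\bq_\rV$ of this point and equating the coefficients of $a^4,a^3b,\dots$ to zero gives $\bq_\rV(\gamma(\kappa),\gamma(\kappa))=0$, the vanishing of the two mixed terms $\bq_\rV(\gamma(\kappa),m)=\bq_\rV(m,\gamma(\kappa'))=0$, and crucially $\bq_\rV(m,m)=-\tfrac12\bq_\rV(\gamma(\kappa),\gamma(\kappa'))$. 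Substituting, the restricted form becomes a nonzero multiple of $\bq_\rV(\gamma(\kappa),\gamma(\kappa'))$; and since $\bq_\rV(\gamma(\kappa),\gamma(\kappa'))=\langle\phi(\gamma(\kappa)),\kappa'\odot\kappa'\rangle=\tcQ_v(\kappa')$, this is exactly the form cutting out $\pi_\kappa(\tcQ_v)$. As both sides are polynomial in $\kappa'$ and the identity holds on the dense locus $\P(K_2)\cap X^\vee=\varnothing$, it holds identically; hence $R_\kappa=\pi_\kappa(\tcQ_v)$, which completes the proof.

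The main obstacle is the middle step: unwinding the universal construction of Lemma~\ref{lemma:secondary-quadric} carefully enough to pin down the restricted quadratic form with its coefficients, and in particular recognizing the discriminant (kernel) projection so that the mixed vector $m$ enters through $\bq_\rV(m,m)$. Everything else is either formal, namely the cone bookkeeping for the \emph{In particular} clause, or supplied verbatim by the quoted results, namely the conic lying on $\rQ$. A cleaner but less self-contained alternative, which I would add as a remark, is to invoke transitivity of $\Spin(\rV)$ on $\P(\SS^\vee)\setminus X^\vee$ (Corollary~\ref{corollary:transitivity}): both $R_\kappa$ and $\pi_\kappa(\tcQ_v)$ are quadrics invariant under the stabilizer $\Spin(\rV)_\kappa$, so it would suffice to verify that this stabilizer admits a unique invariant quadratic form on $\SS^\vee/\kappa$ up to scale, reducing the statement to a representation-theoretic uniqueness fact.
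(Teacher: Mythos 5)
Your proof is correct, but it takes a genuinely different route from the paper. The paper argues geometrically: by definition $R_\kappa$ is the closure of the locus of $\kappa'$ for which $X_{\langle\kappa,\kappa'\rangle}$ is special, Corollary~\ref{corollary:q-kappa-K} translates speciality into singularity of the hyperplane section $\cQ_v\cap\P({\kappa'}^\perp)$ of the smooth quadric $\cQ_v$, so $R_\kappa$ is the cone over the projective dual $\cQ_v^\vee$ with vertex the orthogonal of $\langle\cQ_v\rangle=\P(\cS_{8,v})$, and self-duality of the even-dimensional quadric $\cQ_v$ finishes the identification. You instead unwind the explicit construction of the defining section of $\cO_{\Gr(2,\SS^\vee)}(2)$ from the proof of Lemma~\ref{lemma:secondary-quadric} and compute its restriction to the lines through $\kappa$; your coefficient bookkeeping (the discriminant projection giving $\bq_\rV(m,m)-\bq_\rV(\gamma(\kappa),\gamma(\kappa'))$, and the relation $\bq_\rV(m,m)=-\tfrac12\bq_\rV(\gamma(\kappa),\gamma(\kappa'))$ extracted from $\gamma(\P(K_2))\subset\rQ$) checks out, and the resulting nonzero multiple $-\tfrac32\,\bq_\rV(\gamma(\kappa),\gamma(\kappa'))$ is indeed the descended equation of $\tcQ_v$. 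What your approach buys is independence from the classification of codimension-2 sections (Corollary~\ref{corollary:q-kappa-K}) and it sidesteps the density argument implicit in the paper's ``closure of the special locus'' step, since all your identities are polynomial and propagate from the open locus automatically; what it costs is length and one extra identification you pass over quickly, namely that $\tcQ_v$ is the quadric through $X^\vee$ corresponding to the point $v$ under $H^0(\P(\SS^\vee),I_{X^\vee}(2))\cong\rV$ (equivalently $\phi(\gamma(\kappa))$ cuts out $\tcQ_v$), which follows from $\Spin(\rV)$-equivariance since $\langle v\rangle$ is the only $\bP_1$-stable line in $\rV$, but deserves a sentence. Your suggested shortcut via transitivity and uniqueness of the invariant quadric on $\SS^\vee/\kappa$ is plausible but, as you note, would require verifying that uniqueness for the generic stabilizer.
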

\begin{proof}
Let $X_\kappa \subset X$ be the smooth hyperplane section of $X$ associated with $\kappa$ and set $v = \gamma(\kappa)$.
By definition of $R$, the subscheme $R_\kappa$ is the closure of the locus of hyperplanes in $\P(\kappa^\perp) \subset \P(\SS)$
such that the corresponding hyperplane sections of $X_\kappa$ is special.
By Corollary~\ref{corollary:q-kappa-K} this is equivalent to singularity 
of the hyperplane section $\cQ_v \cap \P({\kappa'}^\perp)$ of the smooth quadric $\cQ_v$, see~\eqref{def:q-kappa-k}. 
Thus, $R_\kappa$ is the cone over the projective dual quadric $\cQ_v^\vee$ with the vertex being the orthogonal to the linear span of $\cQ_v$.

Since the linear span of $\cQ_v$ is $\P(\cS_{8,v}) \subset \P(\SS)$, 
its orthogonal in $\P(\SS^\vee)$ is $\P(\cS_{8,v,-})$, and
its orthogonal in $\P(\SS^\vee/\kappa)$ is $\P(\cS_{8,v,-}/\kappa)$.
On the other hand, under the identification of $\P(\cS_{8,v}^\vee)$ with $\P(\cS_{8,v})$ via the natural quadratic form
the quadric $\cQ_v$ is self dual.
Thus, $R_\kappa = \Cone_{\P(\cS_{8,-,v}/\kappa)}(\cQ_v)$.
%
Finally, $R_\kappa$ contains the projection of $X^\vee$, since the cone over $R_\kappa$ with vertex in $\kappa$ is the quadric $\tcQ_v$
that contains $X^\vee$ by Remark~\ref{remark:degenerate-quadrics-through-x}.
\end{proof}

\begin{corollary}
\label{corollary:sing-r}
The spinor quadratic line complex $R \subset \Gr(2,\SS^\vee)$ contains the locus of lines intersecting~$X^\vee$ and its
singular locus is the variety of secant lines of $X^\vee$, 
that is, the image of $\Gr_{\rQ}(2,\cS_{8,-})$ under the natural map $\Gr_{\rQ}(2,\cS_{8,-}) \to \Gr(2,\SS^\vee)$.
In particular, $\codim_R(\Sing(R)) = 7$.
\end{corollary}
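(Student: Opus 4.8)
The plan is to deduce all three assertions from the preceding lemma's description of the slices $R_\kappa = R\cap\P(\SS^\vee/\kappa)$, together with the geometry of the six-dimensional quadrics $\cQ_{v,-}=\OGr_-(4,v^\perp/v)\subset X^\vee$. Throughout write $S$ for the image of $\Gr_\rQ(2,\cS_{8,-})$ in $\Gr(2,\SS^\vee)$. First I would identify $S$ with the variety of secant lines. A point of $\Gr_\rQ(2,\cS_{8,-})$ is a pair $(v,K_2)$ with $K_2\subset\cS_{8,v,-}$, and via the inclusion $\cS_{8,v,-}\hookrightarrow\SS^\vee$ of~\eqref{eq:s-dual-sequence} it gives a line $\P(K_2)\subset\P(\cS_{8,v,-})$; since any line in $\P(\cS_{8,v,-})\cong\P^7$ meets the quadric hypersurface $\cQ_{v,-}$, and $\cQ_{v,-}\subset X^\vee$ by~\eqref{eq:qv-qvminus}, every point of $S$ is a secant (or tangent) line of $X^\vee$. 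Conversely, for distinct $x=[U'_{5,-}]$, $y=[U''_{5,-}]$ in $X^\vee$ the dimension $\dim(U'_{5,-}\cap U''_{5,-})$ is odd by~\eqref{eq:parity-intersection}, hence at least $1$; picking $v\in\P(U'_{5,-}\cap U''_{5,-})$ puts $x,y$ on $\cQ_{v,-}$, so $\overline{xy}\subset\P(\cS_{8,v,-})$ lies in $S$. Thus $S$ is exactly the secant variety, irreducible of dimension $\dim\rQ+\dim\Gr(2,8)=8+12=20$.

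Next I would prove that $R$ contains the locus $\Sigma$ of lines meeting $X^\vee$. The preceding lemma asserts that $R_\kappa$ contains the projection of $X^\vee$ from $\kappa$ for every $\kappa\in\P(\SS^\vee)\setminus X^\vee$. The variety $\Sigma$ is irreducible, being the image of a $\P^{14}$-bundle over $X^\vee$, and a general member of $\Sigma$ meets $X^\vee$ in a single point $x$ (as $\codim X^\vee=5$) and hence contains points $\kappa\notin X^\vee$. For such a line $\ell\ni\kappa$, $\ell$ is the image of $x$ under projection from $\kappa$, so $[\ell]\in R_\kappa\subset R$; since $R$ is closed, $\Sigma\subset R$.

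Then I would establish $\Sing(R)=S$. By the preceding lemma $R_\kappa=\Cone_{\P(\cS_{8,v,-}/\kappa)}(\cQ_v)$ with $v=\gamma(\kappa)$, a cone over a smooth six-quadric, hence a rank-$8$ quadric in $\P(\SS^\vee/\kappa)\cong\P^{14}$ singular exactly along its vertex $\P(\cS_{8,v,-}/\kappa)$; the vertex parametrizes the lines through $\kappa$ contained in $\P(\cS_{8,v,-})$, i.e.\ the secants of $X^\vee$ through $\kappa$. For $\Sing(R)\subseteq S$: given $[\ell]\in\Sing(R)$ with $\ell\not\subset X^\vee$, choose $\kappa\in\ell\setminus X^\vee$; since $H_\kappa:=\P(\SS^\vee/\kappa)$ is not contained in $R$ (the slice $R_\kappa$ is a proper quadric), one has $\Sing(R)\cap H_\kappa\subseteq\Sing(R_\kappa)$, whence $[\ell]\in S$, while lines $\ell\subset X^\vee$ lie in $S$ by the first paragraph. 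For $S\subseteq\Sing(R)$: an honest secant $\ell=\overline{xy}$ lies in $\Sing(R_\kappa)$ for every $\kappa\in\ell\setminus X^\vee$, since all such $\kappa$ share the same $v$ and $\ell\subset\P(\cS_{8,v,-})$. Using $T_{[\ell]}H_\kappa=\{\phi\in\Hom(K_2,\SS^\vee/K_2):\phi(\kappa)=0\}$ and the identity $T_{[\ell]}H_{\kappa_1}+T_{[\ell]}H_{\kappa_2}=T_{[\ell]}\Gr(2,\SS^\vee)$ for $\kappa_1,\kappa_2$ spanning $K_2$, a smooth point $[\ell]$ of $R$ could not have its tangent hyperplane $T_{[\ell]}R$ contain both $T_{[\ell]}H_{\kappa_i}$; but $\ell\in\Sing(R_{\kappa_i})$ forces exactly that. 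Hence every honest secant is in $\Sing(R)$, and by density $S\subseteq\Sing(R)$. Combining the two inclusions gives $\Sing(R)=S$ and $\codim_R\Sing(R)=27-20=7$.

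The \textbf{main obstacle} is the inclusion $S\subseteq\Sing(R)$: slicing by $H_\kappa$ only yields $\Sing(R)\cap H_\kappa\subseteq\Sing(R_\kappa)$, and a priori the singularities of the slice $R_\kappa$ could arise from tangency of $H_\kappa$ to the smooth locus of $R$ rather than from genuine singularities of $R$. The way around this is to exploit that a secant $\ell$ is singular in $R_\kappa$ for the whole pencil of centers $\kappa\in\ell$, and to convert this into the tangent-space contradiction above; the two elementary facts to verify carefully are the additivity $T_{[\ell]}H_{\kappa_1}+T_{[\ell]}H_{\kappa_2}=T_{[\ell]}\Gr(2,\SS^\vee)$ and the implication that $\ell\in\Sing(R_\kappa)$ at a smooth point of $R$ means $H_\kappa$ is tangent to $R$ at $\ell$.
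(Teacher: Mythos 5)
Your proof is correct, and it rests on the same key input as the paper's, namely the description of the slice $R_\kappa$ as the cone $\Cone_{\P(\cS_{8,v,-}/\kappa)}(\cQ_v)$ from the preceding lemma, whose vertex consists exactly of the secants of $X^\vee$ through $\kappa$. Where the two arguments diverge is in how they convert this fiberwise information into a statement about $\Sing(R)$ itself. The paper globalizes: it considers the incidence variety $\P_R(\cK_2)$ with its two projections, notes that as a $\P^1$-bundle over $R$ its singular locus is $\P_{\Sing(R)}(\cK_2)$, and matches this against the vertices of the fibers $R_\kappa$ of the other projection, leaving the delicate point (why being singular in the fiber is equivalent to being singular in the total space) to an ``it is easy to see.'' You instead argue pointwise: the inclusion $\Sing(R)\subseteq S$ by restricting the differential of the defining section of $\cO_{\Gr(2,\SS^\vee)}(2)$ to a slice $H_\kappa$, and the harder inclusion $S\subseteq\Sing(R)$ from the fact that a secant line lies in $\Sing(R_\kappa)$ for \emph{every} center $\kappa\in\ell\setminus X^\vee$, combined with $T_{[\ell]}H_{\kappa_1}+T_{[\ell]}H_{\kappa_2}=T_{[\ell]}\Gr(2,\SS^\vee)$ for two points spanning $\ell$; this two-center tangent-space computation is precisely the content hidden in the paper's global formulation, and you correctly isolate in your final paragraph the one place where a naive one-slice argument would fail. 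The only detail worth adding is the generic injectivity of $\Gr_{\rQ}(2,\cS_{8,-})\to\Gr(2,\SS^\vee)$ (for a general secant the two points of $\ell\cap X^\vee$ correspond to subspaces $U'_{5,-},U''_{5,-}$ with one-dimensional intersection, so $v$ is determined by $\ell$), which is what justifies $\dim S=20$ and hence $\codim_R(\Sing(R))=27-20=7$.
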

\begin{proof}
For the first it is enough to note that for $\kappa \not\in X^\vee$ the quadric $R_\kappa$ contains the projection of $X^\vee$.
For the second consider $\P_R(\cK_2)$ and the natural map $\P_R(\cK_2) \to \P(\SS^\vee)$.
This is a (non-flat) quadratic fibration, whose fibers over the points of the complement of $X^\vee$ 
are the quadrics $R_\kappa \subset \P^{14}$ described above,
and whose fibers over the points of $X^\vee$ are $\P^{14}$ (thus $X^\vee$ is the non-flat locus).
Note that the singular locus of $\P_R(\cK_2)$ is just~$\P_{\Sing(R)}(\cK_2)$.
On the other hand, it definitely contains the $\P^6$-fibration over $\Gr(2,\SS^\vee)$ 
corresponding to singular loci of the quadrics $R_\kappa$, 
whose projection to $R$ is the variety of secant lines, and it is easy to see that there is nothing else in it.
\end{proof}

We finish this section by constructing a nice resolution of singularities of $R$.
For each point $U_3$ of the isotropic Grassmannian $\OGr(3,\rV)$ 
consider the induced filtration of $\SS \otimes \cO$ (see Lemma~\ref{lemma:filtration-ss})
with factors $\cS_2$, $\cS_{2,-} \otimes \cU_3^\vee$, $\cS_2 \otimes \bw2\cU_3^\vee$, and $\cS_{2,-} \otimes \bw3\cU_3^\vee$.
We denote by $\cW \subset \SS \otimes \cO$ the subbundle generated by the first two factors,
and set $\cW_- := (\SS/\cW)^\vee \subset \SS^\vee \otimes \cO$, which is a similar subbundle in the dual space.
Thus we have a bunch of exact sequences
\begin{equation}
\label{eq:cw-cwv}
0 \to \cW \to \SS \otimes \cO \to \cW_-^\vee \to 0,
\qquad 
0 \to \cW_- \to \SS^\vee \otimes \cO \to \cW^\vee \to 0,
\end{equation}
and
\begin{equation}
\label{eq:cw-sequence}
0 \to \cS_2 \to \cW \to \cS_{2,-} \otimes \cU_3^\vee \to 0,
\qquad 
0 \to \cS_{2,-} \to \cW_- \to \cS_2 \otimes \cU_3^\vee \to 0.
\end{equation}

\begin{lemma}
\label{lemma:resolution-r}
The natural map 
\begin{equation*}
\tR := \Gr_{\OGr(3,\rV)}(2,\cW_-) \to \Gr(2,\SS^\vee)
\end{equation*}
is birational onto the hypersurface $R \subset \Gr(2,\SS^\vee)$ and is an isomorphism over the open subset $R_0 \subset R$.
\end{lemma}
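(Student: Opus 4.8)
The morphism $\pi\colon\tR = \Gr_{\OGr(3,\rV)}(2,\cW_-) \to \Gr(2,\SS^\vee)$ sends a pair $(U_3,[K])$, consisting of a point $U_3\in\OGr(3,\rV)$ and a $2$-dimensional subspace $K\subset(\cW_-)_{U_3}\subset\SS^\vee$, to the point $[K]$. Since $\cW_-$ has rank $8$ by~\eqref{eq:cw-sequence}, the projection $\tR\to\OGr(3,\rV)$ is a $\Gr(2,8)$-bundle, so $\dim\tR = \dim\OGr(3,\rV) + \dim\Gr(2,8) = 15 + 12 = 27$; in particular $\tR$ is smooth and irreducible. On the other side $R\subset\Gr(2,\SS^\vee)$ is a divisor by Lemma~\ref{lemma:secondary-quadric}, hence also of dimension $27$, and it is irreducible since $R = \overline{R_0}$ with $R_0$ a single $\Spin(\rV)$-orbit (Remark~\ref{remark:special-x2-unique}); moreover $R_0$ is dense in $R$ because the locus of lines meeting $X^\vee$ has dimension at most $\dim X^\vee + 14 = 24$. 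The plan is to construct a morphism $\Phi\colon R_0\to\tR$ which is a two-sided inverse to $\pi$ over $R_0$, and then to obtain surjectivity onto $R$ from a dimension count.

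Everything rests on a geometric description of the fibre $(\cW_-)_{U_3}$. By definition $\cW_- = (\SS/\cW)^\vee$, so $(\cW_-)_{U_3}$ is the annihilator $\cW_{U_3}^\perp\subset\SS^\vee$ of the fibre $\cW_{U_3}\subset\SS$ under the pairing between $\SS$ and $\SS^\vee$. The key claim is that this fibre is spanned by the maximal linear spaces on $X$ attached to the line $L^-_{U_3}\subset X^\vee$, namely
\begin{equation*}
\cW_{U_3} = \sum_{[U_{5,-}]\in L^-_{U_3}} \bw4U_{5,-},
\end{equation*}
where $\Pi^4_{U_{5,-}} = \P(\bw4U_{5,-})$ as in~\eqref{def:pi4-u5}. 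To prove this I would argue that the right-hand side is a $\Spin(\rV)$-homogeneous subbundle of $\SS\otimes\cO_{\OGr(3,\rV)}$, i.e. on fibres a $\bP_3$-submodule of $\SS$; it contains the span $(\cS_2)_{U_3}$ of the line $L_{U_3}$ (by~\eqref{eq:universal-lines} and the inclusion $L_{U_3}\subset\Pi^4_{U_{5,-}}$ valid whenever $U_3\subset U_{5,-}$, used in the proof of Proposition~\ref{proposition:x2-with-p4}), so it contains the first factor of the filtration of Lemma~\ref{lemma:filtration-ss}; a rank count then forces it to equal the rank-$8$ subbundle $\cW$ spanned by the first two factors $\cS_2$ and $\cS_{2,-}\otimes\cU_3^\vee$. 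I expect this identification to be the main obstacle: it is exactly the point where the representation theory of the filtration of Lemma~\ref{lemma:filtration-ss} must be matched with the geometry of the pencil of $4$-spaces through $L_{U_3}$.

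Granting the claim, for any $2$-dimensional $K\subset\SS^\vee$ and any $U_3$ one has the chain of equivalences: $K\subset(\cW_-)_{U_3} = \cW_{U_3}^\perp$ if and only if $K\perp\bw4U_{5,-}$ for every $[U_{5,-}]\in L^-_{U_3}$, if and only if $\bw4U_{5,-}\subset K^\perp$, i.e. $\Pi^4_{U_{5,-}}\subset X_K = X\cap\P(K^\perp)$, for every such $U_{5,-}$; that is, if and only if $L^-_{U_3}\subset F_4(X_K)$ (using Theorem~\ref{theorem:linear-spaces} to identify the $4$-spaces on $X_K$). Now let $[K]\in R_0$ be special and set $U_3 := \Ima(\Sym^2 K\to\rV)$, the image of the squaring map that is the fibrewise dual of~\eqref{eq:composition-v-k2}; by Proposition~\ref{proposition:x2-with-p4} this is a $3$-dimensional isotropic subspace and $F_4(X_K) = L^-_{U_3}$. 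In particular $L^-_{U_3}\subset F_4(X_K)$, so $K\subset(\cW_-)_{U_3}$, and $\Phi([K]) := (U_3,[K])$ defines a morphism $R_0\to\tR$ with $\pi\circ\Phi = \id_{R_0}$.

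It remains to show that $\pi$ is bijective over $R_0$. If $(U_3,[K])\in\pi^{-1}([K])$ for some special $[K]$, the equivalence above gives $L^-_{U_3}\subset F_4(X_K) = L^-_{U_3(K)}$, a single line; since distinct isotropic $3$-spaces give distinct lines on $X^\vee$ (one recovers $U_3$ as the common intersection of the subspaces parameterized by $L^-_{U_3}$), this forces $U_3 = U_3(K)$, so $\pi^{-1}([K]) = \{\Phi([K])\}$. Thus $\pi$ and $\Phi$ are mutually inverse morphisms over $R_0$, and $\pi$ restricts to an isomorphism there. Surjectivity onto $R$ then follows formally: $\pi(\tR)$ is closed (as $\pi$ is proper), irreducible, and contains $\pi(\Phi(R_0)) = R_0$, hence contains $\overline{R_0} = R$; since $\dim\pi(\tR)\le\dim\tR = 27 = \dim R$ and $R$ is irreducible, we conclude $\pi(\tR) = R$. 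Therefore $\pi\colon\tR\to R$ is birational and an isomorphism over the open subset $R_0$.
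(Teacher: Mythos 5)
Your overall strategy is the same as the paper's: identify the fibre $(\cW_-)_{U_3}$ with the annihilator of the span of the pencil of $4$-spaces through $L_{U_3}$, deduce that $\tR$ parameterizes pairs $(U_3,[K])$ with $L^-_{U_3}\subset F_4(X_K)$, use Proposition~\ref{proposition:x2-with-p4} to build the inverse over $R_0$, and finish with a properness-plus-dimension count. All of that is correct and essentially matches the paper. The problem is the step you yourself flag as the main obstacle: the identification
\begin{equation*}
\cW_{U_3} \;=\; \sum_{[U_{5,-}]\in L^-_{U_3}} \bw4U_{5,-}.
\end{equation*}
Your proposed argument (the right-hand side is a $\bP_3$-submodule of $\SS$ containing $\cS_{2,U_3}$, hence equals $\cW_{U_3}$ by a rank count) does not close. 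First, it tacitly assumes that the only $\bP_3$-submodules of $\SS$ are the steps of the filtration of Lemma~\ref{lemma:filtration-ss}, which would require showing the extensions between consecutive factors are non-split; you do not verify this. Second, and more seriously, even granting that, a ``rank count'' needs an input you never supply: either the containment $\bw4U_{5,-}\subset\cW_{U_3}$ for each $[U_{5,-}]\in L^-_{U_3}$, or an upper bound $\dim\bigl(\sum\bw4U_{5,-}\bigr)\le 8$. A priori a $\P^1$-family of $5$-planes through a common $2$-plane could span much more than $8$ dimensions, in which case (under your submodule hypothesis) the span would be forced up to the rank-$14$ step or all of $\SS$, and the whole construction would collapse.

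The paper closes exactly this gap by an explicit computation: for $[U_{5,-}]\in L^-_{U_3}$ corresponding to $s\in\P(\cS_{2,-,U_3})$, the exact sequence
\begin{equation*}
0 \to (U_{5,-}/U_3)\otimes\det(U_3) \to \bw4U_{5,-} \to \det(U_{5,-}/U_3)\otimes\det(U_3)\otimes U_3^\vee \to 0
\end{equation*}
is identified with the sequence $0 \to \cS_{2,U_3} \to \bw4U_{5,-} \to s\otimes U_3^\vee \to 0$ induced from the first sequence of~\eqref{eq:cw-sequence} via $s\otimes U_3^\vee\subset\cS_{2,-,U_3}\otimes U_3^\vee$. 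This gives both missing facts at once: each $\bw4U_{5,-}$ sits inside $\cW_{U_3}$, and as $s$ sweeps $\P(\cS_{2,-,U_3})$ the quotients $s\otimes U_3^\vee$ span the $6$-dimensional $\cS_{2,-,U_3}\otimes U_3^\vee$, so the total span is $2+6=8=\rk\cW$. With this substituted for your sketched representation-theoretic argument, the remainder of your proof (the section $\Phi$ over $R_0$ via $U_3=\Ima(\Sym^2K\to\rV)$, injectivity of $\pi$ over $R_0$, and surjectivity onto $R=\overline{R_0}$ by irreducibility and equality of dimensions) is sound and agrees with the paper.
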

\begin{proof}
Choose a point $[U_3] \in \OGr(3,\rV)$ and let $L = L_{U_3} \subset X$ be the corresponding line.
Note that~$L$ is the special line of a special linear section $X_K \subset X$ 
if and only if $X_K$ contains all 4-spaces in~$X$ containing~$L$.
The 4-spaces containing $L$ are parameterized by the line $L_{U_3}^- \subset X^\vee$,
and their linear span in $\SS$ can be identified with the fiber $\cW_{U_3}$ of the subbundle $\cW$ at $[U_3]$.
Indeed, for each point~$s \in \P(\cS_{2,-,U_3}) = L^-_{U_3}$, 
if~$U_{5,-}$ is the corresponding point of $L^-_{U_3}$ then the standard exact sequence
\begin{equation*}
0 \to (U_{5,-}/U_3) \otimes \det(U_3) \to \bw4U_{5,-} \to \det(U_{5,-}/U_3) \otimes \det(U_3) \otimes U_3^\vee  \to 0
\end{equation*}
can be interpreted as the sequence
\begin{equation*}
0 \to \cS_{2,U_3} \to \bw4U_{5,-} \to s \otimes U_3^\vee \to 0,
\end{equation*}
induced from the first sequence of~\eqref{eq:cw-sequence} at point $[U_3]$ 
via the embedding $s \otimes U_3^\vee \subset \cS_{2,-,U_3} \otimes U_3^\vee$.
Since the linear span of the pencil of planes $\P(s \otimes U_3^\vee)$ parameterized by $s \in \P(\cS_{2,-,U_3})$
is the 5-space~$\P(\cS_{2,-,U_3} \otimes U_3^\vee)$, it follows that the span of the pencil of the 4-spaces $\Pi^4_{U_{5,-}}$
is the fiber $\cW_{U_3}$ of the subbundle $\cW \subset \SS \otimes \cO$ at $[U_3]$.

It follows that $X_K$ contains all these 4-spaces if and only if $K$ is contained 
in the orthogonal of the subspace $\cW_{U_3} \subset \SS$, which is nothing but 
the fiber $\cW_{-,U_3}$ of the bundle $\cW_-$ at $[U_3]$.
Thus the relative Grassmannian $\tR$ parameterizes pairs $(L,K)$ such that~$L$ is a line on $X$
and~$K \subset \SS^\vee$ is a two-dimensional subspace such that $X_K$ contains the pencil of 4-spaces on $X$ containing~$L$.
In particular, the image of $\tR$ in $\Gr(2,\SS^\vee)$ contains the open subset~$R_0$ of $R$ and is an isomorphism over it.
Since $\tR$ is irreducible and
\begin{equation*}
\dim \tR =
\dim \OGr(3,10) + \dim \Gr(2,8) = 15 + 12 = 27 = \dim R,
\end{equation*}
it follows that the image is contained in $R$ and the map is birational.
Finally, since $\tR$ is smooth, the map~$\tR \to R$ is a resolution of singularities.
\end{proof}

\begin{remark}
It is easy to check that the intersection of the projectivized fiber $\P(\cW_{-,U_3})$ of the above bundle with $X^\vee$
is the five-dimensional cubic Segre cone 
\begin{equation*}
\Cone_{\P(\cS_{2,-,U_3})}(\P(\cS_{2,U_3}) \times \P(U_3^\vee)).
\end{equation*}
In particular, the preimage of the complement $R \setminus R_0$ is the subvariety of $\tR$
parameterizing 1-secant lines to that cone, hence is a divisor in $\tR$ of relative degree 3 over $\OGr(3,\rV)$.
In particular, the resolution is not small, and is not identical over the smooth locus of $R$.
\end{remark}

We can express the statement of the lemma as a commutative diagram
\begin{equation*}
\xymatrix{
R_0 \ar@{^{(}->}[rr] \ar@{^{(}->}[dr] &&
\tR \ar@{=}[r] \ar[dl] & 
\Gr_{\OGr(3,\rV)}(2,\cW_-) \ar[dr] 
\\
&
R \ar@{-->}[rrr] &&&
\OGr(3,\rV) 
}
\end{equation*}
It defines a rational map (the dashed map in the above diagram)
\begin{equation}
\label{def:lambda}
\lambda \colon R \dashrightarrow \OGr(3,\rV),
\end{equation}
which is regular on the open subset $R_0 \subset R$.

\begin{corollary}
We have an isomorphism $\lambda^*\cO_{\OGr(3,\rV)}(1) \cong \cO_{\Gr(2,\SS^\vee)}(3) \vert_{R_0}$
of line bundles on $R_0$.
\end{corollary}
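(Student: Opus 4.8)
The plan is to prove the isomorphism after pulling back the tautological data of $\Gr(2,\SS^\vee)$ to $R_0$. Recall that under the open embedding $R_0 \hookrightarrow \tR$ the map $\lambda$ is the restriction of the projection $\tR = \Gr_{\OGr(3,\rV)}(2,\cW_-) \to \OGr(3,\rV)$, and that the tautological subbundle $\cK_2 \subset \SS^\vee\otimes\cO$ of the Grassmannian restricts to the tautological rank-$2$ subbundle on $R_0$. Since $\cO_{\Gr(2,\SS^\vee)}(1) \cong \det\cK_2^\vee = \bw2{\cK_2^\vee}$, it suffices to produce an isomorphism $(\det\cK_2^\vee)^{\otimes 3}\vert_{R_0} \cong \lambda^*\cO_{\OGr(3,\rV)}(1)$. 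The heart of the matter is the identification $\Sym^2\cK_2\vert_{R_0} \cong \lambda^*\cU_3$ of rank-$3$ bundles on $R_0$, which I would establish next.

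To get this identification, I would use the universal version of the map $\gamma$. By Lemma~\ref{lemma:secondary-quadric} the dual of the composition~\eqref{eq:composition-v-k2} is a morphism $\Sym^2\cK_2 \to \rV\otimes\cO$ whose fiber over $[K_2]$ is the linear map $\Sym^2 K_2 \to \rV$ inducing $\gamma\colon\P(K_2)\to\rQ\subset\P(\rV)$. For $[K_2]\in R_0$ the section $X_{K_2}$ is special and smooth, so by Corollary~\ref{corollary:stratification-small-k} the map $\gamma$ is a closed embedding with $\gamma^*\cO_\rQ(1)\cong\cO_{\P(K_2)}(2)$; hence $\gamma\vert_{\P(K_2)}$ is a smooth conic and the three quadrics defining it are linearly independent. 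Thus the fiber map $\Sym^2 K_2\to\rV$ is injective, and by the definition of $\lambda$ (Proposition~\ref{proposition:x2-with-p4} and Lemma~\ref{lemma:resolution-r}) its image is precisely the isotropic $3$-space $U_3=\lambda([K_2])$, the linear span of the conic. Injectivity of locally constant rank $3$ over $R_0$ shows that $\Sym^2\cK_2\to\rV\otimes\cO$ is a subbundle inclusion with image $\lambda^*\cU_3$, yielding the desired isomorphism $\Sym^2\cK_2\vert_{R_0}\cong\lambda^*\cU_3$.

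Taking top exterior powers then finishes the argument. For a rank-$2$ bundle one has $\det(\Sym^2\cK_2)\cong(\det\cK_2)^{\otimes 3}$, so the identification gives $(\det\cK_2)^{\otimes 3}\vert_{R_0}\cong\lambda^*\det\cU_3$, and dualizing yields $(\det\cK_2^\vee)^{\otimes 3}\vert_{R_0}\cong\lambda^*\det\cU_3^\vee$. Finally I would record the representation-theoretic fact that $\det\cU_3^\vee\cong\cO_{\OGr(3,\rV)}(1)$: indeed $\bw3\rV\cong\rV_{\Spin(\rV)}^{\omega_3}$ is the irreducible representation with highest weight $\omega_3$, so the Pl\"ucker polarization $\det\cU_3^\vee$ coincides with the ample generator $\cL_{\omega_3}=\cO_{\OGr(3,\rV)}(1)$ of $\Pic(\OGr(3,\rV))\cong\ZZ$. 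Combining this with $\det\cK_2^\vee\cong\cO_{\Gr(2,\SS^\vee)}(1)\vert_{R_0}$ gives $\cO_{\Gr(2,\SS^\vee)}(3)\vert_{R_0}\cong\lambda^*\cO_{\OGr(3,\rV)}(1)$, as claimed. The one step requiring genuine care is the identification of the image of the universal $\gamma$-map with $\lambda^*\cU_3$ — that is, checking that the fiberwise spans of the conics organize into the tautological subbundle over all of $R_0$ and that the map has locally constant rank — which is exactly where the speciality hypothesis (forcing a nondegenerate conic rather than a degenerate one) is essential.
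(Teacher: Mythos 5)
Your argument is correct, but it is genuinely different from the one in the paper. The paper's proof is a discrepancy computation: it computes $K_{\tR}=-8H'-2H''$ from the relative Grassmannian structure of $\tR=\Gr_{\OGr(3,\rV)}(2,\cW_-)$ (using $K_{\OGr(3,\rV)}=-6H''$ and $\rc_1(\cW_-)=-2H''$), compares with $K_R=-14H'$ obtained by adjunction, concludes that the exceptional divisor of the resolution $\tR\to R$ lies in the class $3H'-H''$, and then observes that this divisor is disjoint from $R_0$, so $3H'-H''$ restricts trivially there. You instead produce the isomorphism directly at the level of vector bundles: the universal form of $\gamma$, i.e.\ the dual of the composition~\eqref{eq:composition-v-k2}, gives a morphism $\Sym^2\cK_2\to\rV\otimes\cO$ which over $R_0$ is fiberwise injective (because $\gamma(\P(K_2))$ is a smooth conic, forced by $\gamma^*\cO_\rQ(1)\cong\cO_{\P(K_2)}(2)$) with image the span $U_3=\lambda([K_2])$, yielding $\Sym^2\cK_2\vert_{R_0}\cong\lambda^*\cU_3$; taking determinants and using $\det\Sym^2\cK_2\cong(\det\cK_2)^{\otimes 3}$ and $\det\cU_3^\vee\cong\cO_{\OGr(3,\rV)}(1)$ (the correct normalization here, consistent with the paper's $K_{\OGr(3,\rV)}=-6H''$) finishes the proof. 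Both arguments are complete; the paper's requires knowing the codimension of $R\setminus R_0$ in $R$ to interpret the discrepancy multiplicity and implicitly uses that $\Pic$ of the relevant varieties is as expected, while yours is more self-contained and in fact proves the stronger bundle-level statement $\Sym^2\cK_2\vert_{R_0}\cong\lambda^*\cU_3$, which refines the list of bundles on $R_0$ in Remark~\ref{remark:bundles-on-r} and would not follow from the determinant identity alone.
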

\begin{proof}
Let us compute the canonical class of $\tR$.
If $H'$ is the hyperplane class of $\Gr(2,\SS^\vee)$ and~$H''$ is the hyperplane class of $\OGr(3,\rV)$, then
$K_{\OGr(3,\rV)} = -6H''$ and $\rc_1(\cW_-) = -2H''$, hence
\begin{equation*}
K_{\tR} = -6H'' + 4H'' - 8H' = -8H' -2H''.
\end{equation*}
On the other hand, $K_R = -14H'$ by adjunction, hence the discrepancy is $6H' - 2H'' = 2(3H' - H'')$.
This shows that the exceptional divisor of the resolution is linearly equivalent to $3H' - H''$
(the discrepancy multiplicity $2$ corresponds to $\codim_R(R \setminus R_0) = 3$).
Since the exceptional divisor does not intersect $R_0$, the restriction of $3H' - H''$ to $R_0$ is equivalent to zero,
whence the required relation.
\end{proof}

\begin{remark}
\label{remark:bundles-on-r}
The map $\lambda$ equips $R_0$ with a bunch of vector bundles.
Besides the tautological rank 2 bundle $\cK_2$ (restriction from $\Gr(2,\SS)$), these are
the pullback of the tautological rank 3 bundle $\cU_3$,
and the pullbacks the spinor bundles $\cS_{2,\pm}$ from $\OGr(3,\rV)$.
\end{remark}

\section{Linear sections of bigger codimension}
\label{section:other}

In this section we discuss some results concerning linear sections of $X$ of higher codimension.

\subsection{The quadratic invariant}

Let $K \subset \SS^\vee$ be a vector subspace of dimension $k \ge 2$ and let $X_K$ be the corresponding linear section of $X$
(in most cases we assume that $\P(K) \cap X^\vee = \varnothing$ so that $X_K$ is smooth, but this is not always necessary).
Define 
\begin{equation}
R_K := R \cap \Gr(2,K) \subset \Gr(2,K),
\end{equation} 
the intersection of the spinor quadratic line complex $R \subset \Gr(2,\SS^\vee)$, see Definition~\ref{definition:r}, 
with the Grassmannian $\Gr(2,K) \subset \Gr(2,\SS^\vee)$. 
The geometric sense of $R_K$ is quite straightforward --- it parameterizes those codimension~2 over-sections of $X_K$
(i.e., subvarieties $X_{K_2} \subset X$ such that $K_2 \subset K$), which are special or singular.

The next lemma shows that $R_K$ is an invariant of the isomorphism class of $X_K$.

\begin{lemma}
\label{lemma:rk-invariant}
If there is an isomorphism $X_{K_1} \cong X_{K_2}$ of dimensionally transverse linear sections of $X$
then $R_{K_1} \cong R_{K_2}$.
\end{lemma}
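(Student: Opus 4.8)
The plan is to deduce everything from the rigidity statement of Corollary~\ref{corollary:isomorphism-conjugation} together with the $\Spin(\rV)$-invariance of the spinor quadratic line complex $R$ that was established in the course of proving Lemma~\ref{lemma:secondary-quadric}. The key observation is that $R_K$ is defined purely in terms of the subspace $K \subset \SS^\vee$ and the fixed, $\Spin(\rV)$-invariant hypersurface $R$; hence any symmetry of $X$ carrying $K_1$ to $K_2$ should automatically carry $R_{K_1}$ to $R_{K_2}$.

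First I would apply Corollary~\ref{corollary:isomorphism-conjugation} (valid in the relevant range of codimensions $k \le 7$): an isomorphism $X_{K_1} \cong X_{K_2}$ is realized by an element $g \in \Spin(\rV)$ whose induced action on $\SS^\vee$ satisfies $g(K_1) = K_2$. The action of $g$ on $\SS^\vee$ then induces an automorphism of the Grassmannian $\Gr(2,\SS^\vee)$, which I denote again by $g$. Because $g(K_1) = K_2$, this automorphism restricts to an isomorphism of sub-Grassmannians $g \colon \Gr(2,K_1) \xrightarrow{\ \sim\ } \Gr(2,K_2)$.

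Next I would use that $R \subset \Gr(2,\SS^\vee)$ is $\Spin(\rV)$-invariant, so $g(R) = R$. Since $g$ acts by an automorphism, it preserves scheme-theoretic intersections, and therefore
\begin{equation*}
g(R_{K_1}) = g\bigl(R \cap \Gr(2,K_1)\bigr) = g(R) \cap g\bigl(\Gr(2,K_1)\bigr) = R \cap \Gr(2,K_2) = R_{K_2},
\end{equation*}
so that $g$ restricts to the desired isomorphism $R_{K_1} \cong R_{K_2}$. There is essentially no serious obstacle here: the only points requiring care are the hypothesis $k \le 7$ needed to invoke Corollary~\ref{corollary:isomorphism-conjugation}, and the remark that the equality $g(K_1) = K_2$ is exactly what is needed to match the two sub-Grassmannians (so that the invariance of $R$ can be transported through the intersection). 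For codimension $k > 7$ one would need a separate input in place of Corollary~\ref{corollary:isomorphism-conjugation}, but in all cases treated in this paper the corollary applies and the argument above is complete.
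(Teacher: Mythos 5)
Your proof is correct and follows exactly the same route as the paper: invoke Corollary~\ref{corollary:isomorphism-conjugation} to realize the isomorphism by an element $g \in \Spin(\rV)$ with $g(K_1) = K_2$, then use the $\Spin(\rV)$-invariance of $R$ to transport $R_{K_1} = R \cap \Gr(2,K_1)$ to $R_{K_2}$. Your remark about the hypothesis $k \le 7$ is a reasonable extra precision that the paper leaves implicit.
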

\begin{proof}
By Corollary~\ref{corollary:isomorphism-conjugation} an isomorphism $X_{K_1} \cong X_{K_2}$ can be realized 
by the action of an appropriate element $g \in \Spin(\rV)$ which takes $K_1$ to $K_2$.
Since $R \subset \Gr(2,\SS^\vee)$ is $\Spin(\rV)$-invariant, we conclude that~$g$ induces an isomorphism of $R_{K_1}$ and $R_{K_2}$.
\end{proof}

\begin{lemma}
\label{lemma:general-rk-smooth}
For a general subspace $K \subset \SS^\vee$ of codimension $2 \le k \le 5$ the subscheme $R_K \subset \Gr(2,K)$ is a smooth divisor.
\end{lemma}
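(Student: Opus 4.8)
The plan is to realise all of the subschemes $R_K$ at once as the fibres of a single incidence variety over the parameter space $\cG := \Gr(k,\SS^\vee)$, and then invoke generic smoothness in characteristic zero. The essential point is that this incidence variety carries a second, much more transparent, projection onto $R$ itself, coming from the observation that the condition $[W] \in R_K$ depends only on $[W]$, not on the ambient $K$.

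Concretely, let $\cK \subset \SS^\vee \otimes \cO_{\cG}$ be the tautological rank-$k$ subbundle on $\cG$ and form the relative Grassmannian $\cZ := \Gr_{\cG}(2,\cK)$, whose fibre over $[K]$ is $\Gr(2,K)$. Sending $W \subset K$ to $[W] \in \Gr(2,\SS^\vee)$ defines a morphism $\cZ \to \Gr(2,\SS^\vee)$, and pulling back the section of $\cO_{\Gr(2,\SS^\vee)}(2)$ cutting out $R$ (Lemma~\ref{lemma:secondary-quadric}) yields a section whose zero locus $\cR \subset \cZ$ has fibre over $[K]$ equal to $R_K = R \cap \Gr(2,K)$ with its natural scheme structure. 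Now $\cZ \to \Gr(2,\SS^\vee)$, $([W],[K]) \mapsto [W]$, is itself a Grassmann bundle with fibre $\Gr(k-2,\SS^\vee/W)$, and under it $\cR$ is simply the preimage of $R$. Hence $\cR \to R$ is a $\Gr(k-2,14)$-bundle; in particular $\cR$ is irreducible, and over the smooth locus $R^{\mathrm{sm}} := R \setminus \Sing(R)$ its preimage $\cR^{\mathrm{sm}}$ is a Grassmann bundle over a smooth base, hence smooth.

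First I would dispose of the singular locus by a dimension count. Using $\dim R = 27$ and $\codim_R \Sing(R) = 7$ (Corollary~\ref{corollary:sing-r}), the preimage $\cR^{\mathrm{sing}}$ of $\Sing(R)$ has dimension $20 + (k-2)(16-k)$, whereas $\dim \cG = k(16-k)$, so that
\begin{equation*}
\dim \cG - \dim \cR^{\mathrm{sing}} = 2(16-k) - 20 = 12 - 2k \ge 2 \qquad (k \le 5).
\end{equation*}
Thus the image of $\cR^{\mathrm{sing}} \to \cG$ is a proper closed subset, and for $K$ outside it one has $\Gr(2,K) \cap \Sing(R) = \varnothing$, so that $R_K \subset R^{\mathrm{sm}}$ equals the fibre of $\cR^{\mathrm{sm}}$. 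Next, since $R$ is an ample divisor on $\Gr(2,\SS^\vee)$, for $k \ge 3$ the positive-dimensional $\Gr(2,K)$ meets $R$, so $\cR^{\mathrm{sm}} \to \cG$ is dominant with general fibre of dimension $\dim \cR - \dim \cG = 2k-5 = \dim \Gr(2,K) - 1$; hence $R_K$ is a divisor. As $\cR^{\mathrm{sm}}$ is smooth and the base field has characteristic zero, generic smoothness gives a dense open $V \subset \cG$ over which $\cR^{\mathrm{sm}} \to \cG$ is smooth; intersecting $V$ with the open locus found above shows that $R_K$ is smooth for general $K$. (For $k = 2$ the fibre $\Gr(2,K)$ is a point and $R$ is a proper divisor, so $R_K = \varnothing$ for general $K$, which is vacuously a smooth divisor.)

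The main obstacle, and the reason both the characteristic-zero hypothesis and the estimate $\codim_R \Sing(R) = 7$ are needed, is controlling the singular locus of $R$: generic smoothness only produces smooth fibres over the smooth locus of the total space $\cR$, so one must separately guarantee that a general $\Gr(2,K)$ avoids $\Sing(R)$ entirely, which is exactly what the inequality $12 - 2k > 0$ secures for $k \le 5$.
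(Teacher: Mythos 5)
Your proposal is correct and is essentially the paper's own argument: both form the same incidence variety (the $\Gr(k-2,14)$-bundle over $R$ obtained by pairing a line in $R$ with a $k$-dimensional space containing it), use $\codim_R(\Sing R)=7$ from Corollary~\ref{corollary:sing-r} to show the image of its singular locus has codimension $12-2k>0$ in $\Gr(k,\SS^\vee)$, and conclude by generic smoothness. You merely spell out a few steps the paper leaves implicit (the divisor/nonemptiness claim for $k\ge 3$ and the degenerate case $k=2$).
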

\begin{proof}
Consider the universal family of $R_K$, i.e. the relative Grassmannian 
\begin{equation*}
\Gr_R(k-2,\SS^\vee/\cK_2) \cong R \times_{\Gr(2,\SS^\vee)} \Fl(2,k;\SS^\vee),
\end{equation*}
where $\cK_2$ is the restriction of the tautological vector bundle from $\Gr(2,\SS^\vee)$ to $R$.
Its dimension is equal to~$\dim R + \dim \Gr(k-2,14) = 27 + (k-2)(16 - k)$, and 
by Corollary~\ref{corollary:sing-r} its singular locus has dimension~$20 + (k-2)(16 - k)$.
The image of the singular locus in $\Gr(k,\SS^\vee)$ has codimension
\begin{equation*}
k(16 - k) - 20 - (k-2)(16 - k) = 12 - 2k,
\end{equation*}
which is strictly positive for $k \le 5$.
Therefore, the general fiber of the map $\Gr_R(k-2,\SS^\vee/\cK_2) \to \Gr(k,\SS^\vee)$ is smooth.
\end{proof}

\subsection{Birational constructions and rationality}

The two birational descriptions of the spinor tenfold~$X$ (Proposition~\ref{proposition:blowup-x-pi4} and~\ref{proposition:blowup-x-ogr48})
obtained by projections from a 4-space and a 6-dimensional quadric respectively, can be also applied to linear sections.
The first of them is quite effective for $X_K$ containing a 4-space (see Corollary~\ref{corollary:blowup-xkappa-smooth-pi4}
and Proposition~\ref{proposition:x2-with-p4} above an Proposition~\ref{proposition:x3-with-p4} below), but not as good otherwise.
On a contrary, the second description is quite useful for all linear sections.

Recall the quadric $\cQ_{\kappa,K} \subset \cQ_v$, where $v = \gamma(\kappa)$, defined in~\eqref{def:q-kappa-k}.
The next lemma describes it.

\begin{lemma}
If $X_K$ is a smooth linear section of the spinor tenfold $X$ and $1 \le k \le 5$ the quadric~$\cQ_{\kappa,K}$ has dimension $7 - k$.
It is smooth if and only if 
\begin{equation}
\label{def:r-kappa-k}
R_{\kappa,K} := \P(K/\kappa) \cap R_K \subset \Gr(2,K)
\end{equation} 
is a smooth quadric.
\end{lemma}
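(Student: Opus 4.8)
The plan is to realize both $\cQ_{\kappa,K}$ and $R_{\kappa,K}$ as linear sections of the single smooth quadric $\cQ_v \subset \P(\cS_{8,v})$, where $v = \gamma(\kappa)$, cut out by two \emph{mutually orthogonal} linear subspaces, and then to invoke the classical duality for quadric sections. First I would set up the linear algebra at $v$. Write $q_v$ for the nondegenerate quadratic form on the $8$-dimensional space $\cS_{8,v}$ defining $\cQ_v$, and let $b_v$ be its polarization, giving an isomorphism $\cS_{8,v} \cong \cS_{8,v}^\vee$. Since $\cQ_v \subset \P(\kappa^\perp)$ by Lemma~\ref{lemma:g6-xkappa-smooth} and $\kappa \in K$, the evaluation $\SS^\vee \to \cS_{8,v}^\vee$ of~\eqref{eq:s-dual-sequence} kills $\kappa$ (indeed $\kappa \in \cS_{8,v,-}$); let $\overline{K} \subset \cS_{8,v}$ denote the image of $K$ under this map, transported by $b_v^{-1}$. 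By Corollary~\ref{corollary:stratification-small-k} the corank of $\sigma_K$ at $v \in \gamma(\P(K))$ is exactly $1$, so $\dim \overline{K} = k-1$ and $K \cap \cS_{8,v,-} = \langle \kappa \rangle$. A direct check then identifies $\cS_{8,v} \cap K^\perp$ with $\overline{K}^\perp$ (the $b_v$-orthogonal), of dimension $9-k$, so that
\[
\cQ_{\kappa,K} = \cQ_v \cap \P(\overline{K}^\perp).
\]

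Next I would use the description of $R_\kappa$ as the cone over $\cQ_v^\vee$ with vertex $\P(\cS_{8,v,-}/\kappa)$ inside $\P(\SS^\vee/\kappa)$ proved earlier. Since $K \cap \cS_{8,v,-} = \langle \kappa \rangle$, the subspace $\P(K/\kappa)$ meets the vertex of this cone trivially, so projection from the vertex identifies $R_{\kappa,K} = R_\kappa \cap \P(K/\kappa)$ with a linear section of the base quadric; under the self-duality $\cS_{8,v}^\vee \cong \cS_{8,v}$ this reads
\[
R_{\kappa,K} \cong \cQ_v \cap \P(\overline{K}).
\]
With both sections in hand, the equivalence is immediate and uniform in $k$: the restrictions $q_v\vert_{\overline{K}^\perp}$ and $q_v\vert_{\overline{K}}$ have the \emph{same} radical, namely $\overline{K} \cap \overline{K}^\perp$, so $q_v\vert_{\overline{K}^\perp}$ is nondegenerate if and only if $\overline{K} \cap \overline{K}^\perp = 0$ if and only if $q_v\vert_{\overline{K}}$ is nondegenerate. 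Hence $\cQ_{\kappa,K} = \cQ_v \cap \P(\overline{K}^\perp)$ is a smooth quadric precisely when $R_{\kappa,K} \cong \cQ_v \cap \P(\overline{K})$ is a smooth quadric. (For $k = 2$ this recovers Corollary~\ref{corollary:q-kappa-K}, with the convention that an empty $R_{\kappa,K}$ counts as a smooth quadric of dimension $-1$.)

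It then remains to check that $\cQ_{\kappa,K}$ genuinely has dimension $7-k$, i.e. that $\P(\overline{K}^\perp)$ is not contained in $\cQ_v$. As $\P(\overline{K}^\perp)$ has codimension $k-1$ in $\P(\cS_{8,v}) = \P^7$, and the smooth quadric $\cQ_v \cong Q^6$ contains no linear subspace of dimension exceeding $3$, containment is impossible as soon as $8-k > 3$, that is for $k \le 4$; in these cases $q_v\vert_{\overline{K}^\perp} \ne 0$ and $\cQ_{\kappa,K}$ is a (possibly singular) quadric of the asserted dimension $7-k$. The one delicate point — and the step I expect to be the main obstacle — is the borderline case $k = 5$, where $\P(\overline{K}^\perp) = \P^3$ could a priori be a maximal isotropic subspace of $\cQ_v$. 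I would exclude this using the smoothness hypothesis $\P(K) \cap X^\vee = \varnothing$: if $\overline{K}^\perp$ were totally isotropic then $\overline{K}^\perp = \overline{K}$, forcing $R_{\kappa,K} = \P(K/\kappa)$, so that the entire $\P^3$ of lines through $\kappa$ in $\P(K)$ would lie on $R$. Tracing this back through the embedding $\gamma\colon \P(K) \hookrightarrow \rQ$ — which satisfies $\gamma^*\cO_\rQ(1) \cong \cO_{\P(K)}(2)$, carries lines to conics, and over $\gamma(\P(K))$ has the whole $\P(\overline{K}^\perp)$ as the fibre of $q$ — one should derive that $X^\vee$ meets $\P(K)$, contradicting transversality. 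Making this contradiction precise is where I anticipate the real work lies.
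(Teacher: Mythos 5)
Your identification of $\cQ_{\kappa,K}$ and $R_{\kappa,K}$ as sections of the single quadric $\cQ_v$ by the mutually orthogonal subspaces $\overline{K}^\perp$ and $\overline{K}$, together with the remark that the two restricted forms share the radical $\overline{K}\cap\overline{K}^\perp$, is essentially the paper's own argument for the equivalence. The paper reaches the same picture slightly differently: it observes that as $K_2$ runs over $\P(K/\kappa)$ the quadrics $\cQ_{\kappa,K_2}$ sweep out the hyperplane sections of $\cQ_v$ containing $\cQ_{\kappa,K}$, invokes Corollary~\ref{corollary:q-kappa-K} to identify $R_{\kappa,K}$ with the section of the dual quadric $\cQ_v^\vee$ by the orthogonal of the span of $\cQ_{\kappa,K}$, and concludes ``by an analogue of Lemma~\ref{lemma:smoothness}'' --- which is exactly your common-radical observation. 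Your route through the cone description of $R_\kappa$ is interchangeable with this, since that description is itself proved from Corollary~\ref{corollary:q-kappa-K}. This half of your proposal is correct and complete (including the corank-one computation via Corollary~\ref{corollary:stratification-small-k} that gives $\dim\overline{K}=k-1$).

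The one genuine gap is the one you flagged yourself, namely the dimension claim at $k=5$. For the record, the paper's proof of $\dim\cQ_{\kappa,K}=7-k$ argues that a failure of transversality would produce $\kappa'\ne\kappa$ with $\cQ_v\subset\P({\kappa'}^\perp)$, hence $\gamma(\kappa')=v=\gamma(\kappa)$ by Lemma~\ref{lemma:g6-xkappa-smooth}, contradicting injectivity of $\gamma$; but this only excludes the failure mode $\dim\overline{K}<k-1$ (the same information as your corank claim), not the mode in which $q_v$ vanishes identically on $\overline{K}^\perp$. For $k\le 4$ your isotropic-dimension bound disposes of the latter, so on this point you are actually more careful than the written proof; at $k=5$ neither your proposal nor the paper's text excludes the possibility that $\overline{K}^\perp=\overline{K}$ is maximal isotropic, in which case $\cQ_{\kappa,K}$ would be a linear $\P^3$ rather than a quadric surface. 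Closing this requires an additional argument, e.g.\ ruling out a smooth codimension-$5$ section containing the $3$-space $\cQ_v\cap\P(K^\perp)$ by a degeneracy-locus computation with the rank-$7$ bundle $\cN^\vee_{\Pi^3/X}(1)$ in the spirit of Lemma~\ref{lemma:f4-x4}, or deriving from $R_{\kappa,K}=\P(K/\kappa)$ that the spans of all the conics $\gamma(\P(K_2))$, $\kappa\in K_2\subset K$, force too large a linear or secant configuration into $\rQ$, in the spirit of Lemma~\ref{lemma:rk-codim4}. So your assessment of where the real work lies is accurate, and the loose end is genuine rather than an artifact of your approach.
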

\begin{proof}
Set $v = \gamma(\kappa)$.
If the intersection~\eqref{def:q-kappa-k} is not dimensionally transverse, there exists $\kappa' \in K$ distinct from $\kappa$
such that~$\cQ_v \subset \P({\kappa'}^\perp)$.
By Lemma~\ref{lemma:g6-xkappa-smooth} this implies $\gamma(\kappa') = v$.
Therefore, the map $\gamma \colon \P(K) \to \rQ$ is not injective, which is impossible by Corollary~\ref{corollary:stratification-small-k}.
Thus $\dim \cQ_{\kappa,K} = 7 - k$.

When $K_2$ runs over the linear space $\P(K/\kappa)$ of all 2-dimensional subspaces $K_2$ such that \mbox{$\kappa \subset K_2 \subset K$}, 
the quadrics~$\cQ_{\kappa,K_2}$ run over the linear system of hyperplane sections of the smooth quadric $\cQ_v$ containing~$\cQ_{\kappa,K}$.
Furthermore, by Corollary~\ref{corollary:q-kappa-K} a quadric $\cQ_{\kappa,K_2}$ is singular if and only if $[K_2] \in R_{\kappa,K}$.
Thus, $R_{\kappa,K}$ is a linear section of the projective dual quadric $\cQ_v^\vee$ of $\cQ_v$ by the orthogonal subspace of the linear span of~$\cQ_{\kappa,K}$. 
In particular, by an analogue of Lemma~\ref{lemma:smoothness} it is smooth if and only if $\cQ_{\kappa,K}$ is.
\end{proof}

\begin{remark}
The same argument shows that the corank of the quadric $\cQ_{\kappa,K}$ equals the corank of the quadric $R_{\kappa,K}$.
This observation is especially useful when $\dim K = 5$ and $R_K$ is a smooth Gushel--Mukai fivefold, since in this case 
the corank stratification of the family of quadrics $R_{\kappa,K}$ is controlled by the corresponding EPW sextic,
see~\cite[Proposition~4.5]{debarre2015gushel}.
\end{remark}

The next proposition describes the blowup of $\cQ_{\kappa,K}$.
Recall the quadric $\cQ_{\kappa,-}$, see~\eqref{def:cq-kappa-minus}.

\begin{proposition}
\label{proposition:blowup-x345}
Let $X_K$ be a smooth dimensionally transverse linear section of the spinor tenfold~$X$ of codimension $k \le 5$.
There exists a piecewise locally trivial fibration
\begin{equation*}
\Bl_{\cQ_{\kappa,K}}(X_K) \to \cQ_{\kappa,-}
\end{equation*}
whose general fiber is $\P^{5 - k}$ and whose special fibers are projective spaces of bigger dimensions.
\end{proposition}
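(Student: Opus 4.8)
The plan is to bootstrap from the hyperplane-section case and then impose the remaining $k-1$ linear conditions, exactly as in the proof of Proposition~\ref{proposition:blowup-quadric-x2}, which is the case $k=2$. Set $v = \gamma(\kappa)$. By Corollary~\ref{corollary:blowup-xkappa-smooth} the blowup of the smooth hyperplane section $X_\kappa$ along the $6$-quadric $\cQ_v$ is the $\P^4$-bundle
\begin{equation*}
P := \P_{\cQ_{\kappa,-}}(\cO(-1) \oplus \cS_4),
\end{equation*}
with bundle projection $g_\cQ \colon P \to \cQ_{\kappa,-}$ and blowdown $g_X \colon P \to X_\kappa$. Since $\P(K) \cap X^\vee = \varnothing$, every point of $\P(K)$ lies outside $X^\vee$; I extend $\kappa = \kappa_1$ to a basis $\kappa_1, \dots, \kappa_k$ of $K$, so that $X_K = X_\kappa \cap \bigcap_{i=2}^{k} \P(\kappa_i^\perp)$. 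As in Proposition~\ref{proposition:blowup-quadric-x2}, each $\kappa_i$ with $i \ge 2$ determines, through the tautological inclusion $\cO(-1)\oplus\cS_4 \hookrightarrow \SS \otimes \cO_{\cQ_{\kappa,-}}$ followed by $\kappa_i \colon \SS \to \Bbbk$, a relative hyperplane section of $P$, i.e.\ a section $s_i$ of the relative $\cO_P(1)$.

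Next I would identify $\Bl_{\cQ_{\kappa,K}}(X_K)$ with the common zero locus $Y := \{s_2 = \dots = s_k = 0\} \subset P$. Away from the exceptional divisor $E$ of $g_X$ the map $g_X$ is an isomorphism and the conditions $s_i = 0$ cut out precisely $X_K \setminus \cQ_v$, so $Y$ contains the strict transform of $X_K$; moreover a point of $Y \cap E$ maps into $\cQ_v$ and, since $\kappa \in \cS_{8,v}^\perp$ forces $\kappa(\ell)=0$ there as well, in fact into $\cQ_v \cap \P(K^\perp) = \cQ_{\kappa,K}$, whence $Y \subseteq g_X^{-1}(X_K)$. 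Because $g_X$ is birational, $g_X^{-1}(X_K)$ has dimension $10-k$, so $\dim Y \le 10-k$; on the other hand $Y$ is cut out by $k-1$ divisors in the $9$-fold $P$, so every component has dimension $\ge 10-k$. Thus $Y$ is a complete intersection of pure dimension $10-k$. Since $Y \not\subseteq E$, the intersection $Y \cap E$ is a proper closed subset of dimension $\le 9-k$, so no component of $Y$ lies in $E$, and therefore $Y$ equals the strict transform of $X_K$. By the standard description of strict transforms this strict transform is $\Bl_{\cQ_v \cap X_K}(X_K)$, and as $\cQ_v \subset X_\kappa$ the scheme-theoretic intersection is $\cQ_v \cap \P(K^\perp) = \cQ_{\kappa,K}$, which is dimensionally transverse of dimension $7-k$ by the preceding lemma; hence $Y \cong \Bl_{\cQ_{\kappa,K}}(X_K)$.

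Finally I would read off the fibration from the bundle map. Restricting $g_\cQ$ gives $Y \to \cQ_{\kappa,-}$. Assembling the $s_i$ into a homomorphism
\begin{equation*}
\Psi \colon \cO(-1) \oplus \cS_4 \longrightarrow (K/\langle\kappa\rangle)^\vee \otimes \cO_{\cQ_{\kappa,-}},
\end{equation*}
the fiber of $Y$ over $y \in \cQ_{\kappa,-}$ is $\P(\Ker \Psi_y) = \P^{\,4 - \rank \Psi_y}$, a linear subspace of the fiber $\P^4$ of $P$. Stratifying $\cQ_{\kappa,-}$ by the locally closed loci $\{\rank \Psi = c\}$, on each of which $\Ker \Psi$ is a subbundle, exhibits $Y \to \cQ_{\kappa,-}$ as a piecewise Zariski locally trivial fibration in projective spaces. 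Every fiber is nonempty because $\rank \Psi_y \le k-1 \le 4$, so the morphism is surjective; comparing $\dim Y = 10-k$ with $\dim \cQ_{\kappa,-} = 5$ forces $\rank \Psi = k-1$ on a dense open subset, where the fiber is $\P^{5-k}$, while on the proper degeneration loci one has $\rank \Psi < k-1$ and the fibers $\P^{\,4-\rank\Psi}$ are projective spaces of larger dimension. This is exactly the asserted structure.

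The main obstacle is the bookkeeping of the middle paragraph: verifying that the common zero locus $Y$ of the relative hyperplane sections is genuinely the strict transform $\Bl_{\cQ_{\kappa,K}}(X_K)$ and acquires no spurious component inside the exceptional divisor $E$. Everything else is either quoted from the $k=2$ case treated in Proposition~\ref{proposition:blowup-quadric-x2} and Corollary~\ref{corollary:blowup-xkappa-smooth}, or is a routine rank stratification of a morphism of vector bundles.
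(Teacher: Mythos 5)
Your proposal is correct and follows essentially the same route as the paper: pass to the $\P^4$-bundle $\P_{\cQ_{\kappa,-}}(\cO(-1)\oplus\cS_4)\cong\Bl_{\cQ_v}(X_\kappa)$ of Corollary~\ref{corollary:blowup-xkappa-smooth}, realize $\Bl_{\cQ_{\kappa,K}}(X_K)$ as the relative linear section of codimension $k-1$ given by the composition $\cO(-1)\oplus\cS_4\hookrightarrow\SS\otimes\cO\to(K/\kappa)^\vee\otimes\cO$, and get generic surjectivity (hence generic fiber $\P^{5-k}$) from the same dimension count. Your middle paragraph merely supplies in detail the identification of the common zero locus with the strict transform, which the paper asserts without elaboration.
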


\begin{proof}
We repeat the argument of Proposition~\ref{proposition:blowup-quadric-x2}.
Consider the isomorphism of Corollary~\ref{corollary:blowup-xkappa-smooth}.
The preimage of the linear section $X_K = X_\kappa \cap \P(K^\perp)$ is isomorphic to the blowup of $X_K$ along the quadric~$\cQ_{\kappa,K}$.
By Corollary~\ref{corollary:blowup-xkappa-smooth} it can be also described as a relative linear section of codimension $k - 1$ 
in the projective bundle $\P_{\cQ_{\kappa,-}}(\cO(-1) \oplus \cS_4)$.
This linear section corresponds to the composition of the maps 
\begin{equation}
\label{eq:morphism}
\cO(-1) \oplus \cS_4 \hookrightarrow (\SS^\vee/\kappa)^\vee \otimes \cO \twoheadrightarrow (K/\kappa)^\vee \otimes \cO.
\end{equation}
and it remains to note that this composition is generically surjective, since otherwise the dimension of the general fiber of the map 
\begin{equation*}
\Bl_{\cQ_{\kappa,K}}(X_K) \hookrightarrow \P_{\cQ_{\kappa,-}}(\cO(-1) \oplus \cS_4) \to \cQ_{\kappa,-}
\end{equation*}
has dimension at least $6 - k$, and hence $\dim(\Bl_{\cQ_{\kappa,K}}(X_K)) \ge 5 + 6 - k = 11 - k$
which contradicts to the dimension of $X_K$ being $10 - k$.
\end{proof}

Using Proposition~\ref{proposition:blowup-x345}, we easily deduce rationality of $X_K$.

\begin{corollary}
\label{corollary:xk-rational}
For any $1 \le k \le 5$ a smooth linear section $X_K$ of codimension~$k$ of the spinor tenfold~$X$ is rational.
\end{corollary}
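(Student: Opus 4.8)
The plan is to deduce rationality directly from the fibration of Proposition~\ref{proposition:blowup-x345}, using the elementary fact that a blowup along a smooth center is a birational modification, so rationality of $X_K$ is equivalent to rationality of any such blowup. Concretely, I would fix any point $\kappa \in \P(K)$. Since $X_K$ is smooth we have $\P(K) \cap X^\vee = \varnothing$ by Lemma~\ref{lemma:smoothness}, so $\kappa \notin X^\vee$ and the hyperplane section $X_\kappa$ is smooth; then Corollary~\ref{corollary:blowup-xkappa-smooth} guarantees that the quadric $\cQ_{\kappa,-}$ is a smooth $5$-dimensional quadric. Applying Proposition~\ref{proposition:blowup-x345} to $X_K$ (with $k \le 5$) yields a piecewise locally trivial fibration
\[
\Bl_{\cQ_{\kappa,K}}(X_K) \to \cQ_{\kappa,-}
\]
whose general fiber is $\P^{5-k}$. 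Because $\Bl_{\cQ_{\kappa,K}}(X_K) \to X_K$ is a blowup, it suffices to prove that this blowup is rational.

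The next step is to identify the birational type of the total space. From the construction in the proof of Proposition~\ref{proposition:blowup-x345}, $\Bl_{\cQ_{\kappa,K}}(X_K)$ is realized as a relative linear section of the genuine projective bundle $\P_{\cQ_{\kappa,-}}(\cO(-1) \oplus \cS_4)$, cut out by the morphism~\eqref{eq:morphism}. Over the generic point of $\cQ_{\kappa,-}$, with function field $F := \Bbbk(\cQ_{\kappa,-})$, the fiber is therefore a linear subspace of an honest $\P^4_F$ defined by linear forms over $F$, hence isomorphic to $\P^{5-k}_F$; equivalently, over a dense open $U \subset \cQ_{\kappa,-}$ the fibration restricts to a Zariski-locally trivial $\P^{5-k}$-bundle. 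Thus $\Bl_{\cQ_{\kappa,K}}(X_K)$ is birational to $\P^{5-k} \times \cQ_{\kappa,-}$. Since a smooth quadric of positive dimension is rational, $\cQ_{\kappa,-}$ is rational, so $\P^{5-k} \times \cQ_{\kappa,-}$ is rational, and therefore $X_K$ is rational as well.

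The only point that genuinely requires care is the identification of the generic fiber as a \emph{genuine} projective space rather than a possibly nontrivial Severi--Brauer variety: this is exactly why I would insist on using the description of the fibration as a linear section of an actual projectivization of a vector bundle, so that its fibers are visibly cut out by linear equations inside honest projective spaces and the generic fiber is $\P^{5-k}$ over $F$. Everything else is routine once this is in place, and the argument is uniform in $1 \le k \le 5$.
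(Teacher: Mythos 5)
Your argument is correct and is exactly the route the paper intends: it leaves the proof implicit after Proposition~\ref{proposition:blowup-x345}, deducing that $\Bl_{\cQ_{\kappa,K}}(X_K)$ is birational to $\P^{5-k}\times\cQ_{\kappa,-}$ and hence rational. Your extra care in realizing the fibration as a relative linear section of the honest projective bundle $\P_{\cQ_{\kappa,-}}(\cO(-1)\oplus\cS_4)$, so that the generic fiber is a genuine projective space rather than a Severi--Brauer variety, is the right way to make the "easily deduce" rigorous.
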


\subsection{Linear sections of higher codimension with 4-spaces}

In this section we discuss linear sections of $X$ of codimension higher than~2 that contain a 4-space.
We start by discussing the codimension~3 case.
The birational transformation in the next proposition is again an example of
a special birational transformations of type $(2,1)$ from~\cite[Proposition~2.12]{fu2015special}.

\begin{proposition}
\label{proposition:x3-with-p4}
Let $X_K$ be a smooth dimensionally transverse linear section of $X$ of codimension~$3$.
The following conditions are equivalent:
\begin{enumerate}
\item The Hilbert scheme $F_4(X_K)$ of linear $4$-spaces on $X_K$ is non-empty.
\item For any $K_2 \subset K$ the linear section $X_{K_2}$ is special, i.e., $R_K = \Gr(2,K)$.
\item The linear span of the Veronese surface $\gamma(\P(K))$ is a $\P^4$ and is contained in $\rQ$.
\end{enumerate}
If all of these conditions hold true and $\Pi^4_{U_{5,-}}$ is a $4$-space on $X_K$, 
then the span of $\gamma(\P(K))$ is equal to~$\P(U_{5,-}) \subset \rQ$,
there is an isomorphism $\Bl_{\Pi^4_{U_{5,-}}}(X_K) \cong \Bl_{Z_K}(\P(W_K))$,
and a diagram
\begin{equation}
\vcenter{\xymatrix{
&
E_{\Pi,K} \ar[r] \ar[dl] &
\Bl_{\Pi^4_{U_{5,-}}}(X_K) \ar@{=}[r]^-\sim \ar[dl] &
\Bl_{Z_K}(\P(W_K)) \ar[dr] &
E_{Z_K} \ar[l] \ar[dr]
\\
\Pi^4_{U_{5,-}} \ar[r] &
X_K &&&
\P(W_K) &
Z_K \ar[l]
}}
\end{equation}
where $W_K \subset W$ is a subspace of codimension~$3$ corresponding to $K$, 
and $Z_K = \Gr(2,U_{5,-}) \cap \P(W_K)$ is a smooth linear section of the Grassmannian of codimension~$3$.
Moreover, in this case $F_4(X_K) \cong \Spec(\Bbbk)$.
\end{proposition}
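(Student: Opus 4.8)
The plan is to run the codimension-$3$ argument in close parallel to the proof of Proposition~\ref{proposition:x2-with-p4}, replacing the conic $\gamma(\P(K))$ by a Veronese surface. First I would record an unconditional description of $F_4(X_K)$. Since $\P(K^\perp)=\bigcap_{\kappa\in\P(K)}\P(\kappa^\perp)$, a $4$-space lies on $X_K$ iff it lies on every hyperplane section $X_\kappa$, so $F_4(X_K)=\bigcap_{\kappa\in\P(K)}F_4(X_\kappa)$. Combining Corollary~\ref{corollary:f4-x-codim2} with Lemma~\ref{lemma:linear-quadric-intersection} (which shows $\Pi^4_{U_{5,-}}\cap\cQ_v$ is a $3$-space, and hence $\Pi^4_{U_{5,-}}\subset X_\kappa$, precisely when $v=\gamma(\kappa)\in U_{5,-}$) yields
\begin{equation*}
F_4(X_K)=\{\,[U_{5,-}]\in X^\vee : \gamma(\P(K))\subset\P(U_{5,-})\,\}.
\end{equation*}
By Corollary~\ref{corollary:stratification-small-k} the map $\gamma\colon\P(K)=\P^2\hookrightarrow\rQ$ is a closed embedding with $\gamma^*\cO_\rQ(1)\cong\cO_{\P^2}(2)$, so $\gamma(\P(K))$ is a Veronese surface of degree $4$.

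Next I would establish the equivalences. For $(1)\Rightarrow(3)$: a point of $F_4(X_K)$ gives $\gamma(\P(K))\subset\P(U_{5,-})=\P^4\subset\rQ$; since a Veronese surface is nondegenerate in a $\P^5$ and cannot be contained in a $\P^3$, its span is exactly $\P(U_{5,-})$, a $\P^4$ inside $\rQ$. For $(3)\Rightarrow(2)$: every line $\ell\subset\P(K)$ maps to a conic $\gamma(\ell)$ whose spanning plane lies inside the span $\P^4\subset\rQ$, so by Proposition~\ref{proposition:x2-with-p4} each over-section $X_{K_2}$ is special, i.e. $R_K=\Gr(2,K)$. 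For $(2)\Rightarrow(1)$: using $X_K=\bigcap_{K_2\subset K}X_{K_2}$ and Proposition~\ref{proposition:x2-with-p4} one gets $F_4(X_K)=\bigcap_{\ell}L^-_{U_3^{(\ell)}}$, where $\P(U_3^{(\ell)})$ is the span of the conic $\gamma(\ell)$. The sum $\sum_\ell U_3^{(\ell)}$ equals the span $U'$ of the Veronese surface, and $\dim U'=5$ (two transverse conic-planes already span it, meeting only in the common image point), whence
\begin{equation*}
F_4(X_K)=\{\,[U_{5,-}]\in X^\vee : U_{5,-}\supseteq U'\,\},
\end{equation*}
which is the single point $[U']$ as soon as $[U']\in X^\vee$.

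The main obstacle is exactly this last caveat: closing $(3)\Rightarrow(1)$ requires showing that the spanning space $U'$ belongs to the family $\OGr_-(5,\rV)=X^\vee$ rather than to $\OGr_+(5,\rV)$. Abstract parity of intersections of maximal isotropic subspaces does not settle this (both families admit $\P^4$'s containing a fixed Veronese), so I expect to use the explicit nature of $\gamma$: by Lemma~\ref{lemma:secondary-quadric} the map $\gamma$ is induced by a linear map $\psi\colon\Sym^2K\to\rV$ into the space of quadrics through $X^\vee$ (Corollary~\ref{corollary:resolution-x}), with $U'=\Ima\psi$. I would identify the family of $\P(U')$ through the compatibility of $\psi$ with the spinor filtration (Lemma~\ref{lemma:filtration-ss}, Remark~\ref{remark:filtrations}) and the cone description of Remark~\ref{remark:degenerate-quadrics-through-x}, concluding $[U']\in X^\vee$. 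Alternatively, since the configuration is rigid, one may construct one very special $X_K$ by reversing the blowup below and verify $(1)$ directly, then invoke uniqueness. Granting this, $F_4(X_K)=\{[U']\}\cong\Spec(\Bbbk)$.

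Finally, for the birational description I would apply Proposition~\ref{proposition:blowup-x-pi4} with $U_{5,-}:=U'$. The three hyperplanes cutting out $X_K$ all contain $\Pi^4_{U_{5,-}}$, so via the sequence $0\to\bw4{U_{5,-}}\to\SS\to W\to 0$ they correspond to a codimension-$3$ subspace $\P(W_K)\subset\P(W)$; moreover the distinguished hyperplane $\bw2{U_{5,-}}\subset W$ (whose section is singular) is not among them, because $X_K$ is smooth. Taking strict transforms inside $\Bl_{\Gr(2,U_{5,-})}(\P(W))$ identifies $\Bl_{\Pi^4_{U_{5,-}}}(X_K)$ with $\Bl_{Z_K}(\P(W_K))$, where $Z_K=\Gr(2,U_{5,-})\cap\P(W_K)$ is a codimension-$3$ linear section of $\Gr(2,5)$; smoothness of $Z_K$ then follows from smoothness of the blowup together with Lemma~\ref{lemma:smoothness-criterion}, exactly as in the codimension-$2$ case, completing the diagram.
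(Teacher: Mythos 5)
You have correctly identified the architecture of the argument and, to your credit, you have also correctly located the one step that does not follow formally: after reducing the implication towards $(1)$ to the statement that $F_4(X_K)=\{[U_{5,-}]\in X^\vee: U'\subseteq U_{5,-}\}$, where $U'$ is the $5$-dimensional isotropic subspace spanning the Veronese surface, everything hinges on whether $[U']$ lies in $\OGr_-(5,\rV)=X^\vee$ or in $\OGr_+(5,\rV)$. But flagging the obstacle is not the same as overcoming it: neither of the two strategies you sketch is carried out, and the second one (``construct one very special $X_K$ and invoke uniqueness'') is circular as stated, since the uniqueness of very special sections is itself deduced from the blowup description that this proposition establishes. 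The paper closes the gap by a different and complete device that avoids determining the component of $U'$ altogether: choose a hyperplane $U_4\subset U'$; the preimage $\gamma^{-1}(\P(U_4))$ is a conic in $\P(K)\cong\P^2$, and since the discriminant divisor in the space of conics is ample, one can choose $U_4$ so that this conic is reducible and reduced, $\gamma^{-1}(\P(U_4))=\P(K'_2)\cup\P(K''_2)$. The unique extension of $U_4$ to a subspace $U_{5,-}$ corresponding to a point of $X^\vee$ then contains the spans $U'_3,U''_3\subset U_4$ of the conics $\gamma(\P(K'_2))$ and $\gamma(\P(K''_2))$, hence $[U_{5,-}]\in L^-_{U'_3}\cap L^-_{U''_3}=F_4(X_{K'_2})\cap F_4(X_{K''_2})=F_4(X_K)$ by Proposition~\ref{proposition:x2-with-p4}. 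Only afterwards does one conclude that $U_{5,-}=U'$, so that $[U']\in X^\vee$ and $F_4(X_K)\cong\Spec(\Bbbk)$.

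Two smaller points. First, your justification that $\dim U'=5$ (``two transverse conic-planes already span it, meeting only in the common image point'') is not correct: for the full Veronese surface in $\P^5$ two conic-planes $\P(\Sym^2\ell_1)$ and $\P(\Sym^2\ell_2)$ also meet in a single point and together span only a $\P^4$, so this observation cannot distinguish span $\P^4$ from span $\P^5$. The paper rules out span $\P^5$ by noting that the union of all the conic-planes is the secant variety of the Veronese, a cubic hypersurface in its span, and a cubic hypersurface of $\P^5$ contained in the smooth quadric $\rQ$ would force $\P^5\subset\rQ$, which is impossible for an $8$-dimensional quadric. Second, your ``unconditional description'' of $F_4(X_K)$ asserts that $\Pi^4_{U_{5,-}}\subset X_\kappa$ \emph{precisely when} $\gamma(\kappa)\in U_{5,-}$; but $F_4(X_\kappa)\cong\cQ_{\kappa,-}$ is a $5$-dimensional quadric, a proper hyperplane section of the $6$-dimensional quadric $\cQ_{v,-}=\{[U_{5,-}]:v\in U_{5,-}\}$, so the condition $v\in U_{5,-}$ is necessary but not sufficient. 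Only the necessary direction is used in your $(1)\Rightarrow(3)$, and the intersection $F_4(X_K)=\bigcap F_4(X_{K_2})$ can be computed from Proposition~\ref{proposition:x2-with-p4} as you do later, so this is repairable; the component issue above is the real gap.
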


\begin{proof}
First, we prove equivalence of the conditions.

$(1) \Rightarrow (2)$
Since $X_K \subset X_{K_2}$, the condition $F_4(X_K) \ne \varnothing$ implies $F_4(X_{K_2}) \ne \varnothing$ for each $K_2 \subset K$, 
hence $X_{K_2}$ is special for each $K_2 \subset K$.

$(2) \Rightarrow (3)$
Assume the linear span of $\gamma(\P(K))$ is a $\P^5$.
Note that the union of the linear spans of the conics $\gamma(\P(K_2))$, when $K_2$ runs over the set of all hyperplanes in $K$,
is the secant variety of $\gamma(\P(K))$, i.e., the symmetric determinantal cubic in $\P^5$.
By assumption it is contained in the quadric $\rQ$.
But then~$\P^5 \subset \rQ$, which is impossible since $\rQ$ is smooth of dimension~8.

Therefore the linear span of $\gamma(\P(K))$ is a $\P^4$. 
In this case the union of the linear spans of the conics is equal to this $\P^4$, 
which then by assumption and Proposition~\ref{proposition:x2-with-p4} is contained in $\rQ$.

$(3) \Rightarrow (1)$
Assume that the linear span of the Veronese surface $\gamma(\P(K))$ is $\P(V_5) \subset \rQ$,
where $V_5 \subset V$ is an isotropic subspace
(so far it is not clear whether it corresponds to a point of $X$ or of $X^\vee$,
but later we will see that the second option holds).
For each subspace $U_4 \subset V_5$ the preimage $\gamma^{-1}(\P(U_4)) \subset \P(K)$ is a conic.
Since the divisor of reducible conics in the space $\P(\Sym^2(K^\vee))$ of all conics in $\P(K)$ is ample,
there exists a subspace $U_4$ such that the conic $\gamma^{-1}(\P(V_4))$ is reducible and reduced, that is
\begin{equation*}
\gamma^{-1}(\P(U_4)) = \P(K'_2) \cup \P(K''_2),
\end{equation*}
where $K'_2, K''_2 \subset K$ are distinct two-dimensional subspaces.
Then we have $\gamma(\P(K'_2)),\gamma(\P(K''_2)) \subset \P(U_4)$.
Let $U'_3,U''_3 \subset U_4$ be the linear spans of the conics $\gamma(K'_2)$ and $\gamma(K''_2)$ respectively
and let $U_4 \subset U_{5,-}$ be the unique extension of the isotropic subspace $U_4 \subset V_5 \subset \rV$ to a subspace 
corresponding to a point of~$X^\vee$.
Then 
\begin{equation*}
[U_{5,-}] \in L^-_{U'_3} \cap L^-_{U''_3} = F_4(X_{K'_2}) \cap F_4(X_{K''_2}) = F_4(X_K)
\end{equation*}
(in the last equality we use the fact that $X_K = X  \cap  \P(K^\perp) = X \cap \P({K'_2}^\perp) \cap \P({K''_2}^\perp) = X_{K'_2} \cap X_{K''_2}$,
hence a 4-space lies on $X_K$ if and only if it lies both on $X_{K'_2}$ and $X_{K''_2}$).
This proves that $F_4(X_K) \ne \varnothing$, completes the proof of the implication 
(3) $\Rightarrow$ (2), and hence of the equivalence of all three conditions.

Now assume that all three conditions of the proposition hold and let $U_{5,-}$ be a subspace corresponding to a point of $F_4(X_K)$.
Then for each subspace $K_2 \subset K$, if the linear span of $\gamma(K_2)$ equals $\P(U_3)$, 
then by Proposition~\ref{proposition:x2-with-p4} we have $[U_{5,-}] \in F_4(X_{K_2}) = L^-_{U_3}$,
hence $U_3 \subset U_{5,-}$.
Thus, the linear span of each conic~$\gamma(\P(K_2))$ is contained in $\P(U_{5,-})$, 
hence the linear span of $\gamma(\P(K))$ is contained in $\P(U_{5,-})$.
Since the Veronese surface cannot be isomorphically projected to a $\P^3$, 
it follows that the linear span of $\gamma(\P(K))$ is equal to $\P(U_{5,-})$
(in particular, the subspace $V_5$ that appeared in the proof of implication $(3) \Rightarrow (1)$ is equal to $U_{5,-}$).
Moreover, this also proves that $F_4(X_K) \cong \Spec(\Bbbk)$.

Finally, consider the isomorphism of Proposition~\ref{proposition:blowup-x-pi4} and let 
\begin{equation*}
\tX_K \cong \Bl_{\Pi^4_{U_{5,-}}}(X_K)
\end{equation*}
be the strict transform of $X_K$ in the blowup of $X$ along $\Pi^4_{U_{5,-}}$.
Each hyperplane in $\SS$ corresponding to a point of $\P(K) \subset P(\SS^\vee)$ 
contains the 4-space~$\Pi^4_{U_{5,-}}$, hence by~\eqref{eq:picard-relations-blowup-gr-p10}
it corresponds to a hyperplane in the space $\P(W)$ that intersects 
the linear span of the Grassmannian $\Gr(2,U_{5,-})$ transversely and smoothly.
Therefore, $\tX_K$ is isomorphic to the blowup of a codimension three subspace $\P(W_K) \subset \P(W)$ (the intersection of those hyperplanes)
along the linear section~$Z_K = \P(W_K) \cap \Gr(2,U_{5,-})$ of the Grassmannian.
If the linear section $Z_K$ is not dimensionally transverse, 
then its preimage in $\tX_K$ is an irreducible component of the latter, which is absurd.
So, since $\tX_K$ is smooth, it follows that~$Z_K$ is smooth as well by Lemma~\ref{lemma:smoothness-criterion}.
\end{proof}

\begin{definition}
\label{definition:very-special-section}
A smooth linear section $X_K \subset X$ of the spinor tenfold is called {\sf very special} 
if all of equivalent conditions Proposition~\ref{proposition:x3-with-p4} hold for $X_K$.
\end{definition}

\begin{remark}
\label{remark:very-special-unique}
Since a smooth linear section of $\Gr(2,5)$ of codimension~3 
is unique up to a projective transformation, it follows that a very special linear section of $X$ is also unique.
\end{remark}

We also check that in the codimension higher than~3 there are no smooth linear sections of $X$ containing a 4-space.

\begin{lemma}
\label{lemma:f4-x4}
If $X_K$ is a smooth linear section of $X$ of codimension~$4$ or higher then $F_4(X_K) = \varnothing$.
\end{lemma}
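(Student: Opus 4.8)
The plan is to combine an elementary dimension count with the embedding $\gamma$ of Corollary~\ref{corollary:stratification-small-k}, treating the boundary case $k=6$ separately. Writing $n = \dim X_K = 10 - k$, a linearly embedded $\P^4$ requires $n \ge 4$, i.e. $k \le 6$, so for $k \ge 7$ there is nothing to prove. For $k = 6$ one has $n = 4$, and since a smooth dimensionally transverse section is connected by the Lefschetz hyperplane theorem, any $4$-space $\Pi \subset X_K$ would have to equal $X_K$; but $\P^4 \cong X_K$ is impossible, for instance because $\rk K_0(\P^4) = 5$ whereas $\rk K_0(X_K) = 16$ by~\eqref{eq:k0-xk} (equivalently $\bD(\P^4)$ has five exceptional objects while~\eqref{eq:db-xk-6} exhibits sixteen). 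This disposes of all $k \ge 6$.

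It remains to treat $k \in \{4,5\}$, where $X_K$ is smooth Fano and $X^\vee_K = \varnothing$, so the closed embedding $\gamma \colon \P(K) \hookrightarrow \rQ$ of Corollary~\ref{corollary:stratification-small-k} is available, with $\gamma^*\cO_\rQ(1) \cong \cO_{\P(K)}(2)$. First I would record the reduction $X_K = \bigcap_{\kappa \in \P(K)} X_\kappa$, which follows from $K^\perp = \bigcap_{\kappa} \kappa^\perp$; hence a $4$-space lies on $X_K$ if and only if it lies on every smooth hyperplane section $X_\kappa$, $\kappa \in \P(K)$. By Corollary~\ref{corollary:f4-x-codim2} we have $F_4(X_\kappa) = \cQ_{\kappa,-} \subset \cQ_{v,-} = \OGr_-(4,v^\perp/v)$ with $v = \gamma(\kappa)$, and a point $[U_{5,-}]$ of $\cQ_{v,-}$ corresponds to an isotropic $5$-space containing $v$. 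Thus a point $[U_{5,-}] \in F_4(X_K)$ forces $\gamma(\kappa) \in \P(U_{5,-})$ for every $\kappa$, i.e.
\begin{equation*}
\gamma(\P(K)) \subset \P(U_{5,-}) \cong \P^4 .
\end{equation*}

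Finally I would derive a contradiction from the two facts that $\gamma$ is a closed embedding and that $\gamma^*\cO_\rQ(1) \cong \cO_{\P(K)}(2)$ (here $\cO_\rQ(1)$ restricts to the hyperplane class of the linear space $\P(U_{5,-})$). For $k = 5$ the image $\gamma(\P(K))$ is a fourfold inside $\P(U_{5,-}) \cong \P^4$, hence all of it, so $\gamma$ is an isomorphism of projective spaces pulling back $\cO(1)$ to $\cO(2)$, which is absurd. For $k = 4$ the image is a smooth threefold isomorphic to $\P^3$ sitting inside $\P^4$: if it is nondegenerate it is a hypersurface of degree $(2H)^3 = 8$, whose canonical class by adjunction is $\cO(3)$ and pulls back to $\cO_{\P^3}(6) \ne \omega_{\P^3} = \cO_{\P^3}(-4)$; if instead it spans only a linear $\P^3$, then $\gamma$ is again an isomorphism with $\gamma^*\cO(1) = \cO(2)$. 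Either way we reach a contradiction, so $F_4(X_K) = \varnothing$. The main obstacle is the geometric input ruling out a quadratically embedded $\P^3$ inside a $\P^4$ (the $k=4$ case), together with the separate, invariant-based treatment of $k = 6$, which lies outside the range where $\gamma$ is an embedding.
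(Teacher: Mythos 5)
Your proof is correct, but it takes a genuinely different route from the paper's. The paper argues infinitesimally at a fixed $4$-space $\Pi=\Pi^4_{U_{5,-}}\subset X$: using $\cN_{\Pi/X}\cong\Omega^2_\Pi(2)$, a $4$-dimensional space $K$ of hyperplanes through $\Pi$ gives a morphism $K\otimes\cO\to(I_\Pi/I_\Pi^2)(1)\cong\Omega^2_\Pi(3)$, and since $\rc_3(\Omega^2_\Pi(3))=5\ne 0$ this morphism drops rank somewhere on $\Pi$, forcing $X_K$ to be singular there; this is a two-line Chern-class argument that also explains (via $\rc_4(\Omega^2_\Pi(3))=0$) why codimension $3$ is the true boundary. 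You instead globalize: after disposing of $k\ge 6$ by dimension and a degree/$K_0$ count, you use the intersection $F_4(X_K)=\bigcap_\kappa F_4(X_\kappa)=\bigcap_\kappa\cQ_{\kappa,-}$ together with Corollary~\ref{corollary:f4-x-codim2} to show that a $4$-space on $X_K$ forces $\gamma(\P(K))\subset\P(U_{5,-})\cong\P^4$, and then rule this out because $\P^{k-1}$ admits no closed embedding into $\P^4$ pulling $\cO(1)$ back to $\cO(2)$ when $k\ge 4$ (your adjunction computation for the degree-$8$ hypersurface case is the key input for $k=4$). This is exactly the mechanism the paper itself uses in Propositions~\ref{proposition:x2-with-p4} and~\ref{proposition:x3-with-p4}, so your argument fits naturally into that sequence and explains the boundary at $k=3$ through the classical fact that the Veronese surface projects isomorphically to $\P^4$ while $v_2(\P^3)$ does not; the cost is that you need the full machinery of $\gamma$ and of $F_4$ of smooth hyperplane sections, whereas the paper's degeneracy-locus argument is local, shorter, and does not require knowing $F_4(X_\kappa)$. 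Both proofs are complete; I see no gap in yours.
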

\begin{proof}
Let $\Pi = \Pi^4_{U_{5,-}}$ be a 4-space on $X$.
Note that $\cN_{\Pi/X} \cong \Omega_\Pi^2(2)$.
So, if $K$ is a 4-dimensional space of hyperplanes containing $\Pi$, there is a morphism on $X$
\begin{equation*}
K \otimes \cO \to I_\Pi(1) \to (I_\Pi/I^2_\Pi)(1) \cong \Omega_\Pi^2(3).
\end{equation*}
Since $\rc_3(\Omega_\Pi^2(3)) = 5$, any such morphism drops rank to 3 on some nonempty subscheme of $\Pi$, 
hence the corresponding linear section $X_K$ is singular along that subscheme.
\end{proof}

Note that $\rc_4(\Omega_\Pi^2(3)) = 0$ which explains the existence of a smooth codimension 3 linear section of $X$ containing $\Pi$.

We also check that in codimension higher than~3 the quadratic line complex $R_K$ is always a hypersurface in $\Gr(2,K)$.

\begin{lemma}
\label{lemma:rk-codim4}
If $X_K$ is a linear section of codimension~$4$, then $R_K \ne \Gr(2,K)$.
Moreover, if $X_K$ is general, there are no very special over-sections $X_{K_3} \subset X$ of codimension~$3$.
\end{lemma}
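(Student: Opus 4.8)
The plan is to treat the two assertions separately, after recording the reduction that underlies both. Note that $R_K=\Gr(2,K)$ holds if and only if $\bw2K\subset\bw2\SS^\vee$ is isotropic for the quadratic form defining $R$ (Lemma~\ref{lemma:secondary-quadric}), equivalently if and only if every codimension-$3$ over-section $X_{K_3}$ with $K_3\subset K$ is very special: indeed $\Gr(2,K_3)\subset\Gr(2,K)\subset R$ then forces $R_{K_3}=\Gr(2,K_3)$, which is condition~(2) of Proposition~\ref{proposition:x3-with-p4}. Since $X_K$ is dimensionally transverse, $\P(K)\cap X^\vee=\varnothing$, and by Corollary~\ref{corollary:stratification-small-k} the map $\gamma\colon\P(K)\hookrightarrow\rQ$ is a closed embedding with $\gamma^*\cO_\rQ(1)\cong\cO_{\P(K)}(2)$. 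Hence $\gamma$ factors as $\gamma=\pi\circ v_2$, where $v_2\colon\P(K)\hookrightarrow\P(\Sym^2K)=\P^9$ is the second Veronese and $\pi$ is the linear projection from a centre $\P(\tC)$ with $\tC\subset\Sym^2K$; I will view $\Sym^2K$ as the space of symmetric forms on $K^\vee$.

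For the first assertion I would assume $R_K=\Gr(2,K)$, so that all the $X_{K_3}$ are very special, and derive a contradiction by a determinantal argument governed by two facts. First, because $\gamma$ is a closed embedding, $\P(\tC)$ must be disjoint from the secant variety of $v_2(\P(K))$, i.e. from the locus of forms of rank~$\le 2$, which has dimension $6$ in $\P^9$; by the projective dimension theorem this forces $\dim\tC\le 3$. Second, for a plane $\P(K_3)\subset\P(K)$ with $K_3=\ker\phi$, the span of the Veronese surface $\gamma(\P(K_3))$ is $\pi(\P(\Sym^2K_3))$, of dimension $5-\dim(\tC\cap\Sym^2K_3)$, and $\tC\cap\Sym^2K_3$ consists of those $Q\in\tC$ whose contraction $Q(\phi,-)\in K$ vanishes. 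Very speciality forces this span to be a $\P^4$ (Proposition~\ref{proposition:x3-with-p4}), so $\tC\cap\Sym^2K_3\ne 0$ for every $\phi\in K^\vee\setminus\{0\}$; as $\tC$ contains no form of rank~$\le 2$, such a $Q$ has rank exactly $3$ with kernel spanned by $\phi$. Consequently the rank-$3$ locus inside $\P(\tC)$ maps \emph{onto} $\P(K^\vee)=\P^3$ via $Q\mapsto\ker Q$; but this locus has dimension at most $\dim\P(\tC)\le 2$, so it cannot surject onto $\P^3$, which is the desired contradiction. I expect this covering argument to be the crux of the whole lemma.

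For the second assertion I would run a dimension count. By Remark~\ref{remark:very-special-unique} together with Corollary~\ref{corollary:isomorphism-conjugation}, the locus $V_3\subset\Gr(3,\SS^\vee)$ of very special sections is a single $\Spin(\rV)$-orbit. The equivariant map $V_3\to X^\vee$ attaching to $X_{K_3}$ its unique $4$-space $[U_{5,-}]=F_4(X_{K_3})$ is surjective onto the $10$-dimensional $X^\vee$, and its fibre over $[U_{5,-}]$ is the open subset of $\Gr(3,W^\vee)\cong\Gr(3,11)$ of subspaces $K_3$ for which $Z_{K_3}=\Gr(2,U_{5,-})\cap\P(W_{K_3})$ is a smooth transverse linear section (here $W^\vee\subset\SS^\vee$ is the $11$-dimensional space of hyperplanes containing $\Pi^4_{U_{5,-}}$, as in Proposition~\ref{proposition:x3-with-p4}), so $\dim V_3=10+24=34$. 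Finally I would form the incidence variety $\{(K_3,K): K_3\in V_3,\ K_3\subset K\}$; its projection to $V_3$ has fibre $\P(\SS^\vee/K_3)=\P^{12}$, so the incidence has dimension $34+12=46<48=\dim\Gr(4,\SS^\vee)$. Hence its image is a proper closed subset of $\Gr(4,\SS^\vee)$, and a general codimension-$4$ section $X_K$ contains no $K_3\in V_3$, i.e. has no very special codimension-$3$ over-section. This second part is a routine count, and the only genuine obstacle is the determinantal step above.
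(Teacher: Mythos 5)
Your proof is correct, but both halves take genuinely different routes from the paper's. For the first assertion the paper observes that the union of the planes spanned by the conics $\gamma(\P(K_2))$, $K_2\subset K$, is the secant variety of the Veronese threefold $\gamma(\P(K))$, so the hypothesis $R_K=\Gr(2,K)$ would put this secant variety on the quadric $\rQ$; since the secant variety of a three-dimensional Veronese variety lies on no quadric hypersurface, the whole linear span of $\gamma(\P(K))$ would have to be contained in $\rQ$, contradicting the fact that $\rQ$ contains no linear space of dimension greater than~$4$. You replace the classical input about quadrics through the secant variety by the projective dimension theorem (giving $\dim\tC\le 3$ for the centre of projection) combined with the rank-$3$ kernel-covering argument; this is more elementary and self-contained, at the cost of some length, and the covering step is indeed the crux. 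For the second assertion the paper is much shorter: for general $K$, Lemma~\ref{lemma:general-rk-smooth} makes $R_K\subset\Gr(2,K)$ a smooth complete intersection of two quadrics in $\P^5$, which contains no plane by the Lefschetz theorem, while a very special over-section $X_{K_3}$ would force the plane $\Gr(2,K_3)$ into $R_K$. Your orbit-plus-incidence count ($\dim V_3=10+24=34$, incidence of dimension $46<48$) is also correct and avoids Lemma~\ref{lemma:general-rk-smooth}, at the price of invoking the orbit description of very special sections via Remark~\ref{remark:very-special-unique} and Corollary~\ref{corollary:isomorphism-conjugation}. One harmless inaccuracy: $R_K=\Gr(2,K)$ need not force $\bw2{K}$ to be isotropic for the quadratic form defining $R$ (its restriction to $\bw2{K}$ could a priori be a nonzero multiple of the Pl\"ucker form); but you only use the correct implication that $\Gr(2,K)\subset R$ makes every over-section $X_{K_3}$ very special.
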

\begin{proof}
Assume $R_K = \Gr(2,K)$. 
Then for any $K_2 \subset K$ the linear span of the conic $\gamma(\P(K_2))$ is contained in~$\rQ$.
Therefore, the secant variety of $\gamma(\P(K))$ is contained in~$\rQ$.
Further, the secant variety of a 3-dimensional Veronese variety is not contained in a quadric, 
hence the span of $\gamma(\P(K))$ is contained in $\rQ$.
But the dimension of the span is at least~$7$, and $\rQ$ does not contain a linear space of dimension higher than~$4$.

Now assume that $X_K$ is general. 
Then $R_K \subset \Gr(2,K)$ is a smooth quadratic divisor (Lemma~\ref{lemma:general-rk-smooth}).
In particular, $R_K$ contains no planes by Lefschetz theorem.
But if $X_{K_3}$ is very special then the plane~$\Gr(2,K_3)$ is contained in~$R_K$.
\end{proof}


\end{document}